\documentclass[a4paper,reqno]{amsart}
\usepackage[utf8]{inputenc}
\usepackage[T1]{fontenc}
\usepackage{amsfonts}
\usepackage[foot]{amsaddr}
\usepackage{amsmath}
\usepackage{todonotes}
\numberwithin{equation}{section}
\usepackage{xcolor}
\DeclareUnicodeCharacter{00A0}{~}

\usepackage[hidelinks]{hyperref}
\usepackage[margin=1.2in]{geometry}
\usepackage{enumerate}
\usepackage{graphicx}
\usepackage{caption}
\usepackage{subcaption}

\usepackage{array}
\usepackage{mathrsfs}  
\usepackage{accents}
\usepackage{cleveref}

\usepackage{amsrefs}

\usepackage{stmaryrd}
\usepackage{MnSymbol}

\usepackage{tikz}
\usepackage{tikz-cd}
\tikzcdset{column sep/normal=1.1cm}
\usetikzlibrary{calc}
\usepackage{color, mathtools,epsfig, graphicx}
\usetikzlibrary{positioning}
\usetikzlibrary{positioning, shapes, shadows, arrows}

\newcommand\Pone{\textrm{P}_{\textrm{I}} }
\newcommand\Ptwo{\textrm{P}_{\textrm{II}} }

\newcommand\Pfour{\textrm{P}_{\textrm{IV}}}

\newcommand\Psix{\textrm{P}_{\textrm{VI}}}

\newcommand{\Z}{\mathbb{Z}}
\newcommand{\C}{\mathbb{C}}
\newcommand{\p}{\mathbb{P}}
\newcommand{\E}{\mathcal{E}}
\newcommand{\F}{\mathcal{F}}
\newcommand{\h}{\mathcal{H}}
\newcommand{\pain}[1]{\operatorname{P}_{\mathrm{#1}}}
\newcommand{\defeq}{\vcentcolon=}

\newcommand{\Pic}{\operatorname{Pic}}

\DeclareMathOperator{\Spec}{Spec}
\DeclareMathOperator{\Proj}{Proj}
\DeclareMathOperator{\Aut}{Aut}
\DeclareMathOperator{\SL}{SL}
\usepackage{amsthm}

\newtheorem{theorem}{Theorem}[section]

\newtheorem{lemma}[theorem]{Lemma}

\newtheorem{definition}[theorem]{Definition}
\newtheorem{proposition}[theorem]{Proposition}
\newtheorem{conjecture}[theorem]{Conjecture}

\newtheorem{remark}[theorem]{Remark}
\newtheorem{corollary}[theorem]{Corollary}
\newtheorem{problem}[theorem]{Problem}

\newtheorem{maintheorem}{Theorem}

\numberwithin{equation}{section}

\title[On monodromy of monodromy surfaces]{On monodromy of monodromy surfaces}

\author{Pieter Roffelsen$^{1}$}

\address{$^{1}$School of Mathematics and Statistics F07, The University of Sydney, NSW 2006, Australia.}

\author{Alexander Stokes$^{2}$}

\address{$^{2}$Waseda Institute for Advanced Study, Waseda University, 1-21-1 Nishi Waseda, Shinjuku-ku, Tokyo 169-0051, Japan.}

\date{\today}

\begin{document}

\begin{abstract}
Different instances of the Riemann-Hilbert correspondence relate initial value spaces of Painlev\'e equations to affine varieties known as monodromy surfaces, built from monodromy invariants for associated linear ODEs.
We show that these affine varieties admit realisations as embedded affine del Pezzo surfaces, characterised by their degree and a prescribed divisor at infinity, first in this paper for cases associated with Painlev\'e equations $\rm{VI},\rm{IV},\rm{II},\rm{I}$.
 We prove that the monodromy groups of these monodromy surfaces form the finite parts of the affine Weyl symmetry groups of the corresponding Painlev\'e equations.
We realise the monodromy group in each case: analytically as permutations of lines induced by continuation along loops in parameter space, Galois-theoretically in terms of the function field of the incidence variety of lines, and combinatorially via their intersection graph. 
This in particular yields an interpretation of the parameter spaces of Painlev\'e equations, modulo symmetries, as moduli spaces of categories of embedded affine varieties.
Further, it shows that, despite the affine Weyl group symmetries becoming trivial when conjugated by the corresponding Riemann-Hilbert map, the finite Weyl group part survives as monodromy of the monodromy surface.
\end{abstract}

\maketitle


\section{Introduction}
Symmetry groups play a fundamental role in the theory of integrable systems. In the case of Painlev\'e equations, these are extended affine Weyl groups, that act by B\"acklund transformations on solutions.
Geometrically, for each Painlev\'e equation $\pain{J}$ these are automorphisms of families $\mathcal{X}^{\rm{J}} \to \mathscr{T}_{\rm{J}}\times \mathscr{A}_{\rm{J}}$ of smooth open complex algebraic surfaces, the fibres of which are the Okamoto initial value spaces $\mathcal{X}_{t,a}^{\rm{J}}$ at time $t\in\mathscr{T}_{\rm{J}}$ for $\pain{J}$ with parameters $a\in\mathscr{A}_{\rm{J}}$.

Each Painlev\'e equation $\pain{J}$ governs isomonodromic deformation within one or more classes of linear ODEs. 
Integrability of Painlev\'e equations is via transcendental integrals of motion constructed from generalised monodromy data of these linear ODEs.
The space of these forms an affine variety called a monodromy surface, which is the fibre of a family $\mathcal{M}^{\rm{J}}\to\mathscr{W}_{\rm{J}}$.

In each case we have a map 
\begin{equation}\label{eq:rhmap}
    \operatorname{RH}^{\rm{J}}_{t,a} : \mathcal{X}^{\rm{J}}_{t,a} \longrightarrow \mathcal{M}^{\rm{J}}_{w(a)},
\end{equation}
from the initial value space to the monodromy surface, essentially induced by the generalised Riemann-Hilbert correspondence \cite{malgrange91}. We refer to \eqref{eq:rhmap} as the Riemann-Hilbert map, which is known to be a non-algebraic biholomorphism for generic $a$ and $t$ in each case.

While affine Weyl group symmetries give automorphisms of the family $\mathcal{X}^{\rm{J}}$, direct conjugation by the Riemann-Hilbert map only induces trivial automorphisms of monodromy surfaces in all known cases.
This leads to the following fundamental problem. 

\begin{problem}\label{main:problem}
What role do the symmetry groups of the Painlev\'e equations play on the monodromy surface side of the Riemann-Hilbert correspondence?
\end{problem}

We resolve \Cref{main:problem}, first in this paper for the cases $\rm{J}=\rm{VI},\rm{IV},\rm{II},\rm{I}$.
We derive compactifications from $\mathcal{M}^{\rm{J}}\to\mathscr{W}_{\rm{J}}$ to families of del Pezzo surfaces 
$\overline{\mathcal{M}^{\rm{J}}} \to \mathscr{W}_{\rm{J}}$.
 Then, we define a corresponding category $\mathfrak{C}_{\rm{J}}$ of embedded affine del Pezzo surfaces associated with each Painlevé equation and its linear problem, for which $\mathcal{M}^{\rm{J}}\to\mathscr{W}_{\rm{J}}$ provides a universal family.
Permutations of exceptional curves 
induced by deforming over loops in the moduli of $\mathfrak{C}_{\rm{J}}$ form the monodromy of the monodromy surface, which yields the resolution of \Cref{main:problem}: the monodromy group of $\mathfrak{C}_{\rm{J}}$ is precisely the underlying finite Weyl group of the symmetry group of $\pain{J}$.

Moreover, viewing the monodromy surfaces in terms of the categories $\mathfrak{C}_{\rm{J}}$ we get a definite notion of lines on the monodromy surfaces.
From the literature, one can observe that these are of great significance in the asymptotic analysis of Painlev\'e transcendents. 
This is encapsulated by the following conjecture (see also \cites{ohyamastrasbourg, ramiswebseminaronpainlevetalk}). 

\begin{conjecture}[Ramis \cite{ramis}]\label{conjectureramis}
	Every line of every monodromy variety (for all the differential Painlevé equations and for all the values of the parameters, generic or not) is a one parameter family of “truncated” solutions.
\end{conjecture}

We confirm \Cref{conjectureramis} in each case by careful comparison of the models of monodromy surfaces as embedded affine varieties with the asymptotic results in the literature. Moreover, in the cases $\rm{J}=\rm{VI},\rm{IV},\rm{II}$, we identify additional distinguished curves relevant in the asymptotic theory.

\subsection{Background}

The monodromy group $W(E_6)$ of smooth cubic surfaces acts by permutations of the 27 lines induced by deformations over loops in their moduli, as well as by automorphisms of the Schl\"afli graph that encodes their intersections. 
This was famously established through the work of Schl\"afli, Kantor, Cartan, Coble and du Val, see \cite[Sec. 8 \& 9]{dolgachevclassicalAG}. 
The monodromy groups of smooth del Pezzo surfaces similarly act by permutations of lines and form finite Weyl groups \cites{dolgachevclassicalAG,fangdelpezzo}.
These monodromy groups, as well as those of many other classical enumerative problems,  can be realised in a Galois-theoretic way, as established by Jordan \cite{jordan1870} and revisited by Harris \cite{harris1979}, see \cite{sottileyahl}.

At the same time, the $\operatorname{SL}_2(\C)$-character variety of the four-punctured sphere can be regarded as a hypersurface in $\mathbb{C}^7$, forming a four-parameter family of affine cubic surfaces, as discovered in the work of Vogt \cite{vogt} and Fricke and Klein \cite{frickeklein}.
In turn, the Riemann-Hilbert correspondence \cites{deligne70,jimbo1982} relates these surfaces to rank two Fuchsian ODEs on the Riemann sphere with four singularities, and isomonodromic deformation of the latter is governed by the sixth Painlev\'e equation \cite{fuchs1905}.
There are monodromy surfaces similarly associated to the other Painlev\'e equations, parametrising equivalence classes of generalised monodromy data for associated linear ODEs. These can also be realised as families of affine cubic surfaces \cites{putsaito,chekhovdecorated}. 

Furthermore, monodromy varieties are of fundamental interest in mathematical physics, algebraic geometry and representation theory.
They are connected to, e.g., 
Cherednik algebras \cites{oblomkovDAHA,oblomkov, mazzoccoconfluences,chekhovconj}, Teichm\"uller theory \cite{chekhovmazzocco}, mirror symmetry for log Calabi-Yau surfaces \cites{grosshackingkeel,quantummirror,mirrorsymmetryforpainleve}, and arithmetic and algebraic dynamics \cites{CantatLoray,Cantat,rebeloroeder1,rebeloroeder2}.
They also play a crucial role in the classification of algebraic solutions of ODEs \cites{dubrovinmazzocco,mazzocco2001,boalchschwarzlist,boalchicosahedral,hitchinponcelet,litt}.
We note that monodromy varieties are  also studied in the literature under the names monodromy manifolds, Betti moduli spaces, wild character varieties and decorated character varieties \cites{putsaito, benedettadecoratedlocalsystems,boalchgeometryandbraiding,chekhovdecorated}.

\subsection{Main results}
For every $\rm{J}\in\{\rm{VI},\rm{IV},\rm{II},\rm{I}\}$,
we realise the monodromy surface as a family of embedded affine varieties
\begin{equation}\label{eq:family}
    \mathcal{M}^{\rm{J}} \to \mathscr{W}_{\rm{J}},
\end{equation} 
such that the divisor at infinity $\overline{\mathcal{M}^{\rm{J}}_w}\setminus \mathcal{M}^{\rm{J}}_w$ consists of only smooth points in $\overline{\mathcal{M}^{\rm{J}}_w}$, see Definition \ref{def:monodromysurfacefamilyPVI} ($\rm{J}=\rm{VI}$), Definition \ref{def:monodromysurfacePIV} ($\rm{J}=\rm{IV}$),
Definition \ref{def:monodromysurfacePII} ($\rm{J}=\rm{II}$) and
Definition \ref{def:monodromysurfacePI} ($\rm{J}=\rm{I}$).

As affine varieties, forgetting about the embedding, the surfaces are isomorphic to the affine cubic surfaces appearing in \cites{putsaito,chekhovdecorated}. 
Further, the two well-known linear problems related to $\pain{II}$, due to Flaschka-Newell and Jimbo-Miwa, lead to the same family \eqref{eq:family} of monodromy surfaces.

We derive the action of the extended affine Weyl group $\widetilde{W}_{\rm{J}}$ of symmetries of $\pain{J}$ on $\mathcal{M}^{\rm{J}}\to\mathscr{W}_{\rm{J}}$ induced by conjugation by the Riemann-Hilbert map \eqref{eq:rhmap}.
This extends the results on $\pain{VI}$ of Inaba-Iwasaki-Saito \cite{inabaiwasakisaito} (see \Cref{thm:PVIRHsymmetries}) and Lisovyy-Tykhyy \cite{lisovyyalgebraic} (see \Cref{prop:PVI:dynkinautosunderRH}) to $\pain{IV}$ and $\pain{II}$ as follows. 
\begin{maintheorem}[Symmetries conjugated by Riemann-Hilbert]\label{mainthm:conjugatedsymmetries}
    For $\rm{J}\in \{\rm{VI},\rm{IV},\rm{II}\}$, the action of $\widetilde{W}_{\rm{J}}$ on $\mathcal{M}^{\rm{J}}\to\mathscr{W}_{\rm{J}}$ induced by conjugation by the Riemann-Hilbert map is given in 
    Tables
    \ref{tab:PVI:symmetry:ws} ($\rm{J}=\rm{VI}$), 
    \ref{tab:PIV:symmetry:dynkinautos} ($\rm{J}=\rm{IV}$) and \ref{tab:PII:symmetriesunderRH}  ($\rm{J}=\rm{II}$).
    In particular, the entire unextended affine Weyl group $W_{\rm{J}}$ acts trivially in each case. 
\end{maintheorem}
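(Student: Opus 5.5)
The plan is to handle each $\rm{J}\in\{\rm{VI},\rm{IV},\rm{II}\}$ separately, but with the same three steps. First, choose generators of $\widetilde{W}_{\rm{J}}$: the simple reflections $s_0,\dots,s_n$ of the affine Weyl group $W_{\rm{J}}$, together with the Dynkin diagram automorphisms $\pi$. Second, for each generator, determine how the Bäcklund transformation acts on the linear problem. Third, read off the induced map on monodromy data, as a pair (map on $\mathscr{W}_{\rm{J}}$, map between fibres of $\mathcal{M}^{\rm{J}}$).

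For $\rm{J}=\rm{VI}$ we take the results of Inaba--Iwasaki--Saito (\Cref{thm:PVIRHsymmetries}) and Lisovyy--Tykhyy (\Cref{prop:PVI:dynkinautosunderRH}) as given, and only rewrite them in the coordinates of Definition \ref{def:monodromysurfacefamilyPVI} to produce Table \ref{tab:PVI:symmetry:ws}.

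The key point for the reflections $s_i$ is how they act on the linear system. Each $s_i$ is realised by a gauge/Schlesinger-type transformation $Y\mapsto R(x)Y$ of the Lax pair, with $R$ rational in the spectral variable. In the Fuchsian case this can include an elementary transformation at a singular point, or a sign change of a formal exponent. Such a transformation does not change the monodromy representation, or the Stokes data, up to overall conjugation. The local parameters change, but only in ways invisible to the monodromy parameters $w(a)$. For instance, $\theta_i\mapsto-\theta_i$ leaves $2\cos\pi\theta_i$ unchanged, and integer shifts of exponents are invisible after exponentiation. Hence $s_i$ acts as the identity on $\mathcal{M}^{\rm{J}}$ and on $\mathscr{W}_{\rm{J}}$.

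Concretely, for $\rm{IV}$ and $\rm{II}$ we work with the Jimbo--Miwa (and, for $\rm{II}$, also Flaschka--Newell) Lax pairs. For each $s_i$ we write down the explicit gauge matrix from the literature or construct it from the formula for the Bäcklund transformation. We then check two things:
\begin{itemize}
\item the canonical solutions at the irregular singularity, normalised in Stokes sectors, are mapped to canonical solutions;
\item the normalisations at the regular singularity (for $\rm{IV}$) are preserved.
\end{itemize}
Then every Stokes multiplier and connection invariant entering the coordinates of Definitions \ref{def:monodromysurfacePIV} and \ref{def:monodromysurfacePII} is invariant.

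The diagram automorphisms $\pi$ do act nontrivially. In $\rm{IV}$ the cyclic rotation of $(\alpha_0,\alpha_1,\alpha_2)$ is realised by a change of spectral variable $\lambda\mapsto\omega\lambda$ with $\omega^3=1$, combined with a permutation of solutions. This rotates the Stokes sectors, which cyclically permutes Stokes multipliers and hence the coordinates. Similarly, the $\pain{II}$ automorphism $y\mapsto -y$ corresponds to $\lambda\mapsto-\lambda$ or a conjugation by $\sigma_3$, which shifts the Stokes rays by half the period. We track these explicitly to obtain the entries of Tables \ref{tab:PIV:symmetry:dynkinautos} and \ref{tab:PII:symmetriesunderRH}. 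Since $\widetilde{W}_{\rm{J}}=W_{\rm{J}}\rtimes\Omega$ and the $s_i$ act trivially, the action factors through $\Omega$, which yields the final claim.

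I expect the main obstacle to be making the gauge transformations for the $s_i$ precise enough at the irregular singularity. One has to ensure that $R(x)$ has the correct asymptotic form, so that canonical solutions are carried to canonical solutions without extra Stokes-sector relabelling or diagonal rescalings; such rescalings could act nontrivially on the chosen coordinates. I would first try to handle this by a uniqueness argument: the transformed canonical solutions have the normalised asymptotics, so by uniqueness of solutions with prescribed asymptotics in each sector they coincide with the canonical ones up to a constant diagonal factor. That factor must then be shown to cancel in the invariants that define $\mathcal{M}^{\rm{J}}$.
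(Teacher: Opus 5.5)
Your $\pain{VI}$ step matches the paper. The gap is in $\pain{IV}$: your central claim, that every simple reflection is realised on the $2\times2$ system by a gauge or Schlesinger transformation leaving the monodromy and Stokes data unchanged, is false for $r_0$, and hence for $r_2$.

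\textbf{Why it fails.} By \Cref{tab:PIV:symmetry:varsandparams}, $r_0$ sends $(\theta_0,\theta_\infty)\mapsto(\tfrac12(\theta_0+\theta_\infty),\tfrac12(3\theta_0-\theta_\infty))$. So the trace of the monodromy around $z=0$ changes from $2\cos2\pi\theta_0$ to $2\cos\pi(\theta_0+\theta_\infty)$. Two kinds of transformation are available, and neither can produce this change. A transformation $Y\mapsto R(z)Y$, with $R$ single-valued around $z=0$ up to a scalar factor $z^{k/2}$ (as for the Schlesinger transformations realising $T_1,T_2$), changes this trace at most by a sign. A rotation $z\mapsto cz$ leaves it unchanged. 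So neither can realise $r_0$. The parameters $w$ stay fixed only because $r_0$ permutes the three exponential terms in $w_1,w_2$; on $\mathscr{U}_{\rm IV}$ it acts by $u_1\mapsto u_1$, $u_2\mapsto 1/(u_1u_2)$. Your phrase ``changes invisible to $w(a)$'' conflates invisibility to $w$ with invisibility to the monodromy data. Here $v_0$ and $v_\infty$ genuinely change, and since $x_2=v_\infty^2(1+s_1s_2)$ and so on, the Stokes multipliers must change as well for $x$ to stay fixed. So triviality of $r_0$ is a genuinely new fact, not a consequence of Stokes data being preserved.

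\textbf{Why it cannot be bypassed group-theoretically.} The transformations that are realisable on the $2\times2$ problem are $r_1$, $T_1$, $T_2$, $\vartheta$, and $\sigma_{(01)}r_1r_0$ (via $Y\mapsto\sigma_1Y(-iz)$). On $(\log u_1,\log u_2)$ their linear parts lie in $\{\pm\mathrm{id},\pm\text{swap}\}$; in particular $\sigma_{(01)}r_1r_0$ acts by $(u_1,u_2)\mapsto(u_2^{-1},u_1^{-1})$. This set does not contain the linear part of $r_0$, so $r_0$ is not a word in these. This is exactly the difficulty noted in \Cref{rem:commutingRHsPIV}.

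\textbf{What the paper does instead.} It uses an asymptotic argument in the spirit of Inaba--Iwasaki--Saito (\Cref{lem:PIVsymmetryunderRHasymptotic}).
\begin{itemize}
\item Kitaev's Lax pair is obtained from \eqref{eq:pivlinearsystem} by a triangular gauge transformation and a quadratic change of spectral variable. Its solution $y^{\rm Ki}$ is precisely the $r_0$-image of $f$.
\item Kapaev's leading asymptotics $\tfrac{f}{2t}+\tfrac13\sim R_\pm(t)$ as $t\to\pm\infty$ depend on $x$ alone, not explicitly on $\theta$.
\item Comparing these asymptotics for $f$ and $y^{\rm Ki}$ forces $\tilde x=x$ on the dense set where $x_1x_3,x_2x_4\notin\mathbb{R}$.
\item Triviality of $r_2$ then follows: $r_1$ leaves $A(z)$ unchanged, and $r_0r_1r_2r_1$ is a root-lattice translation, which acts trivially by the Schlesinger computation.
\end{itemize}

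\textbf{Misidentified mechanisms for the diagram automorphisms.}
\begin{itemize}
\item In $\pain{IV}$ there is no spectral rotation by a cube root of unity compatible with the normalisation $A_1=\sigma_3$ and the four Stokes sectors. Also, $\sigma_{(012)}$ does not permute the $x_i$; it rescales them by $\zeta_3^{\pm1}$. This comes from $\sigma_{(012)}=T_2r_1r_2$ and the shift $\theta_\infty\mapsto\theta_\infty-1$ inside $v_\infty=e^{2\pi i\theta_\infty/3}$.
\item In $\pain{II}$ with the Jimbo--Miwa problem, which is the Riemann--Hilbert map the theorem refers to, $Y\mapsto\sigma_1Y(-z)$ realises $r_0$, not $\sigma$. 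It does change the Stokes multipliers, $\tilde s_{k+3}=e^{\pm2\pi i\theta}s_k$. The point $x$ is fixed only because of relations such as $s_1s_2+1=u^2(s_4s_5+1)$.
\item Still in Jimbo--Miwa, $\sigma=Tr_0$, where $T$ is a Schlesinger transformation sending $u\mapsto-u$.
\item In the Flaschka--Newell problem, $y\mapsto-y$ is indeed conjugation by $\sigma_3$. But that computes the action for a different Riemann--Hilbert map, whose relation to the Jimbo--Miwa one is open (\Cref{rem:commutingRHsPII}).
\end{itemize}
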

As mentioned, the case ($\rm{J}=\rm{VI}$) in Theorem \ref{mainthm:conjugatedsymmetries} was already established in the literature \cites{inabaiwasakisaito,lisovyyalgebraic}. 
Note that for $\pain{II}$ with the Riemann-Hilbert map coming from the Flaschka-Newell linear problem, the corresponding case of \Cref{mainthm:conjugatedsymmetries} can be deduced from the work of Kapaev \cite{kapaev2004}. 
We prove the result for the one coming from the Jimbo-Miwa linear problem.
The two Riemann-Hilbert maps give biholomorphisms from the initial value space to the same monodromy surface, and we conjecture them to coincide, see \Cref{rem:commutingRHsPII}. 
For the case of $\pain{IV}$, we establish \Cref{mainthm:conjugatedsymmetries} for the Riemann-Hilbert map coming from a rank 2 linear problem. 
There is another Riemann-Hilbert map coming from a rank 3 linear problem \cite{puttop2013piv} associated with $\pain{IV}$, and the relation between these is unknown, see \Cref{rem:commutingRHsPIV}.

Further, we identify several automorphisms of the families $\mathcal{M}^{\rm{J}}\to \mathscr{W}_{\rm{J}}$, $\rm{J}\in\{ \rm{IV}, \rm{II}, \rm{I}\}$ for which the corresponding transcendental symmetry of $\pain{J}$ seems to not be known, see \Cref{rem:mysterysymmetryPIV,rem:mysterysymmetryPII,rem:mysterysymmetryPI}.

For each Painlev\'e equation $\pain{J}$, $\rm{J}\in\{\rm{VI},\rm{IV},\rm{II},\rm{I}\}$, 
    we define a corresponding category $\mathfrak{C}_{\rm{J}}$ of embedded affine del Pezzo surfaces.
    
    \begin{definition}[Category $\mathfrak{C}_{\rm{J}}$, $\rm{J}\in \{\rm{VI},\rm{IV},\rm{II},\rm{I}\}$]
        Define the category $\mathfrak{C}_{\mathrm{J}}$ with
    \begin{itemize}
        \item an object being an embedded affine del Pezzo surface $\mathcal{V}\subset\mathbb{A}^{d_{\rm{J}}}$ of degree $d_{\rm{J}}$ such that the complement in its projective completion, $\overline{\mathcal{V}}\setminus\mathcal{V}\subseteq \mathbb{P}^{d_{\rm{J}}}$, is a cycle of curves consisting of only smooth points of $\overline{\mathcal{V}}$, with the degree and cycle as specified in \Cref{tab:categorytable}.
        \item a morphism $L$ between objects $\mathcal{V}_1\subset\mathbb{A}^{d_{\rm{J}}}$ and $\mathcal{V}_2\subset\mathbb{A}^{d_{\rm{J}}}$ being an affine linear map $L\in \operatorname{End}(\mathbb{A}^{d_{\rm{J}}})$ such that $L(\mathcal{V}_1)\subseteq \mathcal{V}_2$.
    \end{itemize}
    \end{definition}

\begingroup
\setlength{\tabcolsep}{15pt} 
\renewcommand{\arraystretch}{1.5} 
\begin{table}[t]
    \centering
\begin{tabular}{c|c|c|c|c}
\shortstack{Painlev\'e\\ Equation} &
   \shortstack{Symmetry \\
          Group} & 
   \shortstack{Degree $d_{\mathrm{J}}$\\
      of del Pezzo} & 
    \shortstack{Divisor at \\
          Infinity} & 
    \shortstack{Monodromy of\\
          Monodromy}  \\
\hline\hline
 $\pain{VI}$ & $\widetilde{W}(D_4^{(1)})$ & $3$ & triangle  of lines & $W(D_4)$\\
   \hline
  $\pain{IV}$ & $\widetilde{W}(A_2^{(1)})$ & $4$ & rectangle of lines & $W(A_2)$\\
   \hline
   $\pain{II}$ & $\widetilde{W}(A_1^{(1)})$ & $6$ & triangle of conics & $W(A_1)$\\
   \hline
  $\pain{I}$ & $\widetilde{W}(A_0^{(1)})$ & $5$ & pentagon of lines & $W(A_0)$\\
        \hline
	\end{tabular}
    \caption{Embedded affine del Pezzo surfaces and their monodromy for Painlev\'e equations $\pain{VI},\pain{IV},\pain{II},\pain{I}.$}
    \label{tab:categorytable}
\end{table}
\endgroup

\begin{maintheorem}[Moduli space of $\mathfrak{C}_{\rm{J}}$] \label{mainthm:categoryandmodulispace}
    For every $\rm{J}\in\{\rm{VI},\rm{IV},\rm{II},\rm{I}\}$,
 any object in $\mathfrak{C}_{\rm{VI}}$ is isomorphic to a fibre $\mathcal{M}_w^{\rm{J}}$ of the family $\mathcal{M}^{\rm{J}}\to \mathscr{W}_{\rm{J}}$.
 Two fibres are isomorphic in $\mathfrak{C}_{\rm{VI}}$ if and only if their parameters are related by the action of $\widetilde{W}_{\rm{J}}$.
 Consequently, there is a canonical bijection 
 $$ \operatorname{Iso}(\mathfrak{C}_{\rm{VI}}) \to \mathscr{O}_{\rm{J}}(\C),$$
from isomorphism classes of objects to complex points of $\mathscr{O}_{\rm{J}}$, where the scheme $\mathscr{O}_{\rm{J}}= \mathscr{W}_{\rm{J}} \sslash \,\widetilde{W}_{\rm{J}} $ is the quotient of the parameter space by the action of the symmetry group.
\end{maintheorem}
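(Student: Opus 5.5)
The plan is to argue the same way in each case $\rm{J}\in\{\rm{VI},\rm{IV},\rm{II},\rm{I}\}$, using the geometry of the del Pezzo completion together with the explicit family $\mathcal{M}^{\rm{J}}\to\mathscr{W}_{\rm{J}}$ and the symmetry action of \Cref{mainthm:conjugatedsymmetries}. (In the statement, $\mathfrak{C}_{\rm{VI}}$ should presumably read $\mathfrak{C}_{\rm{J}}$.)

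\emph{Step 1: normal form.} Let $\mathcal{V}\subset\mathbb{A}^{d_{\rm{J}}}$ be an object. Its closure $\overline{\mathcal{V}}\subseteq\mathbb{P}^{d_{\rm{J}}}$ is an anticanonically embedded (possibly singular, Gorenstein) del Pezzo surface of degree $d_{\rm{J}}$. The divisor at infinity is its hyperplane section, a cycle of smooth curves as in \Cref{tab:categorytable}. We choose affine coordinates adapted to the components of this cycle. For $\rm{J}=\rm{VI}$ this means three linear forms cutting out the triangle of lines. The cubic equation must then have the form $x_1x_2x_3+Q(x)=0$ with $\deg Q\le 2$. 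Using the residual affine-linear freedom (translations and rescalings preserving the cycle), we normalise $Q$ to the Fricke--Jimbo form $x_1x_2x_3+x_1^2+x_2^2+x_3^2-\omega_1x_1-\omega_2x_2-\omega_3x_3+\omega_4$.

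For $d_{\rm{J}}=4,5,6$ we work instead with the ideal of quadrics. A rectangle, pentagon or triangle of conics at infinity fixes the top-degree parts of the quadrics. We then show, by a dimension count on $H^0(\mathcal{O}(2))$, that the lower-order terms can be normalised to those in the corresponding definitions (Definitions \ref{def:monodromysurfacePIV}, \ref{def:monodromysurfacePII}, \ref{def:monodromysurfacePI}). Smoothness of $\overline{\mathcal{V}}$ along the cycle excludes degenerate normal forms. This step gives essential surjectivity onto fibres $\mathcal{M}^{\rm{J}}_w$.

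\emph{Step 2: classifying morphisms that are isomorphisms.} Suppose $L$ is an affine-linear map with $L(\mathcal{M}_w)\subseteq\mathcal{M}_{w'}$ that is an isomorphism in $\mathfrak{C}_{\rm{J}}$. Then $L$ extends to a projective linear map sending $\overline{\mathcal{M}_w}$ onto $\overline{\mathcal{M}_{w'}}$ and the cycle at infinity onto the cycle at infinity. It therefore induces an automorphism of the cycle, that is, of the dual polygon. Composing with explicit maps realising each such dihedral symmetry reduces us to the case where $L$ fixes every component of the cycle. In that case $L$ is diagonal up to translation in the adapted coordinates. Comparing with the normal form shows that $L$ is either the identity or one of finitely many explicit sign or scaling maps.

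\emph{Step 3: matching with $\widetilde{W}_{\rm{J}}$.} We then compare these explicit maps with the action of $\widetilde{W}_{\rm{J}}$ on $\mathscr{W}_{\rm{J}}$ given in Tables \ref{tab:PVI:symmetry:ws}, \ref{tab:PIV:symmetry:dynkinautos}, \ref{tab:PII:symmetriesunderRH}, and the analogous one for $\rm{J}=\rm{I}$. The extended part of $\widetilde{W}_{\rm{J}}$ (Dynkin diagram automorphisms, plus the sign changes that act on parameters) realises all such $L$. The unextended part $W_{\rm{J}}$ acts trivially on $w$, because $w$ is built from invariant functions of $a$. Hence the fibres over $w$ and $w'$ are isomorphic if and only if $w'\in\widetilde{W}_{\rm{J}}\cdot w$. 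The bijection with $\mathscr{O}_{\rm{J}}(\C)$ then follows, since the group acting on $\mathscr{W}_{\rm{J}}$ is finite and the GIT quotient of an affine variety by a finite group has points equal to the orbits.

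The main obstacle is Step 1 for the higher-degree cases. There we must show that every intersection of quadrics with the prescribed cycle at infinity lies in the family, including the singular ones. The danger is extra moduli or degenerate configurations. I would handle this by blowing down: realise $\overline{\mathcal{V}}$ as a blow-up of $\mathbb{P}^2$ (or of a weak del Pezzo model) whose anticanonical cycle maps to a fixed configuration. This lets us count parameters and confirm that they match $\dim\mathscr{W}_{\rm{J}}$.
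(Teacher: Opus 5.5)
Your plan is essentially the paper's proof. For $\mathrm{VI}$ and $\mathrm{IV}$, adapted coordinates plus translations and rescalings give the normal forms, with smoothness at infinity guaranteeing that the needed coefficients are nonzero; for $\mathrm{II}$ and $\mathrm{I}$, a blow-down to $\mathbb{P}^2$ does the work; and isomorphisms between fibres reduce to permutations of the cycle composed with diagonal maps. One difference in degrees $5,6$: the paper does not rely on a parameter count, which alone would not give essential surjectivity. Instead it explicitly normalises the image configuration (for $\mathrm{II}$, three lines through the corner points with one parameter $v=u^2$) and invokes Dolgachev's classification to dispose of singular surfaces.

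One correction to Step 3: for $\mathrm{IV}$ and $\mathrm{II}$ the automorphisms $\varpi_1,\varpi_2$ (and hence $\vartheta=\varpi_1\varpi_2$), and for $\mathrm{I}$ the automorphisms $\varpi,\vartheta$, are not induced by $\widetilde{W}_{\mathrm{J}}$. So the extended Weyl group does not realise every such $L$. Since these act trivially on $\mathscr{W}_{\mathrm{J}}$, however, the orbit statement and the bijection with $\mathscr{O}_{\mathrm{J}}(\mathbb{C})$ are unaffected.
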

The moduli spaces  $\mathscr{O}_{\mathrm{J}}$ are given explicitly in equation \eqref{eq:moduli_map} ($\rm{J}=\rm{VI}$), equation \eqref{eq:mapPhiPIV} ($\rm{J}=\rm{IV}$), equation \eqref{eq:mapPhiPII} ($\rm{J}=\rm{II}$). For $\rm{J}=\rm{I}$, the moduli space is a point $\mathscr{O}_{\rm{I}}=\operatorname{\Spec}\C$.

We also characterise the lines on the monodromy surfaces for $\pain{IV},\pain{II}$ and $\pain{I}$ in terms of special Stokes data and quantisation conditions on the associated linear problem, and degenerate asymptotic expansions of Painlev\'e transcendents, see \Cref{prop:curves_explained,prop:monodromydatalinescurvesexplainedPII} and \Cref{rem:monodromylinesexplainedPI}. 
The case of $\pain{VI}$ was already established in the literature, see \Cref{rem:linesmonodromyPVI}.

Our final main results are on monodromy groups of the monodromy surfaces.
For each $\rm{J}$, we give three realisations of the monodromy group of the family $\mathcal{M}^{\rm{J}}\to\mathscr{W}_{\rm{J}}$.
The first is its analytic monodromy group
$$ \operatorname{Mon}^{\operatorname{an}}(\mathcal{M}^{\rm{J}}) = \operatorname{Im} \Big( \pi_1( \mathscr{W}_{\rm{J}} \setminus \mathscr{W}_{\rm{J}}^{\operatorname{sing}}; w_*) \to \mathfrak{S}_{\operatorname{lines}} \Big) ,$$
of permutations of lines induced by deformation over loops in the parameter space $\mathscr{W}_{\rm{J}}$ avoiding the locus where $\mathcal{M}^{\rm{J}}_w$ is singular, which, as an abstract group, is independent of choice of base $w_*$.

The second is its algebraic monodromy group 
$$ \operatorname{Mon}^{\operatorname{alg}}(\mathcal{M}^{\rm{J}}) = \operatorname{Gal}\left(K_{\Gamma_{\rm{J}}}/ \C(\mathscr{W}_{\rm{J}})\right),$$
defined in terms of the minimal field extension $K_{\Gamma_{\rm{J}}}$ of $\C(\mathscr{W}_{\rm{J}})$ required to write lines rationally in terms of parameters, coming from the incidence variety of lines on the monodromy surface $\mathcal{M}_w$:
$$\Gamma_{\rm{J}} \xrightarrow{\rho} \mathscr{W}_{\rm{J}}, \quad \Gamma_{\rm{J}}  = \left\{ (w,\ell) \in \mathscr{W}_{\rm{J}}\times \mathbb{G}(1,d_{\rm{J}}) ~|~ \ell \subset \overline{\mathcal{M}_w^{\rm{J}}}, \,\, 
\ell\not\subset \overline{\mathcal{M}_w^{\rm{J}}}\setminus \mathcal{M}_w^{\rm{J}}\right\},$$
where $\mathbb{G}(1,d_{\rm{J}})$ is the Grassmannian of lines in $\p^{d_{\rm{J}}}$.
In each case, $\Gamma_{\rm{J}}$ is reducible, with number of irreducible components equal to the length of the cycle of curves at infinity on $\overline{\mathcal{M}^{\rm{J}}_w}$, and the  field $K_{\Gamma_{\rm{J}}}$ is the compositum of the normal closures of the function fields of the irreducible components of $\Gamma_{\rm{J}}$ within the algebraic closure of $\C(\mathscr{W}_{\rm{J}})$.

The third realisation is the combinatorial monodromy group
$$\operatorname{Mon}^{\operatorname{com}} (\mathcal{M}^{\rm{J}}) = \operatorname{Fix}_{\operatorname{Aut}(\mathcal{G}_{\rm{J}})} (\mathcal{G}_{\rm{J}}^{\infty}),$$
of automorphisms of the intersection graph $\mathcal{G}_{\rm{J}}$ of lines on $\overline{\mathcal{M}^{\rm{J}}_w}$ and components of $\overline{\mathcal{M}_w^{\rm{J}}}\setminus \mathcal{M}_w^{\rm{J}} $, that leaves the subgraph $\mathcal{G}_{\rm{J}}^{\infty}$ generated by components of the divisor at infinity invariant.
There are natural inclusions of groups
$ \operatorname{Mon}^{\operatorname{an}}(\mathcal{M}^{\rm{J}}) \subseteq \operatorname{Mon}^{\operatorname{alg}}(\mathcal{M}^{\rm{J}}) \subseteq  \operatorname{Mon}^{\operatorname{com}}(\mathcal{M}^{\rm{J}}),$
and our third main result shows that these are isomorphisms.
\begin{maintheorem}[Monodromy of monodromy surfaces] \label{mainthm:mon}
For each Painlev\'e equation $\pain{J}$, $\rm{J}\in \{\rm{VI},\rm{IV},\rm{II},\rm{I}\}$, the monodromy group of the family $\mathcal{M}^{\rm{J}}$ forms the underlying finite Weyl group of the extended affine Weyl group of B\"acklund transformation symmetries.
That is, 
\begin{equation}\label{eq:groupisos}
    \begin{gathered}
    \operatorname{Mon}^{\operatorname{an}}(\mathcal{M}^{\rm{J}}) \cong \operatorname{Mon}^{\operatorname{alg}}(\mathcal{M}^{\rm{J}}) \cong \operatorname{Mon}^{\operatorname{com}}(\mathcal{M}^{\rm{J}})\cong W(X_{\rm{J}}), 
\end{gathered}
\end{equation}
where $W(X_{\rm{J}})$ is the finite Weyl group of type $X_{\rm{J}}$ as shown in \Cref{tab:categorytable}.
\end{maintheorem}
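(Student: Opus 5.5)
The plan is to prove the chain of isomorphisms in \eqref{eq:groupisos} by trapping all three groups between a lower and an upper bound. Since $\operatorname{Mon}^{\operatorname{an}}\subseteq\operatorname{Mon}^{\operatorname{alg}}\subseteq\operatorname{Mon}^{\operatorname{com}}$ already holds, it suffices to show two things: (i) $\operatorname{Mon}^{\operatorname{com}}(\mathcal{M}^{\rm{J}})$ has order at most $|W(X_{\rm{J}})|$; (ii) $\operatorname{Mon}^{\operatorname{an}}(\mathcal{M}^{\rm{J}})$ contains a subgroup isomorphic to $W(X_{\rm{J}})$. The argument is case by case for $\rm{J}\in\{\rm{VI},\rm{IV},\rm{II},\rm{I}\}$, using the explicit families of Definitions \ref{def:monodromysurfacefamilyPVI}, \ref{def:monodromysurfacePIV}, \ref{def:monodromysurfacePII}, \ref{def:monodromysurfacePI}.

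The first step is to enumerate the affine lines on a generic fibre $\overline{\mathcal{M}^{\rm{J}}_w}$ and compute the intersection graph $\mathcal{G}_{\rm{J}}$.
\begin{itemize}
\item A line not contained in the boundary is an exceptional curve of the del Pezzo surface.
\item Each such line meets the cycle at infinity in exactly one component, with multiplicity one. This follows from adjunction, since the boundary is anticanonical (or hyperplane-related, in the degree $6$ case with conics).
\item So the lines split into groups indexed by the boundary components, which explains the irreducible components of $\Gamma_{\rm{J}}$.
\end{itemize}
For $\rm{J}=\rm{VI}$ this gives the $24=27-3$ lines of the cubic, in three groups of eight. For $\rm{IV}$, $\rm{II}$, $\rm{I}$ the counts are smaller and come from the corresponding configurations on degree $4,6,5$ del Pezzo surfaces.

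Next I compute $\operatorname{Fix}_{\operatorname{Aut}(\mathcal{G}_{\rm{J}})}(\mathcal{G}_{\rm{J}}^\infty)$. Equivalently, I take the subgroup of $W(E_{9-d})$ fixing the boundary classes in $\Pic$, which is the Weyl group of the orthogonal complement of the boundary components. This complement is the root lattice of type $D_4,A_2,A_1,A_0$ respectively, because the boundary cycle has type $\widetilde{E}_6$-complement data matching the Sakai surface type. This gives the upper bound (i). For $\rm{J}=\rm{I}$ the group is trivial, and everything reduces to checking that the lines are individually rational over $\C(\mathscr{W}_{\rm{I}})$.

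For the lower bound (ii), I write the lines explicitly in terms of the parameters $w$. Each line's equation involves roots of quadratic polynomials in auxiliary parameters; for $\pain{VI}$ these are the local monodromy exponents with $w_i=a_i+a_i^{-1}$. Passing to the cover on which all lines are rational, the deck group acts through sign and permutation changes of these roots. I then exhibit loops in $\mathscr{W}_{\rm{J}}\setminus\mathscr{W}_{\rm{J}}^{\operatorname{sing}}$ encircling the branch loci (discriminant components). Each such loop realises the transposition of the lines associated to a simple root reflection. The reflections obtained generate $W(X_{\rm{J}})$, and since the discriminant loci correspond to root hyperplanes, every generator is hit.

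The main obstacle is doing this consistently. I have to identify the singular locus $\mathscr{W}_{\rm{J}}^{\operatorname{sing}}$ precisely, check that loops around each component avoid further degeneracies, and match the induced permutations with reflections. For $\pain{VI}$ one must also verify that the resulting group is all of $W(D_4)$ of order $192$, rather than an index-$2$ subgroup or a larger group. I would do this by counting the orbit structure on the eight lines meeting a fixed boundary line and checking the stabiliser order directly.
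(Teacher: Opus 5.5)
Your upper bound is essentially the paper's combinatorial argument: blow-up model, Cremona isometries, and the pointwise stabiliser of the boundary classes. Using Steinberg's theorem (pointwise stabilisers in a finite reflection group are generated by the reflections they contain) in place of the paper's count $51840/(45\cdot 6)=192$ is a legitimate substitute. Squeezing $\operatorname{Mon}^{\operatorname{alg}}$ between the other two groups via the inclusions is also fine. However, the type of the orthogonal complement has to be computed, not inferred. For $\pain{VI}$, with boundary classes $\E_1$, $\h-\E_1-\E_6$, $2\h-\E_1-\dots-\E_5$, the orthogonal roots form a $D_4$ with simple roots $\h-\E_4-\E_5-\E_6$, $\E_5-\E_4$, $\E_4-\E_3$, $\E_3-\E_2$. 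The appeal to ``$\widetilde{E}_6$-complement data matching the Sakai surface type'' is not an argument.

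The genuine gap is in the lower bound, in two places.

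\emph{The cover is misidentified for $\pain{VI}$.}
\begin{itemize}
\item The quantities $a_k+a_k^{-1}$ with $a_k=e^{2\pi i\theta_k}$ are the traces $v_k$, not the $w_i$; in fact $w_1=v_0v_\infty+v_tv_1$, and similarly for the others.
\item Over $\C(\mathscr{W}_{\rm VI})$ the lines are governed by the octics $\Lambda_i(b)$, and over $\C(\mathscr{W}_{\rm IV})$ by the cubic $b^3-w_1b^2+w_2b-1$, not by quadratics.
\item $W(D_4)$ does not act by signs and permutations of the exponents. The central reflection $r_2$ sends $\theta_k\mapsto\theta_k+\tfrac12(1-\sum_j\theta_j)$, which is not even rational in the $e^{2\pi i\theta_k}$.
\end{itemize}
What is needed is the cover $\mathscr{U}_{\rm VI}$ with coordinates $u_1=e^{2\pi i(\theta_1+\theta_t)},\dots$, over which all $24$ lines are rational. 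You also need an explicit action of $W(D_4)$ on $\mathscr{U}_{\rm VI}$ by monomial maps that fix $w$ and induce the permutations $\tilde r_i$. The analogues are permutations of $\{u_1,u_2,(u_1u_2)^{-1}\}$ for $\pain{IV}$ and $u\mapsto u^{-1}$ for $\pain{II}$.

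\emph{Loops around the discriminant do not by themselves produce every simple reflection.} All reflections of $W(D_4)$ (resp.\ $W(A_2)$) are conjugate. Hence every reflection hypersurface in $\mathscr{U}$ maps onto the same irreducible $\mathscr{W}^{\operatorname{sing}}$, and a meridian yields only \emph{some} reflection, depending on the path to the base point.

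The missing idea is connectedness of the lifted parameter space. $\mathscr{U}\setminus\mathscr{U}^{\operatorname{sing}}$ is the image of $\Theta\setminus\Theta^{\operatorname{sing}}$, which is a complement of countably many hyperplanes and hence path-connected. So for each generator, a path from $u_*$ to $r_i(u_*)$ projects to a loop in $\mathscr{W}\setminus\mathscr{W}^{\operatorname{sing}}$ with monodromy $\tilde r_i$; this is the paper's argument. With it, no local analysis of the branch locus and no orbit/stabiliser count is needed. Without an explicit deck action, your proposed count on eight lines cannot decide whether the group you obtain is all of $W(D_4)$.
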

The last isomorphism in \eqref{eq:groupisos} is also natural, induced by a map 
$\mathscr{A}_{\rm{J}} \to \mathscr{U}_{\rm{J}}, $
from the parameter space of $\pain{J}$ to  the space $\mathscr{U}_{\rm{J}}$ of parameters in terms of which lines on $\mathcal{M}_w^{\rm{J}}$ are written rationally, corresponding to the isomorphism of field extensions 
$$K_{\Gamma_{\rm{J}}}/ \C(\mathscr{W}_{\rm{J}}) \cong \C(\mathscr{U}_{\rm{J}})/\C(\mathscr{W}_{\rm{J}}).$$
The action of a B\"acklund transformation on $\mathscr{A}_{\rm{J}}$ then induces monodromy via an element of $\operatorname{Gal}\left(\C(\mathscr{U}_{\rm{J}})/\C(\mathscr{W}_{\rm{J}})\right).$

We remark that the monodromy groups of the monodromy surfaces in \Cref{mainthm:mon} were formulated in order to resolve \Cref{main:problem}. The initial value spaces, monodromy surfaces, and their monodromy can be defined and studied over more general fields. 
However, it is unclear how to extend the corresponding Riemann-Hilbert maps to such settings.

\subsection{Outline}
The paper is structured as follows. Each of Sections \ref{sec:PVI}-\ref{sec:PI} is devoted to the monodromy surface corresponding to a particular Painlev\'e equation, $\Psix$ in \Cref{sec:PVI}, $\Pfour$ in \Cref{sec:PIV}, $\Ptwo$ in \Cref{sec:PII} and $\Pone$ in \Cref{sec:PI}.
In each case, we introduce the symmetry group and initial value space, derive the monodromy surface, and compute the action of the symmetry group under conjugation by the corresponding Riemann-Hilbert map(s).
Then we define the associated category of embedded affine del Pezzo surfaces, establish its moduli space, describe the lines on the surfaces and derive the corresponding monodromy group.

We establish the main Theorems \ref{mainthm:conjugatedsymmetries}, \ref{mainthm:categoryandmodulispace}, \ref{mainthm:mon} separately for each Painlev\'e equation in the corresponding section:
\begin{itemize}
    \item 
    Painlev\'e-VI
    \begin{itemize}
    \item \Cref{mainthm:conjugatedsymmetries}  : \Cref{thm:PVIRHsymmetries} and \Cref{prop:PVI:dynkinautosunderRH}.  
    \item \Cref{mainthm:categoryandmodulispace} : \Cref{prop:oblomkov} and \Cref{cor:modulispacePVI}. 
    \item \Cref{mainthm:mon} : \Cref{prop:combinatorialmonodromyPVI,prop:algebraicmonodromyPVI,prop:analyticmonodromyPVI}.
    \end{itemize}
    \item 
    Painlev\'e-IV
    \begin{itemize}
        \item \Cref{mainthm:conjugatedsymmetries} : \Cref{thm:PIVsymmetriesunderRH}.
        \item \Cref{mainthm:categoryandmodulispace} : \Cref{prop:oblomkovstylenormalformPIV} and \Cref{cor:modulispacePIV}.
        \item \Cref{mainthm:mon} : \Cref{prop:combinatorialmonodromyPIV,prop:algebraicmonodromyPIV,prop:analyticmonodromyPIV}.
    \end{itemize}
    \item 
    Painlev\'e-II
    \begin{itemize}
        \item \Cref{mainthm:conjugatedsymmetries} :  \Cref{thm:symmetryunderRHPII}.
        \item \Cref{mainthm:categoryandmodulispace} : \Cref{prop:oblomkovstylenormalformPII} and \Cref{cor:modulispacePII}.
        \item \Cref{mainthm:mon} : \Cref{prop:combinatorialmonodromyPII,prop:algebraicmonodromyPII,prop:analyticmonodromyPII}.
    \end{itemize}
    \item 
    Painlev\'e-I
    \begin{itemize}
        \item \Cref{mainthm:categoryandmodulispace} : 
 \Cref{prop:oblomkovstylenormalformPI} and \Cref{cor:modulispacePI}.
    \end{itemize}
\end{itemize}
 Theorems \ref{mainthm:conjugatedsymmetries} and \ref{mainthm:mon} are trivial in the case of Painlev\'e-I, since there are no nontrivial B\"acklund transformations.

We provide a conclusion in Section \ref{sec:conclusion}, which is followed by Appendix \ref{app:singularlocusofJimboFricke}, where explicit formulas describing the singular locus of the monodromy surface of $\Psix$ are collected,
and Appendix \ref{app:derivationofmonodromysurfaceFN}, which contains the derivation of the monodromy surface of $\pain{II}$ from the Flaschka-Newell linear problem.

\subsection{Acknowledgements}
We thank Frank Nijhoff for interesting discussions and for bringing \Cref{main:problem} to our attention. PR would like to thank the MAGMA group at the University of Sydney for support, as well as the organisers Bregje Pauwels and Alexander Sherman of Magma Mondays, which played an important role in the genesis of this project.
We also thank Marco Bertola, Davide Dal Martello, Davide Guzzetti, Nalini Joshi, Marta Mazzocco and John Roberts for useful discussions.
We further thank Jean-Pierre Ramis for useful discussions and suggestions, in particular related to the definition of algebraic monodromy groups in terms of incidence varieties.
PR's research was supported by the
Australian Research Council Discovery Projects \#DP200100210 and \#DP210100129, and by the Australian Government through the Office of National Intelligence NISDRG Grant
\#NI240100145.
AS's research was supported by the Japan Society for the Promotion of Science (JSPS) through KAKENHI grant \#24K22843.

\section{The sixth Painlev\'e equation} \label{sec:PVI}
We consider the sixth Painlev\'e equation ($\Psix$) in the form
\begin{equation} \label{eq:PVIscalar}
\begin{aligned}
y_{tt}=&\left(\frac{1}{y}+\frac{1}{y-1}+\frac{1}{y-t}\right)\frac{y_t^2}{2}-\left(\frac{1}{t}+\frac{1}{t-1}+\frac{1}{y-t}\right)y_t\\
&+\frac{y(y-1)(y-t)}{t^2(t-1)^2}\left(\alpha+\frac{\beta\, t}{y^2}+\frac{\gamma\,(t-1)}{(y-1)^2}+\frac{\delta \,t(t-1)}{(y-t)^2}\right),
\end{aligned}
\end{equation}
where
\begin{equation} \label{eq:scalarparamstotheta}
    \alpha=\tfrac{1}{2}(2\theta_\infty-1)^2,\quad \beta=-2\theta_0^2,\quad \gamma =2\theta_1^2,\quad \delta=\tfrac{1}{2}-2\theta_t^2.
\end{equation}
Denote the space of parameters by
\begin{equation*}
    \Theta_{\mathrm{VI}}=\{\theta = (\theta_0,\theta_t,\theta_1,\theta_\infty)\in\mathbb{C}^4\}.
\end{equation*}
We will make use of the alternative parametrisation by root variables \cite{sakai2001}, the space of which we denote by
\begin{equation*}
\mathscr{A}_{\mathrm{VI}} = \left\{a = (a_0,a_1,a_2,a_3,a_4) \in \mathbb{C}^5 ~|~ a_0 +a_1 +2 a_2 + a_3 + a_4 = 1 \right\}.
\end{equation*}
We relate $\Theta_{\mathrm{VI}}$ and $\mathscr{A}_{\mathrm{VI}}$ through
\begin{equation} \label{eq:rootvarstotheta}
    a_0 =  2 \theta_t, \quad 
    a_1 = 2 \theta_{\infty } -1 ,\quad 
    a_2 =1 -\theta_0-\theta_t-\theta_1 -\theta_{\infty }, \quad 
    a_3 = 2 \theta _1, \quad 
    a_4 =  2 \theta _0.
\end{equation}
It will be convenient to recast the $\pain{VI}$ equation \eqref{eq:PVIscalar} as the following non-autonomous Hamiltonian system:
\begin{equation} \label{eq:hamiltoniansystempvi}
    \left\{
    \begin{aligned}
        f_t &=+f \frac{\partial H_{\rm VI}}{\partial g} = \frac{f(f-1)(f-t)}{t (t-1)}\left(2\frac{g}{f}-\frac{a_4}{f}+\frac{1-a_0}{f-t} - \frac{a_3}{f-1}\right), \\
        g_t &= - f \frac{\partial H_{\rm VI}}{\partial f} = \frac{( t- f^2)g^2 - \left( (a_1+2a_2)f^2 +  t a_4\right)g - a_2(a_1+a_2) f^2  }{t(t-1)f},
    \end{aligned}
    \right.    
\end{equation} 
with Hamiltonian as given in \cite{KNY},
\begin{equation*}
            H_{\rm VI} = \frac{g(f-1)(f-t)}{t (t-1)}\left(\frac{1-a_0}{f-t} - \frac{a_3}{f-1}-\frac{a_4}{f}+\frac{g}{f} \right) + \frac{a_2(a_1+a_2)f}{t(t-1)}    .
\end{equation*}
Eliminating $g$ leads to $\pain{VI}$ for $y=f$ with parameters given by \eqref{eq:scalarparamstotheta}, \eqref{eq:rootvarstotheta}.

\subsection{B\"acklund transformations}

The B\"acklund transformation symmetries of $\pain{VI}$ form the extension of the affine Weyl group $W(D_4^{(1)})$ by the group of Dynkin diagram automorphisms $\Aut(D_4^{(1)})$.
We write this as $\widetilde{W}(D_4^{(1)}) = W(D_4^{(1)})\rtimes \Aut(D_4^{(1)})$, and it contains the affine Weyl group $W(F_4^{(1)})$ forming the symmetries discovered by Okamoto \cite{okamotovi}.

The affine Weyl group $W(D_4^{(1)})$ has a standard presentation generated by simple reflections, with relations encoded in the extended Dynkin diagram of type $D_4^{(1)}$, which we write as
 \begin{equation*}
 	W(D_4^{(1)}) =
 	\left\langle r_0, r_1,r_2,r_3,r_4 \quad {\bigg |}\quad 
 	r_{i}^{2} = 1,  \quad
 	\begin{aligned}
 		(r_{i} r_{j})^2 &= 1&  &\text{ when }
 		\raisebox{-0.15in}{\begin{tikzpicture}[
 				elt/.style={circle,draw=black!100,thick, inner sep=0pt,minimum size=1.5mm}]
 				\path   ( 0,0) 	node  	(ai) [elt] {}
 				( 0.5,0) 	node  	(aj) [elt] {};
 				\draw [black] (ai)  (aj);
 				\node at ($(ai.south) + (0,-0.2)$) 	{\small ${i}$};
 				\node at ($(aj.south) + (0,-0.2)$)  {\small ${j}$};
 		\end{tikzpicture}}\\
 		(r_{i} r_{j})^3 &= 1& &\text{ when }
 		\raisebox{-0.12in}{\begin{tikzpicture}[
 				elt/.style={circle,draw=black!100,thick, inner sep=0pt,minimum size=1.5mm}]
 				\path   ( 0,0) 	node  	(ai) [elt] {}
 				( 0.5,0) 	node  	(aj) [elt] {};
 				\draw [black, thick] (ai) -- (aj);
 				\node at ($(ai.south) + (0,-0.2)$) 	{\small ${i}$};
 				\node at ($(aj.south) + (0,-0.2)$)  {\small ${j}$};
 		\end{tikzpicture}}
 	\end{aligned} 
 	\right\rangle
    \qquad 
    \quad
       \raisebox{-20pt}{
\begin{tikzpicture}[scale=.4,elt/.style={circle,draw=black!100,thick, inner sep=0pt,minimum size=1.5mm}]
		\path 	(-1,-1) 	node 	(d0) [elt ] {}
			(-1,1) 	node 	(d1) [elt ] {}
		        ( 0,0) 	node  	(d2) [elt  ] {}
		        ( 1,-1) 	node  	(d3) [elt  ] {}
		        (1,1) 	node 	(d4) [elt ] {};
		\node at ($(d0.west) + (-.3,+0.0)$) 	{ \tiny ${0}$};
		\node at ($(d1.west) + (-.3,+0.0)$) 	{ \tiny ${1}$};
		\node at ($(d2.east) + (+.3,+0.0)$) 	{ \tiny ${2}$};
		\node at ($(d3.east) + (+.3,+0.0)$) 	{ \tiny ${3}$};
		\node at ($(d4.east) + (+.3,+0.0)$) 	{ \tiny ${4}$};

		\draw [black,line width=1pt ] (d0) -- (d2) -- (d1);
		\draw [black,line width=1pt ] (d3) -- (d2) -- (d4);
		\node at ($(d2.east) + (+0,-2)$) 	{ \small ${D_4^{(1)}}$};

	\end{tikzpicture} }
 \end{equation*}  
The group of Dynkin diagram automorphisms $\Aut(D_4^{(1)})\cong \mathfrak{S}_4$ is isomorphic to the symmetric group on 4 symbols, and acts by conjugation on the simple reflections $r_0,\dots,r_4$.
We indicate elements by the corresponding permutation, so
\begin{equation*}
    \widetilde{W}(D_4^{(1)}) = \left\langle r_0, r_1, r_2, r_3,r_4\right\rangle\rtimes\left\langle \sigma_{(03)}, \sigma_{(34)}, \sigma_{(14)} \right\rangle \cong W(D_4^{(1)}) \rtimes \operatorname{Aut}(D_4^{(1)}).
\end{equation*}
The B\"acklund transformations of system \eqref{eq:hamiltoniansystempvi} corresponding to these generators are given in \Cref{tab:PVI:symmetry:varsandparams}.

\begingroup

\setlength{\tabcolsep}{15pt} 
\renewcommand{\arraystretch}{1.5} 

\begin{table}[htb]
    \begin{equation*}
    \begin{array}{c||c|c||c|c|c|c|c||c|}
        w       & \tilde{f}     & \tilde{g}     & \tilde{a}_0   & \tilde{a}_1   & \tilde{a}_2   & \tilde{a}_3   &  \tilde{a}_4      & \tilde{t}     \\
        \hline\hline 
        r_0    & f     & g + \frac{a_0 f}{t-f}    & -a_0   & a_1   & a_2 + a_0   & a_3   &  a_4      & t  \\
        r_1    & f     & g     & a_0   & - a_1   & a_2 + a_1   & {a}_3   &  {a}_4      & t  \\
        r_2    & f+ \frac{a_2 f}{g}    & g+a_2     & a_0+a_2   & a_1 +a_2   & -{a}_2   & {a}_3 +a_2  &  {a}_4+a_2      & {t}  \\
        r_3    & {f}     & {g} - \frac{a_3 f}{f-1}     & {a}_0   & {a}_1   & {a}_2+a_3   & -{a}_3   &  {a}_4      & {t}  \\
        r_4    & {f}     & {g} -a_4    & {a}_0   & {a}_1   & {a}_2+a_4   & {a}_3   &  -{a}_4      & {t}   \\
        \hline \hline
        \sigma_{(03)}   & \frac{f}{t}     & {g}     & {a}_3   & {a}_1   & {a}_2   & {a}_0   &  {a}_4      & \frac{1}{t}     \\
        \sigma_{(34)}   & 1 - f     & g - \frac{g}{f}     & {a}_0   & {a}_1   & {a}_2   & {a}_4   &  {a}_3      & 1-t     \\
        \sigma_{(14)}   & \frac{1}{f}     & -g     & {a}_0   & {a}_4   & {a}_2   & {a}_3   &  {a}_1      & \frac{1}{t}     
    \end{array}
    \end{equation*}
    \caption{Symmetries of $\pain{VI}$ on $(f,g)$ variables and root variable parameters $a$.}
    \label{tab:PVI:symmetry:varsandparams}
\end{table}
\endgroup

\subsection{Initial value space}

We will use the realisation of the initial value space of $\pain{VI}$ provided by extending \eqref{eq:hamiltoniansystempvi} to $\p^1 \times \p^1$ and applying the desingularisation procedure.
This involves performing a sequence of eight point blow-ups starting from $\p^1 \times\p^1$, whose centres depend on $a$ and $t$. 
The locations of these points can be computed by carrying out the desingularisation procedure along the lines as has been done for $\pain{VI}$ in various forms in, e.g., \cites{okamoto1979,heujoshiradnovic,differenthamsdzhamay}.

The result of this is a family of Sakai surfaces, which we write as 
\begin{equation*}
\overline{\mathcal{X}} \rightarrow \mathscr{T}_{\mathrm{VI}} \times \mathscr{A}_{\mathrm{VI}}, 
\end{equation*}
where $\mathscr{A}_{\mathrm{VI}}$ is the space of root variables and $\mathscr{T}_{\mathrm{VI}}= \C \setminus \{0,1\}$ is the independent variable space of $\pain{VI}$, i.e. the space of $t$ where the ODE is nonsingular.
The fibre 
$\overline{\mathcal{X}}_{t,a}$ is a Sakai surface of surface type $D_4^{(1)}$, i.e. with unique effective anticanonical divisor $D_{t,a}$ whose irreducible components intersect according to the $D_4^{(1)}$ Dynkin diagram.
We set $\mathcal{X}_{t,a}$ to be the result of removing the support of $D_{t,a}$ from $\overline{\mathcal{X}}_{t,a}$, and get a family 
\begin{equation*}
\mathcal{X} \rightarrow \mathscr{T}_{\mathrm{VI}} \times \mathscr{A}_{\mathrm{VI}}.
\end{equation*}
For a chosen $a \in \mathscr{A}_{\mathrm{VI}}$, the initial value space of $\pain{VI}$ is as follows.
\begin{definition}[Initial value space of $\pain{VI}$]
    The initial value space at $t\in \mathscr{T}_{\mathrm{VI}}$ of the Hamiltonian form \eqref{eq:hamiltoniansystempvi} of $\pain{VI}$ with parameters $a\in \mathscr{A}_{\mathrm{VI}}$ is the fibre $\mathcal{X}_{t,a}$ of the family $\mathcal{X}_a\to \mathscr{T}_{\mathrm{VI}}$.
\end{definition}

B\"acklund transformations of $\pain{VI}$ become automorphisms of the family $\mathcal{X}$, and extend to automorphisms of $\overline{\mathcal{X}}$ \cites{sakai2001, BTsonmanifolds,inabaiwasakisaito}.
In particular, for each $w \in \widetilde{W}(D_4^{(1)})$, there is an automorphism 
$$w : \mathscr{T}_{\mathrm{VI}}\times \mathscr{A}_{\mathrm{VI}} \to \mathscr{T}_{\mathrm{VI}}\times \mathscr{A}_{\mathrm{VI}},  \quad (t,a) \mapsto (\tilde{t},\tilde{a}),$$
and an isomorphism
$$w : \mathcal{X}_{t,a}  \rightarrow \mathcal{X}_{\tilde{t},\tilde{a}}.$$

\subsection{Associated linear problem}
Painlev\'e VI was discovered by R. Fuchs \cite{fuchs1905} arising as the compatibility condition of a pair of linear ODEs. Here we use a system form of Fuchs' pair of ODEs that can be found in \cite{jimbomiwaII1981},
\begin{subequations}\label{eq:laxpvi}
\begin{align}
  &&  Y_z&=A\,Y, & A&=\frac{A_0}{z}+\frac{A_t}{z-t}+\frac{A_1}{z-1}, && \label{eq:laxpvi1}\\
   && Y_t&=B\,Y, & B&=-\frac{A_t}{z-t}, && \label{eq:laxpvi2}
\end{align}
\end{subequations}
where the matrices $	A_0,A_t, A_1\in \mathfrak{sl}_2(\mathbb{C})$ are such that
		\begin{equation*}
	A_\infty:=-(A_0+A_t+A_1)=\begin{bmatrix}
	    \theta_\infty & 0\\
        0 & -\theta_\infty
	\end{bmatrix},
	\end{equation*}
with corresponding spectra fixed by
	\begin{equation}\label{eq:specsA}
		\operatorname{Spec}(A_j)=\{+\theta_j,-\theta_j\}\qquad (j=0,t,1,\infty).
	\end{equation}
Using coordinates $\{f,g,k\}$ defined by
\begin{equation*}
 A_{12}=\frac{\theta_\infty k(z-f)}{z(z-t)(z-1},\quad A_{11}|_{z=f}=\frac{g}{f}-\left(\frac{\theta_0}{f}+\frac{\theta_t}{f-t}+\frac{\theta_1}{f-1}\right),
\end{equation*}
compatibility of the pair \eqref{eq:laxpvi},  namely the condition
\begin{equation*}
    A_t+AB=B_z+BA,
\end{equation*}
is equivalent to \cref{eq:hamiltoniansystempvi} and the auxiliary equation
\begin{equation*}
   \frac{k_t}{k}=(2 \theta_\infty-1) \frac{(f-t)}{t(t-1)}.
\end{equation*}

\subsection{Derivation of monodromy surface}
Compatibility of the pair \eqref{eq:laxpvi} ensures that the classical monodromy of the linear ODE \eqref{eq:laxpvi1} is preserved under the $\Psix$ flow.

Namely, let $Y(z,t)$ be a local solution of the pair of ODEs \eqref{eq:laxpvi}, say around a point $(z_0,t_0)$. Analytic continuation of $Y(z,t)$ with respect to $z$ along curves $\gamma\in\pi_1(\mathbb{CP}^1\setminus\{0,t,1,\infty\},z_0)$ leads to a monodromy representation
\begin{equation*}
    \pi_1(\mathbb{CP}^1\setminus\{0,t,1,\infty\},z_0)\rightarrow \SL_2(\mathbb{C}),
\end{equation*}
in the usual way \cite{fokas}. This representation is constant in $t$, due to \eqref{eq:laxpvi2}.

After choosing some free generators $\gamma_k$, $k=0,t,1$, of the fundamental goup, corresponding to simple loops around the singularities $z=0,t,1$ respectively, the monodromy representation is fixed by the values of the respective corresponding monodromy matrices $M_0$, $M_t$ and $M_1$. We further define $M_\infty$ by
  \begin{equation*}
        M_\infty M_1M_tM_0=I,
    \end{equation*}
which corresponds to a loop around infinity.

Freedom of multiplication of any local solution to \eqref{eq:laxpvi} from the right by elements of $\operatorname{GL}_2(\mathbb{C})$, leads to the diagonal action of $\operatorname{SL}_2(\mathbb{C})$ on $\{(M_0,M_t,M_1)\in \SL_2(\mathbb{C})^3\}$ by simultaneous conjugation. The corresponding GIT quotient
\begin{equation*}
\{(M_0,M_t,M_1)\in \SL_2(\mathbb{C})^3\}\sslash\SL_2(\mathbb{C}),
\end{equation*}
forms the character variety relevant to $\Psix$.

We will use the description of this character variety as a family of affine cubic surfaces, following Fricke and Klein \cite{frickeklein}.
Namely, consider the quotient $R$ of the ring of polynomials in the entries of $M_0$, $M_t$ and $M_1$ by the ideal generated by the three relations coming from unimodularity. 
A classical result due to Fricke and Klein, and earlier Vogt \cite{vogt}, is that the ring of invariants $R^{\SL_2(\C)}$ of the $\SL_2(\C)$ action is given by 
$$R^{\SL_2(\C)} \cong \C[x_1,x_2,x_3,y,v_0,v_t,v_1,v_{\infty}] / (k_1,k_2),$$ 
where 
\begin{equation} \label{eq:pvitracesxv}
    \begin{aligned}
   x_1&=\operatorname{Tr} M_tM_1, &
   x_2&=\operatorname{Tr} M_0M_1, &
   x_3&=\operatorname{Tr} M_0M_t, &
   y&=\operatorname{Tr} M_1M_0M_t\\
   v_0&=\operatorname{Tr} M_0, & 
   v_t&=\operatorname{Tr} M_t, &
   v_1&=\operatorname{Tr} M_1, &
   v_\infty&=\operatorname{Tr} M_1M_tM_0,
\end{aligned}
\end{equation}
and 
\begin{align*}
k_1 &:= -y - v_\infty +  v_0 x_1+v_t x_2+v_1 x_3-v_0v_tv_1,\\
k_2 &:= -y\, v_\infty + x_1x_2x_3+x_1^2+x_2^2+x_3^2-(v_tv_1 x_1+v_0v_1x_2+v_0v_t x_3)+v_0^2+v_t^2+v_1^2-4.
\end{align*}
Now, the first equation is linear in $\{y,x_1,x_2,x_3\}$, and using it to elimate $y$ we get 
$$R^{\SL_2(\C)} \cong \C[x_1,x_2,x_3,v_0,v_t,v_1,v_{\infty}] / (p_w),$$
where
\begin{equation}
    p_w:=x_1x_2x_3
+x_1^2+x_2^2+x_3^2
-w_1 x_1-w_2x_2-w_3 x_3+w_4,\label{eq:pvicubic}
\end{equation}
with 
\begin{equation}
    \label{eq:pvi-om}
\begin{gathered}
     w_1:=v_0v_\infty+v_t v_1,\quad
    w_2:=v_0v_1+v_tv_\infty,\quad
    w_3:=v_0v_t+v_1v_\infty,\\  w_4:=v_0^2+v_t^2+v_1^2+v_\infty^2+v_0v_tv_1v_\infty-4.  
\end{gathered}
\end{equation}

Since the local exponents of ODE \eqref{eq:laxpvi1} around the singularities are fixed as in \eqref{eq:specsA}, the traces $v_k$ are given in terms of the $\theta$-parameters by
\begin{equation*}
 v_k=2\cos(2\pi\theta_k),
 \quad (k=0,t,1,\infty).
\end{equation*}
In particular, the $\theta$-parameters fix the values of $w_k$, $1\leq k\leq 4$, in \eqref{eq:pvicubic}, so that we regard this character variety as a four-parameter family of affine surfaces.
\begin{definition} \label{def:monodromysurfacefamilyPVI}
    The monodromy surface of $\pain{VI}$ is the embedded affine cubic surface  $$\mathcal{M}_{w}= \Spec \C[x_1,x_2,x_3]/(p_w),$$ 
    where $w\in\mathscr{W}_{\rm VI} = \left\{ w=(w_1,w_2,w_3,w_4) \in \C^4 \right\}$ and $p_w$ is the polynomial in \eqref{eq:pvicubic}, known as the Jimbo-Fricke cubic surface.
    This gives the family
    \begin{equation*}
        \mathcal{M}\to \mathscr{W}_{\rm VI},
    \end{equation*}
    with fibre over any $w\in \mathscr{W}_{\rm VI}$ being $\mathcal{M}_{w}$.
\end{definition}

We regard $\mathcal{M}_w\subseteq \mathbb{A}^3$ as an embedded affine variety. We define its projective completion through the map,
        $$\mathbb{A}^3\rightarrow \mathbb{P}^3,\quad (x_1,x_2,x_3) \mapsto [1:x_1:x_2:x_3],$$
        yielding the projective cubic $$\overline{\mathcal{M}}_w=\Proj \C[X_0,X_1,X_2,X_3]/(\overline{p}_w),\quad \operatorname{deg} X_i=1,$$ where
$$\overline{p}_w:=X_1X_2X_3
+X_0(X_1^2+X_2^2+X_3^2)
-X_0^2\left(w_1 X_1+w_2X_2+w_3 X_3\right)+w_4 X_0^3.$$

\begin{remark} \label{rem:propertiesofMwPVI}
We have the following properties of $\mathcal{M}_w$.
    \begin{itemize}
        \item The hyperplane section at infinity of $\overline{\mathcal{M}}_w$ is a triangle of lines,
        \begin{equation*}
            X_0 =0,\quad X_1X_2X_3=0,
        \end{equation*}
        consisting of only smooth points of $\overline{\mathcal{M}}_w$.       
        \item The singular locus $\mathscr{W}_{\rm VI}^{\operatorname{sing}}\subset \mathscr{W}_{\rm VI}$, where $\mathcal{M}_w$ is singular, is given by $Q(w)=0$, where $Q\in \C[w_1,w_2,w_3,w_4]$ is of degree $6,6,6,5$ in $w_1,w_2,w_3,w_4$ respectively, given in \Cref{app:singularlocusofJimboFricke}.
\end{itemize}

\end{remark}

\subsection{Riemann-Hilbert map and symmetries} \label{subsec:pvi:riemannhilbertmapandsymmetries}

In the context of $\pain{VI}$, the Riemann-Hilbert correspondence induces a map from the initial value space to the associated monodromy surface,
\begin{equation*}
    \begin{tikzcd}
        \mathcal{X}_{t,a} \arrow[r,"\operatorname{RH}_{t,a}"] & \mathcal{M}_{w(a)},
    \end{tikzcd}
\end{equation*}
which we refer to as the Riemann-Hilbert map for $\pain{VI}$. It is defined as follows. 
Any point on $\mathcal{X}_{t,a}$ determines uniquely a local solution around $t\in \mathscr{T}_{\rm{VI}}$ of $\pain{VI}$ with parameters $a\in\mathscr{A}_{\rm{VI}}$. Plugging this solution into the linear problem defines a corresponding set of monodromy data determining a point on the affine cubic $\mathcal{M}_{w(a)}$, where $w(a) \in \mathscr{W}_{\rm VI}$ is determined by $a$ according to equations \eqref{eq:pvi-om}, \eqref{eq:pvitracesxv} and  \eqref{eq:rootvarstotheta}.
This is a biholomorphism of complex analytic surfaces for generic $a$ \cite{inabaiwasakisaito}.

The B\"acklund transformation symmetries of $\pain{VI}$, as isomorphisms between corresponding initial value spaces, can be conjugated by the Riemann-Hilbert map. 
For the symmetries in $W(D_4^{(1)})$, the parameters $w$ are invariant under the action on $\mathscr{A}_{\rm VI}$, and they conjugate to the identity map on $\mathcal{M}_{w}$.

\begin{theorem}[Inaba-Iwasaki-Saito \cite{inabaiwasakisaito}]\label{thm:PVIRHsymmetries}
For $g\in W(D_4^{(1)})$, write the corresponding action of the B\"acklund transformation on parameters as $g : \mathscr{A}_{\mathrm{VI}} \to \mathscr{A}_{\mathrm{VI}},  a \mapsto \tilde{a},$
and the isomorphism of initial value spaces as 
$g : \mathcal{X}_{t,a}  \rightarrow \mathcal{X}_{t,\tilde{a}}.$
Then $w=w(a)=w(\tilde{a})$ and $g$ conjugates to the identity under the Riemann-Hilbert map, in other words, the following diagram commutes:
\begin{equation*}
    \begin{tikzcd}
        \mathcal{X}_{t,a} \arrow[d,"\operatorname{RH}_{t,a}"] \arrow[r,"g"] & \mathcal{X}_{t,\tilde{a}} \arrow[d,"\operatorname{RH}_{t,\tilde{a}}"] \\
        \mathcal{M}_{w} \arrow[r,"\operatorname{Id}"] & \mathcal{M}_{w}
    \end{tikzcd}
\end{equation*}
\end{theorem}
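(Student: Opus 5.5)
The plan is to reduce to generators and handle each one at the level of the linear problem \eqref{eq:laxpvi}. Since $W(D_4^{(1)})$ is generated by $r_0,\dots,r_4$, and conjugation by $\operatorname{RH}$ respects composition, it suffices to prove the statement for each simple reflection.

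The first step is the parameter claim $w(a)=w(r_i(a))$. By \eqref{eq:rootvarstotheta}, the reflections $r_0,r_1,r_3,r_4$ act on $\theta$ by $\theta_t\mapsto-\theta_t$, $\theta_\infty\mapsto 1-\theta_\infty$, $\theta_1\mapsto-\theta_1$ and $\theta_0\mapsto-\theta_0$ respectively. Each of these leaves $v_k=2\cos(2\pi\theta_k)$ unchanged. The reflection $r_2$ shifts all four $\theta_k$ by $\pm a_2$ with a common sign pattern. In general this does not preserve each $v_k$ individually. I would instead check directly that the symmetric functions $w_1,\dots,w_4$ in \eqref{eq:pvi-om} are invariant. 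Writing $v_k=\lambda_k+\lambda_k^{-1}$ with $\lambda_k=e^{2\pi i\theta_k}$, this reduces to a short trigonometric identity using $\theta_0+\theta_t+\theta_1+\theta_\infty$. That identity I would verify by expansion.

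The second step is to show that the induced map on monodromy is the identity. For $r_0,r_1,r_3,r_4$, the transformation in \Cref{tab:PVI:symmetry:varsandparams} fixes $f$ and only changes $g$. At the level of \eqref{eq:laxpvi1}, this corresponds to relabelling the eigenvalue $\theta_j\mapsto-\theta_j$ at a single pole, or $\theta_\infty\mapsto 1-\theta_\infty$ at infinity. Such a change can be realised by a rational gauge transformation $Y\mapsto R(z,t)Y$ (a Schlesinger transformation), with $R$ rational in $z$ and having singularities only at the poles. A gauge transformation of this kind does not change the monodromy matrices $M_0,M_t,M_1$ up to overall conjugation and signs. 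The signs are compatible with the traces being fixed, since each $v_k$ is fixed. Hence the traces $x_1,x_2,x_3$ are unchanged.

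For $r_2$ the situation is different: the transformation maps to a Fuchsian system whose local exponents all change. I would realise it by the same Schlesinger-type mechanism, or by the Katz middle convolution/Euler transform, which is known to act on $\operatorname{SL}_2$ monodromy of four-point systems as a scalar twist of the $M_k$. The key claim to prove is that this twist preserves the traces $\operatorname{Tr} M_iM_j$. A cleaner alternative argument runs as follows. The conjugated map is an analytic automorphism of $\mathcal{M}_w$ that commutes with the $\pain{VI}$ flow, which is trivial on $\mathcal{M}_w$. It is also algebraic, by the explicit formulas. One then pins it down to be the identity by checking it on an open set of explicit solutions, for instance Riccati or asymptotic solutions, where the monodromy is computable.

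The main obstacle is $r_2$, specifically identifying the gauge transformation for which the monodromy, including the normalisation of $M_\infty$, is computed explicitly. I would first try to construct $R(z,t)$ explicitly from $f,g,k$ and then compare the local solutions near each pole.
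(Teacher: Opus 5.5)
You handle the easy parts correctly. The reduction to generators and the invariance of $w$ are fine; note that $r_2$ shifts every $\theta_k$ by $a_2/2$, not $\pm a_2$. The reflections $r_0,r_1,r_3,r_4$ cause no trouble. For $r_0,r_3,r_4$ no gauge transformation is even needed: the shift of $g$ in \Cref{tab:PVI:symmetry:varsandparams} exactly compensates $\theta_j\mapsto-\theta_j$ in $A_{11}|_{z=f}$, so $A(z)$ in \eqref{eq:laxpvi1} is literally unchanged. For $r_1$ you have a Schlesinger transformation at $z=\infty$, though you would still have to check its effect on $(f,g)$ against the table.

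The gap is $r_2$, which carries the whole content of the theorem. Your main plan, to construct a rational $R(z,t)$ from $f,g,k$, cannot work. If $\tilde Y=RY$ with $R$ single-valued in $z$, then $\tilde Y$ has exactly the same monodromy matrices, so $\operatorname{Tr}\tilde M_k=\operatorname{Tr}M_k$. But $r_2$ sends $v_k=2\cos2\pi\theta_k$ to $2\cos2\pi(\theta_k+a_2/2)$, which differs for generic $\theta$. The middle-convolution route is also misdescribed: it does not act as a scalar twist of the $M_k$. In $\operatorname{SL}_2$ a scalar twist is only a sign, and a $\operatorname{GL}_2$ twist by $\mu$ gives eigenvalues $\mu\lambda_k^{\pm1}$, not $(\mu\lambda_k)^{\pm1}$. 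Middle convolution produces a genuinely different triple with different local traces. The claim that the $\operatorname{Tr}M_iM_j$ nevertheless survive is exactly what must be proved. That needs two things you do not supply: computing the Dettweiler--Reiter operation on trace coordinates in the $\operatorname{SL}_2$ normalisation, and checking that the middle convolution of \eqref{eq:laxpvi1} reproduces the formulas for $r_2$ in the coordinates $(f,g)$.

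Your fallback does not close the gap either. Commuting with the $\pain{VI}$ flow only says that $\operatorname{RH}_{t,\tilde a}\circ r_2\circ\operatorname{RH}_{t,a}^{-1}$ is independent of $t$, which constrains nothing. There are no ``explicit formulas'' for this map; having them would be the theorem. A biholomorphism of an affine surface such as $\mathcal{M}_w$ need not be algebraic, so your algebraicity claim is unfounded. Riccati solutions exist only for parameters lying over $\mathscr{W}_{\rm VI}^{\operatorname{sing}}$ and sweep out curves, so they cannot determine the map on a generic fibre.

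What does work is a precise version of your passing mention of asymptotic solutions. The paper attributes this strategy to Inaba--Iwasaki--Saito and carries it out itself for $\pain{IV}$ in \Cref{lem:PIVsymmetryunderRHasymptotic}. Jimbo's formula gives the leading behaviour of the general solution near a critical point in terms of $\theta$ and $(x_1,x_2,x_3)$, invertibly on a dense open subset of $\mathcal{M}_w$. You would compute the leading asymptotics of $r_2(f,g)$ and show they are Jimbo's asymptotics for $r_2(\theta)$ with the same $(x_1,x_2,x_3)$. That makes the conjugated map the identity on a dense set, hence everywhere by continuity, with no algebraicity needed. This computation is the missing core of your proof.
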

As for the B\"acklund transformations corresponding to Dynkin diagram automorphisms, they become combinations of sign flips and permutations.

\begin{proposition}[Lisovyy-Tykhyy \cite{lisovyyalgebraic}] \label{prop:PVI:dynkinautosunderRH}
    Under the Riemann-Hilbert map, Dynkin diagram automorphisms in $\Aut(D_4^{(1)})\cong \mathfrak{S}_4$ become symmetries of the family $\mathcal{M}\rightarrow \mathscr{W}_{\rm VI}$. 
    For $\sigma\in\Aut(D_4^{(1)})$, the action $\sigma: (x_1,x_2,x_3)\mapsto(\tilde{x}_1,\tilde{x}_2,\tilde{x}_3)$, $(w_1,w_2,w_3,w_4)\mapsto(\tilde{w}_1,\tilde{w}_2,\tilde{w}_3,\tilde{w}_4)$ is as given in \Cref{tab:PVI:symmetry:ws}.
\end{proposition}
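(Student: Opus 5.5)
The plan is to compute, generator by generator, how each Dynkin diagram automorphism acts on the monodromy data of the linear problem \eqref{eq:laxpvi}. We then read off the induced map on the traces \eqref{eq:pvitracesxv} and on the parameters \eqref{eq:pvi-om}. It suffices to treat the three generators $\sigma_{(03)},\sigma_{(34)},\sigma_{(14)}$ of \Cref{tab:PVI:symmetry:varsandparams}, since the assignment is compatible with composition.

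\textbf{Generators as M\"obius maps.} Each generator is induced by a M\"obius transformation of the $z$-sphere permuting the four singular points $\{0,t,1,\infty\}$, combined with a gauge transformation.
\begin{itemize}
\item $\sigma_{(03)}$ corresponds to $z\mapsto z/t$, with $\tilde t=1/t$. This sends the singular points $0,t,1,\infty$ to $0,1,1/t,\infty$.
\item $\sigma_{(34)}$ corresponds to $z\mapsto 1-z$.
\item $\sigma_{(14)}$ corresponds to $z\mapsto 1/z$.
\end{itemize}
The gauge transformation is a scalar factor $z^{\alpha}(z-t)^{\beta}(z-1)^{\gamma}$ that shifts the local exponents. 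It is needed because the root variables are related to the $\theta$'s by \eqref{eq:rootvarstotheta}, so a permutation of the $a_i$ may also flip the sign of some $\theta_j$ or shift it by a half-integer. For instance, $a_1=2\theta_\infty-1$ and $a_4=2\theta_0$ are swapped by $\sigma_{(14)}$, so $\theta_\infty\mapsto\theta_0+\tfrac12$.

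\textbf{Effect on monodromy.}
\begin{itemize}
\item The M\"obius change of variable transports the fundamental group of the four-punctured sphere. The chosen generators $\gamma_0,\gamma_t,\gamma_1$ are mapped to a new generating system, related to the standard one by a braid/conjugation. The new triple $(\tilde M_0,\tilde M_t,\tilde M_1)$ is therefore an explicit word in $M_0,M_t,M_1,M_\infty$, taken up to overall conjugation.
\item A shift of an exponent by $\tfrac12$ multiplies the corresponding local monodromy by $-1$. This flips the sign of the relevant $v_k$, and of the pair traces $x_i$ containing that matrix.
\end{itemize}
Substituting into \eqref{eq:pvitracesxv} gives each $\tilde x_i$ as $\pm x_j$, possibly with $y$ involved. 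Any occurrence of $y$ is eliminated using the linear relation $k_1$, which returns the result to the form recorded in \Cref{tab:PVI:symmetry:ws}.

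\textbf{Effect on parameters.} The induced action on $v=(v_0,v_t,v_1,v_\infty)$ is a signed permutation. Plugging it into \eqref{eq:pvi-om} gives $\tilde w$. As a consistency check, the pair $(x,w)\mapsto(\tilde x,\tilde w)$ should map $p_w=0$ to $p_{\tilde w}=0$, so it is an automorphism of the family $\mathcal{M}\to\mathscr{W}_{\rm VI}$. One verifies this by direct substitution into \eqref{eq:pvicubic}: the cubic term $x_1x_2x_3$ requires an even number of sign flips, and the quadratic part is invariant under signed permutations.

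\textbf{Commutativity with the Riemann-Hilbert map.} One shows that the Riemann-Hilbert map intertwines the B\"acklund transformation on $\mathcal{X}_{t,a}$ with the map computed above. This holds because the B\"acklund transformation on $(f,g)$ in \Cref{tab:PVI:symmetry:varsandparams} is exactly the transformation induced on the spectral coordinates $(f,g)$ by the M\"obius-plus-gauge change of the Fuchsian system. We check this from the definitions of $f$ as the zero of $A_{12}$ and of $g$ via $A_{11}|_{z=f}$. The transformed system may not be normalised with $A_\infty$ diagonal, so a further constant conjugation is needed. This conjugation does not affect traces.

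\textbf{Main obstacle.} The hardest step is bookkeeping the loop generators and the signs. This includes choosing the paths and base point so that the new $\gamma$'s satisfy the product relation $M_\infty M_1M_tM_0=I$ in the required order, and tracking the $-1$ factors from the half-integer exponent shifts. Any error changes which $x_i$ gets a sign. As a cross-check, I would verify that the resulting maps satisfy the $\mathfrak{S}_4$ relations among the generators, up to the trivial action of $W(D_4^{(1)})$ given by \Cref{thm:PVIRHsymmetries}.
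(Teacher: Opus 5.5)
The paper gives no argument of its own here; it cites Lisovyy--Tykhyy. So your outline has to stand alone, and it has a genuine gap at the step you call bookkeeping. For the odd elements of $\mathfrak{S}_4$ this step is an obstruction, not a matter of care.

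The coordinates $x_1,x_2,x_3$ are traces along $\gamma_t\gamma_1$, $\gamma_0\gamma_1$, $\gamma_0\gamma_t$. These are three simple loops meeting pairwise twice, i.e.\ a Farey triangle. An orientation-preserving homeomorphism of the four-punctured sphere carrying such a triangle to itself lies in a subgroup $(\mathbb{Z}/2)^2\rtimes\mathbb{Z}/3$ of the mapping class group, so it permutes the punctures \emph{evenly}. Your M\"obius maps for $\sigma_{(03)},\sigma_{(34)},\sigma_{(14)}$ (and $\sigma_{(04)}$) are holomorphic and induce transpositions of $\{0,t,1,\infty\}$.

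Hence, whenever the generators at $t$ and $\tilde t$ are related by continuation in $t$, one new pair-trace is the trace of a different loop. This includes the fixed points of $t\mapsto\tilde t$, where the two sets of generators coincide. A typical example is $\operatorname{Tr}(M_tM_\infty)=w_2-x_2-x_1x_3$, and no use of $k_1$ turns this into $\pm x_j$.

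Concretely, take $t=1/2$:
\begin{itemize}
\item There $\sigma_{(34)}$ is exactly $Y(z)\mapsto Y(1-z)$; this reproduces $f\mapsto 1-f$ and $g\mapsto g-g/f$ with no gauge.
\item The rotation by $\pi$ about $1/2$ sends the loop around $\{0,1\}$ passing between $t$ and an upper-half-plane base point (trace $x_2$) to the loop passing below $t$.
\item The conjugated map is therefore $(x_1,x_2,x_3)\mapsto(x_3,\,w_2-x_2-x_1x_3,\,x_1)$, not $x_1\leftrightarrow x_3$.
\end{itemize}

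Carried out correctly, your method thus gives, for the odd elements, the tabulated map composed with a nontrivial element of the group $C_2*C_2*C_2$ generated by the Fricke involutions $x_i\mapsto w_i-x_i-x_jx_k$. The linear maps of Table~\ref{tab:PVI:symmetry:ws} are obtained only modulo this action. Equivalently, they require re-choosing the generators in $\operatorname{RH}$ independently at $t$ and $\tilde t$, which is impossible at the fixed points if the generators depend only on $t$. A proof has to make this identification explicit.

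The obstruction does not affect the Klein four-group. It is realised by involutions such as $z\mapsto t/z$ with $\tilde t=t$, which fix every isotopy class of loops, so only the sign changes of the table remain.

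A second, smaller error concerns $\sigma_{(14)}$:
\begin{itemize}
\item A scalar factor $z^\alpha(z-t)^\beta(z-1)^\gamma$ does not change exponent differences. It therefore cannot produce $\tilde\theta_\infty=\theta_0+\tfrac12$ in a traceless system.
\item After $\zeta=1/z$ one finds $\tilde A=A_\infty/\zeta+A_t/(\zeta-1/t)+A_1/(\zeta-1)$. Its $(1,2)$-entry already vanishes at $\zeta=1/f$, but now $\tilde A_0$ rather than $\tilde A_\infty=A_0$ is diagonal.
\item The constant conjugation you propose to diagonalise $A_0$ moves that zero, so your intertwining check with $f\mapsto 1/f$ would fail.
\item What is needed is a Schlesinger transformation at $\{0,\infty\}$ shifting $(\theta_0,\theta_\infty)$ by half-integers. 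Its $\zeta^{\pm1/2}$ factors produce $M_0\mapsto -M_0$ and $M_\infty\mapsto -M_\infty$, and hence the signs in $\tilde w_2,\tilde w_3$.
\end{itemize}
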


\begingroup

\setlength{\tabcolsep}{15pt} 
\renewcommand{\arraystretch}{1.5} 

\begin{table}[h]
    \begin{equation*}
    \begin{array}{c||c|c|c||c|c|c|c|}
        \sigma \in \Aut(D_4^{(1)})      & \tilde{x}_1    & \tilde{x}_2     & \tilde{x}_3   & \tilde{w}_1   & \tilde{w}_2   & \tilde{w}_3   &  \tilde{w}_4          \\
        \hline\hline 
        \sigma_{(03)}   & x_1     & x_3     & x_2   & w_1   & w_3   & w_2   &  w_4          \\
        \sigma_{(34)}   & x_3    & x_2     & x_1   & w_3   & w_2   & w_1   &  w_4          \\
        \sigma_{(04)}   & x_2    & x_1     & x_3   & w_2   & w_1   & w_3   &  w_4          \\
        \sigma_{(03)(14)}   & x_1     & -x_2     & -x_3   & w_1   & -w_2   & -w_3   &  w_4   \\
        \sigma_{(04)(13)}   & -x_1     & -x_2     & x_3   & -w_1   & -w_2   & w_3   &  w_4   \\
        \sigma_{(01)(34)}   & -x_1     & x_2     & -x_3   & -w_1   & w_2   & - w_3   &  w_4  
    \end{array}
    \end{equation*}
    \caption{Action of $\Aut(D_4^{(1)})$ symmetries of $\pain{VI}$ on $\mathcal{M}\rightarrow\mathscr{W}_{\rm VI}$}
    \label{tab:PVI:symmetry:ws}
\end{table}
\endgroup

\begin{remark}
    Note that $\langle\sigma_{(03)},\sigma_{(34)}\rangle \cong \mathfrak{S}_3$, and $\langle\sigma_{(03)(14)}, \sigma_{(04)(13)}\rangle\cong \Z_2\times\Z_2$, which recovers the description of $\mathfrak{S}_4$ as the semi-direct product of $\mathfrak{S}_3$ and the Klein 4-group.
    The subgroup $\langle\sigma_{(03)(14)}, \sigma_{(04)(13)}\rangle$ corresponds to the part of the extended affine Weyl group given by  translations in the weight lattice of $D_4$ modulo the root lattice.
\end{remark}

\subsection{Moduli space}
In this section, we show that the family $\mathcal{M}\rightarrow \mathscr{W}_{\rm VI}$ forms a universal family in a category of embedded affine varieties. This in particular describes the parameter space $\mathscr{A}_{\rm VI}$ of $\pain{VI}$, modulo the action of $W(D_4^{(1)})\rtimes \Aut(D_4^{(1)})$, as the corresponding moduli space.

\begin{definition}[Category $\mathfrak{C}_{\mathrm{VI}}$ of monodromy surfaces for $\pain{VI}$]
    Define the category $\mathfrak{C}_{\mathrm{VI}}$ with
    \begin{itemize}
        \item an object being an embedded affine cubic surface $\mathcal{V}\subset\mathbb{A}^3$, such that the complement in its projective completion, $\overline{\mathcal{V}}\setminus\mathcal{V}\subseteq \mathbb{P}^3$,  is the union of three lines which intersect like a triangle and consist of only smooth points of $\overline{\mathcal{V}}$.
        \item a morphism $L$ between objects $\mathcal{V}_1\subset\mathbb{A}^3$ and $\mathcal{V}_2\subset\mathbb{A}^3$ being an affine linear map $L\in \operatorname{End}(\mathbb{A}^3)$ such that $L(\mathcal{V}_1)\subseteq \mathcal{V}_2$.
    \end{itemize}
\end{definition}

Then $\mathcal{M}\to\mathscr{W}_{\rm VI}$ from \Cref{def:monodromysurfacefamilyPVI} is a universal family for $\mathfrak{C}_{\mathrm{VI}}$ in the following sense.

\begin{proposition} \label{prop:oblomkov}
   For any object $\mathcal{V}$ in $\mathfrak{C}_{\rm VI}$, there exists a, unique up to $\operatorname{Aut}(D_4^{(1)})$ action, $w\in \mathscr{W}_{\rm VI}$ such that $\mathcal{V}$ is isomorphic in $ \mathfrak{C}_{\rm VI}$ to $\mathcal{M}_w$. 
   The automorphism group of $\mathcal{M}_w$ is $\operatorname{Stab}_{\Aut(D_4^{(1)})}\left(w\right)$, which is trivial for generic $w$.
\end{proposition}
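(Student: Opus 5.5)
We reduce an arbitrary object of $\mathfrak{C}_{\rm VI}$ to the normal form $p_w$ by an affine change of coordinates, following Oblomkov's characterisation of the Jimbo-Fricke cubics. Let $\mathcal{V}\subset\mathbb{A}^3$ be cut out by a cubic $p$, with projective closure $\overline{p}(X_0,X_1,X_2,X_3)$. The divisor at infinity is $X_0=0$, where $\overline p$ restricts to the cubic form $p_3$ (the top-degree part of $p$). The triangle condition says $p_3$ factors as a product of three linear forms $\ell_1\ell_2\ell_3$ whose zero loci in $\mathbb{P}^2$ are three non-concurrent lines. Hence the $\ell_i$ are linearly independent, and after a linear change of coordinates on $\mathbb{A}^3$ we may take $p_3=x_1x_2x_3$ up to a scalar, which we normalise to $1$.

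Next we use the smoothness of $\overline{\mathcal{V}}$ along the triangle to constrain the quadratic part $p_2$. At the vertex $[0:1:0:0]$ (where $X_2=X_3=X_0=0$), the only linear terms of $\overline p$ in the local coordinates $X_0,X_2,X_3$ (setting $X_1=1$) come from $X_0\,p_2(1,0,0)$. So smoothness there forces the coefficient of $x_1^2$ in $p_2$ to be nonzero, and likewise for $x_2^2$ and $x_3^2$. A similar check at a general point of each line should give nothing further, since the $X_0$-coefficient there is generically nonzero.

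We then remove the cross terms of $p_2$. Translating $x_i\mapsto x_i+c_i$ changes $p_2$ by $c_1x_2x_3+c_2x_1x_3+c_3x_1x_2$, so all mixed monomials can be killed. Rescaling $x_i\mapsto\lambda_ix_i$ with $\lambda_1\lambda_2\lambda_3$ fixed lets us make the coefficients of $x_i^2$ all equal to $1$. This uses the nonvanishing from the previous paragraph; it amounts to solving $a_i\lambda_i^2=\mu$ with $\lambda_1\lambda_2\lambda_3=\mu$ after rescaling $p$ overall. The remaining linear and constant terms give exactly the form $p_w$ for some $w\in\mathscr{W}_{\rm VI}$.

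For uniqueness, we determine the affine linear maps carrying $\mathcal{M}_w$ into $\mathcal{M}_{w'}$. Such a map $L$ is invertible: it must preserve the embedded surface degree and the divisor at infinity, and we will argue surjectivity from the inclusion of irreducible surfaces. Since $L$ maps the triangle to the triangle, its linear part permutes the coordinate lines, so it is a monomial matrix $x_i\mapsto\lambda_i x_{\pi(i)}$ up to translation. Comparing cubic terms gives $\lambda_1\lambda_2\lambda_3=\kappa$, where $p_{w'}\circ L=\kappa p_w$. Requiring no cross terms in degree two forces the translation to vanish. Comparing the square terms gives $\lambda_i^2=\kappa$ for all $i$, so $\lambda_i=\pm\sqrt{\kappa}$; combined with the cubic relation this yields $\kappa=1$ and $\lambda_i=\pm1$ with $\lambda_1\lambda_2\lambda_3=1$. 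These maps, permutations together with even sign changes, form $\mathfrak{S}_3\ltimes(\Z_2\times\Z_2)\cong\mathfrak{S}_4$. They act on $w$ precisely as in \Cref{tab:PVI:symmetry:ws}, i.e.\ as $\Aut(D_4^{(1)})$. This gives uniqueness of $w$ up to $\Aut(D_4^{(1)})$ and identifies $\operatorname{Aut}(\mathcal{M}_w)$ with the stabiliser. For generic $w$ with distinct, nonzero $w_1,w_2,w_3$ the stabiliser is trivial.

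The main obstacle is justifying invertibility of morphisms, i.e.\ that an arbitrary affine $L$ with $L(\mathcal{V}_1)\subseteq\mathcal{V}_2$ is an isomorphism. If $L$ is non-invertible, its image is a line or plane, so it cannot contain the irreducible surface $\mathcal{V}_1$ isomorphically. We would first argue that $\dim L(\mathcal{V}_1)=2$ forces $L$ to be of full rank, since a rank-$\le 2$ map sends $\mathcal{V}_1$ into a plane and $\mathcal{V}_2$ contains no affine plane. Then $L(\mathcal{V}_1)$ is closed of dimension $2$ in the irreducible $\mathcal{V}_2$, so the two coincide.
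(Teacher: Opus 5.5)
Your argument is essentially the paper's proof. You straighten the three linear factors of the cubic part to $x_1x_2x_3$, translate away the mixed quadratic terms, and use smoothness at the vertices of the triangle to get nonzero $x_i^2$-coefficients, then rescale. You then show that an isomorphism $\mathcal{M}_w\to\mathcal{M}_{w'}$ is a signed permutation with an even number of sign changes, which reproduces the action in \Cref{tab:PVI:symmetry:ws}. Your uniqueness step is slightly more careful than the paper's: you check that the translation part vanishes and that the overall scalar is $\kappa=1$. That step uses irreducibility of $p_w$, which holds because a factorisation $(x_1+\alpha)(x_2x_3+\dots)$ cannot produce the $x_2^2$ term.

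Two corrections are needed.

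First, drop your last paragraph. The claim that every morphism of $\mathfrak{C}_{\rm VI}$ is invertible is false. A constant map $L\equiv p\in\mathcal{V}_2$ satisfies $L(\mathcal{V}_1)\subseteq\mathcal{V}_2$, and so does a rank-one map $x\mapsto p+\phi(x)v$ onto an affine line $p+\C v\subset\mathcal{V}_2$. Your argument breaks because a map of rank $\le 2$ only places $L(\mathcal{V}_1)$ inside a plane; the image can be a curve or a point of $\mathcal{V}_2$. Nothing is lost, however. The proposition concerns only isomorphisms and automorphisms, which are morphisms with two-sided inverses, i.e.\ invertible affine $L$ with $L(\mathcal{V}_1)=\mathcal{V}_2$. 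Start the uniqueness argument from such an $L$, as the paper does.

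Second, ``distinct, nonzero $w_1,w_2,w_3$'' does not give a trivial stabiliser. If $w_1=-w_2$, then $(x_1,x_2,x_3)\mapsto(-x_2,-x_1,x_3)$, the composite of $\sigma_{(04)}$ and $\sigma_{(04)(13)}$, fixes $w$. The correct condition is that $w_1^2,w_2^2,w_3^2$ are pairwise distinct. This still yields triviality for generic $w$, so the conclusion of the statement is unaffected.
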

\begin{proof}
We note that existence can be found in \cite{oblomkov} and we repeat the argument here.
Let $\mathcal{V}\subset \mathbb{A}^3$ be an object in $\mathfrak{C}_{\rm VI}$ and denote its projective completion by $\overline{\mathcal{V}} \subset \mathbb{P}^3$. 
The hyperplane section $\overline{\mathcal{V}}\setminus \mathcal{V}$ is by assumption a triangle of lines defined by
\begin{equation*}
    L_1 L_2 L_3=0,\quad X_0=0,
\end{equation*}
where each $L_k\in \mathbb{C}[X_0,X_1,X_2,X_3]$ is homogeneous of degree 1. Since the three lines $L_k=0$, $1\leq k\leq 3$, pairwise intersect at  distinct points, the affine map
\begin{equation*}
 x_k\mapsto L_k(x_1,x_2,x_3)\qquad (1\leq k\leq 3),   
\end{equation*}
has full rank and application of its inverse puts $\mathcal{V}$ into the form
\begin{equation}\label{eq:cubic_mixed}
    x_1x_2x_3+a_1 x_1^2+a_2 x_2^2+a_3 x_3^2+b_1x_2x_3+b_2x_1x_3+b_3x_1x_2+l(x_1,x_2,x_3)=0,
\end{equation}
for some $a_{1,2,3},b_{1,2,3}\in\mathbb{C}$ and a  polynomial $l$ of degree at most one. Applying $x_k\mapsto x_k-b_k$, we may eliminate all mixed terms from \eqref{eq:cubic_mixed}, yielding
\begin{equation*}
    x_1x_2x_3+a_1 x_1^2+a_2 x_2^2+a_3 x_3^2-c_1 x_1-c_2 x_2-c_3 x_3+c_4=0,
\end{equation*}
for some $c_k\in\mathbb{C}$, $1\leq k\leq 4$.
Further scaling, 
$x_k\mapsto \gamma_k x_k$ for $k=1,2,3$, rescales the coefficients in the above cubic as
\begin{equation*}
    a_1\mapsto a_1 \frac{\gamma_1}{\gamma_2\gamma_3},\quad 
    a_2\mapsto a_2 \frac{\gamma_2}{\gamma_1\gamma_3},\quad
    a_3\mapsto a_3 \frac{\gamma_3}{\gamma_1\gamma_2},\quad
   c_1\mapsto c_1 \frac{1}{\gamma_2\gamma_3},\quad 
    c_2\mapsto c_2 \frac{1}{\gamma_1\gamma_3},\quad
    c_3\mapsto c_3 \frac{1}{\gamma_1\gamma_2}.
\end{equation*}
Since $\overline{\mathcal{V}}\setminus \mathcal{V}$ consists of only smooth points of $\overline{\mathcal{V}}$, the coefficients of the quadratic terms are nonzero and any of the four solutions $(\gamma_1,\gamma_2,\gamma_3)\in(\mathbb{C}^*)^3$ to
$$\gamma_1^2 = a_2 a_3, \quad \gamma_2^2 = a_1 a_3, \quad \gamma_3^2 = a_1 a_2,\quad \gamma_1\gamma_2\gamma_3=a_1a_2a_3,$$
put the cubic into the form 
$p_w=0$ as in \Cref{eq:pvicubic} 
with 
$$w_1 = \frac{c_1}{\gamma_2\gamma_3}, \quad 
w_2 = \frac{c_2}{\gamma_1\gamma_3}, \quad 
w_3 = \frac{c_3}{\gamma_1\gamma_2}, \quad 
w_4 = \frac{c_4}{\gamma_1\gamma_2 \gamma_3}.$$
This gives, for any $\mathcal{V}$, a $w \in \mathscr{W}_{\rm VI}$ and an isomorphism $\mathcal{V}\to \mathcal{M}_w$ determined up to $\Aut(D_4^{(1)})$ action, through the freedom of choice of enumeration of $L_1,L_2,L_3$ and the choice of $(\gamma_1,\gamma_2,\gamma_3)$ which correspond exactly to the actions on $x_i$ and $w_i$ in \Cref{tab:PVI:symmetry:ws}.

To show that $\mathcal{M}_w$ is isomorphic to $\mathcal{M}_{\tilde{w}}$ if and only if $w$ and $\tilde{w}$ are related by the action of some $\sigma\in \Aut(D_4^{(1)})$ given in \Cref{tab:PVI:symmetry:ws}, we compute directly as follows. 
Explicitly, up to permutation of coordinates via the action of $\langle \sigma_{(03)},\sigma_{(34)}\rangle \cong \mathfrak{S}_3$, to match the triangles at infinity $\overline{\mathcal{M}}_{w}\setminus \mathcal{M}_{w}$ and $\overline{\mathcal{M}}_{\tilde{w}}\setminus \mathcal{M}_{\tilde{w}}$ the isomorphism must be of the form $(x_1,x_2,x_3)\mapsto (A_1 x_1,A_2 x_2, A_3x_3)$ for some nonzero $A_1,A_2,A_3$. Requiring that $p_{w}=0$ is sent to $p_{\tilde{w}}=0$ gives 
\begin{equation*}
    A_1A_2A_3=1,\quad A_1^2=A_2^2=A_3^2=1, \qquad w_i = A_i \tilde{w}_i, ~i=1,2,3,\quad \tilde{w}_4=w_4, 
\end{equation*}
the solutions of which correspond to the remaining action of $\langle \sigma_{(03)(14)}, \sigma_{(04)(13)}\rangle$.
An automorphism of $\mathcal{M}_w$ then must come from the action of some $\sigma \in \Aut(D_4^{(1)})$ such that $\sigma(w)=w$, so generically the isomorphism $\mathcal{V}\to \mathcal{M}_w$ is unique for fixed $w$.
\end{proof}

This implies that the Dynkin diagram automorphisms as in \Cref{prop:PVI:dynkinautosunderRH} exhaust all symmetries of the Jimbo-Fricke cubic in the category $\mathfrak{C}_{\rm VI}$.
\begin{corollary}
    The symmetry group of the family of embedded affine varieties $\mathcal{M}\to\mathscr{W}_{\rm{VI}}$ is $\Aut(D_4^{(1)})\cong\mathfrak{S}_4$, acting as in \Cref{prop:PVI:dynkinautosunderRH}.
\end{corollary}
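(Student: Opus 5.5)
The plan is to read the corollary off \Cref{prop:oblomkov}, combined with \Cref{prop:PVI:dynkinautosunderRH}. By a symmetry of the family $\mathcal{M}\to\mathscr{W}_{\rm VI}$ we mean a pair $(\phi,\psi)$. Here $\psi:\mathscr{W}_{\rm VI}\to\mathscr{W}_{\rm VI}$ is an automorphism of the base, and $\phi$ restricts on each fibre to an isomorphism $\mathcal{M}_w\to\mathcal{M}_{\psi(w)}$ in $\mathfrak{C}_{\rm VI}$, i.e.\ an affine linear map of $\mathbb{A}^3$. Containment of $\Aut(D_4^{(1)})$ is immediate. Each row of \Cref{tab:PVI:symmetry:ws} is a fixed signed permutation of the coordinates $x_i$, independent of $w$. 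Such a map sends $p_w$ to $p_{\sigma(w)}$, which is a direct check on the polynomial \eqref{eq:pvicubic}. So these maps define symmetries of the family, and they compose as $\mathfrak{S}_4$.

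For the converse, take a symmetry $(\phi,\psi)$ and fix $w$ outside the proper closed set where $\operatorname{Stab}_{\Aut(D_4^{(1)})}(w)$ is nontrivial. The fibre map $\phi_w:\mathcal{M}_w\to\mathcal{M}_{\psi(w)}$ is an isomorphism in $\mathfrak{C}_{\rm VI}$. By the uniqueness part of \Cref{prop:oblomkov}, we get $\psi(w)=\sigma(w)$ for some $\sigma\in\Aut(D_4^{(1)})$. Composing with $\sigma^{-1}$ then gives an automorphism of $\mathcal{M}_w$. For generic $w$ this automorphism is trivial, since the automorphism group is $\operatorname{Stab}(w)$. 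So $\phi_w$ coincides with the table action of $\sigma$.

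The element $\sigma=\sigma(w)$ is determined by $w$, and it varies in a finite set. The sets $\{w:\psi(w)=\sigma(w)\}$ are closed and cover a dense open subset of the irreducible space $\mathscr{W}_{\rm VI}$. Hence one $\sigma$ works on all of $\mathscr{W}_{\rm VI}$, and $\psi=\sigma$ everywhere. The fibre maps $\phi_w$ then agree with the affine map of $\sigma$ on a dense set of $w$. Since $\phi$ is algebraic, continuity gives agreement everywhere. This identifies the symmetry group with $\Aut(D_4^{(1)})\cong\mathfrak{S}_4$, with the action as in \Cref{prop:PVI:dynkinautosunderRH}.

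The main obstacle is the genericity step. We must make sure that the \emph{uniqueness up to $\Aut(D_4^{(1)})$} statement in \Cref{prop:oblomkov} pins down $\phi_w$ itself, and not only the parameter $w$. The proof of \Cref{prop:oblomkov} shows that any isomorphism has the normal form of a permutation composed with a sign-scaling $(A_1,A_2,A_3)$. For generic $w$, the constraint $w_i=A_i\tilde w_i$ fixes these signs uniquely, which gives exactly what is needed. We also need the nontrivial-stabiliser locus to be a proper closed subset. This is clear, since each nonidentity $\sigma$ fixes a proper linear subspace of $\mathscr{W}_{\rm VI}$.
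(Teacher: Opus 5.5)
Your proposal is correct and follows the paper's route: the paper reads the corollary directly off \Cref{prop:oblomkov}, whose proof shows that every isomorphism $\mathcal{M}_w\to\mathcal{M}_{\tilde w}$ in $\mathfrak{C}_{\rm VI}$ is one of the signed permutations in \Cref{tab:PVI:symmetry:ws}. Your extra step — fixing $\sigma$ on the dense open set of $w$ with trivial stabiliser and extending it to all of $\mathscr{W}_{\rm VI}$ by irreducibility and continuity — only makes explicit what the paper leaves implicit.
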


\begin{remark}
    As an affine variety, forgetting about the embedding, $\mathcal{M}_w$ admits an infinite group of symmetries,
    \begin{align*}
        &C_2*C_2*C_2=\langle r_1,r_2,r_3\,|\, r_1^2=r_2^2=r_3^2=1\rangle,\\
       & r_i:x_i\mapsto w_i-x_i-x_jx_k,\quad x_j\mapsto x_j,\quad x_k\mapsto x_k,\quad \{i,j,k\}=\{1,2,3\},
    \end{align*}
     which already appeared in \cite[Sec. ${\rm{IV}}$]{fricke1904} and, together with $\operatorname{Stab}_{\Aut(D_4^{(1)})}\left(w\right)$, generates the full symmetry group, as follows from \cite[Th. 2]{ElHuti}.
   But, beyond $\operatorname{Stab}_{\Aut(D_4^{(1)})}\left(w\right)$, these do not preserve the embedding and are thus not automorphisms of $\mathcal{M}_w$ in $\mathfrak{C}_{\rm{VI}}$.
    These symmetries were studied in the context of $\pain{VI}$ in \cites{IwasakiModular, CantatLoray}, and in particular used to classify algebraic solutions in \cites{dubrovinmazzocco,lisovyyalgebraic}.
\end{remark}



Taking the quotient of $\mathscr{W}_{\rm{VI}}$ by the $\Aut(D_4^{(1)})$ action, we obtain a classifying map for objects in $\mathfrak{C}_{\rm{VI}}$.
By \Cref{prop:oblomkov}, we get a mapping 
\begin{equation}\label{eq:moduli_map}
\begin{gathered}
     \Phi : \operatorname{ob}(\mathfrak{C}_{\mathrm{VI}}) \to \mathscr{O}_{\mathrm{VI}} := \mathscr{W}_{\rm{VI}}\sslash\Aut(D_4^{(1)}) = \Spec \C[o_1,o_2,o_3,o_4],\\
    o_1 = w_1^2+w_2^2+w_3^2, \quad o_2= w_1^2w_2^2+w_2^2w_3^2+w_3^2w_1^2, \quad o_3 = w_1w_2w_3, \quad o_4=w_4.
    \end{gathered}
    \end{equation}
    The singular locus $\mathscr{W}_{\rm{VI}}^{\operatorname{sing}}\subset \mathscr{W}_{\rm{VI}}$ descends to the quotient as $\mathscr{O}_{\rm{VI}}^{\operatorname{sing}}\subset \mathscr{O}_{\rm{VI}}$ and is given by $P(o)=0$, where $P\in \mathbb{Q}[o_1,o_2,o_3,o_4]$ is of degree $6,4,3,10$ in $o_1,o_2,o_3,o_4$ respectively, given in \Cref{app:singularlocusofJimboFricke}.

    Note that $\Phi$ induces a surjective map from $\mathscr{A}_{\rm{VI}}\cong \Theta_{\rm{VI}}$ to $\mathscr{O}_{\rm{VI}}(\C)$, and two points have the same image if and only if they are related by the action of $\widetilde{W}(D_4^{(1)})$. This map also appears in \cite[Rem. 11]{lisovyyalgebraic}.
    From \Cref{prop:oblomkov}, we get the following corollary.

\begin{corollary} \label{cor:modulispacePVI}
The scheme $\mathscr{O}_{\rm{VI}}$ over $\mathbb{C}$ is the moduli space of isomorphism classes of objects in $\mathfrak{C}_{\rm{VI}}$.
That is, $\Phi$ induces a canonical bijection
\begin{equation} \label{eq:canonicalbijectionPVI}
\operatorname{Iso}(\mathfrak{C}_{\rm{VI}}) \to \mathscr{O}_{\rm{VI}}(\mathbb{C})\cong \mathbb{C}^4
\end{equation}
from the set of isomorphism classes of objects in $\mathfrak{C}_{\rm{VI}}$ to complex points of $\mathscr{O}_{\rm{VI}}$.
\end{corollary}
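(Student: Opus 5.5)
The plan is to read the corollary off from \Cref{prop:oblomkov} and from the description of $\mathscr{O}_{\rm{VI}}$ as the invariant-theoretic quotient $\mathscr{W}_{\rm{VI}}\sslash\Aut(D_4^{(1)})$. The map on isomorphism classes is defined as follows: given an object $\mathcal{V}$, \Cref{prop:oblomkov} supplies some $w\in\mathscr{W}_{\rm VI}$ with $\mathcal{V}\cong\mathcal{M}_w$ in $\mathfrak{C}_{\rm VI}$, and we set $\Phi(\mathcal{V})=(o_1,o_2,o_3,o_4)(w)$.

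\emph{Well-definedness.} We must show that $\Phi$ does not depend on the choice of $w$ and is constant on isomorphism classes. If $\mathcal{V}\cong\mathcal{V}'$, then composing the isomorphisms gives $\mathcal{M}_w\cong\mathcal{M}_{w'}$. By \Cref{prop:oblomkov}, $w'=\sigma(w)$ for some $\sigma\in\Aut(D_4^{(1)})$, acting as in \Cref{tab:PVI:symmetry:ws}. The functions $o_k$ are invariant under this action:
\begin{itemize}
\item permutations of $(w_1,w_2,w_3)$ fix $o_1,o_2,o_3$, since these are symmetric;
\item sign changes of two of $w_1,w_2,w_3$ fix $o_1,o_2$, since these involve only squares, and fix $o_3=w_1w_2w_3$, since an even number of signs change;
\item $w_4$ is fixed by every $\sigma$.
\end{itemize}

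\emph{Injectivity.} Suppose $\Phi(\mathcal{M}_w)=\Phi(\mathcal{M}_{w'})$. We need $w'\in\Aut(D_4^{(1)})\cdot w$, after which \Cref{prop:oblomkov} gives $\mathcal{M}_w\cong\mathcal{M}_{w'}$. This is the main step. The acting group is $\mathfrak{S}_3\ltimes(\Z_2\times\Z_2)$: permutations together with even sign changes, that is, the Weyl group $W(D_3)$ acting on $(w_1,w_2,w_3)$, with $w_4$ fixed. Its invariant ring is generated by $w_1^2+w_2^2+w_3^2$, $\sum_{i<j}w_i^2w_j^2$ and $w_1w_2w_3$, which are exactly $o_1,o_2,o_3$. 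Since the group is finite, the points of the quotient correspond exactly to orbits. To make this concrete:
\begin{itemize}
\item $o_1$ and $o_2$, together with $o_3^2$, determine the multiset $\{w_1^2,w_2^2,w_3^2\}$, so $w'$ agrees with $w$ up to permutation and arbitrary sign changes;
\item $o_3$ then fixes the parity of the number of sign changes, except when some $w_i=0$, in which case an odd sign change can be absorbed by flipping the sign of that zero coordinate.
\end{itemize}
Hence $w'$ and $w$ lie in the same orbit.

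\emph{Surjectivity.} Given $(o_1,\dots,o_4)\in\C^4$, choose $w_1^2,w_2^2,w_3^2$ as the roots of $T^3-o_1T^2+o_2T-o_3^2$. Then choose square roots with product $o_3$, which is possible because changing a single sign flips the product. Set $w_4=o_4$. This $w$ maps to the given point, and $\mathcal{M}_w$ is an object of $\mathfrak{C}_{\rm VI}$ by \Cref{rem:propertiesofMwPVI}.

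Finally, $\mathscr{O}_{\rm VI}(\C)\cong\C^4$ because $\C[o_1,\dots,o_4]$ is a polynomial ring: the invariants $o_1,o_2,o_3$ are algebraically independent, as expected for the reflection group $W(D_3)$. This yields the bijection \eqref{eq:canonicalbijectionPVI}. The only nontrivial ingredient is the orbit-separation argument above, in particular the care needed when some $w_i$ vanishes.
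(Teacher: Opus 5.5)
Your proof is correct and follows the same route as the paper: use \Cref{prop:oblomkov} to reduce everything to the claim that the fibres of $w\mapsto(o_1,\dots,o_4)$ are exactly the $\Aut(D_4^{(1)})$-orbits, and that this map is surjective. The paper simply asserts these two facts. Your explicit orbit-separation argument (recovering $\{w_1^2,w_2^2,w_3^2\}$ from $o_1,o_2,o_3^2$, then the sign parity from $o_3$, taking care when some $w_i=0$) and your explicit construction of a preimage supply exactly the details it leaves implicit.
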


\begin{proof}



By Proposition \ref{prop:oblomkov}, the mapping $\Phi$ associates the same complex point on $\mathscr{O}_{\rm{VI}}$ to two cubic surfaces if and only if they are isomorphic in $\mathfrak{C}_{\rm{VI}}$. 
Therefore, we have an induced injective map from $\operatorname{Iso}(\mathfrak{C}_{\rm{VI}})$ to  $\mathscr{O}_{\rm{VI}}(\C)$.
Note further that $\Phi$ is  surjective since any value $\Phi(\mathcal{M}_w)=o\in \mathscr{O}_{\rm{VI}}(\C)$ can be realised by an appropriate choice of $w\in \mathscr{W}_{\rm{VI}}$, which establishes the corollary.
\end{proof}
\begin{remark} \label{rem:promotingtomodulistack}
    \Cref{cor:modulispacePVI} identifies the complex points of \(\mathscr O_{\mathrm{VI}}\) canonically with the isomorphism classes of objects of \(\mathfrak{ C}_{\mathrm{VI}}\). 
    We expect that \Cref{cor:modulispacePVI} can be promoted to a description of a moduli stack of $\mathfrak{C}_{\rm{VI}}$, taking into account families over arbitrary base schemes.
\end{remark}

\subsection{Lines on the monodromy surface}
For $w \in \mathscr{W}_{\rm{VI}}\setminus \mathscr{W}_{\rm{VI}}^{\operatorname{sing}}$, the projective completion $\overline{\mathcal{M}}_w$ of the monodromy surface for $\pain{VI}$ is a smooth projective cubic surface, and thus by the Cayley-Salmon theorem contains 27 lines. 

The lines on the Jimbo-Fricke cubic surface were first written down in \cite{mklimes2024}. They correspond to different forms of partial reducibility of monodromy representations \cites{mklimes2024,ORS}, which in turn correspond to truncations of the generic asymptotic expansions of solutions around critical points under suitable parameter assumptions, see \cite{guzzetti12}.

The problem of computing lines on a smooth projective cubic surface, as is the case for other problems in enumerative geometry, has a Galois-theoretic interpretation, as worked out by Jordan \cite{jordan1870} and revisited by Harris \cite{harris1979}.
In the case of a smooth projective cubic $\mathcal{S}\subset \p^3$, this is in terms of the field extension of the function field $\C(\mathcal{S})$ of the cubic to that of the incidence variety of lines on $\mathcal{S}$, see the survey \cite{sottileyahl}.

In our setting, this can be seen in terms of the need to extend the field $\C(\mathscr{W}_{\rm{VI}})$ in order to write lines on $\mathcal{M}_w$ rationally. 
We provide a description of the lines using a different field extension than the one used in \cite{mklimes2024}, which is minimal in allowing the lines to be written down rationally, see Remark \ref{rem:rational}.

Starting from the parameter space $\Theta_{\mathrm{VI}}$ for $\pain{VI}$, introduce  
$$\mathscr{U}_{\rm{VI}} = \left\{ u=(u_1,u_2,u_3,u_4)\in (\C^*)^{\times 4} \right\},$$
where 
\begin{equation}\label{eq:pvitheta_to_u}
 u_1 = e^{2\pi i(\theta_1+\theta_t)}, 
\quad  u_2 = e^{2\pi i(\theta_0+\theta_1)},
\quad  u_3 = e^{2\pi i(\theta_0+\theta_t)},
\quad  u_4 = e^{2\pi i(\theta_1+\theta_{\infty})}.   
\end{equation}

In terms of these parameters, the lines on the surface $\mathcal{M}_w$ have the following expressions.
\begin{equation}\label{eq:linesPVI}
\begin{aligned}
    L_k : \quad x_1 &= b_k+\frac{1}{b_k},\quad &&b_k x_2+x_3=\frac{w_3b_k-w_2}{b_k-b_k^{-1}}, \quad &&k=1,\dots,8, \\
    L_k :\quad  x_2 &= b_k+\frac{1}{b_k},\quad &&b_k x_3+x_1=\frac{w_1b_k-w_3}{b_k-b_k^{-1}}, \quad &&k=9,\dots,16, \\
    L_k : \quad x_3 &= b_k+\frac{1}{b_k},\quad &&b_k x_1+x_2=\frac{w_2b_k-w_1}{b_k-b_k^{-1}}, \quad &&k=17,\dots,24, 
\end{aligned}
\end{equation}
where the $b_k$, $k=1,\dots,24$, are given by
\begin{equation}\label{eq:defbpvi}
\begin{aligned}
    b_1 &= \frac{u_3}{u_2}, &
    b_2 &= \frac{u_2}{u_3}, &
    b_3 &= \frac{u_3u_4}{u_1}, &
    b_4 &= \frac{u_1}{u_3u_4}, &
    b_5 &= u_1, &
    b_6 &= \frac{1}{u_1}, &
    b_7 &= \frac{u_2}{u_4}, &
    b_8 &= \frac{u_4}{u_2},\\
    b_9 &= \frac{u_1}{u_3}, &
    b_{10} &= \frac{u_3}{u_1}, &
    b_{11} &= \frac{u_3u_4}{u_2}, &
    b_{12} &= \frac{u_2}{u_3u_4}, &
    b_{13} &= u_2, &
    b_{14} &= \frac{1}{u_2}, &
    b_{15} &= \frac{u_1}{u_4}, &
    b_{16} &= \frac{u_4}{u_1},\\
    b_{17} &= \frac{u_2}{u_1}, &
    b_{18} &= \frac{u_1}{u_2}, &
    b_{19} &= u_4, &
    b_{20} &= \frac{1}{u_4}, &
    b_{21} &= u_3, &
    b_{22} &= \frac{1}{u_3}, &
    b_{23} &= \frac{u_1u_2}{u_3u_4}, &
    b_{24} &= \frac{u_3u_4}{u_1u_2}.
\end{aligned}
\end{equation}
We will also use $L_k$ to denote the closure of $L_k$ in $\mathbb{P}^3$, $1\leq k\leq 24$. The remaining three lines on the projective cubic are
\begin{equation*}
\begin{aligned}
    L_{25}=\{X\in\mathbb{P}^3 : X_0=0,X_1=0\},\\
    L_{26}=\{X\in\mathbb{P}^3 : X_0=0,X_2=0\},\\
    L_{27}=\{X\in\mathbb{P}^3 : X_0=0,X_3=0\}.
\end{aligned}
\end{equation*}


The following lemma shows that $\C(\mathscr{U}_{\rm{VI}})$ is indeed a field extension of $\C(\mathscr{W}_{\rm{VI}})$.
\begin{lemma}     \label{lem:PvimorphismUtoW}
The definitions of the $u_i,w_i$ in terms of $\theta_j$ yields the field homomorphism 
    \begin{equation} \label{pvifieldextensionUW}
    \C(\mathscr{W}_{\rm{VI}}) \longrightarrow \C(\mathscr{U}_{\rm{VI}}) ,
    \end{equation}
   specified by 
\begin{align*}
	w_1&=u_1+\frac{1}{u_1}+\frac{u_2}{u_3}+\frac{u_3}{u_2}+\frac{u_2}{u_4}+\frac{u_4}{u_2}+\frac{u_1}{u_3 u_4}+\frac{u_3 u_4}{u_1},\\
	w_2&=u_2+\frac{1}{u_2}+\frac{u_1}{u_3}+\frac{u_3}{u_1}+\frac{u_1}{u_4}+\frac{u_4}{u_1}+\frac{u_2}{u_3 u_4}+\frac{u_3 u_4}{u_2},\\
    w_3&=u_3+\frac{1}{u_3}+\hspace{0.5mm}u_4\hspace{0.5mm}+\frac{1}{u_4}+\frac{u_1}{u_2}+\frac{u_2}{u_1}+\frac{u_1 u_2}{u_3 u_4}+\frac{u_3 u_4}{u_1 u_2},\\
	w_4&=4
    +u_3 u_4
    +\frac{1}{u_3 u_4}
    +\frac{u_3}{u_4}
    +\frac{u_4}{u_3}  
    +\frac{u_1}{u_2 u_3}
    +\frac{u_2 u_3}{u_1}
    +\frac{u_1}{u_2 u_4}
    +\frac{u_2 u_4}{u_1}
    +\frac{u_3}{u_1 u_2}
    +\frac{u_1 u_2}{u_3}
    +\frac{u_4}{u_1 u_2}
    +\frac{u_1 u_2}{u_4}\\
    &\quad  +\frac{u_2}{u_1 u_3}
    +\frac{u_1 u_3}{u_2}
    +\frac{u_2}{u_1 u_4}
    +\frac{u_1 u_4}{u_2}
    +\frac{u_1^2}{u_3 u_4}
    +\frac{u_3 u_4}{u_1^2}
    +\frac{u_2^2}{u_3 u_4}
    +\frac{u_3 u_4}{u_2^2}
    +\frac{u_3^2 u_4}{u_1 u_2}
    +\frac{u_1 u_2}{u_3^2 u_4}
    +\frac{u_3 u_4^2}{u_1 u_2}
    +\frac{u_1 u_2}{u_3 u_4^2}.
\end{align*}
 \end{lemma}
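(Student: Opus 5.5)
The plan is to compute directly from the definitions and then argue that the resulting map of fields is well defined and injective. Introduce the exponentials $e_j=e^{2\pi i\theta_j}$ for $j=0,t,1,\infty$, so that $v_j=e_j+e_j^{-1}$. Then $u_1=e_1e_t$, $u_2=e_0e_1$, $u_3=e_0e_t$, $u_4=e_1e_\infty$. We have $u_2u_3/u_1=e_0^2$, so the $e_j$ themselves are not rational in the $u_i$; only certain products are.

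First I would check that each $w_k$ in \eqref{eq:pvi-om} is a Laurent polynomial in the $e_j$ that is invariant under the sign ambiguity, hence rational in the $u_i$. For example,
\begin{equation*}
w_1=v_0v_\infty+v_tv_1=(e_0+e_0^{-1})(e_\infty+e_\infty^{-1})+(e_t+e_t^{-1})(e_1+e_1^{-1}),
\end{equation*}
whose eight monomials are $e_0^{\pm1}e_\infty^{\pm1}$ and $e_t^{\pm1}e_1^{\pm1}$. Using $e_te_1=u_1$, $e_t/e_1=u_3/u_2$, $e_0e_\infty=u_3u_4/u_1$ and $e_0/e_\infty=u_2/u_4$, these match the eight terms in the stated formula. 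The same check handles $w_2$ and $w_3$ after permuting roles. For $w_4$, expand $\sum_j(e_j+e_j^{-1})^2-4+\prod_j(e_j+e_j^{-1})$. This equals $4+\sum_j(e_j^2+e_j^{-2})$ plus the sixteen monomials $e_0^{\pm1}e_t^{\pm1}e_1^{\pm1}e_\infty^{\pm1}$. I would match all $8+16=24$ monomials against the listed terms, for instance $e_0^2=u_2u_3/u_1$, $e_1^2=u_1u_2/u_3$, $e_t^2=u_1u_3/u_2$ and $e_\infty^2=u_4^2u_3/(u_1u_2)$. This is routine bookkeeping.

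Next, the map $\C[w_1,\dots,w_4]\to\C[u_1^{\pm1},\dots,u_4^{\pm1}]$ defined by these formulas must extend to a field homomorphism of fraction fields. This requires it to be injective, i.e.\ that $w_1,\dots,w_4$ are algebraically independent over $\C$ as functions of $u$. I would prove this with a Jacobian argument: compute $\det(\partial w_k/\partial u_l)$ at a convenient point and show it is nonzero there. Equivalently, $\mathscr{U}_{\rm VI}\to\mathscr{W}_{\rm VI}$ is dominant, so it is generically finite. An alternative route is to argue that each $v_j$ is integral over $\C[w]$, since $\theta\mapsto w$ is surjective onto $\C^4$ up to a finite group.

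The only step with real content is the independence, and I expect it to be the main obstacle. The cleanest argument I see uses the finite-fibre observation: given $w$, the lines of \eqref{eq:linesPVI} determine finitely many $b_k$, hence finitely many $u$. I would nevertheless do the explicit Jacobian evaluation at a generic rational point, which is mechanical.
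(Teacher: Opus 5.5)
Your proposal is correct and is essentially the paper's argument. The paper also passes to $t_j=e^{2\pi i\theta_j}$ (your $e_j$), computes in $\C(t_1,\dots,t_4)$, and obtains the stated map as the unique homomorphism compatible with $\C(\mathscr{U}_{\rm VI})\to\C(t_1,\dots,t_4)$ and $\C(\mathscr{W}_{\rm VI})\to\C(t_1,\dots,t_4)$; this is the same as your monomial-by-monomial matching. Your extra check that $w_1,\dots,w_4$ are algebraically independent is a point the paper takes for granted, and the Jacobian route is quick. The Jacobian of $(v_0,v_t,v_1,v_\infty)\mapsto w$ at $v=(1,0,0,0)$ has rows $(0,0,0,1),(0,0,1,0),(0,1,0,0),(2,0,0,0)$, so it is nonzero, and the $v_j$ are themselves algebraically independent in $\C(e_0,e_t,e_1,e_\infty)$.
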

 \begin{proof}
 Introducing the variables $t_j=e^{2\pi i \theta_j}$, $1\leq j\leq 4$, the definitions of $u_i,w_i$ in terms of $t_j$ give field homomorphisms $\C(\mathscr{U}_{\rm{VI}}) \to  \mathbb{C}(t_1,t_2,t_3,t_4)$, and $\C(\mathscr{W}_{\rm{VI}}) \to  \mathbb{C}(t_1,t_2,t_3,t_4)$. Direct computation in the field $\mathbb{C}(t_1,t_2,t_3,t_4)$, or really $\mathbb{Q}(t_1,t_2,t_3,t_4)$, shows that \eqref{pvifieldextensionUW} is the unique homomorphism making the following diagram commute:
 \begin{equation*}
     \begin{tikzcd}[sep=tiny]
        \C(\mathscr{W}_{\rm{VI}}) \arrow[dd] \arrow[dr]& \\
        & \qquad \mathbb{C}(t_1,t_2,t_3,t_4) \\
         \C(\mathscr{U}_{\rm{VI}}) \arrow[ur]
     \end{tikzcd}
 \end{equation*}
 and the lemma follows.
 \end{proof}
The singular locus $\mathscr{W}_{\rm{VI}}^{\operatorname{sing}}\subset \mathscr{W}_{\rm{VI}}$ of the family $\mathcal{M}\to \mathscr{W}_{\rm{VI}}$ pulls back to $\mathscr{U}_{\rm{VI}}^{\operatorname{sing}}\subset \mathscr{U}_{\rm{VI}}$  defined by 
\begin{equation*}
\begin{aligned}
    &\left(u_1- u_2 u_3\right)
    \left(u_2- u_1 u_3\right) 
    \left(u_3- u_1 u_2\right) 
    \left(u_1- u_2 u_4\right)
    \left(u_2- u_1 u_4\right) 
    \left(u_4- u_1 u_2\right) \times \\ 
   &\left(u_3-u_4\right) 
    \left(u_3 -u_4^{-1}\right) 
     \left(u_1^2-u_3 u_4\right) 
    \left(u_2^2-u_3 u_4\right)    
    \left(u_3^2 -u_1 u_2u_4^{-1}\right) 
    \left( u_4^2-u_1 u_2u_3^{-1}\right)=0.
\end{aligned}
\end{equation*}

\begin{remark}
    When $u\in \mathscr{U}_{\rm{VI}}^{\operatorname{sing}}$, some of the lines are merged. 
For example, when $u_3=u_4$, the following pairs of lines coincide: 
\begin{equation*}
    (L_1,L_8), \quad (L_2,L_7), \quad (L_9,L_{16}),\quad (L_{10},L_{15}),\quad (L_{19},L_{21}),\quad (L_{20},L_{22}).
\end{equation*}
Under the Riemann-Hilbert correspondence, $\mathscr{U}_{\rm{VI}}^{\operatorname{sing}}$ corresponds to parameter values for the Riccati solutions of the sixth Painlev\'e equation \cite{inabadynamics}. 
\end{remark}

\begin{remark} \label{rem:linesmonodromyPVI}
    Note that the lines and their characterisation in terms of partial reducibility of monodromy are given in \cites{mklimes2024,ORS}. 
    The relevance of the lines in the asymptotic theory of $\pain{VI}$ can already be observed in Jimbo \cite{jimbo1982}, and is fully worked out by Guzzetti \cites{guzzetti2006,guzzetti12}, who also provides the corresponding partial reducibility of monodromy. 
    There are also six distinguished conics, which correspond to logarithmic asymptotics of solutions of $\pain{VI}$ \cite{guzzetti2008loga}.
    These curves are given by 
    $$X_i=\pm 2X_0, \qquad (X_j\pm X_k)^2 - X_0(w_j X_j+x_kX_k) + (w_4+4 \mp 2w_i)X_0^2=0,\qquad \{i,j,k\}=\{1,2,3\}, $$
 for $1\leq i\leq 3$.
    These can be characterised geometrically in terms of hyperplane sections of $\overline{\mathcal{M}}_w$ that decompose into a line at infinity and a conic tangent to it.
\end{remark}

\subsection{Combinatorial monodromy}
~
We now give our first description of monodromy of the family $\mathcal{M}\to\mathscr{W}_{\rm{VI}}$, which is combinatorial and concerns permutations of lines which preserve their intersections.

\begin{definition} \label{def:pviintersectiongraph}
    Let $u \in \mathscr{U}_{\rm{VI}}\setminus \mathscr{U}_{\rm{VI}}^{\operatorname{sing}}$ and consider the lines $L_1,\dots,L_{27}$ on $\overline{\mathcal{M}}_{w}$, for $w$ given in terms of $u$ as in \Cref{lem:PvimorphismUtoW}.
    Form the intersection graph of lines on $\overline{\mathcal{M}}_{w}$ as a 2-coloured graph as follows.
    The vertices are the indices $1,\dots,{27}$, with those corresponding to the triangle at infinity coloured red, and the remaining coloured blue. 
    Two vertices $i,j$ are connected by an edge if and only if the lines $L_i$ and $L_j$ intersect in $\mathbb{P}^3$. Edges among red vertices are coloured red and others are coloured blue.
    We denote the intersection graph by $\mathcal{G}_{\rm{VI}}$, and the subgraph generated by the red vertices by $\mathcal{G}_{\rm{VI}}^{\infty}$.
\end{definition}
The intersection graph can be computed directly from the explicit expressions for the lines $L_k$, $1\leq k\leq 27$, in equation \eqref{eq:linesPVI}, yielding the coloured graph in Figure  \ref{fig:linesgraphpvi}. It is not surprising that the result is independent of $u\in \mathscr{U}_{\rm{VI}}\setminus \mathscr{U}_{\rm{VI}}^{\operatorname{sing}}$, since, for generic $u$, $\overline{\mathcal{M}}_{w}$ is a smooth projective cubic without Eckardt points \cite{dolgachevclassicalAG}, so that the intersection graph of lines must be the dual of the famous Schl\"afli graph \cite{schlafli1858}.

\begin{figure}[htb]
    \centering
    \includegraphics[width=1\linewidth]{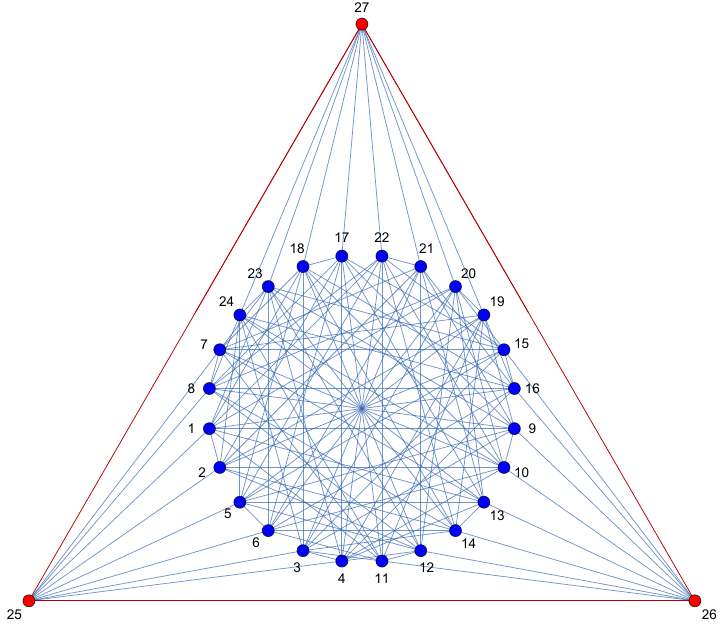}
    \caption{Intersection graph $\mathcal{G}_{\rm{VI}}$ of lines on the monodromy surface for $\pain{VI}$,
    with in red the subgraph $\mathcal{G}_{\rm{VI}}^\infty$ generated by the lines at infinity $(L_{25},L_{26},L_{27})$, as defined in Definition \ref{def:pviintersectiongraph}.
    }
    \label{fig:linesgraphpvi}
\end{figure}

Considering the intersection graph above $\mathscr{W}_{\rm{VI}}$, instead of $\mathscr{U}_{\rm{VI}}$, leads to a combinatorial version of monodromy of the family $\mathcal{M} \to \mathscr{W}_{\rm{VI}}$, made precise in the following definition.

\begin{definition} \label{def:combinatorialmonodromyPVI}
    The \emph{combinatorial monodromy} of the family $\mathcal{M}\to\mathscr{W}_{\rm{VI}}$ is the set of permutations of lines on $\mathcal{M}_w$ that preserve the intersection configuration, in other words, the group
    \begin{equation*}
        \operatorname{Fix}_{\operatorname{Aut}(\mathcal{G}_{\rm{VI}})}(\mathcal{G}_{\rm{VI}}^\infty)
    \end{equation*}
    of automorphisms of the graph $\mathcal{G}_{\rm{VI}}$ in \Cref{fig:linesgraphpvi} that pointwise fix the vertices $\{{25},{26},{27}\}$.
\end{definition}

We will show that the combinatorial monodromy of $\mathcal{M}\to \mathscr{W}_{\rm{VI}}$ forms the underlying finite Weyl group $W(D_4)$ of the symmetry group $\widetilde{W}(D_4^{(1)})$ of $\pain{VI}$, which we consider using the usual presentation generated by simple reflections encoded in the $D_4$ Dynkin diagram: 
 \begin{equation*}
 	W(D_4) =
 	\left\langle r_1,r_2,r_3,r_4 \quad {\bigg |}\quad 
 	r_{i}^{2} = 1,  \quad
 	\begin{aligned}
 		(r_{i} r_{j})^2 &= 1&  &\text{ when }
 		\raisebox{-0.15in}{\begin{tikzpicture}[
 				elt/.style={circle,draw=black!100,thick, inner sep=0pt,minimum size=1.5mm}]
 				\path   ( 0,0) 	node  	(ai) [elt] {}
 				( 0.5,0) 	node  	(aj) [elt] {};
 				\draw [black] (ai)  (aj);
 				\node at ($(ai.south) + (0,-0.2)$) 	{\small ${i}$};
 				\node at ($(aj.south) + (0,-0.2)$)  {\small ${j}$};
 		\end{tikzpicture}}\\
 		(r_{i} r_{j})^3 &= 1& &\text{ when }
 		\raisebox{-0.12in}{\begin{tikzpicture}[
 				elt/.style={circle,draw=black!100,thick, inner sep=0pt,minimum size=1.5mm}]
 				\path   ( 0,0) 	node  	(ai) [elt] {}
 				( 0.5,0) 	node  	(aj) [elt] {};
 				\draw [black, thick] (ai) -- (aj);
 				\node at ($(ai.south) + (0,-0.2)$) 	{\small ${i}$};
 				\node at ($(aj.south) + (0,-0.2)$)  {\small ${j}$};
 		\end{tikzpicture}}
 	\end{aligned} 
 	\right\rangle
    \qquad 
    \quad
       \raisebox{-20pt}{
\begin{tikzpicture}[scale=.4,elt/.style={circle,draw=black!100,thick, inner sep=0pt,minimum size=1.5mm}]
		\path 	(-1.25,0) 	node 	(d1) [elt ] {}
		        ( 0,0) 	node  	(d2) [elt  ] {}
		        ( 1,-1) 	node  	(d3) [elt  ] {}
		        (1,1) 	node 	(d4) [elt ] {};
		\node at ($(d1.west) + (-.3,+0.0)$) 	{ \tiny ${1}$};
		\node at ($(d2.east) + (+.3,+0.0)$) 	{ \tiny ${2}$};
		\node at ($(d3.east) + (+.3,+0.0)$) 	{ \tiny ${3}$};
		\node at ($(d4.east) + (+.3,+0.0)$) 	{ \tiny ${4}$};

		\draw [black,line width=1pt ] (d2) -- (d1);
		\draw [black,line width=1pt ] (d3) -- (d2) -- (d4);
		\node at ($(d2.east) + (+0,-2)$) 	{ \small ${D_4}$};

	\end{tikzpicture} .}
 \end{equation*}


\begin{proposition} \label{prop:combinatorialmonodromyPVI}
The combinatorial monodromy of the family $\mathcal{M}\to\mathscr{W}_{\rm{VI}}$ is isomorphic to $W(D_4)$, with an explicit isomorphism 
    \begin{equation*}
     W(D_4)\xrightarrow{\sim}   \operatorname{Fix}_{\operatorname{Aut}(\mathcal{G}_{\rm{VI}})}(\mathcal{G}_{\rm{VI}}^\infty), \quad r\mapsto \tilde{r},
    \end{equation*}
defined by sending the generators $r_k$, $1\leq k\leq 4$, to
\begin{align*}
	\tilde{r}_1&=(3\,\, 7)\,(4\, \, 8)\,(11\,\, 15)\,(12\,\, 16)\,(19\,\, 23)\,(20\,\, 24),\\
	\tilde{r}_2&=(5\,\, 8)\,(6\, \, 7)\,(13\,\, 16)\,(14\,\, 15)\,(21\,\, 24)\,(22\,\, 23),\\
	\tilde{r}_3&=(1\,\, 5)\,(2\, \, 6)\,(9\,\, 14)\,(10\,\, 13)\,(19\,\, 24)\,(20\,\, 23),\\
	\tilde{r}_4&=(3\,\, 8)\,(4\, \, 7)\,(9\,\, 13)\,(10\,\, 14)\,(17\,\, 22)\,(18\,\, 21).
\end{align*}
\end{proposition}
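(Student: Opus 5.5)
The proof has three parts: determine the combinatorial monodromy abstractly, check that the $\tilde r_k$ lie in it, and check that they generate it with the $D_4$ relations.

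For the abstract structure, recall that for generic $u$ the graph $\mathcal{G}_{\rm{VI}}$ is the intersection graph of the 27 lines on a smooth cubic, whose automorphism group is $W(E_6)$. Choosing the triangle $L_{25},L_{26},L_{27}$, the stabiliser in $W(E_6)$ of a tritangent trio has order $51840/45 = 1152 = |W(F_4)|$. The subgroup fixing each of the three lines pointwise is the kernel of the induced map to $\mathfrak{S}_3$ on the trio, which is surjective. So this subgroup has order $1152/6=192=|W(D_4)|$.

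One could argue this lattice-theoretically: the classes of the three lines span a sublattice whose orthogonal complement in $K^\perp\cong E_6$ (negative) is a $D_4$ root lattice. The pointwise stabiliser is then the Weyl group of that $D_4$. However, $\operatorname{Aut}(\mathcal{G}_{\rm{VI}})$ might a priori exceed $W(E_6)$, so I would rather argue directly on the graph.

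The direct graph argument runs as follows.

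First, each remaining line meets exactly one of $L_{25},L_{26},L_{27}$. Each of the red lines meets $10$ lines, namely the other two red lines plus $8$ blue ones. From \eqref{eq:linesPVI}, the lines $L_1,\dots,L_8$ meet $L_{25}$, since $x_1$ is constant on them, and similarly for the other two groups of eight. Hence any $\phi$ fixing the red vertices preserves the three blocks $\{1..8\},\{9..16\},\{17..24\}$.

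Second, within a block, $L_{2j-1}$ and $L_{2j}$ are coplanar with the corresponding red line, since $b\leftrightarrow b^{-1}$ gives the same value of $x_1$. So each block splits into four pairs: the four tritangent planes through that red line. These pairs must also be permuted.

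Third, I bound the order of the fixer. Intersections between blocks form a fixed combinatorial pattern. Each blue line in block 1 meets exactly $4$ lines in block 2 and $4$ in block 3 (besides its partner and the red line). I would show that $\phi$ is determined by its action on block 1. The image of each line in blocks 2 and 3 is forced by its intersection set with block 1, and distinct lines in block 2 have distinct neighbourhoods in block 1, which can be read off from the explicit lines. The allowed actions on block 1 respect the $4$-pair structure, together with the incidences. This gives an embedding of the fixer into a subgroup of the hyperoctahedral group $\mathbb{Z}_2\wr \mathfrak{S}_4$, of order $384$. The parity constraint coming from block 2 cuts this to index $2$, giving order at most $192$.

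For the explicit part, I verify with the explicit equations \eqref{eq:linesPVI} that each $\tilde r_k$ preserves adjacency. The clean way to do this is to realise each $\tilde r_k$ as induced by an automorphism of $\mathbb{C}(\mathscr{U}_{\rm{VI}})$ over $\mathbb{C}(\mathscr{W}_{\rm{VI}})$, namely a monomial substitution on $u$ fixing the $w_i$ of \Cref{lem:PvimorphismUtoW}. Such a substitution permutes the $b_k$ in \eqref{eq:defbpvi} and hence the lines, and automatically preserves intersections and fixes $L_{25},L_{26},L_{27}$. For instance, $\tilde r_3$ swapping $b_1=u_3/u_2 \leftrightarrow b_5=u_1$ suggests $u_1\mapsto u_3/u_2$ together with a compatible change of $u_4$. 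I find these substitutions by matching $b$'s, and check that the formulas for the $w_i$ are invariant.

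Finally, I check the Coxeter relations $(\tilde r_i\tilde r_j)^{m_{ij}}=1$ directly on the permutations, and show the generated group has order $192$, for example by an orbit–stabiliser count on the $24$ blue lines. Then $W(D_4)\to\operatorname{Fix}$ is a well-defined homomorphism. Since $W(D_4)$ has no proper quotient of order $192$, the map is injective once the image has order $192$, and the upper bound of $192$ makes it bijective.

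The main obstacle is the upper bound $|\operatorname{Fix}|\le 192$, which needs a careful combinatorial argument on the graph. I would try the block/pair rigidity argument first. If it becomes unwieldy, I would fall back on identifying $\operatorname{Aut}$ of the Schläfli graph with $W(E_6)$ and using the lattice computation of the pointwise stabiliser of a tritangent trio.
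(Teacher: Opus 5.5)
Your proposal is correct. Apart from the order count, it takes a different route from the paper.

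\textbf{The paper's proof.} The paper works entirely in $\Pic(\overline{\mathcal{M}}_w)$. It takes as known that the automorphisms of the uncoloured graph are the Cremona isometries $W(E_6)$. It chooses a sixer so that the triangle at infinity becomes $\E_1$, $\h-\E_1-\E_6$, $2\h-\E_1-\dots-\E_5$. The lower bound comes from the reflections in the $D_4$ root sublattice orthogonal to these three classes, and the upper bound from $51840/45/6=192$. So your ``abstract structure'' paragraph together with your lattice remark is the paper's whole proof, not a fallback. You are also right that this count needs the setwise stabiliser to surject onto $\mathfrak{S}_3$; the paper states only transitivity.

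\textbf{Your upper bound.} Your main argument avoids the fact that the automorphism group of the Schl\"afli graph is $W(E_6)$. It goes through:
\begin{itemize}
\item Restriction to $\{L_1,\dots,L_8\}$ is injective, because lines in the other two blocks have pairwise distinct neighbourhoods in this block.
\item The image lies in $\mathbb{Z}_2\wr\mathfrak{S}_4$, because the block induces a perfect matching.
\item The index-$2$ cut is valid. The block-$1$ neighbourhoods of the eight block-$2$ lines are exactly the transversals of the four pairs of one parity, and block $3$ gives the other parity. An odd number of in-pair swaps would therefore exchange blocks $2$ and $3$.
\end{itemize}

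\textbf{Your existence argument.} You realise the $\tilde r_k$ as monomial substitutions in $u$ fixing $w$. The paper only does this later, in \Cref{prop:algebraicmonodromyPVI}. Its proof of the present proposition never connects the displayed $\tilde r_k$ to its reflections $r_{\beta_k}$. In fact $\beta_3$, not $\beta_2$, is the central node there, while $r_2$ is central in the stated presentation.

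\textbf{Comparison.} The paper's route is shorter and explains why $D_4$ appears, as the root system orthogonal to a tritangent trio in $E_6$. Yours is self-contained and actually verifies the stated explicit isomorphism. It also gives the faithfulness on each block noted in \Cref{rem:combinatorialremarkPVI} for free. The cost is finite bookkeeping. For the order of $\langle\tilde r_1,\dots,\tilde r_4\rangle$, the cleanest check is that its restriction to block $1$ is the full group of even signed permutations of the pairs $\{1,2\},\{3,4\},\{5,6\},\{7,8\}$, of order $192$. This beats an orbit--stabiliser count over all $24$ blue lines, which form three orbits.
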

\begin{proof}
        It is well-known that a smooth projective cubic surface can be realised as $\p^2$ blown up at six points in general position.        
        Applying this to $\overline{\mathcal{M}}_{w}$  allows us to translate the problem of computing the subgroup $\operatorname{Fix}_{\operatorname{Aut}(\mathcal{G}_{\rm{VI}})}(\mathcal{G}_{\rm{VI}}^\infty)$ of graph automorphisms to an algebraic one in the Picard group $\Pic(\overline{\mathcal{M}}_w)$.
       
        Fix $w \in \mathscr{W}_{\rm{VI}}\setminus \mathscr{W}_{\rm{VI}}^{\operatorname{sing}}$.
        For any set of six lines on $\overline{\mathcal{M}}_w$ of which none intersect - a `sixer' in the language of \cite{dolgachevclassicalAG} - 
        there is a birational morphism $\pi :\overline{\mathcal{M}}_w\to \p^2$, which contracts these six lines.
    This gives 
    $$\Pic(\overline{\mathcal{M}}_w) \cong \Z \mathcal{H} \oplus \Z \E_1 \oplus \Z \E_2\oplus \Z \E_3\oplus \Z \E_4\oplus \Z \E_5\oplus \Z \E_6,$$
    where $\h$ corresponds to the linear equivalence class of the pullback by $\pi$ of a hyperplane divisor on $\p^2$ and $\E_1,\dots,\E_6$ correspond to classes of the six lines contracted by $\pi$.
    The exceptional curves on $\overline{\mathcal{M}}_w$ are the 27 lines on $\overline{\mathcal{M}}_w$.
    On the level of $\Pic(\overline{\mathcal{M}}_w)$, these correspond to the subset 
    $$\operatorname{EX} = \{\F\in \Pic(\overline{\mathcal{M}}_w) ~|~ \F\cdot\F =\F \cdot \mathcal{K}_{\overline{\mathcal{M}}_w}= -1\},$$
    where $\mathcal{K}_{\overline{\mathcal{M}}_w}$ is the canonical divisor class of $\overline{\mathcal{M}}_w$ and $\cdot $ is the intersection form on $\Pic(\overline{\mathcal{M}}_w)$.
    The elements of $\operatorname{EX}$ are all of the form
    $$\E_i, \quad \h - \E_j - \E_k, \quad 2\h - \E_{1}-\dots-\E_6 + \E_{\ell},\quad \text{ where } \quad i,j,k,\ell\in\{1,\dots,6\}, j\neq k,$$
    so images in $\p^2$ of the lines under $\pi$ are either points onto which lines in the sixer are contracted, lines between a pair of these points, or conics passing through five of these six points.
    
    Graph automorphisms in $\operatorname{Aut}(\mathcal{G}_{\rm{VI}})$, being permutations of lines that preserve the intersection configuration, then correspond to permutations of $\operatorname{EX}$ induced by Cremona isometries of $\Pic(\overline{\mathcal{M}}_w)$, which form the finite Weyl group $W(E_6)$ \cite{dolgachevclassicalAG}.
    This can be described in terms of the elements of $\Pic(\overline{\mathcal{M}}_w)$,
    $$\alpha_1 = \E_6-\E_5, \quad \alpha_2=\E_5-\E_4, \quad \alpha_3=\E_4-\E_3, \quad \alpha_4=\E_3-\E_2, \quad \alpha_5=\E_2-\E_1, \quad \alpha_6 = \h -\E_4-\E_5-\E_6,$$
    which play the role of simple roots of the $E_6$ root system.
    The enumeration of these simple roots corresponds to the enumeration of the $E_6$ Dynkin diagram given in \Cref{fig:dynkinE6}.
\begin{figure}[htb]
    \centering
            \begin{tikzpicture}[elt/.style={circle,draw=black!100,fill=black!100,thick, inner sep=0pt,minimum size=1.6mm},aff/.style={circle,draw=black!100,thick, inner sep=0pt,minimum size=1.6mm},scale=0.8]
 		\path 	(-2,0) 	node 	(a1) [elt, label={[xshift=0pt, yshift = -21 pt] $\alpha_1$} ] {}
                    (-1,0) 	node 	(a2) [aff, label={[xshift=0pt, yshift = -21 pt] $\alpha_2$} ] {}
                    (0,0) 	node 	(a3) [aff, label={[xshift=0pt, yshift = -21 pt] $\alpha_3$} ] {}
                    (1,0) 	node 	(a4) [aff, label={[xshift=0pt, yshift = -21 pt] $\alpha_4$} ] {}
                    (2,0) 	node 	(a5) [elt, label={[xshift=0pt, yshift = -21 pt] $\alpha_5$} ] {}
                    (0,1) 	node 	(a6) [aff, label={[xshift=0pt, yshift = +0 pt] $\alpha_6$} ] {}
 		       ;
 		\draw [black,line width=1pt ] (a1) -- (a2) -- (a3) -- (a4) -- (a5);
 		\draw [black,line width=1pt ] (a6) -- (a3);
 	\end{tikzpicture}
   \caption{Dynkin diagram $E_6$ with $D_4$ subdiagram indicated by unfilled nodes.}
    \label{fig:dynkinE6}
\end{figure}
    The reflections in these simple roots are defined as lattice automorphisms of $\Pic(\overline{\mathcal{M}}_w)$ by 
    \begin{equation} \label{eq:reflectionformulapic}
        r_{\alpha_i} (\F) = \F + (\F\cdot \alpha_i)\alpha_i,
    \end{equation}
    and generate the finite Weyl group of type $E_6$, so 
    \begin{equation*}
        \operatorname{Aut}(\overline{\mathcal{G}}_{\rm{VI}}) \cong \langle r_{\alpha_1},\dots r_{\alpha_6}\rangle \cong W(E_6),
    \end{equation*}
    where $\overline{\mathcal{G}}_{\rm{VI}}$ indicates the graph obtained from $\mathcal{G}_{\rm{VI}}$ by forgetting about the colouring.
    
    Then $\operatorname{Fix}_{\operatorname{Aut}(\mathcal{G}_{\rm{VI}})}(\mathcal{G}_{\rm{VI}}^\infty)$ corresponds to a subgroup of $W(E_6)$, which we compute as follows. 
    The choice of sixer can be made such that the lines at infinity correspond to the following three elements of $\operatorname{EX}\subset \Pic(\overline{\mathcal{M}}_w)$:
    $$ \mathcal{L}^{\infty}_{1} =  \E_1, \quad \mathcal{L}^{\infty}_{2} = \h - \E_1-\E_6, \quad \mathcal{L}^{\infty}_{3} = 2\h - \E_1-\E_2-\E_3-\E_4-\E_5,$$
    where $\mathcal{L}^{\infty}_{1}+\mathcal{L}^{\infty}_{2}+\mathcal{L}^{\infty}_{3}=-\mathcal{K}_{\overline{\mathcal{M}}_w}$. 
    Explicitly, the choice of sixer can be made, for example, such that $\E_1,\dots,\E_6$ correspond to $L_{25}, L_{18},L_{20},L_{22},L_{24},L_9$.
    
    Then $\operatorname{Fix}_{\operatorname{Aut}(\mathcal{G}_{\rm{VI}})}(\mathcal{G}_{\rm{VI}}^\infty)$ is given by elements of $W(E_6)$ that fix pointwise the subset $\{\mathcal{L}^{\infty}_{1},\mathcal{L}^{\infty}_{2},\mathcal{L}^{\infty}_{3}\}$.
    To show that this subgroup is isomorphic to $W(D_4)$, we first note that the subset of $\Z \alpha_1 + \dots + \Z \alpha_6$ orthogonal to $\{\mathcal{L}^{\infty}_{1},\mathcal{L}^{\infty}_{2},\mathcal{L}^{\infty}_{3}\}$ is spanned by     
    $$\beta_1 = \alpha_6=\h-\E_4-\E_5-\E_6, \quad \beta_2 =\alpha_2=\E_5-\E_4, \quad \beta_3 =\alpha_3=\E_4-\E_3, \quad \beta_4=\alpha_4= \E_3-\E_2.$$
    This is isomorphic as a lattice to the root lattice of the $D_4$ root system, with $(\beta_i\cdot\beta_j)=-B_{i,j}$, $i,j=1,\dots,4$, given by the Cartan matrix  
    $$ B = \begin{bmatrix}
 2 & 0 & -1 & 0 \\
 0 & 2 & -1 & 0 \\
 -1 & -1 & 2 & -1 \\
 0 & 0 & -1 & 2 
    \end{bmatrix}.$$
    The choice of $\beta_1,\beta_2,\beta_3,\beta_4$ corresponds to the $D_4$ subdiagram in \Cref{fig:dynkinE6}.
    The reflections in these roots, defined by the formula \eqref{eq:reflectionformulapic}, generates the subgroup
    $$\langle r_{\beta_1},r_{\beta_2},r_{\beta_3},r_{\beta_4}\rangle \cong W(D_4),$$
    all elements of which fix pointwise $\{\mathcal{L}^{\infty}_{1},\mathcal{L}^{\infty}_{2},\mathcal{L}^{\infty}_{3}\}$.

To show that this exhausts $\operatorname{Fix}_{\operatorname{Aut}(\mathcal{G}_{\rm{VI}})}(\mathcal{G}_{\rm{VI}}^\infty)$, we consider the action of $W(E_6)$ on 45 triangles of lines on $\overline{\mathcal{M}}_w$ \cite{dolgachevclassicalAG}.
The order of $W(E_6)$ is $51,840$ and it acts transitively on the set of triangles, so the setwise stabiliser of a given triangle has order $51,840/45 =1152$. 
Vertices of any triangle are also acted on transitively by $W(E_6)$, so the pointwise stabiliser of a triangle has order $1152/6=192$, which is the order of $W(D_4)$.

\end{proof}

\begin{remark} \label{rem:combinatorialremarkPVI}
    
    The affine lines are naturally partitioned into subsets $\{L_1,
    \dots, L_8\}$, $\{L_9,
    \dots, L_{16}\}$, and $\{L_{17},
    \dots, L_{24}\}$ according to which line at infinity they intersect.  
    By definition, the combinatorial monodromy must respect this partition and
    we observe that $W(D_4)$ acts faithfully when restricted to any of these three sets of lines.
\end{remark}

The Dynkin diagram automorphisms also induce, via their nontrivial action on $\mathcal{M}\to\mathscr{W}_{\rm{VI}}$, permutations of lines, and some of these move the lines at infinity.
\begin{proposition}
    The action of on $\mathcal{M}\to \mathscr{W}_{\rm{VI}}$ of $\Aut(D_4^{(1)})\cong \mathfrak{S}_4$ in \Cref{prop:PVI:dynkinautosunderRH} induces the following automorphisms of the graph $\mathcal{G}_{\rm{VI}}$:
\begin{align*}
	\sigma_{(03)}&= (3\,\, 4)\,(5\,\, 6)\,(7\,\, 8)\,(9\,\, 17)\,(10\,\, 18)\,(11\,\, 20)\,(12\,\, 19)\,(13\,\, 22)\,(14\,\, 21)\,(15\,\, 24)\,(16\,\, 23)\,(26\,\,27),\\
	\sigma_{(34)}&=(1\,\, 17)\,(2\,\, 18)\,(3\,\, 20)\,(4\,\, 19)\,(5\,\, 22)\,(6\,\, 21)\,(7\,\, 24)\,(8\,\, 23)\,(11\,\, 12)\,(13\,\, 14)\,(15\,\, 16)\,(25\,\,27),\\
	\sigma_{(04)}&=(1\,\, 9)\,(2\,\, 10)\,(3\,\, 12)\,(4\,\, 11)\,(5\,\, 14)\,(6\,\, 13)\,(7\,\, 16)\,(8\,\, 15)\,(19\,\, 20)\,(21\,\, 22)\,(23\,\, 24)\,(25\,\,26),\\
	\sigma_{(03)(14)}&=(1\,\, 2)\,(3\,\, 4)\,(9\,\, 11)\,(10\,\, 12)\,(13\,\, 15)\,(14\,\, 16)\,(17\,\, 20)\,(18\,\, 19)\,(21\,\, 23)\,(22\,\, 24),\\
	\sigma_{(04)(13)}&=(1\,\, 3)\,(2\,\, 4)\,(5\,\, 7)\,(6\,\, 8)\,(9\,\, 12)\,(10\,\, 11)\,(13\,\, 15)\,(14\,\, 16)\,(17\,\, 18)\,(19\,\, 20)\,,\\
	\sigma_{(01)(34)}&=(1\,\, 4)\,(2\,\, 3)\,(5\,\, 7)\,(6\,\, 8)\,(9\,\, 10)\,(11\,\, 12)\,(17\,\, 19)\,(18\,\, 20)\,(21\,\, 23)\,(22\,\, 24).
\end{align*}
In particular, the cyclic subgroup of order 3 generated by $\sigma_{(043)}$ acts according to 
$$\sigma_{(043)}=(1\,\,9\,\, 17)\,(2\,\,10\,\, 18)\,(3\,\,11\,\, 19)\,(4\,\,12\,\, 20)\,(5\,\,13\,\, 21)\,(6\,\,14\,\, 22)\,(7\,\,15\,\, 23)\,(8\,\,16\,\, 24)\,( 25\,\,26\,\,27).$$


\end{proposition}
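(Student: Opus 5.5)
The plan is a direct computation: track how each $\sigma$ in \Cref{tab:PVI:symmetry:ws} moves the explicit equations \eqref{eq:linesPVI}. Each $\sigma$ is an automorphism of the family. It maps $\mathcal{M}_w$ to $\mathcal{M}_{\sigma(w)}$ by a linear map of $\mathbb{A}^3$ that extends to $\mathbb{P}^3$. So it sends lines to lines, preserves incidence, and hence induces a graph automorphism of $\mathcal{G}_{\rm{VI}}$. To label the image lines by indices, I first lift the action of $\sigma$ on $\mathscr{W}_{\rm{VI}}$ to an action on $\mathscr{U}_{\rm{VI}}$ compatible with \Cref{lem:PvimorphismUtoW}. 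The natural lift comes from the action of $\sigma$ on $\theta$ via \eqref{eq:rootvarstotheta} and \Cref{tab:PVI:symmetry:varsandparams}, together with \eqref{eq:pvitheta_to_u}. For example, $\sigma_{(03)}$ swaps $\theta_t\leftrightarrow\theta_1$, giving $u_1\mapsto u_1$, $u_2\leftrightarrow u_3$ and $u_4\mapsto e^{2\pi i(\theta_t+\theta_\infty)}=u_3u_4/u_1$. The sign-change elements act through shifts $\theta_j\mapsto\theta_j+\tfrac12$ or reflections, which produce sign changes on the $u$'s. I will check each lift against the formulas of \Cref{lem:PvimorphismUtoW}, so that it really covers the given $w\mapsto\tilde w$. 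Any residual ambiguity only changes the induced permutation by an element of the monodromy group, and the lift must be fixed consistently to reproduce the stated permutations.

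For each generator I then substitute into the equation of $L_k$. Take $\sigma_{(03)}$: it swaps $x_2\leftrightarrow x_3$ and $w_2\leftrightarrow w_3$. A line $x_1=b+b^{-1}$, $bx_2+x_3=(w_3b-w_2)/(b-b^{-1})$ goes to $x_1=b+b^{-1}$, $bx_3+x_2=(w_2b-w_3)/(b-b^{-1})$. Multiplying by $b^{-1}$ gives $b^{-1}x_2+x_3=(w_3b^{-1}-w_2)/(b^{-1}-b)$, which is the same form with $b\mapsto b^{-1}$. So the image is the line among $L_1,\dots,L_8$ whose parameter equals $\sigma^*(b_k)^{-1}$, computed in the new $u$-variables. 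The families indexed by $x_2$ and $x_3$ are exchanged similarly. The lines at infinity are permuted exactly as the coordinates $X_1,X_2,X_3$ are, which gives the transpositions among $25,26,27$.

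For the sign changes, e.g.\ $\sigma_{(03)(14)}$, the line $x_1=b+b^{-1}$ is unchanged. Flipping the signs of $x_2,x_3,w_2,w_3$ leaves the second equation invariant, so these lines are sent to lines with parameter $\sigma^*(b)$. For $x_2=b+b^{-1}$ one needs $-x_2=b'+b'^{-1}$, i.e.\ $b'=-b$, and the linear equation then matches up to an overall sign. So the bookkeeping reduces to matching the monomials $b_k$ in \eqref{eq:defbpvi} under the lifted $u$-action, including signs.

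The main obstacle is purely organisational: fixing the lifts to $\mathscr{U}_{\rm{VI}}$ correctly, especially their sign conventions. Once they are fixed, each identification $\sigma^*(b_k)^{\pm1}=b_{\pi(k)}$ is a one-line check on monomials. The composite $\sigma_{(043)}=\sigma_{(04)}\sigma_{(03)}$, taken in the appropriate order, is then obtained by composing the computed permutations. It cycles the three families of eight lines and the lines at infinity as stated.
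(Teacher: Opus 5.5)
Your proposal is correct and follows the paper's proof: lift the action to $\mathscr{U}_{\rm{VI}}$ through its action on $\theta$, then identify each image line by matching the parameters $b_k$ (up to inversion or sign) by direct computation. Two small corrections:
\begin{itemize}
\item For $\sigma_{(03)}$ the lift is $u_4\mapsto u_3u_4/u_2$. Your $u_3u_4/u_1$ is the $\sigma_{(34)}$ formula and would fail your own covering check.
\item As you anticipated, the stated permutations for the double transpositions require the paper's particular lifts, e.g.\ $\sigma_{(03)(14)}:(u_1,u_2,u_3,u_4)\mapsto(u_1,-u_1/u_4,-u_1u_2/(u_3u_4),-u_1/u_2)$. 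The lift obtained by composing the table actions, $(u_1,-u_3u_4/u_2,-u_4,-u_3)$, gives $(1\,2)(7\,8)\cdots$ instead of $(1\,2)(3\,4)\cdots$, i.e.\ it differs by an element of $W(D_4)$.
\end{itemize}
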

\begin{proof}
    The permutations are induced through a lift of the action of $\Aut(D_4^{(1)})$ on $\mathscr{W}_{\rm{VI}}$ to an action on $\mathscr{U}_{\rm{VI}}$. 
    We define this using the action on $\mathscr{A}_{\rm{VI}}\cong\Theta_{\rm{IV}}$ in \Cref{tab:PVI:symmetry:varsandparams} through the map from $\mathscr{A}_{\rm{VI}}\cong\Theta_{\rm{IV}}$  to $\mathscr{U}_{\rm{VI}}$ in \cref{eq:pvitheta_to_u}.
Explicitly, this is given by 
\begin{align*}
	\sigma_{(03)}&: &&\tilde{u}_1=u_1,&\quad &\tilde{u}_2 = u_3, &\quad 
    &\tilde{u}_3 = u_2,&\quad 
    &\tilde{u}_4 = \frac{u_3 u_4}{u_2},\\
	\sigma_{(34)}&: &&\tilde{u}_1=u_3,&\quad &\tilde{u}_2 = u_2,&\quad 
    &\tilde{u}_3 = u_1,&\quad 
    &\tilde{u}_4 = \frac{u_3 u_4}{u_1},\\
	\sigma_{(04)}&: &&\tilde{u}_1=u_2,&\quad &\tilde{u}_2 =u_1,&\quad 
    &\tilde{u}_3 =u_3,&\quad 
    &\tilde{u}_4 = u_4,\\
	\sigma_{(03)(14)}&: &&\tilde{u}_1=u_1,&\quad &\tilde{u}_2 = - \frac{u_1}{u_4},&\quad 
    &\tilde{u}_3 = - \frac{u_1 u_2}{u_3 u_4},&\quad 
    &\tilde{u}_4 = - \frac{u_1}{u_2},\\
	\sigma_{(04)(13)}&: &&\tilde{u}_1=- \frac{u_2}{u_4},&\quad 
    &\tilde{u}_2 = - \frac{u_1}{u_4},&\quad 
    &\tilde{u}_3 = u_3,&\quad 
    &\tilde{u}_4 = \frac{1}{u_4},\\
	\sigma_{(01)(34)}&: &&\tilde{u}_1=-\frac{u_2}{u_4},&\quad &\tilde{u}_2 = u_2,&\quad 
    &\tilde{u}_3 = - \frac{u_1 u_2}{u_3 u_4}, &\quad 
    &\tilde{u}_4 = - \frac{u_2}{u_1}.
\end{align*}
        The induced permutations of lines are derived from this and the action on $x_i$ in \Cref{tab:PVI:symmetry:ws} by direct computation.
\end{proof}

\subsection{Algebraic monodromy}

\label{subsec:algmonodromyPVI}
We now proceed to a second description of the monodromy of the family $\mathcal{M} \to \mathscr{W}_{\rm{VI}}$, which is as the Galois group of a field extension coming from the incidence variety of lines on the monodromy surface, in the same spirit as the Galois groups of enumerative problems formulated in \cite{harris1979}.

\begin{definition}\label{def:coveringpvi}
    The incidence variety of lines on $\mathcal{M}_w$ is  
    \begin{equation*}
        \Gamma \xrightarrow{\rho} \mathscr{W}_{\rm{VI}}, \quad  \Gamma = \left\{ (w,\ell) \in \mathscr{W}_{\rm{VI}}\times \mathbb{G}(1,3) ~|~ \ell\subset \overline{\mathcal{M}}_w, \,\,\ell\not\subset \overline{\mathcal{M}_w}\setminus\mathcal{M}_w \right\},
    \end{equation*}
    where $\mathbb{G}(1,3)$ is the Grassmannian of projective lines in $\p^3$.
\end{definition}
Then $\rho$ gives a branched covering of $\mathscr{W}_{\rm{VI}}$, and letting $\Gamma^{\operatorname{ns}}$ be the restriction of $\Gamma$ over $\mathscr{W}_{\rm{VI}}\setminus\mathscr{W}_{\rm{VI}}^{\operatorname{sing}}$, we have a covering $\rho : \Gamma^{\operatorname{ns}} \to \mathscr{W}_{\rm{VI}}\setminus\mathscr{W}_{\rm{VI}}^{\operatorname{sing}}$.
The fibre $\Gamma_w$ over $w\in\mathscr{W}_{\rm{VI}}\setminus \mathscr{W}_{
\rm{VI}}^{\operatorname{sing}}$ consists of 24 points, corresponding to the lines $L_1,\dots,L_{24}$ on $\mathcal{M}_w$. 

To define the algebraic monodromy of the family $\mathcal{M} \to \mathscr{W}_{\rm{VI}}$, we construct a field extension $K_{\Gamma}$ of $\C(\mathscr{W}_{\rm{VI}})$ that can be thought of as the smallest field extension necessary to write all lines rationally. 
We define this field through the incidence variety of lines $\Gamma$, which, as we will see, is not irreducible.
\begin{definition} \label{def:KgammaPVI}
    For any irreducible component $\Gamma_r$ of $\Gamma$, pulling back the function field of $\mathscr{W}_{\rm{VI}}$ via $\rho|_{\Gamma_{r}}$ we get an algebraic field extension $\C(\mathscr{W}_{\rm{VI}})\cong\rho|_{\Gamma_{r}}^*\C(\mathscr{W}_{\rm{VI}})\subset \C(\Gamma_r)$.
    We define the field extension $K_{\Gamma}$ of $\C(\mathscr{W}_{\rm{VI}})$ to be the smallest subfield in the algebraic closure of $\C(\mathscr{W}_{\rm{VI}})$ containing all of the normal closures of the function fields $\C(\Gamma_r)$ (their {compositum}).
\end{definition}
We will later see that $K_{\Gamma}$ is itself normal in the algebraic closure of $\C(\mathscr{W}_{\rm{VI}})$.

 \begin{definition}
 Consider (the normal closure of) the algebraic field extension $\C(\mathscr{W}_{\rm{VI}}) \subset K_{\Gamma}$.
   The \emph{algebraic monodromy} of the family $\mathcal{M}\to \mathscr{W}_{\rm{VI}}$ is the Galois group of this field extension:
    \begin{equation*}
        \operatorname{Gal}\left( K_{\Gamma}/\C (\mathscr{W}_{\rm{VI}})\right).
    \end{equation*}
\end{definition}

The following proposition shows that the algebraic monodromy also forms $W(D_4)$.
\begin{proposition}\label{prop:algebraicmonodromyPVI} 
    The field extension $\C(\mathscr{W}_{\rm{VI}})\subset \C(\mathscr{U}_{\rm{VI}})$, defined in \Cref{lem:PvimorphismUtoW}, is a Galois extension and the algebraic monodromy of $\mathcal{M}\to\mathscr{W}_{\rm{VI}}$ is 
    \begin{equation*}
\operatorname{Gal}\left( K_{\Gamma}/\C (\mathscr{W}_{\rm{VI}})\right)\cong \operatorname{Gal}\left(\C(\mathscr{U}_{\rm{VI}})/\C(\mathscr{W}_{\rm{VI}})\right)\cong W(D_4).
    \end{equation*}
    Here, the first isomorphism comes from a $\C(\mathscr{W}_{\rm{VI}})$-linear isomorphism of fields 
    $K_{\Gamma} \to \C(\mathscr{U}_{\rm{VI}})$. 
    The second isomorphism comes from the action of $W(D_4)$ on $\C(\mathscr{U}_{\rm{VI}})$ given by 
    \begin{equation} \label{pvi:actionofD4onU}
\begin{aligned}
    r_1 &: &&u_1\to u_1, \quad 
    &&u_2\to u_2, \quad 
    &&u_3\to u_3, \quad 
    &&u_4 \to \frac{u_1 u_2}{u_3 u_4}, \\
    r_2 &: &&u_1\to \frac{u_4}{u_2}, \quad 
    &&u_2\to \frac{u_4}{u_1}, \quad 
    &&u_3\to \frac{u_3 u_4}{u_1 u_2}, \quad 
    &&u_4\to u_4. \\
    r_3 &: &&u_1\to \frac{u_3}{u_2}, \quad 
    &&u_2\to \frac{u_3}{u_1}, \quad 
    &&u_3 \to u_3, \quad 
    &&u_4\to \frac{u_3 u_4}{u_1 u_2}, \\
    r_4 &: &&u_1 \to u_1, \quad 
    &&u_2\to \frac{u_1}{u_3}, \quad 
    &&u_3\to \frac{u_1}{u_2}, \quad
    &&u_4 \to u_4,
\end{aligned}
\end{equation}
which keeps $(w_1,w_2,w_3,w_4)$ fixed and induces the same permutations of lines as in \Cref{prop:combinatorialmonodromyPVI}.
\end{proposition}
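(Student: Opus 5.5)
We prove that $\C(\mathscr{U}_{\rm{VI}})/\C(\mathscr{W}_{\rm{VI}})$ is Galois with group $W(D_4)$, and then identify $K_\Gamma$ with $\C(\mathscr{U}_{\rm{VI}})$ inside an algebraic closure of $\C(\mathscr{W}_{\rm{VI}})$.

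\emph{Step 1: the group action.} The maps in \eqref{pvi:actionofD4onU} are monomial automorphisms of the torus $\mathscr{U}_{\rm{VI}}$, that is, elements of $\operatorname{GL}_4(\Z)$ acting on exponent vectors. We first check that each $r_k$ is an involution and that the Coxeter relations of $D_4$ hold. This is a finite computation with $4\times 4$ integer matrices, so it yields a homomorphism $W(D_4)\to\Aut(\C(\mathscr{U}_{\rm{VI}}))$. Next we verify, by substitution into the formulas of \Cref{lem:PvimorphismUtoW}, that each $r_k$ fixes $w_1,\dots,w_4$. Each $w_i$ is a sum of Laurent monomials, and $r_k$ permutes these monomials. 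Finally we substitute \eqref{pvi:actionofD4onU} into the $b_k$ of \eqref{eq:defbpvi}. Since each $w_i$ is fixed, this permutes the lines in \eqref{eq:linesPVI} exactly as $\tilde r_k$ in \Cref{prop:combinatorialmonodromyPVI}. Because that map $W(D_4)\to \operatorname{Fix}(\mathcal{G}^\infty_{\rm VI})$ is an isomorphism, the action on $\C(\mathscr{U}_{\rm{VI}})$ is faithful. So $G\cong W(D_4)$, of order $192$, acts on $\C(\mathscr{U}_{\rm{VI}})$ with $\C(\mathscr{W}_{\rm{VI}})\subseteq \C(\mathscr{U}_{\rm{VI}})^G$.

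\emph{Step 2: the degree.} By Artin's theorem, $[\C(\mathscr{U}_{\rm{VI}}):\C(\mathscr{U}_{\rm{VI}})^G]=192$. It therefore suffices to show $[\C(\mathscr{U}_{\rm{VI}}):\C(\mathscr{W}_{\rm{VI}})]\le 192$, and we bound the degree of the dominant map $\mathscr{U}_{\rm{VI}}\to\mathscr{W}_{\rm{VI}}$ by a generic fibre count. Fix generic $w$. Any preimage $u$ determines the $24$ values $b_k+b_k^{-1}$ at which the affine lines meet the three lines at infinity; these values are intrinsic to $\mathcal{M}_w$. The set $\{b_1^{\pm1},\dots,b_8^{\pm1}\}$ is thereby determined as the set of solutions of $b+b^{-1}=x_1$ over the eight values $x_1$. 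The unordered pair $\{u_1,u_1^{-1}\}=\{b_5,b_6\}$ is one of eight pairs in this set. A choice of labelling $u$ is then pinned down by choices among finitely many possibilities: choosing which pair is $\{b_5,b_6\}$ and an orientation already fixes $u_1$, and consistency with the other two families then fixes the rest. Each choice is realised by the $G$-orbit, so the fibre is exactly a $G$-orbit of size $192$. Hence the extension is Galois with group $W(D_4)$.

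\emph{Step 3: identifying $K_\Gamma$.} This is the main obstacle. We must relate the abstractly defined $K_\Gamma$ to $\C(\mathscr{U}_{\rm{VI}})$. Every line $L_k$ is defined over $\C(\mathscr{U}_{\rm{VI}})$, since the $b_k$ are monomials in $u$. Hence there are rational maps $\mathscr{U}_{\rm{VI}}\to\Gamma$, $u\mapsto(w(u),L_k)$. Each of these lands in some irreducible component $\Gamma_r$, and this gives embeddings $\C(\Gamma_r)\hookrightarrow\C(\mathscr{U}_{\rm{VI}})$ over $\C(\mathscr{W}_{\rm{VI}})$. Because $\C(\mathscr{U}_{\rm{VI}})$ is Galois, each normal closure also lies in it, so $K_\Gamma\subseteq\C(\mathscr{U}_{\rm{VI}})$. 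By Remark \ref{rem:combinatorialremarkPVI}, $G$ acts transitively on each family of eight lines and the orbits are exactly these three families. So $\Gamma$ has three components of degree $8$, with function fields $\C(\mathscr{W}_{\rm{VI}})(b_k+b_k^{-1},\ldots)$. For the reverse inclusion we use that $K_\Gamma$ corresponds under the Galois correspondence to the subgroup of $G$ fixing every line. By faithfulness (Step 1), only the identity fixes all lines, so $K_\Gamma=\C(\mathscr{U}_{\rm{VI}})$. Concretely, we can exhibit $u_1=b_5$ and the other $u_i$ as rational expressions in the line coefficients $b_k$, which are rational functions on the components $\Gamma_r$. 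The care needed is that the $b_k$ themselves, not just $b_k+b_k^{-1}$, are rational on $\Gamma_r$. This holds because the second equation in \eqref{eq:linesPVI} records $b_k$ as the slope of the line, and this slope is read off from the Grassmannian coordinates.
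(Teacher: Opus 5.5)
The gap is in your Step 2, which carries all the weight. Artin's theorem only gives $[\C(\mathscr{U}_{\rm{VI}}):\C(\mathscr{U}_{\rm{VI}})^G]=192$. You still need $[\C(\mathscr{U}_{\rm{VI}}):\C(\mathscr{W}_{\rm{VI}})]\le 192$, that is, that every preimage of a generic $w$ lies in the $G$-orbit of a given one, and your fibre count does not show this.

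\begin{itemize}
\item Among $L_1,\dots,L_8$ there are only four distinct values of $x_1$, since each reciprocal pair $\{b_1,b_2\},\{b_3,b_4\},\{b_5,b_6\},\{b_7,b_8\}$ lies in a single plane $x_1=\mathrm{const}$. So $u_1=b_5$ can be any of the eight roots of $\Lambda_1$.
\item The claim that ``consistency with the other two families then fixes the rest'' cannot hold as stated. It would bound the fibre by $8$, whereas the stabiliser of $L_5$ in $W(D_4)$ has order $24$.
\item No count of the remaining choices is given. The sentence ``each choice is realised by the $G$-orbit'' assumes exactly what has to be proved.
\end{itemize}

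Step 3 inherits the problem, because your inclusion $K_\Gamma\subseteq\C(\mathscr{U}_{\rm{VI}})$ uses normality of $\C(\mathscr{U}_{\rm{VI}})$ over $\C(\mathscr{W}_{\rm{VI}})$.

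The missing idea, which is what the paper uses, is to bound the Galois side by the combinatorial group rather than by counting a fibre. All $27$ lines of the generic fibre are defined over $\C(\mathscr{U}_{\rm{VI}})$. This field is generated over $\C(\mathscr{W}_{\rm{VI}})$ by the slopes $b_k$, for instance $u_1=b_5$, $u_2=b_{13}$, $u_3=b_{21}$, $u_4=b_{19}$. Hence any $\C(\mathscr{W}_{\rm{VI}})$-embedding of $\C(\mathscr{U}_{\rm{VI}})$ into an algebraic closure has the following properties:
\begin{itemize}
\item it maps lines on $\overline{\mathcal{M}}_w$ to lines, so it lands back in $\C(\mathscr{U}_{\rm{VI}})$;
\item it preserves incidence and fixes $L_{25},L_{26},L_{27}$;
\item it is determined by the permutation of lines it induces.
\end{itemize}
This gives normality; in the paper this appears as $\C(\mathscr{U}_{\rm{VI}})$ being the splitting field of $\Lambda_1\Lambda_2\Lambda_3$, read off from the Pl\"ucker equations. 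It also gives an injection of the Galois group into $\operatorname{Fix}_{\operatorname{Aut}(\mathcal{G}_{\rm{VI}})}(\mathcal{G}_{\rm{VI}}^\infty)$, which has order $192$.

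Your Step 1 then supplies surjectivity exactly as in the paper. Once normality is established, your Galois-correspondence argument in Step 3, that only the identity fixes every line and hence $K_\Gamma=\C(\mathscr{U}_{\rm{VI}})$, is a valid and somewhat more conceptual substitute for the paper's explicit reduction to the octics $\Lambda_i$.
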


\begin{proof}
    We first establish the $\C(\mathscr{W}_{\rm{VI}})$-linear isomorphism of fields between $K_{\Gamma}$ and $\C(\mathscr{U}_{\rm{VI}})$. 
    To do this, we describe the incidence variety using (dual) Pl\"ucker coordinates on $\mathbb{G}(1,3)$ in the following way.
For a projective line in $\p^3$ defined as the intersection of two hyperplanes
\begin{equation*}
    \ell = V( a_0 X_0+a_1X_1+a_2X_2+a_3X_3, b_0 X_0+b_1X_1+b_2X_2+b_3X_3),
\end{equation*}
the Pl\"ucker coordinates of $\ell$ are $[P_{0,1}:P_{0,2}:P_{0,3}:P_{1,2}:P_{2,3}:P_{3:1}]\in V(P_{0,1}P_{2,3}+ P_{0,3}P_{1,2}+P_{0,2}P_{1,3})\subset \p^5$, defined by 
\begin{equation*}
    P_{i,j}=a_i b_j - a_j b_i, \quad i,j=0,\dots,3.
\end{equation*}

We work with the incidence variety as an algebraic set in $\mathscr{W}_{\rm{VI}}\times \p^5$ via Pl\"ucker coordinates of lines.
These can be computed from the expressions for lines in \eqref{eq:linesPVI} and are given in \Cref{tab:PVI:plucker:1to24}.
In particular, by the last three columns of the table,
\begin{equation*}
    P_{2,3}P_{3,1}P_{1,2}=0
\end{equation*}
on $\Gamma$. Corresponding to the three components of $\Gamma$ defined by $P_{2,3}=0$, $P_{3,1}=0$ and $P_{1,2}=0$,
are respectively the sets of lines
 $\{L_1,\ldots, L_8\}$, $\{L_9,\ldots, L_{16}\}$ and $\{L_{17},\ldots, L_{24}\}$.
Each component is a variety in $\mathscr{W}_{\rm{VI}}\times \p^5$ of dimension 4.

The lines $L_1,\dots,L_8$ correspond to points in the hypersurface $P_{2,3}=0$ with $P_{3,1}\neq0$, so to study this part of $\Gamma$ we restrict to the affine chart with coordinates $(p_{0,1},p_{0,2},p_{0,3},p_{1,2})\in\C^4$, 
$$[P_{0,1}:P_{0,2}:P_{0,3}:P_{1,2}:P_{2,3}:P_{3:1}] = [p_{0,1}:p_{0,2}:p_{0,3}:p_{1,2}:0:1].$$
The condition that $\ell\subset \overline{\mathcal{M}}_w$ and also the Pl\"ucker relation is written in these coordinates as
\begin{equation} \label{eq:pluckereqs1}
    \begin{gathered}
        p_{0,3} p_{0,2}^2+p_{1,2} p_{0,2}^2+p_{0,3}^2 p_{1,2} =0, \\
        w_3 p_{0,3} p_{1,2}^2 p_{0,2}-w_2 p_{0,3}^2 p_{1,2}^2-p_{0,1} p_{0,3} p_{0,2}^2-2 p_{0,1} p_{1,2} p_{0,2}^2=0,\\
        w_1 p_{0,3}^2 p_{1,2} p_{0,2}-w_3 p_{0,1} p_{0,3} p_{1,2} p_{0,2}+w_4 p_{0,3}^2 p_{1,2}^2+p_{0,1}^2 p_{0,2}^2+p_{0,3}^2 p_{0,2}^2=0,\\
        p_{0,2}+p_{0,3} p_{1,2} = 0.
    \end{gathered}
\end{equation}
All lines $L_1\dots,L_8$ have $p_{0,1},p_{0,2},p_{0,3},p_{1,2}\neq 0$, for generic $w$, and under this assumption the equations \eqref{eq:pluckereqs1} reduce, letting $p_{1,2}=-b$, to a single equation $\Lambda_1(b)=0$, where $\Lambda_1(b)$ is the degree eight polynomial
\begin{equation} \label{pvi:Lambda1}
    \Lambda_1(b)= b^8-w_1b^7 +w_4 b^6 + (w_1- w_2 w_3) b^5+ (w_2^2 +w_3^2-2w_4-2) b^4 +(w_1- w_2 w_3)b^3+ w_4 b^2- w_1 b+1.
\end{equation}
The Pl\"ucker coordinates of lines $L_9,\dots,L_{16}$ are given by the solutions of equations that similarly reduce to $\Lambda_2(b)=0$, where 
\begin{equation} \label{pvi:Lambda2}
    \Lambda_2(b)= b^8-w_2b^7 +w_4b^6 + (w_2 - w_1 w_3) b^5+(w_1^2+ w_3^2-2w_4-2)b^4+(w_2- w_1 w_3)b^3+ w_4 b^2 - w_2 b+1,
\end{equation}
while $L_{17},\dots,L_{24}$ lead to $\Lambda_3(b)=0$, where 
\begin{equation} \label{pvi:Lambda3}
    \Lambda_3(b)= b^8-w_3b^7 + w_4 b^6+ (w_3  - w_1 w_2) b^5+( w_1^2+ w_2^2-2  w_4-2)b^4 +(  w_3-w_1 w_2)b^3+ w_4 b^2 - w_3b+1.
\end{equation}

Recalling the definition of $b_k$, $1\leq k\leq 24$, in equation \eqref{eq:defbpvi}, the three polynomials \eqref{pvi:Lambda1}, \eqref{pvi:Lambda2}, \eqref{pvi:Lambda3} split over $\C(\mathscr{U}_{\rm{VI}})$, with roots $b_1,\dots,b_8$ for $\Lambda_1$,  $b_9,\dots,b_{16}$ for $\Lambda_2$, and $b_{17},\dots,b_{24}$ for $\Lambda_3$.
Therefore the irreducible components $\Gamma_1,\Gamma_2$, and $\Gamma_3$, corresponding to $L_1,\dots,L_8$, $L_9,\dots,L_{16}$, and  $L_{17},\dots,L_{24}$ respectively, have function fields whose normal closures are $\C(b_1,\dots,b_8)$, $\C(b_7,\dots,b_{16})$, and 
$\C(b_{17},\dots,b_{24})$ respectively.
This means that $K_{\Gamma}\cong\mathbb{C}(b_1,\dots, b_{24}) \cong \C(\mathscr{U}_{\rm{VI}})$.
    Further, since $\C(\mathscr{U}_{\rm{VI}
    })$ is the splitting field of the polynomial $\Lambda_1(b)\Lambda_2(b)\Lambda_3(b)$ over $\mathbb{C}(\mathscr{W}_{\rm{VI}})$, it is a Galois extension of $\mathbb{C}(\mathscr{W}_{\rm{VI}})$.

\begingroup

\setlength{\tabcolsep}{15pt} 
\renewcommand{\arraystretch}{1.75} 

\begin{table}[h]
    \begin{equation*}
    \begin{array}{c||c|c|c|c|c|c|}
             & P_{0,1}   & P_{0,2}    & P_{0,3}   & P_{1,2}   &  P_{2,3}   & P_{3,1}        \\
        \hline\hline 
        L_1  & \frac{u_1}{u_2}+u_3+\frac{u_3 u_4}{u_1 u_2}+\frac{1}{u_4}     & -\frac{u_3^2}{u_2^2}-1     & -\frac{u_2}{u_3}-\frac{u_3}{u_2}   & \frac{u_3}{u_2}  & 0   & -1           \\
        \hline
        L_2  &  \frac{u_2}{u_1}+\frac{u_1 u_2}{u_3 u_4}+u_4+\frac{1}{u_3}     & -\frac{u_2^2}{u_3^2}-1     & -\frac{u_2}{u_3}-\frac{u_3}{u_2}   & \frac{u_2}{u_3}  & 0   & -1           \\
        \hline
        L_3  & \frac{u_2}{u_1}+u_3+\frac{u_3 u_4}{u_1 u_2}+u_4     & -\frac{u_3^2 u_4^2}{u_1^2}-1  & -\frac{u_1}{u_3 u_4}-\frac{u_3 u_4}{u_1}  & \frac{u_3 u_4}{u_1}  & 0   & -1           \\
        \hline
        L_4  & \frac{u_1}{u_2}+\frac{u_2 u_1}{u_3 u_4}+\frac{1}{u_3}+\frac{1}{u_4}    &-\frac{u_1^2}{u_3^2 u_4^2}-1  & -\frac{u_1}{u_3 u_4}-\frac{u_3 u_4}{u_1}  & \frac{u_1}{u_3 u_4}  & 0   & -1           \\
        \hline
        L_5  & \frac{u_1}{u_2}+\frac{u_2 u_1}{u_3 u_4}+u_3+u_4    &-u_1^2-1  & -u_1-\frac{1}{u_1}  & u_1  & 0   & -1           \\
        \hline
        L_6  & \frac{u_2}{u_1}+\frac{u_3 u_4}{u_1 u_2}+\frac{1}{u_3}+\frac{1}{u_4}    &-\frac{1}{u_1^2}-1  & -u_1-\frac{1}{u_1}  & \frac{1}{u_1}  & 0   & -1           \\
        \hline
        L_7  & \frac{u_2}{u_1}+\frac{u_1 u_2}{u_3 u_4}+u_3+\frac{1}{u_4}    &-\frac{u_2^2}{u_4^2}-1  & -\frac{u_2}{u_4}-\frac{u_4}{u_2}  & \frac{u_2}{u_4}  & 0   & -1           \\
        \hline
        L_8  & \frac{u_1}{u_2}+\frac{u_3 u_4}{u_1 u_2}+u_4+\frac{1}{u_3}    &-\frac{u_4^2}{u_2^2}-1  & -\frac{u_2}{u_4}-\frac{u_4}{u_2}  & \frac{u_4}{u_2}  & 0   & -1           \\
        \hline\hline 
        L_9  & -\frac{u_3^2}{u_1^2}-1    &\frac{u_2}{u_1}+u_3+\frac{u_3 u_4}{u_1 u_2}+\frac{1}{u_4}  & -\frac{u_1}{u_3}-\frac{u_3}{u_1} & -\frac{u_3}{u_1}  & 1   & 0           \\
        \hline
        L_{10}  & -\frac{u_1^2}{u_3^2}-1 & \frac{u_1}{u_2}+\frac{u_2 u_1}{u_3 u_4}+u_4+\frac{1}{u_3} & -\frac{u_1}{u_3}-\frac{u_3}{u_1}  & -\frac{u_1}{u_3}  & 1   & 0           \\
        \hline
        L_{11} &-\frac{u_2^2}{u_3^2 u_4^2}-1    &\frac{u_2}{u_1}+\frac{u_1 u_2}{u_3 u_4}+\frac{1}{u_3}+\frac{1}{u_4}  & -\frac{u_2}{u_3 u_4}-\frac{u_3 u_4}{u_2} &-\frac{u_2}{u_3 u_4}  & 1   & 0           \\
        \hline
        L_{12}  & -\frac{u_3^2 u_4^2}{u_2^2}-1  &\frac{u_1}{u_2}+u_3+\frac{u_3 u_4}{u_1 u_2}+u_4  & -\frac{u_2}{u_3 u_4}-\frac{u_3 u_4}{u_2} & -\frac{u_3 u_4}{u_2} & 1   & 0          \\
        \hline
        L_{13}  & -\frac{1}{u_2^2}-1  &\frac{u_1}{u_2}+\frac{u_3 u_4}{u_1 u_2}+\frac{1}{u_3}+\frac{1}{u_4}  & -u_2-\frac{1}{u_2} & -\frac{1}{u_2}  & 1   & 0           \\
        \hline
        L_{14}  & -u_2^2-1  & \frac{u_2}{u_1}+\frac{u_1 u_2}{u_3 u_4}+u_3+u_4 & -u_2-\frac{1}{u_2} & -u_2  & 1   & 0           \\
        \hline
        L_{15}  & -\frac{u_4^2}{u_1^2}-1  & \frac{u_2}{u_1}+\frac{u_3 u_4}{u_1 u_2}+u_4+\frac{1}{u_3} & -\frac{u_1}{u_4}-\frac{u_4}{u_1} & -\frac{u_4}{u_1}  & 1   & 0           \\
        \hline
        L_{16}  & -\frac{u_1^2}{u_4^2}-1  & \frac{u_1}{u_2}+\frac{u_2 u_1}{u_3 u_4}+u_3+\frac{1}{u_4} & -\frac{u_1}{u_4}-\frac{u_4}{u_1}  & -\frac{u_1}{u_4} & 1   & 0          \\
        \hline \hline
        L_{17}  & -\frac{u_2^2}{u_1^2}-1  & -\frac{u_1}{u_2}-\frac{u_2}{u_1} &\frac{u_2}{u_3 u_4}+u_2+\frac{u_3}{u_1}+\frac{u_4}{u_1} & 0 & -1   & \frac{u_2}{u_1}         \\
        \hline
        L_{18}  &-\frac{u_1^2}{u_2^2}-1  & -\frac{u_1}{u_2}-\frac{u_2}{u_1} & \frac{u_1}{u_3}+\frac{u_1}{u_4}+\frac{u_3 u_4}{u_2}+\frac{1}{u_2}  & 0 & -1   & \frac{u_1}{u_2}         \\
        \hline
        L_{19}  & -u_4^2-1 & -u_4-\frac{1}{u_4} & \frac{u_1}{u_3}+u_2+\frac{u_3 u_4}{u_2}+\frac{u_4}{u_1} & 0 & -1   & u_4          \\
        \hline
        L_{20}  & -\frac{1}{u_4^2}-1  & -u_4-\frac{1}{u_4} & \frac{u_1}{u_4}+\frac{1}{u_2}+\frac{u_2}{u_3 u_4}+\frac{u_3}{u_1} & 0 & -1   & \frac{1}{u_4}          \\
        \hline
        L_{21}  & -u_3^2-1 & -u_3-\frac{1}{u_3} & \frac{u_1}{u_4}+u_2+\frac{u_3 u_4}{u_2}+\frac{u_3}{u_1} & 0 & -1   & u_3        \\
        \hline
        L_{22}  & -\frac{1}{u_3^2}-1   & -u_3-\frac{1}{u_3} & \frac{u_1}{u_3}+\frac{1}{u_2}+\frac{u_2}{u_3 u_4}+\frac{u_4}{u_1} & 0 & -1   & \frac{1}{u_3}          \\
        \hline
        L_{23}  & -\frac{u_1^2 u_2^2}{u_3^2 u_4^2}-1 & -\frac{u_1 u_2}{u_3 u_4}-\frac{u_3 u_4}{u_1 u_2} & \frac{u_1}{u_3}+\frac{u_1}{u_4}+u_2+\frac{u_2}{u_3 u_4}  & 0 & -1   & \frac{u_1 u_2}{u_3 u_4}          \\
        \hline
        L_{24}  & -\frac{u_3^2 u_4^2}{u_1^2 u_2^2}-1  & -\frac{u_1 u_2}{u_3 u_4}-\frac{u_3 u_4}{u_1 u_2} & \frac{u_4 u_3}{u_2}+\frac{u_3}{u_1}+\frac{u_4}{u_1}+\frac{1}{u_2} & 0 & -1   & \frac{u_3 u_4}{u_1 u_2}          \\
        \hline
    \end{array}
    \end{equation*}
    \caption{Pl\"ucker coordinates of lines $L_1,\dots,L_{24}$ on the monodromy surface for $\pain{VI}$}
    \label{tab:PVI:plucker:1to24}
\end{table}
\endgroup

Elements of $\operatorname{Gal}\left(\C(\mathscr{U}_{\rm{VI}})/\C(\mathscr{W}_{\rm{VI}})\right)$ give permutations of $b_1,\dots,b_{24}$, which in turn induce permutations of the lines $L_1,\dots,L_{24}$ which necessarily preserve incidence relations among them, yielding a homomorphism 
\begin{equation} \label{eq:galtofix}
\operatorname{Gal}\left(\C(\mathscr{U}_{\rm{VI}})/\C(\mathscr{W}_{\rm{VI}})\right)\rightarrow \operatorname{Fix}_{\operatorname{Aut}(\mathcal{G}_{\rm{VI}})}(\mathcal{G}_{\rm{VI}}^\infty).
\end{equation}
The kernel is trivial, since any nontrivial element of $\operatorname{Gal}\left(\C(\mathscr{U}_{\rm{VI}})/\C(\mathscr{W}_{\rm{VI}})\right)$ acts nontrivially on $\mathbb{C}(\mathscr{U}_{\rm{VI}})=\C(b_1,\dots,b_{24})$. 
On the other hand, the actions of $r_1,r_2,r_3,r_4$ on $\C(\mathscr{U}_{\rm{VI}})$ in \Cref{pvi:actionofD4onU} give elements of $\operatorname{Gal}\left(\C(\mathscr{U}_{\rm{VI}})/\C(\mathscr{W}_{\rm{VI}})\right)$
which induce the permutations of lines as claimed, and we have the required isomorphism \eqref{eq:galtofix}.
\end{proof}

\begin{remark}\label{rem:rational}
    Proposition \ref{prop:algebraicmonodromyPVI} implies that the field extension from $\C(\mathscr{W}_{\rm{VI}})$ to $\C(\mathscr{U}_{\rm{VI}})$ is the minimal one required to write all lines on $\mathcal{M}_w$ rationally.
\end{remark}

It is striking that the Galois group $\operatorname{Gal}\left(\C(\mathscr{U}_{
\rm{VI}})/\C(\mathscr{W}_{\rm{VI}})\right)$ is not larger, since in the proof of \Cref{prop:algebraicmonodromyPVI}, it was shown to be the splitting field  of three palindromic octic polynomials, and the Galois group of a general palindromic octic polynomial is $C_2\wr \mathfrak{S}_4$ and already strictly contains $W(D_4)$. 
This can be explained as follows.
Firstly, we note that the splitting field of the three polynomials over $\C(\mathscr{W}_{\rm{VI}})$ is already exhausted by that of any one of them, and it suffices to consider $\Lambda_i(b)\in \C[w_1,w_2,w_3,w_4,b]$ for a single $i\in\{1,2,3\}$. 
Combinatorially, this is explained by the faithful action of $W(D_4)$ restricted to any one of the three subsets of eight lines in  \Cref{rem:combinatorialremarkPVI}.

Indeed, let $K_i$ be the the splitting field of $\Lambda_i(b)$ over $\C(\mathscr{W}_{\rm{VI}})$ and $G_i$ the corresponding Galois group for $i=1,2,3$.
Writing $K=K_1\cap K_2\cap K_3$, so that 
\begin{equation*}
\operatorname{Gal}\left(\C(\mathscr{U}_{\rm{VI}})/\C(\mathscr{W}_{\rm{VI}})\right)\cong \{(g_1,g_2,g_3)\in G_1\times G_2\times G_3: g_1|_K=g_2|_K=g_3|_K\},
\end{equation*}
we have the following result.
\begin{proposition} \label{prop:galoisprojection}
For $i=1,2,3$, the homomorphism obtained by projecting to  $G_i$,
    \begin{equation}\label{eq:projectionG1}
 \operatorname{Gal}\left(\C(\mathscr{U}_{\rm{VI}})/\C(\mathscr{W}_{\rm{VI}})\right)\rightarrow G_i, \quad (g_1,g_2,g_3)\mapsto g_i,
\end{equation}
is an isomorphism.
\end{proposition}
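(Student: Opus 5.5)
We reduce the statement to the fact that each $K_i$ is already all of $\C(\mathscr{U}_{\rm{VI}})$. Since $\C(\mathscr{U}_{\rm{VI}})$ is the compositum $K_1K_2K_3$ (it equals $\C(b_1,\dots,b_{24})$ by the proof of \Cref{prop:algebraicmonodromyPVI}), the projection \eqref{eq:projectionG1} is just restriction $g\mapsto g|_{K_i}$. It is automatically surjective, because $K_i/\C(\mathscr{W}_{\rm{VI}})$ is normal, so every automorphism of $K_i$ extends to the Galois extension $\C(\mathscr{U}_{\rm{VI}})$. Its kernel is $\operatorname{Gal}(\C(\mathscr{U}_{\rm{VI}})/K_i)$. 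So the projection is an isomorphism if and only if $K_i=\C(\mathscr{U}_{\rm{VI}})$.

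To show $K_1=\C(b_1,\dots,b_8)$ equals $\C(u_1,u_2,u_3,u_4)$, we express each $u_j$ rationally in the $b_k$, $k\le 8$, using \eqref{eq:defbpvi}:
\begin{itemize}
\item $u_1=b_5$;
\item $u_3u_4=b_3u_1=b_3b_5$;
\item $u_2/u_3=b_2$ and $u_2/u_4=b_7$.
\end{itemize}
Multiplying the last two gives $u_2^2/(u_3u_4)=b_2b_7$, so $u_2^2=b_2b_7b_3b_5$. This only yields $u_2$ up to sign, so it is not enough.

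The possible obstruction is therefore a quadratic subextension, and this is the main step. Consider $H=\operatorname{Gal}(\C(\mathscr{U}_{\rm{VI}})/K_1)$, the elements of $W(D_4)$ fixing $b_1,\dots,b_8$. Under the isomorphism of \Cref{prop:algebraicmonodromyPVI}, these are the elements whose induced permutation fixes $L_1,\dots,L_8$ pointwise, since a line $L_k$ determines $b_k$ via $P_{1,2}$ in \Cref{tab:PVI:plucker:1to24}. By the faithfulness observed in \Cref{rem:combinatorialremarkPVI}, such an element is trivial in $\operatorname{Fix}_{\operatorname{Aut}(\mathcal{G}_{\rm{VI}})}(\mathcal{G}_{\rm{VI}}^\infty)\cong W(D_4)$. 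Hence $H$ is trivial and $K_1=\C(\mathscr{U}_{\rm{VI}})$.

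To make this airtight without relying on the remark, I would check faithfulness directly. From the generators $\tilde r_1,\dots,\tilde r_4$ in \Cref{prop:combinatorialmonodromyPVI}, compute the permutation action on $\{1,\dots,8\}$ and verify that the image subgroup of $\mathfrak{S}_8$ has order $192$; this is a finite, routine check. Alternatively, one can try an explicit formula for $u_2$ in the $b_k$, for instance via $b_1b_7=u_3/u_4$ combined with the $P_{0,1}$ entries of \Cref{tab:PVI:plucker:1to24}, which are rational in $u$ and lie in $K_1$. The counting argument is cleaner, though.

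The cases $i=2,3$ follow in the same way. One option is the analogous faithfulness check on $\{9,\dots,16\}$ and $\{17,\dots,24\}$. The other is to transport the $i=1$ case using $\sigma_{(043)}$, which cyclically permutes the three blocks and conjugates the family to itself, so it maps $K_1$ to $K_2$ to $K_3$ compatibly with the $\C(\mathscr{W}_{\rm{VI}})$ structure up to the induced automorphism of $\C(\mathscr{W}_{\rm{VI}})$.
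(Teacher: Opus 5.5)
Your proof is correct, but it takes a different route from the paper's.

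\textbf{The paper's route.} It verifies, by explicit monomial identities among the $b_k$ in \eqref{eq:defbpvi} (e.g.\ $u_2^2=b_2b_3b_5b_7=b_{13}^2=b_{17}b_{19}b_{21}b_{23}$), that every $b_k^2$ lies in $K=K_1\cap K_2\cap K_3$. It then kills the kernel by a sign argument. If $g_1=1$, then $g_2$ fixes each $b_k^2$ with $9\le k\le 16$. Since $g_2(b_k)$ must be a root of $\Lambda_2$ while $-b_k$ is not, $g_2=1$, and likewise $g_3=1$.

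\textbf{Your route.} You use the Galois correspondence to reduce to $K_i=\C(\mathscr{U}_{\rm{VI}})$. You obtain this from the embedding \eqref{eq:galtofix} together with faithfulness of $\langle\tilde r_1,\dots,\tilde r_4\rangle$ on each block of eight lines. This turns the combinatorial explanation the paper only mentions in passing (\Cref{rem:combinatorialremarkPVI}) into the actual proof.

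Your finite check does succeed. On $\{1,\dots,8\}$ the generators restrict to $(3\,7)(4\,8)$, $(5\,8)(6\,7)$, $(1\,5)(2\,6)$, $(3\,8)(4\,7)$. Each is a transposition of two of the reciprocal pairs $\{1,2\},\{3,4\},\{5,6\},\{7,8\}$, with or without a simultaneous inversion. Together they generate the group of even signed permutations of the four pairs, of order $192$. The blocks $\{9,\dots,16\}$ and $\{17,\dots,24\}$ behave the same way.

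\textbf{Comparison.} Your approach is more conceptual and explains why a single octic $\Lambda_i$ already has the full splitting field. However, it depends on \Cref{prop:combinatorialmonodromyPVI}, including the orbit count identifying the fixing group with $\langle\tilde r_i\rangle$, and on \Cref{prop:algebraicmonodromyPVI}. The paper's argument is self-contained given \eqref{eq:defbpvi}, needs no knowledge of the group order, and records the extra fact that $K$ contains all $b_k^2$.

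\textbf{Your explicit fallback also works.} The coordinate $P_{0,1}$ of $L_1$, normalised by $P_{3,1}=-1$, equals $\frac{w_3b_1-w_2}{b_1-b_1^{-1}}$ and so lies in $K_1$. By \Cref{tab:PVI:plucker:1to24}, with $u_1=b_5$, $u_3=u_2/b_2$, $u_4=u_2/b_7$ and $u_3u_4=b_3b_5$, it also equals $\frac{u_2}{b_2}+\frac{b_3+b_5+b_7}{u_2}$. Since it is nonzero and $u_2^2\in K_1$, this gives $u_2\in K_1$.
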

\begin{proof}
We use the explicit descriptions of $K_1 = \C(b_1,\dots,b_{8})$, $K_2 = \C(b_9,\dots,b_{16})$, $K_3 = \C(b_{17},\dots,b_{24})$ in \Cref{eq:defbpvi}, 
to show that
\begin{equation} \label{eq:squaresinL}
    \mathbb{C}(b_1^2,\ldots, b_{24}^2)\subseteq K.
\end{equation}
To see this, note that, for example,
\begin{equation*}
   u_1^2=b_5^2=b_9 b_{15} b_{11} b_{16}=b_{18}b_{19} b_{21}b_{23},
\end{equation*}
so that $b_5^2\in K$. Similarly,
\begin{equation*}
    u_2^2=b_2 b_3 b_5 b_7=b_{13}^2=b_{17} b_{19} b_{21} b_{23},
\end{equation*}
so that $b_{13}^2\in K$. The others follow analogously.

We show that the fact \eqref{eq:squaresinL} implies  the claimed isomorphism in the example of $G_1$. The others are analogous. 
Since $u_2^2\in K$, if $(g_1,g_2,g_3)$ is mapped to the identity in $G_1$, i.e. $g_1=1$, it follows that $g_1(u_2^2)=u_2^2$ and therefore $g_2(u_2^2)=u_2^2$. But $g_2$ has to map $u_2$ to another root of $\Lambda_2(b)$, so that necessarily $g_2(u_2)=u_2$. Similarly we see that $g_2(b_k)=b_k$ for $9\leq k\leq 16$ so that $g_2=1$. A similar argument shows that $g_3=1$ so that the kernel of the surjective homomorphism \eqref{eq:projectionG1} is indeed trivial.\end{proof}




In light of \Cref{prop:galoisprojection}, we can consider $\operatorname{Gal}\left(\C(\mathscr{U}_{\rm{VI}})/\C(\mathscr{W}_{\rm{VI}})\right)$ as the splitting field of $\Lambda_1(b)$ over $\C(\mathscr{W}_{\rm{VI}})$, without loss of generality.
What distinguishes $\Lambda_1(b)$ from a generic palindromic octic polynomial is the following.
\begin{proposition}
    The polynomial $\Lambda_1(b)$ is a palindromic octic polynomial with $\Lambda_1(1)\Lambda_1(-1)$ being a square in the base field, 
    so the Galois group   of its splitting field over $\mathscr{W}_{\rm{VI}}$ is $W(D_4)$.
\end{proposition}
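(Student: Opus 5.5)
Since $\Lambda_1$ is palindromic of degree eight, we write $\Lambda_1(b)=b^4\,\mu(b+b^{-1})$. Here $\mu(x)$ is a quartic in $x=b+1/b$ with coefficients in $\C(\mathscr{W}_{\rm{VI}})$, and substituting $b+b^{-1}=x$ into \eqref{pvi:Lambda1} gives it explicitly. The splitting field of $\Lambda_1$ is then obtained in two stages. First we adjoin the four roots $x_1,\dots,x_4$ of $\mu$. Then we adjoin the square roots $\sqrt{x_i^2-4}$, since $b_i=\tfrac12(x_i\pm\sqrt{x_i^2-4})$. This realises the Galois group $G_1$ as a subgroup of the hyperoctahedral group $C_2\wr\mathfrak{S}_4=C_2^4\rtimes\mathfrak{S}_4$, acting on the four pairs $\{b,b^{-1}\}$. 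The key identity is
$$\prod_{i=1}^4(x_i^2-4)=\prod_i(x_i-2)\prod_i(x_i+2)=\mu(2)\mu(-2)=\Lambda_1(1)\Lambda_1(-1).$$
If this product is a square in $\C(\mathscr{W}_{\rm{VI}})$, then the product $\prod_i\sqrt{x_i^2-4}$ is fixed by every Galois element. So the sign-change part of each Galois element lies in the even subgroup $\{\epsilon\in C_2^4:\prod\epsilon_i=1\}$, and $G_1\subseteq C_2^3\rtimes\mathfrak{S}_4\cong W(D_4)$.

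The first concrete step is to compute $\Lambda_1(1)$ and $\Lambda_1(-1)$ from \eqref{pvi:Lambda1}:
$$\Lambda_1(\pm1)=2\pm(\mp 2w_1)+2w_4\pm 2(w_1-w_2w_3)+w_2^2+w_3^2-2w_4-2,$$
with the signs tracked carefully. In the sum the $w_4$ terms cancel. One expects $\Lambda_1(1)=(w_2-w_3)^2$ and $\Lambda_1(-1)=(w_2+w_3)^2$, possibly up to a linear correction in $w_1$. I will verify the cancellation directly: for $b=1$ the odd-degree terms contribute $-2w_1+2(w_1-w_2w_3)=-2w_2w_3$ and the even-degree terms contribute $2+2w_4+w_2^2+w_3^2-2w_4-2$, giving $(w_2-w_3)^2$; the case $b=-1$ is symmetric and gives $(w_2+w_3)^2$. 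So the product is $(w_2^2-w_3^2)^2$, which is a square. This settles the first assertion.

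For the Galois group itself we need the reverse inclusion $W(D_4)\subseteq G_1$. The cleanest route is to use what is already proved. By \Cref{prop:algebraicmonodromyPVI}, $\operatorname{Gal}(\C(\mathscr{U}_{\rm{VI}})/\C(\mathscr{W}_{\rm{VI}}))\cong W(D_4)$, and by \Cref{prop:galoisprojection} the projection of this group onto $G_1$ is an isomorphism. Together these give $G_1\cong W(D_4)$ immediately, and the inclusion shown above identifies the group concretely as the even-sign-change subgroup of $C_2\wr\mathfrak{S}_4$.

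The main obstacle is presenting this as an explanation rather than a circular citation. To show independently that the image is the full index-two subgroup, I would check two things. First, that $\mu$ is irreducible with Galois group $\mathfrak{S}_4$, for instance by specialising $w$ to a value where the resolvent cubic is irreducible with nonsquare discriminant. Second, that the kernel onto $\mathfrak{S}_4$ is the full even subgroup $C_2^3$, by exhibiting explicit sign changes in \eqref{pvi:actionofD4onU}. One such sign change is $r_1$ composed with its conjugates: $r_1$ inverts $b_3,b_4$ and $b_7,b_8$ in pairs. The group order $192=8\cdot24$ then confirms that the inclusion is an equality.
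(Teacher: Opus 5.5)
Your proof is essentially the paper's: the identity $\Lambda_1(1)\Lambda_1(-1)=\prod_i(r_i-r_i^{-1})^2=(w_2^2-w_3^2)^2$, which you reach through $\mu(\pm2)$ rather than through $\Lambda_1(\pm1)=\prod_i(r_i\mp1)^2/(r_1r_2r_3r_4)$, forces an even number of inversions. The reverse inclusion then comes from \Cref{prop:algebraicmonodromyPVI} together with \Cref{prop:galoisprojection}, which you invoke explicitly where the paper leaves it implicit. The optional independent route you sketch is not needed, but it has slips. Specialising $w$ to a complex point cannot give an irreducible resolvent cubic, since every polynomial splits over $\C$. Also, $r_1$ by itself exchanges the pairs $\{b_3,b_4\}$ and $\{b_7,b_8\}$ rather than inverting them; the double inversion is $r_1r_4$. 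Finally, your first display should read $\mp2w_1$ rather than $\pm(\mp2w_1)$, though your term-by-term check afterwards is correct.
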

\begin{proof}
    Since $\Lambda_1(b)$ is palindromic, its roots come in pairs of reciprocals, 
    \begin{equation*}
      \{r_1,r_1^{-1}\}=\{b_1,b_2\},\quad
      \{r_2,r_2^{-1}\}=\{b_3,b_4\},\quad
      \{r_3,r_3^{-1}\}=\{b_5,b_6\},\quad
      \{r_4,r_4^{-1}\}=\{b_7,b_8\}.
    \end{equation*}
    The values of $\Lambda_1(b)$ at $b=1$ and $b=-1$ are
    \begin{equation*}
    \begin{aligned}
        \Lambda_1(+1) &= \frac{ (r_1-1)^2(r_2-1)^2(r_3-1)^2(r_4-1)^2}{r_1r_2r_3r_4}  = (w_2 - w_3)^2,\\
      \Lambda_1(-1) &=  \frac{ (r_1+1)^2(r_2+1)^2(r_3+1)^2(r_4+1)^2}{r_1r_2r_3r_4}  = (w_2 + w_3)^2.
  \end{aligned}
    \end{equation*}
Therefore their product is given by 
    $$\Lambda_1(1) \Lambda_1(-1) = (r_1-r_1^{-1})^2(r_2-r_2^{-1})^2(r_3-r_3^{-1})^2(r_4-r_4^{-1})^2 = (w_2^2-w_3^2)^2,$$
    which is a square in the base field.
    Making the definite choice $r_i=b_{2i-1}$, $1\leq i\leq 4$, we get 
    $$(r_1-r_1^{-1})(r_2-r_2^{-1})(r_3-r_3^{-1})(r_4-r_4^{-1}) = w_2^2-w_3^2.$$ 
    Thus, the Galois group of the splitting field of $\Lambda_1(b)$ over $\mathscr{W}_{\rm{VI}}$ consists of permutations of the four pairs of roots, wreathed with even numbers of inversions, recovering the description of $W(D_4)$ as the group of signed permutations of four elements with an even number of sign changes.
\end{proof}

\subsection{Analytic monodromy} \label{subsec:analyticmonodromyPVI}
~
We now give our third formulation of $W(D_4)$ as the monodromy of the family $\overline{\mathcal{M}}\to \mathscr{W}_{\rm{VI}}$, which is analytic and concerns deformation of lines on $\mathcal{M}_w$ along loops.

Consider the nonsingular locus of the family of projective surfaces $\overline{\mathcal{M}}\to\mathscr{W}_{\rm{VI}}$ with the complex analytic topology, i.e.
$\mathscr{W}_{\rm{VI}}\setminus \mathscr{W}_{\rm{VI}}^{\operatorname{sing}}$. Recall the covering map $\rho : \Gamma^{\operatorname{ns}} \to \mathscr{W}_{\rm{VI}}\setminus\mathscr{W}_{\rm{VI}}^{\operatorname{sing}}$, see Definition \ref{def:coveringpvi}. Take any $w_{*}\in \mathscr{W}_{\rm{VI}}\setminus \mathscr{W}_{\rm{VI}}^{\operatorname{sing}}$ and enumerate the points in the fibre above $w_{*}$ by $l_1,\ldots,l_{27}$. Any loop $\gamma\in \pi_1( \mathscr{W}_{\rm{VI}}\setminus \mathscr{W}_{\rm{VI}}^{\operatorname{sing}} ; w_{*})$,  induces a permutation of the points in the fibre, and thus an element $s_\gamma$ of $\mathfrak{S}_{27}$. This gives a homomorphism
\begin{equation}\label{eq:monodromyhom}
    \pi_1( \mathscr{W}_{\rm{VI}}\setminus \mathscr{W}_{\rm{VI}}^{\operatorname{sing}} ; w_{*})\rightarrow \mathfrak{S}_{27}, \qquad \gamma\mapsto s_\gamma.
\end{equation}
\begin{definition}
    The \emph{analytic monodromy} of the family $\mathcal{M}\to \mathscr{W}_{\rm{VI}}$ is the subgroup of $\mathfrak{S}_{27}$ given by the image of the homomorphism \eqref{eq:monodromyhom},
    defined up to overall conjugation, within $\mathfrak{S}_{27}$, induced by changing enumeration of lines and choice of base-point.
\end{definition}

The following shows that the analytic monodromy matches with the combinatorial and algebraic ones established above.

\begin{proposition} \label{prop:analyticmonodromyPVI}
    The analytic monodromy of the family $\mathcal{M}\to \mathscr{W}_{\rm{VI}}$ is $W(D_4)$.
\end{proposition}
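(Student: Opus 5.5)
Our approach is to compare the analytic monodromy with the algebraic monodromy of \Cref{prop:algebraicmonodromyPVI}, using the standard principle that for a finite étale cover of a connected base, the monodromy group equals the Galois group of the Galois closure. The comparison goes through the intermediate cover $\mathscr{U}_{\rm{VI}}\to\mathscr{W}_{\rm{VI}}$.

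First, the monodromy action is contained in $W(D_4)$. The three lines at infinity $L_{25},L_{26},L_{27}$ are given by $X_0=X_i=0$, independently of $w$, so every loop fixes them. Continuation along a loop preserves intersection numbers, so $s_\gamma$ preserves the intersection graph. Hence the image of \eqref{eq:monodromyhom} lies in $\operatorname{Fix}_{\operatorname{Aut}(\mathcal{G}_{\rm{VI}})}(\mathcal{G}_{\rm{VI}}^\infty)\cong W(D_4)$ by \Cref{prop:combinatorialmonodromyPVI}.

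Second, for the reverse inclusion we use the dominant map $\pi:\mathscr{U}_{\rm{VI}}\to\mathscr{W}_{\rm{VI}}$ of \Cref{lem:PvimorphismUtoW}. By \Cref{prop:algebraicmonodromyPVI} this is generically a Galois covering with deck group $W(D_4)$, acting by the explicit monomial transformations \eqref{pvi:actionofD4onU}. Choose a Zariski open $V\subset \mathscr{W}_{\rm{VI}}\setminus\mathscr{W}_{\rm{VI}}^{\operatorname{sing}}$ over which $\pi$ is finite étale; for instance, remove the branch/discriminant locus of $\Lambda_1\Lambda_2\Lambda_3$ together with the images of the hypersurfaces where two $b_k$ coincide.

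Then $\pi^{-1}(V)$ is a nonempty Zariski open subset of the torus $(\C^*)^4$, so it is connected. It follows that $\pi^{-1}(V)\to V$ is a connected Galois topological covering with group $W(D_4)$. Consequently every $g\in W(D_4)$ arises as follows: take a path in $\pi^{-1}(V)$ from a base point $u_*$ to $g(u_*)$, and project it to a loop $\gamma$ in $V$.

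Along $\gamma$, the lines $L_k$ vary continuously, being given by the rational formulas \eqref{eq:linesPVI} in $u$. So the permutation $s_\gamma$ sends $L_k(u_*)$ to $L_k(g u_*)$. By the final clause of \Cref{prop:algebraicmonodromyPVI}, this is exactly the permutation $\tilde g$ of \Cref{prop:combinatorialmonodromyPVI}. Finally, since $V$ is Zariski dense in the smooth connected space $\mathscr{W}_{\rm{VI}}\setminus\mathscr{W}_{\rm{VI}}^{\operatorname{sing}}$, the map $\pi_1(V)\to\pi_1(\mathscr{W}_{\rm{VI}}\setminus\mathscr{W}_{\rm{VI}}^{\operatorname{sing}})$ is surjective. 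Hence the image of \eqref{eq:monodromyhom} contains $W(D_4)$, and combined with the first step it equals $W(D_4)$.

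The main obstacle we expect is the bookkeeping between the fibre of $\Gamma^{\operatorname{ns}}$ and the fibres of $\pi$. We must ensure that loops avoiding the extra branch locus give all of $\pi_1(V)$, and that the identification of lines by formulas in $u$ is compatible with analytic continuation. This compatibility holds because the formulas \eqref{eq:linesPVI} are regular wherever the $b_k$ are distinct and satisfy $b_k\neq\pm1$. We also need connectedness of $\pi^{-1}(V)$, which is immediate from irreducibility of the torus.
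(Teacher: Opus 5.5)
Your proof is correct and essentially the paper's argument. The upper bound comes from preservation of incidences along loops avoiding $\mathscr{W}_{\rm{VI}}^{\operatorname{sing}}$, which puts the image inside $\operatorname{Fix}_{\operatorname{Aut}(\mathcal{G}_{\rm{VI}})}(\mathcal{G}_{\rm{VI}}^\infty)\cong W(D_4)$. The lower bound comes from projecting a path from $u_*$ to $g(u_*)$ down to a loop in $\mathscr{W}_{\rm{VI}}$; the only difference is that you get connectedness from $\pi^{-1}(V)$ being Zariski open in the torus $\mathscr{U}_{\rm{VI}}$, whereas the paper lifts further to the path-connected set $\Theta_{\rm{VI}}\setminus\Theta_{\rm{VI}}^{\operatorname{sing}}$ and uses the B\"acklund action of the $r_i$ there. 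The surjectivity of $\pi_1(V)\to\pi_1(\mathscr{W}_{\rm{VI}}\setminus\mathscr{W}_{\rm{VI}}^{\operatorname{sing}})$ is harmless but not needed, since loops in $V$ are already loops in the larger space.
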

\begin{proof}
    Take a $w_* \in \mathscr{W}_{\rm{VI}}\setminus \mathscr{W}_{\rm{VI}}^{\operatorname{sing}}$.
    Make a choice of $u_*\in \mathscr{U}_{\rm{VI}}\setminus \mathscr{U}_{\rm{VI}}^{\operatorname{sing}}$ lying above $w_*$ under the morphism $\mathscr{U}_{\rm{VI}}\to \mathscr{W}_{\rm{VI}}$ from \Cref{lem:PvimorphismUtoW}, which determines a numbering of the lines on $\overline{\mathcal{M}}_{w_*}$ according to equations \eqref{eq:linesPVI}.
    Further, take $\theta_* \in \Theta_{\rm{VI}}$ lying above $u_*$ under the map $\theta \mapsto u$ as defined by equation \eqref{eq:pvitheta_to_u}.
    Denote the preimage of $\mathscr{U}_{\rm{VI}}^{\operatorname{sing}}$ under this map by $\Theta_{\rm{VI}}^{\operatorname{sing}}$. It is a countable union of hyperplanes and its complement in $\Theta_{\rm{VI}}$ is given by
    \begin{equation*}
        \Theta_{\rm{VI}}\setminus \Theta_{\rm{VI}}^{\operatorname{sing}}=\{\theta\in \mathbb{C}^4:2\theta_j\notin\mathbb{Z}\text{ for $j\in \{0,t,1,\infty\}$ and }
\epsilon_1\theta_0+\epsilon_2\theta_t+\epsilon_3\theta_1+\epsilon_4\theta_\infty\notin\mathbb{Z}\text{ for $\epsilon\in \{\pm 1\}^4$}\}.
    \end{equation*}
In particular, $\Theta_{\rm{VI}}\setminus \Theta_{\rm{VI}}^{\operatorname{sing}}$ is path-connected.
    
    Thus, for each of the generators $r_i$, $i=1,\dots,4$, of $W(D_4)$, there exists a path $\gamma_i$ in $\Theta_{\rm{VI}}\setminus \Theta_{\rm{VI}}^{\operatorname{sing}}$ with starting point $\theta_*$ and end point $r_i(\theta_*)$, where we recall the action of $r_i$ on $\Theta_{\rm{VI}}$ defined through the parameter correspondence \eqref{eq:rootvarstotheta} and Table \ref{tab:PVI:symmetry:varsandparams}.
    
    Then, each $\gamma_i$ descends to a path in $\mathscr{U}_{\rm{VI}}\setminus \mathscr{U}_{\rm{VI}}^{\operatorname{sing}}$ from $u_*$ to $r_i(u_*)$, where the latter is defined in \eqref{pvi:actionofD4onU}.
    It descends down further to a loop in $\mathscr{W}_{\rm{VI}}\setminus\mathscr{W}_{\rm{VI}}^{\operatorname{sing}}$, based at $w_*$, that permutes the lines according to the action of $W(D_4)$ in \Cref{prop:combinatorialmonodromyPVI}.

    This shows that the analytic monodromy of $\mathcal{M}\to \mathscr{W}_{\rm{VI}}$ contains the combinatorial monodromy. On the other hand, since the incidence relations among lines are preserved under continuation along paths that avoid the singular locus $\mathscr{W}_{\rm{VI}}^{\operatorname{sing}}$, the analytic monodromy is a subset of the combinatorial monodromy and the two must therefore coincide.
\end{proof}

\Cref{thm:PVIRHsymmetries} and \Cref{prop:PVI:dynkinautosunderRH} establish \Cref{mainthm:conjugatedsymmetries},   \Cref{prop:oblomkov} and \Cref{cor:modulispacePVI} establish \Cref{mainthm:categoryandmodulispace},
and \Cref{prop:combinatorialmonodromyPVI,prop:algebraicmonodromyPVI,prop:analyticmonodromyPVI} establish \Cref{mainthm:mon} in the case of Painlev\'e-VI.

\section{The fourth Painlev\'e equation}\label{sec:PIV}

We next consider the fourth Painlev\'e equation $\pain{IV}$, in the form
\begin{equation*} 
\Pfour:\quad y_{tt}=\displaystyle\frac{1}{2y}y_t^2+ \frac{3}{2} y^3 + 4\, t\,y^2+2(t^2+1-2\theta_\infty)y-\frac{8\theta_0^2}{y}.
\end{equation*}
Denote the space of parameters by 
\begin{equation*}
    \Theta_{\rm{IV}} = \left\{ \theta=(\theta_0, \theta_{\infty}) \in \C^2\right\}.
\end{equation*}
We again use root variables, the space of which in this case we denote by
\begin{equation*}
    \mathscr{A}_{\rm IV} = \left\{ a = (a_0,a_1,a_2) \in \C^3~|~ a_0+a_1+a_2=1\right\},
\end{equation*}
and we relate $\Theta_{\rm{IV}}$ and $\mathscr{A}_{\rm{IV}}$ according to 
\begin{equation} \label{eq:thetatorootvarsPIV}
    \theta_0 = \frac{a_1}{2}, \quad \theta_{\infty} = 1- \frac{a_1}{2}-a_2.
\end{equation}
We work with the Hamiltonian form of $\pain{IV}$ given by
\begin{equation} \label{eq:hamPIV}
    \left\{ 
    \begin{aligned}
        f_t &= + \frac{\partial H_{\rm{IV}}}{\partial g} = f ( 2g - f - 2t) - 2 a_1, \\
        g_t &= - \frac{\partial H_{\rm{IV}}}{\partial f} = g ( 2f - g + 2 t) +2a_2, 
    \end{aligned}
    \right.
\end{equation}
with Hamiltonian 
\begin{equation*}
    H_{\rm{IV}} = f g^2 - f^2 g - 2 t f g - 2 a_1 g - 2 a_2 f.
\end{equation*}
This coincides with that provided by Okamoto in \cite{okamotopolynomialhamiltonians}, up to scaling and notation, and eliminating $g(t)$ from \eqref{eq:hamPIV} leads to $\pain{IV}$ for $f(t)$.

\subsection{B\"acklund transformations}
In the case of $\pain{IV}$, the B\"acklund transformation symmetries form the extended affine Weyl group of type $A_2^{(1)}$, which we write as
\begin{equation*}
    \widetilde{W}(A_2^{(1)}) := W(A_2^{(1)}) \rtimes \Aut (A_2^{(1)}) \cong \left\langle r_0, r_1, r_2 \right\rangle \rtimes \left\langle  \sigma_{(12)}, \sigma_{(02)} \right\rangle,  
    \quad
       \raisebox{-20pt}{
\begin{tikzpicture}[scale=.4,elt/.style={circle,draw=black!100,thick, inner sep=0pt,minimum size=1.5mm}]
		\path 	(-1,-1) 	node 	(a1) [elt ] {}
		        ( 0,.3) 	node  	(a0) [elt  ] {}
		        ( 1,-1) 	node  	(a2) [elt  ] {};
		\node at ($(a1.west) + (-.3,+0.0)$) 	{ \tiny ${1}$};
		\node at ($(a2.east) + (+.3,+0.0)$) 	{ \tiny ${2}$};
		\node at ($(a0.north) + (+0,+0.3)$) 	{ \tiny ${0}$};

		\draw [black,line width=1pt ] (a1) -- (a2) -- (a0) -- (a1);
		\node at ($(d2.east) + (+0,-2)$) 	{ \small ${A_2^{(1)}}$};

	\end{tikzpicture} }
\end{equation*}
where here and below we denote by $\sigma_{s}\in \Aut(A_2^{(1)})$ the element corresponding to the permutation $s$ of the indices $\{0,1,2\}$ of nodes of the $A_2^{(1)}$ Dynkin diagram, in cycle notation.
The B\"acklund transformations of system \eqref{eq:hamPIV} corresponding to these generators are given in \Cref{tab:PIV:symmetry:varsandparams}.
We also give the action of the generator $\sigma_{(012)}$ of the subgroup, isomorphic to $\Z/3\Z$, of rotational Dynkin diagram automorphisms.
There is also the symmetry $\vartheta$, which leaves the equation invariant without changing parameters.

\begingroup

\setlength{\tabcolsep}{15pt} 
\renewcommand{\arraystretch}{1.5} 

\begin{table}[h]
\makebox[\textwidth][c]{%
$   \begin{array}{c||c|c||c|c|c||c|c||c|}
        w       & \tilde{f}     & \tilde{g}     & \tilde{a}_0   & \tilde{a}_1   & \tilde{a}_2  & \tilde{\theta}_0 & \tilde{\theta}_{\infty} & \tilde{t} \\
        \hline\hline 
        r_0    & f - \frac{2 a_0}{f-g+2t}    & g + \frac{2 a_0}{f-g+2t}  & -a_0   & a_1+a_0   & a_2 + a_0  & \frac{\theta _{\infty }}{2}+\frac{\theta _0}{2} & \frac{3 \theta _0}{2}-\frac{\theta _{\infty }}{2} & t  \\
        \hline
        r_1    & f    & g - \frac{2 a_1}{f}  & a_0+a_1   & -a_1   & a_1+a_2  & - \theta_0 & \theta_{\infty} & t  \\\hline
        r_2    & f + \frac{2a_2}{g}   & g  & a_0+a_2   & a_1+a_2   & -a_2 & \frac{1}{2}+\frac{\theta _0}{2}-\frac{\theta _{\infty }}{2} & \frac{3}{2}-\frac{3 \theta _0}{2}-\frac{\theta _{\infty }}{2}  & t  \\    
        \hline \hline
        \sigma_{(12)}   & i g    & i f  & a_0   & a_2   & a_1 & \frac{1}{2} -\frac{\theta _0}{2} -\frac{\theta _{\infty }}{2} &  \frac{1}{2}-\frac{3 \theta _0}{2}+\frac{\theta_{\infty}}{2}  & -i t \\
        \sigma_{(02)}   &  -i f    & i (g-f-2t) & a_2   & a_1   & a_0  & \theta_0 & 1-\theta _{\infty }  & -i t \\    
        \sigma_{(01)}   & i (g-f-2t)    & i g  & a_1   & a_0   & a_2  & -\frac{\theta _0}{2} +\frac{\theta _{\infty }}{2}& \frac{3 \theta _0}{2}+\frac{\theta _{\infty }}{2}  & i t \\    
        \sigma_{(012)}   & - g    & f- g + 2 t  & a_1   & a_2   & a_0  &\frac{1}{2}-\frac{\theta _0}{2} -\frac{\theta _{\infty }}{2}& \frac{1}{2}+\frac{3 \theta _0}{2}-\frac{\theta _{\infty }}{2}  & t \\
        \hline \hline  
        \vartheta & - f    & - g  & a_0   & a_1   & a_2  & \theta_0 & \theta_{\infty}  & -  t 
    \end{array}$    
    }
    \caption{Symmetries of $\pain{IV}$ on variables $(f,g)$, parameters $\theta$, root variables $a$ and independent variable $t$.}
    \label{tab:PIV:symmetry:varsandparams}
\end{table}
\endgroup

The translation part of $\widetilde{W}(A_2^{(1)})$, corresponding to the weight lattice of $A_2$, is generated by 
    $$T_1=\sigma_{(021)}r_1r_2 : (a_0,a_1,a_2)\mapsto(a_0-1,a_1+1,a_2), \quad(\theta_0,\theta_{\infty})\mapsto(\theta_0+\tfrac{1}{2},\theta_{\infty}-\tfrac{1}{2}),$$
    and 
    $$T_2= \sigma_{(012)}r_2r_1 
    : (a_0,a_1,a_2)\mapsto(a_0-1,a_1,a_2+1),\quad(\theta_0,\theta_{\infty})\mapsto(\theta_0,\theta_{\infty}-1).$$
    The translation part of $W(A_2^{(1)})$, corresponding to the root lattice of $A_2$ and not requiring extension by Dynkin diagram automorphisms, is generated by 
    $$T_1^2T_2^{-1}: (a_0,a_1,a_2) \mapsto (a_0-1,a_1+2,a_2-1), \quad (\theta_0,\theta_{\infty})\mapsto(\theta_0+1,\theta_{\infty}),$$
    and 
    $$T_1^{-1}T_2^{2} : (a_0,a_1,a_2) \mapsto (a_0-1,a_1-1,a_2+2), \quad (\theta_0,\theta_{\infty})\mapsto(\theta_0-\tfrac{1}{2},\theta_{\infty}-\tfrac{3}{2}).$$

    \begin{remark}
        The transformations given in \Cref{tab:PIV:symmetry:varsandparams} define an action of a larger group than $\widetilde{W}(A_2^{(1)})$, and there is nontrivial interaction between involutions in $\Aut(A_2^{(1)})$ and the symmetry $\vartheta$. The subgroup generated by $W(A_2^{(1)})$ and $\sigma_{(012)}$ commutes with $\vartheta$, and the the remaining relations are
$$\sigma_{(01)}^2=\sigma_{(12)}^2=\sigma_{(02)}^2 = \vartheta, \quad \sigma_{(01)}\sigma_{(02)}=\sigma_{(021)},\quad \sigma_{(01)}\sigma_{(12)}=\sigma_{(012)}, \quad \sigma_{(12)}\sigma_{(02)}=\vartheta  \sigma_{(012)}.$$
    \end{remark}
    
\subsection{Initial value space}

In this case we have a family of Sakai surfaces 
\begin{equation*}
    \overline{\mathcal{X}} \to \mathscr{T}_{\rm{IV}}\times \mathscr{A}_{\rm{IV}},
\end{equation*}
where $\mathscr{T}_{\rm{IV}} = \C$ is the independent variable space of $\pain{IV}$.
The surfaces can be constructed explicitly through a sequence of eight point blowups from $\p^1 \times\p^1$ which can be derived through the usual procedure applied to the system \eqref{eq:hamPIV}. 
The fibre $\overline{\mathcal{X}}_{t,a}$ is a Sakai surface of surface type $E_6^{(1)}$, with anticanonical divisor $D_{t,a}$ whose irreducible components intersect according to the $E_6^{(1)}$ Dynkin diagram.
Removing the support of this anticanonical divisor from each fibre gives a family $\mathcal{X}\to\mathscr{T}_{\rm{IV}}\times \mathscr{A}_{\rm{IV}}$.

\begin{definition}[Initial value space for $\pain{IV}$]
    The initial value space at $t\in\mathscr{T}_{\rm{IV}}$ for $\pain{IV}$ with parameters $a\in \mathscr{A}_{\rm{IV}}$ is the fibre $\mathcal{X}_{t,a}$ of the family $\mathcal{X}_a \to \mathscr{T}_{\rm{IV}}$.
\end{definition}

For each $w \in \widetilde{W}(A_2^{(1)})$, the corresponding B\"acklund transformation of $\pain{IV}$ gives an automorphism of the family $\mathcal{X}$, and in particular we have an automorphism
$$w : \mathscr{T}_{\rm{IV}}\times \mathscr{A}_{\rm{IV}} \to \mathscr{T}_{\rm{IV}}\times \mathscr{A}_{\rm{IV}},\quad (t,a)\mapsto (\tilde{t},\tilde{a}),$$
and an isomorphism
$$w : \mathcal{X}_{t,a}\to \mathcal{X}_{\tilde{t},\tilde{a}}.$$

\subsection{Associated linear problem}

The Painlev\'e IV equation governs isomonodromy deformations of a rank two system of linear ODEs with one Fuchsian singularity and one irregular singularity of Poincar\'e rank two.
We consider this in the form
\begin{subequations}\label{eq:pivlinearsystem}
    \begin{align}
    Y_z&=AY, & A&=zA_1+A_0+z^{-1}A_{-1},\label{eq:pivlinearsystem1}\\
    Y_t&=BY, & B&=zB_1+B_0,
\end{align}
\end{subequations}
where, for fixed $(\theta_0,\theta_{\infty})\in \Theta_{\rm{IV}}$, the matrices $A_1,A_0,A_{-1}\in \mathfrak{sl}_2(\mathbb{C})$ are such that
		\begin{equation}\label{eq:pivlinconditions}
A_1=\sigma_3:=\begin{bmatrix}
        1 & 0\\
        0 & -1
    \end{bmatrix},\quad \operatorname{Spec}(A_{-1})=\{+\theta_0,-\theta_0\},\quad |A|=-(z+t)^2+2\theta_\infty+\mathcal{O}(z^{-1}) \quad (z\rightarrow \infty),
	\end{equation}
 and $B_1,B_0\in \mathfrak{sl}_2(\mathbb{C})$ are given by
\begin{equation*}
    B_1=\sigma_3,\quad B_0=A_0-t\,\sigma_3.
\end{equation*}
Introduce coordinates $\{f,g,k\}$ on the space of matrices $A$ satisfying the conditions \eqref{eq:pivlinconditions} according to 
\begin{equation*}
 A_{12}=k\left(1-\frac{f}{2z}\right),\quad A_{11}|_{z=f/2}=g-\frac{2 \theta _0}{f}-\frac{f}{2}-t.  
\end{equation*}
Then the compatibility of the pair \eqref{eq:pivlinearsystem} yields
the first-order system \eqref{eq:hamPIV} with parameters $a$ given in terms of $\theta_0,\theta_{\infty}$ by \Cref{eq:thetatorootvarsPIV}, as well as 
\begin{equation*}
   \frac{k_t}{k}=-(f+2t). 
\end{equation*}

\subsection{Derivation of monodromy surface}

The monodromy surface for $\pain{IV}$ is formed of equivalence classes of generalised monodromy data for the linear system \eqref{eq:pivlinearsystem1}, i.e. both usual monodromy as well as Stokes data associated with the irregular singularity.
To derive this, begin with the unique formal solution of the linear system \eqref{eq:pivlinearsystem1} of the form
\begin{equation}\label{eq:formalsolPIV}
    Y_{\operatorname{form}}(z)=P(z) e^{(\frac{1}{2}z^2+t z)\sigma_3}z^{-\theta_\infty \sigma_3},
\end{equation}
where $P(z)$ is a power series around $z=\infty$,
\begin{equation} \label{eq:matrixPasymptoticPIV}
    P(z)=I+\sum_{n=1}^\infty z^{-n}U_n.
\end{equation}
We consider the function $z^{-\theta_\infty \sigma_3}$ as a single-valued function on the universal covering space $\widetilde{\mathbb{C}^*}$ and correspondingly define Stokes sectors
\begin{equation*}
    \Sigma_k=\left\{\left|\arg z- \frac{(k-1)\pi}{2}\right|<\frac{\pi}{4}\right\}\subseteq \widetilde{\mathbb{C}^*} \qquad (k\in\mathbb{Z}),
\end{equation*}
within which $e^{\frac{1}{2}z^2+t z}$ is alternately exponentially small or large as $z\rightarrow \infty$.

For any $k\in\mathbb{Z}$, there exists a unique solution $Y_k$ of the linear problem that satisfies
\begin{equation*}
    Y_k(z)\sim Y_{\operatorname{form}}(z)\qquad (z\in \Sigma_k\cup \Sigma_{k+1}, z\rightarrow \infty).
\end{equation*}
The Stokes phenomenon is then embodied by the following relation among these solutions,
\begin{equation}\label{eq:pivstokes}
    Y_{k+1}(z)=Y_k(z)S_k,
\end{equation}
with Stokes matrices
\begin{equation*}
    S_k=\begin{bmatrix}
        1 & 0\\
        s_k & 1
    \end{bmatrix}\,\,\, \text{if $k$ even},\qquad
    S_k=\begin{bmatrix}
        1 & s_k\\
        0 & 1
    \end{bmatrix}\,\,\, \text{if $k$ odd},
\end{equation*}
for some $s_k\in\mathbb{C}$, called a Stokes multiplier, for $k\in\mathbb{Z}$.

By the definition of $Y_k(z)$,
\begin{equation*}
    Y_{k+4}(e^{2\pi i}z)=Y_k(z)D_\infty^{-1},\qquad D_\infty=e^{2\pi i \theta_\infty \sigma_3},
\end{equation*}
and therefore
\begin{equation}\label{eq:pivstokescyclic}
    S_{k+4}=D_\infty S_k D_\infty^{-1}\qquad (k\in\mathbb{Z}).
\end{equation}
Furthermore, it follows from this and \eqref{eq:pivstokes} that the monodromy of $Y_k(z)$ around $z=\infty$ is given by
\begin{equation*}
      Y_k(e^{-2\pi i}z)=Y_k(z)M_\infty^{(k)},\qquad M_\infty^{(k)}=S_kS_{k+1}S_{k+2}S_{k+3} D_\infty.  
\end{equation*}
The corresponding monodromy matrix $M_\infty^{(k)}$ satisfies
\begin{equation}\label{eq:pivmonodromycyclic}
    M_\infty^{(k+1)}=S_k^{-1} M_\infty^{(k)} S_k,\qquad
    M_\infty^{(k+4)}=D_\infty M_\infty^{(k)} D_\infty^{-1},
\end{equation}
by equation \eqref{eq:pivstokescyclic}.

Since $z=0$ and $z=\infty$ are the only singularities of the linear system \eqref{eq:pivlinearsystem}, monodromy of $Y_k(z)$ around $z=0$ in the clockwise direction is given by $M_\infty^{(k)}$ as well. 
Furthermore, as $\operatorname{Spec}(A_{-1})=\{+\theta_0,-\theta_0\}$, see equation \eqref{eq:pivlinconditions}, this means that the eigenvalues of $M_\infty^{(k)}$ must be $e^{\pm 2\pi i \theta_0}$ and therefore
\begin{equation}\label{eq:pivtracecondition}
    \operatorname{Tr}M_\infty^{(k)}=2\cos 2\pi \theta_0,
\end{equation}
for $k\in\mathbb{Z}$. 
Due to \eqref{eq:pivmonodromycyclic}, equation \eqref{eq:pivtracecondition} is equivalent to one and the same condition on the Stokes multipliers for all $k\in\mathbb{Z}$, which is the vanishing of the quartic polynomial
\begin{equation} \label{eq:pivquarticpolynomiali1}
    i_v:=s_1 s_2 s_3 s_4  +s_1 s_2+s_1 s_4+s_3 s_4+v_{\infty }^{-6} s_2 s_3 - v_{\infty }^{-3}(v_0+v_0^{-1})+ v_{\infty }^{-6}+1,
\end{equation}
in which we have used the notation
\begin{equation}\label{eq:defiv0vinfPIV}
    v= (v_0,v_{\infty}),\qquad v_0=e^{2 i \pi \theta_0},\quad v_\infty=e^{\frac{2 i \pi \theta_\infty}{3}}.
\end{equation}
This quartic first appeared in \cite[Eq. (2.27)]{fokaszhou}.

We then interpret the space of monodromy data
 for the linear problem \eqref{eq:pivlinearsystem} with $\theta_0,\theta_{\infty}$ fixed as the affine variety\footnote{It is possible to also include connection data in the derivation of the monodromy surface, see Remark \ref{rem:includeconnection}.}
\begin{equation} \label{eq:monodromyspaceMpiv}
    M_v = \operatorname{Spec} \C [s_1,s_2,s_3,s_4]/(i_v).
\end{equation}

The freedom of scaling the gauge factor $k$ by an arbitrary nonzero complex number, $k\mapsto c k$, is equivalent to rescaling the coefficient matrix of the linear system \eqref{eq:pivlinearsystem} by
\begin{equation*}
    A\mapsto c^{\frac{1}{2}\sigma_3} A c^{-\frac{1}{2}\sigma_3},
\end{equation*}
which correspondingly rescales canonical solutions near infinity by
\begin{equation*}
    Y_k(z)\mapsto c^{\frac{1}{2}\sigma_3} Y_k(z) c^{-\frac{1}{2}\sigma_3}\qquad (k\in\mathbb{Z}),
\end{equation*}
and consequently 
\begin{equation*}
    S_k\mapsto c^{\frac{1}{2}\sigma_3} S_k c^{-\frac{1}{2}\sigma_3}\qquad (k\in\mathbb{Z}),
\end{equation*}
which is equivalent to
\begin{equation} \label{eq:pivscalingactiononsj}
 (s_1,s_2,s_3,s_4)\mapsto  (c^{-1}s_1,c\, s_2,c^{-1}s_3,c\, s_4).
\end{equation}

To describe the space of equivalence classes of monodromy data up to this scaling, analogously to the character variety associated with $\pain{VI}$, we take the GIT quotient of the variety \eqref{eq:monodromyspaceMpiv} by this action of $\C^*$.
We consider $R=\C [s_1,s_2,s_3,s_4]/(i_v)$, with $i_v$ in \eqref{eq:pivquarticpolynomiali1}, and consider the action of $\C^*$ on $R$ as in \eqref{eq:pivscalingactiononsj}.
The ring of invariants is generated by 
\begin{equation}\label{eq:ytosPIV}
    y_1 = s_1 s_4 ,\quad y_2 = s_1 s_2, \quad y_3 = s_3 s_4, \quad y_4 = s_2 s_3.
\end{equation}
Introducing the notation
\begin{equation} \label{eq:xtoyPIV}
    x_1 = v_{\infty}^{-2}(1+v_{\infty}^6 y_1), \quad x_2 = v_{\infty}^2( 1 + y_2 ),\quad x_3 = v_{\infty}^2( 1 + y_3 ), \quad x_4 = v_{\infty}^{-2}(1+y_4),
\end{equation}
we arrive at the following description of the ring of invariants: 
$$ R^{\C^*} =  \C[x_1,x_2,x_3,x_4]/I_{w},$$ where the relations among $x_1,x_2,x_3,x_4$ correspond to the ideal 
\begin{equation}\label{piv:idealIw}
    I_{w} = (x_2x_3+x_1+x_4-w_1, \,\,x_1 x_4+x_2+x_3-w_2),
\end{equation} 
in which
\begin{equation*} 
    w_1 =  v_{\infty}\left(v_0  + v_0^{-1}\right)+ {v_{\infty}^{-2}}, \qquad 
w_2 = v_{\infty}^{-1}\left(v_0+v_0^{-1}\right)+v_{\infty}^2.
\end{equation*}

The affine variety $\operatorname{Spec} R^{\C^*}$ has canonical projective completion being a smooth Segre surface. 
It was found in \cite{JMR} as an affine surface embedded in $\mathbb{A}^6$, described by
$$z_1+z_2+z_3+z_4+z_5+z_6=0,\quad z_4=1,\quad z_3 z_4 - \lambda_1 z_1 z_2 = 0,  \quad z_5 z_6 - \lambda_2 z_1z_2=0,$$
see \cite[Sec. 4.2.2]{JMR}. This is related to $\Spec R^{\C^*}$ above by 
$$x_1=  c\,  v_{\infty }z_2 +v_{\infty }^{-2}, \quad 
x_2 = c\, v_{\infty}^{-1}{z}_5+v_{\infty }^2, \quad
x_3 = c\, v_{\infty}^{-1} z_6 +v_{\infty }^2, \quad 
x_4 = c\, v_{\infty } z_1 + v_{\infty }^{-2},$$
in which $c = v_{\infty }^3+v_{\infty }^{-3}-v_0-v_0^{-1}$, with the parameters $\lambda_1,\lambda_2$ related to $v_0,v_{\infty}$ according to 
$$\lambda_1 =  v_{\infty }^6-v_{\infty }^3(v_0+v_0^{-1})+1,\quad \lambda_2 = v_{\infty }^{-6}-v_{\infty }^{-3} (v_0 + v_0^{-1})+1.$$
Note that in the same paper, monodromy surfaces for other Painlev\'e equations were realised as affine Segre surfaces, and this one corresponding to $\pain{IV}$ is distinguished in having smooth projective completion with a rectangle of lines at infinity.

\begin{definition} \label{def:monodromysurfacePIV}
    The monodromy surface of $\pain{IV}$ is the embedded affine Segre surface
    \begin{equation*}
    \begin{aligned}
        \mathcal{M}_w &= \Spec \C[x_1,x_2,x_3,x_4]/I_{w}, \\  
        I_w &= (x_2x_3+x_1+x_4-w_1, \,\,x_1 x_4+x_2+x_3-w_2),
    \end{aligned}
    \end{equation*}
    where $w \in \mathscr{W}_{\rm{IV}} = \left\{ w=(w_1,w_2) \in \C^2 \right\}$.
    This gives the family
    \begin{equation*}
        \mathcal{M}\to\mathscr{W}_{\rm{IV}},
    \end{equation*}
    with fibre over $w\in\mathscr{W}_{\rm{IV}}$ being $\mathcal{M}_{w}$.
\end{definition}
We regard $\mathcal{M}_w\subset\mathbb{A}^4$ as an embedded affine variety.
Under the embedding 
        \begin{equation*}
        \begin{aligned}
            \mathbb{A}^4 &\to \p^4,\\
            (x_1,x_2,x_3,x_4) &\mapsto [1: x_1:x_2:x_3:x_4],
        \end{aligned}
        \end{equation*}
        its projective completion is the Segre surface $\overline{\mathcal{M}}_w$ given by 
        \begin{equation*}
            \begin{aligned}
            X_2X_3+X_0(X_1+X_4)-w_1 X_0^2 &= 0, \\
            X_1 X_4+ X_0(X_2+X_3)-w_2 X_0^2 &= 0.           
            \end{aligned}
        \end{equation*}
\begin{remark}
We have the following properties of $\mathcal{M}_w$.
    \begin{itemize}
        \item The hyperplane section at infinity of $\overline{\mathcal{M}}_w$ is 
        \begin{equation*}
            X_0 = 0, \quad X_2 X_3 = 0 ,\quad X_1 X_4 = 0,
        \end{equation*}
        which is the union of four lines that pairwise intersect forming a rectangle, and consists only of smooth points of $\overline{\mathcal{M}}_w$.
        \item The singular locus $\mathscr{W}^{\operatorname{sing}}_{\rm{IV}}\subset \mathscr{W}_{\rm{IV}}$, where $\mathcal{M}_w$ is singular, is given by $Q(w)=0$, where
        \begin{equation*} 
    Q(w)= w_1^2 w_2^2 - 4 ( w_1^3+ w_2^3) +18 w_1 w_2-27.
    \end{equation*}
    \end{itemize}
\end{remark}

As an affine variety, the surface $\mathcal{M}_w$ is isomorphic to the affine cubic surface
\begin{equation}\label{eq:affinecubicPIV}
    \begin{aligned}
        \mathcal{C}_w &= \Spec \C[x_1,x_2,x_3]/\tilde{I}_w, \\  
        \tilde{I}_w &= (x_1 x_2 x_3+x_1^2 -w_1 x_1 -x_2-x_3 +w_2),
    \end{aligned}
\end{equation}
which is the model of the monodromy surface for $\pain{IV}$ which appeared in \cite[Sec. 3.7]{putsaito}, but scaled differently, see \Cref{rem:comparisonpivPS} below.
The isomorphism is provided by
\begin{equation} \label{eq:isomsegretocubicPIV}
\begin{aligned}
    \mathcal{M}_w &\to \mathcal{C}_w, \\
    (x_1,x_2,x_3,x_4) &\mapsto (x_1,x_2,x_3). 
\end{aligned}
\end{equation}
The canonical projective completion $\overline{\mathcal{C}}_w\subset \p^3$ of the affine cubic surface is not smooth. 
At infinity lies a triangle of lines, of which two corners form $A_1$ singularities of the projective cubic, see \cite[Table 4]{JMR}.

\begin{remark}\label{rem:comparisonpivPS}
Note that we have chosen a slightly different parametrisation, and our cubic is related to the one in \cite{putsaito}, which we write as 
\begin{equation*}
\tilde{x}_1\tilde{x}_2\tilde{x}_3 + \tilde{x}_1^2 - \left(\tilde{s}_2^2+\tilde{s}_1\tilde{s}_2\right)\tilde{x}_1-\tilde{s}_2^2 \tilde{x}_2 - \tilde{s}_2^2 \tilde{x}_3+\tilde{s}_2^2+\tilde{s}_1\tilde{s}_2^3 = 0, 
\end{equation*}
according to
\begin{equation*}
    \tilde{x}_1= v_{\infty}^{-4} x_1, \quad 
    \tilde{x}_2= v_{\infty}^{-2} x_2, \quad 
    \tilde{x}_3= v_{\infty}^{-2} x_3,
\end{equation*}
with the parameters $\tilde{s}_1,\tilde{s}_2$ related to ours by
\begin{equation*}
    \tilde{s}_1 =v_0+v_0^{-1},\quad 
    \tilde{s}_2 = v_\infty^{-3},
\end{equation*}
where we recall that $v_0$ and $v_\infty$ are defined in \eqref{eq:defiv0vinfPIV}.
We choose our particular normalisation since the coefficients $w_1$ and $w_2$ are invariant under the $W(A_2^{(1)})$ symmetry. 
\end{remark}

\begin{remark} \label{rem:includeconnection}
    In the Riemann-Hilbert theory of $\pain{IV}$, an important role is also played by the canonical solution of the linear problem around $z=0$ and connection matrices between this solution and the canonical solutions around infinity \cite{fokas}. 
    If we extend the space of monodromy data in \eqref{eq:monodromyspaceMpiv} with connection data\footnote{For an example of how connection data are included, see Appendix \ref{app:derivationofmonodromysurfaceFN} where this is done in the derivation of the monodromy surface for the Flaschka-Newell Lax pair of $\Ptwo$.}, we get, after taking the GIT quotient, four additional variables $\{x_5,x_6,x_7,x_8\}$ satisfying
    \begin{equation*}
        x_5= x_1x_3,\quad x_6 = x_3 x_4\quad x_7 =x_1 x_2,\quad x_8 = x_2 x_4.
    \end{equation*}
 After projective completion, the resulting surface in $\p^8$ is also a del Pezzo surface of degree four with a rectangle of lines at infinity, but not anti-canonically embedded.
 Away from infinity it is isomorphic to $\mathcal{M}_w$, and the lines at infinity are contained in a copy of $\p^3$ given by the intersection of five hyperplanes in $\p^8$. 
 These lines at infinity are related to those in $\overline{\mathcal{M}}_w$ by a monomial transformation between copies of $\p^3$, which swaps the four lines with their four points of pairwise intersection.
\end{remark}

\subsection{Riemann-Hilbert map and symmetries}

The Riemann-Hilbert map in this case is 
\begin{equation*}
    \mathcal{X}_{t,a} \xrightarrow{~~\operatorname{RH}_{t,a}~~} \mathcal{M}_{w(a)},    
\end{equation*}
which is a biholomorphism for generic $a$ and $t$ \cite{puttop2013piv}. 
It is defined analogously to the $\pain{VI}$ case outlined in \Cref{subsec:pvi:riemannhilbertmapandsymmetries},  by computing the monodromy data for the linear system with a local solution of $\pain{IV}$ inserted.
The map $a\mapsto w(a)$ from $\mathscr{A}_{\rm{IV}}\cong\Theta_{\rm{IV}}$ to $\mathscr{W}_{\rm{IV}}$ is defined via the intermediate parameters $v_0,v_{\infty}$, and is given explicitly by
\begin{equation} \label{eq:wtothetaandrootvarsPIV}
    \begin{aligned}
    w_1 &= v_{\infty}\left(v_0  + v_0^{-1}\right)+ {v_{\infty}^{-2}}\\
    &=e^{-2\pi i \theta_0 + \frac{2 \pi i}{3}\theta _{\infty }}
    +e^{\frac{-4  \pi i}{3}   \theta _{\infty }}
    +e^{ \frac{2 \pi i}{3}   \theta _{\infty } +2 \pi i \theta _0} \\
    &= e^{\frac{2\pi i }{3} \left(a_0+2a_1\right)}
    +e^{\frac{2  \pi i}{3}   \left(a_0- a_1\right)}
    +e^{\frac{-2\pi i}{3}   \left(2a_0 + a_1\right)},\\
    w_2 &= v_{\infty}^{-1}\left(v_0+v_0^{-1}\right)+v_{\infty}^2\\
    &=e^{ 2 \pi i \theta _0-\frac{2 \pi i}{3}\theta _{\infty }}
    +e^{\frac{4\pi i }{3}   \theta _{\infty }}
    +e^{ \frac{-2  \pi i}{3}  \theta _{\infty } - 2 \pi i \theta _0} \\
    &= e^{\frac{-2 \pi i}{3}   \left(a_0+2 a_1\right)}
    +e^{\frac{2\pi i}{3}  \left(a_1- a_0\right)}
    +e^{\frac{2 \pi i}{3}   \left(2a_0+a_1\right)}.
    \end{aligned}
\end{equation}

We will consider the action of the extended affine Weyl group $\widetilde{W}(A_2^{(1)})$ extended further by the symmetry $\vartheta$, as given in \Cref{tab:PIV:symmetry:varsandparams}, conjugated by the Riemann-Hilbert map, which gives an action of 
$\widetilde{W}(A_2^{(1)})\rtimes \Z/2\Z$
on the family $\mathcal{M} \to \mathscr{W}_{\rm{IV}}$.

\begin{theorem} \label{thm:PIVsymmetriesunderRH}
    For any $g\in W(A_2^{(1)})$, the corresponding symmetry of $\pain{IV}$ conjugates to the identity under the Riemann-Hilbert map.
    The subgroup generated by $\Aut(A_2^{(1)})$ and $\langle\vartheta\rangle \cong \Z/2\Z$ acts nontrivally, as described in \Cref{tab:PIV:symmetry:dynkinautos}.
\end{theorem}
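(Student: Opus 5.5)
Both claims can be read off from how a Bäcklund transformation acts on the canonical solutions $Y_k$ of the linear system \eqref{eq:pivlinearsystem1}. For each generator $g$ in \Cref{tab:PIV:symmetry:varsandparams}, I would construct an explicit gauge (Schlesinger) transformation
\begin{equation*}
\tilde{Y}(\tilde z,\tilde t)=R(z,t)\,Y(z,t),
\end{equation*}
where $R$ is rational in $z$ and polynomial in the entries of $A$. It is combined with the change of variables $\tilde z=\lambda z$, $\tilde t=\lambda t$ for $\lambda\in\{1,\pm i,-1\}$, as dictated by $\tilde t$ in the table. I would choose $R$ and $\lambda$ so that $\tilde Y$ solves the Lax pair \eqref{eq:pivlinearsystem} with variables $(\tilde f,\tilde g,\tilde k)$ and parameters $(\tilde\theta_0,\tilde\theta_\infty)$. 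Concretely, I would make the ansatz $R(z)=z^{\pm\frac12}(R_1 z+R_0)$, or a lower-triangular/upper-triangular elementary form, for $r_0,r_1,r_2$. For $\sigma_{(12)},\sigma_{(02)},\vartheta$, $R$ is a constant matrix such as $\sigma_1$ or $i^{\sigma_3/2}$, composed with the rotation of $z$. I would then fix the unknowns by matching the formal solution \eqref{eq:formalsolPIV} at infinity and the local exponent at $z=0$. The expressions for $\tilde f,\tilde g$ in the table should come out as the compatibility check.

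Given such $R$, I would compare asymptotics in the sectors $\Sigma_k$. For $g\in W(A_2^{(1)})$ (take $\lambda=1$), $R(z)Y_{\operatorname{form}}(z)$ equals $\tilde Y_{\operatorname{form}}(z)$ multiplied on the right by a constant diagonal matrix and possibly a shift $z^{\pm\sigma_3/2}$. By uniqueness, $\tilde Y_k=RY_kC$ for a constant diagonal $C$. Hence $\tilde S_k=C^{-1}S_kC$, which is exactly a scaling \eqref{eq:pivscalingactiononsj} together with the change of $v_\infty$. It then remains to check that the invariants $x_j$ in \eqref{eq:xtoyPIV} are unchanged. I expect this to hold because the shift in $\theta_\infty$ under the translations $T_1^2T_2^{-1}$, $T_1^{-1}T_2^2$ is an integer multiple of $3/2$, so $v_\infty^{2}$ is invariant up to the factors that are absorbed. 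Since $W(A_2^{(1)})$ is generated by $r_0,r_1,r_2$, it suffices to treat these three; $r_1$ is the easiest, since it only flips $\theta_0$.

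For the generators $\sigma_{(12)},\sigma_{(02)},\vartheta$ (and $\sigma_{(012)}$ as a check), the rotation $z\mapsto\lambda z$ shifts the sector index. Conjugation by $\sigma_1$ also swaps the upper and lower triangular structure. As a result $\tilde s_k=\pm s_{k+m}$, multiplied by suitable powers of $v_\infty$, for an explicit shift $m$. I would then compute the induced permutation and sign changes on $y_1,\dots,y_4$, and hence on $x_1,\dots,x_4$ and on $(w_1,w_2)$ via \eqref{eq:wtothetaandrootvarsPIV}, and record these in \Cref{tab:PIV:symmetry:dynkinautos}. Consistency is checked by verifying that the resulting map sends $I_w$ to $I_{\tilde w}$ and preserves the rectangle at infinity. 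I expect the main obstacle to be the bookkeeping of branches of $z^{-\theta_\infty\sigma_3}$ and $v_\infty=e^{2\pi i\theta_\infty/3}$ under rotation and under half-integer shifts of $\theta_\infty$. These determine the exact cube roots of unity and signs in the table. I would pin them down by first verifying the $\sigma_{(012)}$ case (where $\tilde t=t$) and the $\vartheta$ case, and then deriving the others from the relations in the preceding remark.
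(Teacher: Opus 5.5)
Your proposal has a genuine gap. It rests on realising $r_0$ and $r_2$ as gauge transformations $\tilde Y=R(z)Y(\lambda z)$ of the $2\times2$ system \eqref{eq:pivlinearsystem1}, and no such transformation exists. The same applies to your direct treatment of $\sigma_{(12)}$, $\sigma_{(01)}$ and $\sigma_{(012)}$.

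With $R$ rational in $z$ up to a scalar factor $z^{\pm1/2}$, as in your ansatz, the monodromy around $z=0$ is only conjugated, and multiplied by at most a sign. Neither constant conjugation nor the rotation $z\mapsto\lambda z$ changes $\operatorname{Spec}A_{-1}$. Hence $\operatorname{Tr}M_\infty^{(k)}=2\cos2\pi\theta_0$ is preserved up to sign, which forces $\tilde\theta_0\equiv\pm\theta_0 \bmod \tfrac12\Z$.

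But $r_0$ gives $\tilde\theta_0=\tfrac12(\theta_0+\theta_\infty)$ and $r_2$ gives $\tilde\theta_0=\tfrac12(1+\theta_0-\theta_\infty)$. So for generic parameters no admissible $R$ exists, whatever ansatz you choose. The same holds for $\sigma_{(12)}$, $\sigma_{(01)}$ and $\sigma_{(012)}$, each of which mixes $\theta_\infty$ into $\tilde\theta_0$.

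Your own bookkeeping shows the inconsistency. If $\tilde S_k=C^{-1}S_kC$ with $C$ diagonal, then $y_1,\dots,y_4$ are unchanged. Under $r_0$, however, $\tilde v_\infty^{2}=v_0v_\infty^{-1}\neq v_\infty^2$. Then $\tilde x_2=\tilde v_\infty^2(1+y_2)\neq x_2$, and the transformed data would not even satisfy the quartic $i_{\tilde v}$ in \eqref{eq:pivquarticpolynomiali1}. Your mechanism would therefore "prove" that $r_0$ acts nontrivially.

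Within $W(A_2^{(1)})$, the condition $\tilde\theta_0\equiv\pm\theta_0$ holds only on the index-three subgroup generated by $r_1$ and the root-lattice translations. In the extended group it also holds for, e.g., $\vartheta$, $\sigma_{(02)}$, $T_1$, $T_2$ and the composite $\sigma_{(01)}r_1r_0$, which fixes $\theta_0$ and sends $\theta_\infty\mapsto-\theta_\infty$. It never holds for $r_0$ or $r_2$ themselves.

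The missing ingredient is a non-gauge argument for one reflection. The paper realises on the $2\times2$ problem only $r_1$ (which leaves $A$ invariant), the Schlesinger transformations for $T_1,T_2$, and $\sigma_{(01)}r_1r_0$ via $Y\mapsto\sigma_1Y(-iz)$. It then proves that $r_0$ conjugates to the identity by the Inaba--Iwasaki--Saito strategy:
\begin{itemize}
\item A quadratic change of spectral variable and a triangular gauge turn \eqref{eq:pivlinearsystem1} into Kitaev's Lax pair, whose Painlev\'e function $y^{\mathrm{Ki}}$ is exactly the $r_0$-transform of $f$.
\item Kapaev's asymptotics as $t\to\pm\infty$ along $\mathbb{R}$ express the leading behaviour purely in terms of $x$, with no explicit dependence on $(\theta_0,\theta_\infty)$.
\item Applying them to both sides of $y^{\mathrm{Ki}}(t;\theta,x)=f(t;\tilde\theta,\tilde x)$ forces $\tilde x=x$ on a dense open set, hence everywhere.
\end{itemize}
Triviality of $r_2$ then follows because $r_0r_1r_2r_1$ is a root-lattice translation. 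The rows of \Cref{tab:PIV:symmetry:dynkinautos} for $\sigma_{(01)}$ and $\sigma_{(012)}$ follow from $\sigma_{(01)}r_1r_0$ and $\sigma_{(012)}=T_2r_1r_2$.

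Your idea of reducing to generators and reading off the table from group relations is sound. Without some non-gauge input for $r_0$, however, the argument does not close. An alternative input would be a Lax pair realising the full symmetry group, such as the $3\times3$ one of van der Put and Top, but this requires comparing the two Riemann--Hilbert maps, which is open.
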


\begingroup
\setlength{\tabcolsep}{15pt} 
\renewcommand{\arraystretch}{1.5} 
\begin{table}[h]
    \begin{equation*}
    \begin{array}{c||c|c|c|c||c|c|}
              & \tilde{x}_1    & \tilde{x}_2     & \tilde{x}_3    & \tilde{x}_4   & \tilde{w}_1   & \tilde{w}_2           \\
        \hline\hline 
        r_0   &   x_1    &  x_2    &  x_3 &    x_4 & w_1   &  w_2             \\
        r_1   &   x_1    &  x_2    &  x_3 &    x_4 & w_1   &  w_2             \\
        r_2   &   x_1    &  x_2    &  x_3 &    x_4 & w_1   &  w_2             \\
        \hline\hline
        \sigma_{(12)}   &   \zeta_3^{-1} x_2    &  \zeta_3 x_4    &  \zeta_3 x_1 &   \zeta_3^{-1} x_3 & \zeta_3^{-1} w_2   & \zeta_3 w_1             \\
        \sigma_{(02)}   & \zeta_3 x_2    &  \zeta_3^{-1} x_4    &  \zeta_3^{-1} x_1 &   \zeta_3 x_3 & \zeta_3 w_2   & \zeta_3^{-1} w_1             \\
        \sigma_{(01)}   & x_3    & x_1     & x_4 & x_2 & w_2   & w_1      
        \\
        \sigma_{(012)}   &  \zeta_3^{-1} x_1  &  \zeta_3 x_2    & \zeta_3 x_3 & \zeta_3^{-1} x_4 & \zeta_3^{-1} w_1   &       \zeta_3 w_2
        \\
        \hline\hline
        \vartheta   &  x_4  &   x_3    & x_2 & x_1 &  w_1   &      w_2
    \end{array}
    \end{equation*}
    \caption{Action of symmetries of $\pain{IV}$ on $\mathcal{M}\rightarrow\mathscr{W}_{\rm{IV}}$ via conjugation by the Riemann-Hilbert map.}
    \label{tab:PIV:symmetry:dynkinautos}
\end{table}
\endgroup

We will prove \Cref{thm:PIVsymmetriesunderRH} in several steps.
Regarding the action on parameters, we first note the following, which can be verified by direct computation.
\begin{lemma}\label{lem:weylactionwPIV}
    For any $g\in \widetilde{W}(A_2^{(1)})\rtimes\Z/2\Z$, the action of $g$ on $\mathscr{A}_{\rm{IV}}\cong\Theta_{\rm{IV}}$ induces via the correspondence \eqref{eq:wtothetaandrootvarsPIV} the transformation of $\mathscr{W}_{\rm{IV}}$ as written in \Cref{tab:PIV:symmetry:dynkinautos}.
    In particular, if $g$ is an element of the affine Weyl group $W(A_2^{(1)})$ then it acts as the identity on $\mathscr{W}_{\rm{IV}}$.
\end{lemma}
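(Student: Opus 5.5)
The statement is a direct verification, so the plan is to reduce it to checking generators. The map $a\mapsto w(a)$ in \eqref{eq:wtothetaandrootvarsPIV} is a morphism $\mathscr{A}_{\rm{IV}}\to\mathscr{W}_{\rm{IV}}$. The action of $\widetilde{W}(A_2^{(1)})\rtimes\Z/2\Z$ on $\mathscr{A}_{\rm{IV}}$ is by affine maps. The transformations of $\mathscr{W}_{\rm{IV}}$ in \Cref{tab:PIV:symmetry:dynkinautos} are linear, and they compose compatibly with the group relations listed in the remark after \Cref{tab:PIV:symmetry:varsandparams}. Hence it suffices to check the claim on the generators $r_0,r_1,r_2,\sigma_{(12)},\sigma_{(02)}$ and $\vartheta$. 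For convenience, and as a consistency check, I would also verify $\sigma_{(01)}$ and $\sigma_{(012)}$ directly.

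To organise the computation, write $q=e^{\frac{2\pi i}{3}}=\zeta_3$ and use the third expressions in \eqref{eq:wtothetaandrootvarsPIV}:
\[
w_1=e^{\frac{2\pi i}{3}(a_0+2a_1)}+e^{\frac{2\pi i}{3}(a_0-a_1)}+e^{-\frac{2\pi i}{3}(2a_0+a_1)},
\]
with $w_2$ obtained by negating all exponents. Each exponent is a linear form in $(a_0,a_1)$, and we eliminate $a_2$ via $a_0+a_1+a_2=1$.

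For $r_1$, the substitution $(a_0,a_1)\mapsto(a_0+a_1,-a_1)$ sends the three exponents $\{a_0+2a_1,\ a_0-a_1,\ -2a_0-a_1\}$ to $\{a_0-a_1,\ a_0+2a_1,\ -2a_0-a_1\}$. This is a permutation, so $w_1$ and $w_2$ are fixed. For $r_0$ and $r_2$, I would rewrite $a_2=1-a_0-a_1$ and check that the three exponents are permuted modulo $3\Z$. Shifts by multiples of $3$ in the exponent $\frac{2\pi i}{3}(\cdot)$ are invisible, and this is exactly where the integrality of $a_0+a_1+a_2=1$ enters.

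For the Dynkin automorphisms, the same substitution permutes the exponents up to an overall additive constant $\pm1$. This produces the common factor $\zeta_3^{\pm1}$, and it may also interchange $w_1$ with $w_2$, since negation maps the $w_1$-exponents to the $w_2$-exponents. For instance, $\sigma_{(012)}:(a_0,a_1)\mapsto(a_1,1-a_0-a_1)$ turns the exponent $a_0+2a_1$ into $2-2a_0-a_1$. The resulting set of exponents is $\{a_0+2a_1,\ a_0-a_1,\ -2a_0-a_1\}-1$ modulo $3$, giving $\tilde w_1=\zeta_3^{-1}w_1$, and similarly $\tilde w_2=\zeta_3 w_2$. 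Finally, $\vartheta$ does not change the parameters, so it acts trivially on $\mathscr{W}_{\rm{IV}}$.

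The only real obstacle is bookkeeping. One must track the constant shifts modulo $3$ carefully to get the correct power of $\zeta_3$ and the correct swap $w_1\leftrightarrow w_2$ for each involution. I would double-check each of these against the $\theta$-form of \eqref{eq:wtothetaandrootvarsPIV}, using the $\tilde\theta$ columns of \Cref{tab:PIV:symmetry:varsandparams}. The final assertion, that $W(A_2^{(1)})$ acts as the identity on $\mathscr{W}_{\rm{IV}}$, then follows immediately from the computations for $r_0,r_1,r_2$.
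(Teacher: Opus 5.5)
Your proposal is correct and takes the same approach as the paper, which only says the lemma "can be verified by direct computation". Your bookkeeping via the exponents $\{a_0+2a_1,\ a_0-a_1,\ -2a_0-a_1\}$ modulo $3$ (a uniform shift gives the factor $\zeta_3^{\pm1}$, negation gives $w_1\leftrightarrow w_2$) makes that computation explicit and reproduces every row of \Cref{tab:PIV:symmetry:dynkinautos}, though the appeal to the group relations is unnecessary, since $w(g\cdot a)=\phi_g(w(a))$ for generators automatically extends to all products.
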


Regarding the action on coordinates $x_i$, we first discuss the symmetries that can easily be extended to the linear problem, in the following lemma.
\begin{lemma} \label{lem:PIVsymmetriesunderRHdirect}
The action of $\widetilde{W}(A_2^{(1)})\rtimes \Z/2\Z$ on $\mathcal{M}\to \mathscr{W}_{\rm{IV}}$ obtained through conjugation by the Riemann-Hilbert map satisfies the following.
\begin{enumerate}
    \item The generators of the translation part of $\widetilde{W}(A_2^{(1)})$ act as
    \begin{align*}
        T_1 : (x_1,x_2,x_3,x_4)&\mapsto \left( \zeta_3 x_1, \zeta_3^{-1} x_2, \zeta_3^{-1} x_3, \zeta_3 x_4\right), \quad \left(w_1,w_2\right)\mapsto \left( \zeta_3 w_1, \zeta_3^{-1} w_2\right),\\
        T_2 : (x_1,x_2,x_3,x_4)&\mapsto \left( \zeta_3^{-1}x_1, \zeta_3 x_2,\zeta_3 x_3,\zeta_3^{-1} x_4\right), \quad \left(w_1,w_2\right)\mapsto \left( \zeta_3^{-1} w_1, \zeta_3 w_2\right),
    \end{align*}
    in which $\zeta_3=e^{\frac{2\pi i}{3}}$.
    \item The translation part of $W(A_2^{(1)})$ generated by $T_1^2T_2^{-1}$ and $T_1^{-1}T_2^{2}$ acts trivially.
    \item The element $r_1$ acts trivially.
    \item The generator $\vartheta$ of $\Z/2\Z$ acts as 
    \begin{equation*}
        \vartheta : (x_1,x_2,x_3,x_4)\mapsto \left( x_4,x_3,x_2,x_1\right), \quad \left(w_1,w_2\right)\mapsto \left( w_1, w_2\right),
    \end{equation*}
    \item The element $\sigma_{(01)}r_1r_0$ acts as  
    \begin{equation*}
        \sigma_{(01)}r_1r_0 : (x_1,x_2,x_3,x_4)\mapsto \left( x_3,x_1,x_4,x_2\right), \quad \left(w_1,w_2\right)\mapsto \left( w_2, w_1\right).
    \end{equation*}
\end{enumerate}
\end{lemma}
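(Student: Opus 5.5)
The plan is to compute each listed symmetry directly on Stokes multipliers and then push the result through the invariants \eqref{eq:ytosPIV} and \eqref{eq:xtoyPIV}. In every case the symmetry has a simple realisation on the linear system \eqref{eq:pivlinearsystem1}. Suppose the transformed system is obtained by a gauge or coordinate change that maps canonical solutions $Y_k$ to (multiples of) canonical solutions $\tilde Y_{k'}$. Then the new Stokes multipliers $\tilde s_j$ come out as explicit monomial expressions in the old $s_j$ and in $v_0,v_\infty$. The action on $x_i$ is then read off from the definitions of $y_i$ and $x_i$, and the action on $w$ follows from \Cref{lem:weylactionwPIV}.

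\textbf{Translations, items (1) and (2).} For $T_1,T_2$ I will use Schlesinger-type transformations $\tilde Y=R(z)Y$, with $R$ rational in $z$ and built from $f,g,k$. Such a transformation shifts $\theta_0,\theta_\infty$ by half-integers or integers and leaves the Stokes matrices $S_k$ unchanged. Two things then change in the $x_i$:
\begin{itemize}
\item only the normalisation changes, because $z^{-\theta_\infty\sigma_3}$ is multiplied by $z^{\pm\sigma_3/2}$ or $z^{\pm\sigma_3}$;
\item $v_\infty=e^{2\pi i\theta_\infty/3}$ is multiplied by a cube root of unity.
\end{itemize}
Under $\theta_\infty\mapsto\theta_\infty-\tfrac12$ the shift gives $v_\infty\mapsto\zeta_3^{-1/2}v_\infty$, up to a possible conjugation by $\sigma_3$ that flips the signs of $s_j$ in pairs. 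The invariants $y_i$ are quadratic in the $s_j$ and therefore unaffected. The factor $v_\infty^{\pm2}$ in \eqref{eq:xtoyPIV}, and the term $v_\infty^6 y_1$, then supply the $\zeta_3^{\pm1}$ factors claimed for $T_1$ and $T_2$. Item (2) follows formally: $T_1^2T_2^{-1}$ gives factors $\zeta_3^{2+1}=1$, and likewise for $T_1^{-1}T_2^2$.

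\textbf{Remaining items.} For item (3), $r_1$ only flips $\theta_0\mapsto-\theta_0$. The linear system is unchanged up to a gauge fixed at $\infty$, so the $S_k$, and hence the $x_i$, are unchanged.

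For item (4), $\vartheta$ sends $t\mapsto-t$, $f\mapsto-f$, $g\mapsto -g$. It is realised on the linear system by $z\mapsto -z$ together with conjugation by an off-diagonal matrix, such as $\sigma_1$, which swaps upper and lower triangular Stokes matrices. This shifts the sector index by $2$ and relabels $s_j\mapsto s_{j+2}$ with factors of $D_\infty$ from \eqref{eq:pivstokescyclic}. The resulting factors of $v_\infty^{\pm 6}$ are exactly what is needed to exchange $x_1=v_\infty^{-2}(1+v_\infty^6 s_1s_4)$ with $x_4=v_\infty^{-2}(1+s_2s_3)$, and $x_2$ with $x_3$.

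For item (5), $\sigma_{(01)}r_1r_0$ acts by $t\mapsto it$. It is realised by the rotation $z\mapsto iz$ combined with $\sigma_1$-conjugation, which shifts sectors by one. That relabels $s_j\mapsto s_{j+1}$, again with a $D_\infty$ twist, and produces the cyclic permutation $(x_1,x_2,x_3,x_4)\mapsto(x_3,x_1,x_4,x_2)$.

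\textbf{Expected obstacle.} The hardest part will be bookkeeping for the rotation and reflection cases, items (4) and (5). The index shift interacts with the branch of $z^{-\theta_\infty\sigma_3}$ on $\widetilde{\mathbb C^*}$ and with the $D_\infty$ conjugation. One has to verify that after the shift the transformed $\tilde\theta_\infty$ of Table \ref{tab:PIV:symmetry:varsandparams} produces precisely the needed powers of $v_\infty$. I would fix branches by writing $\tilde Y_{k}(z)=C\,Y_{k+m}(e^{i\pi m/2}z)\,C'$ explicitly and matching asymptotics with \eqref{eq:formalsolPIV}. As a final check I would confirm that the resulting map preserves the ideal $I_w$ \eqref{piv:idealIw} with the transformed $w$.
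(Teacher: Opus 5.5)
Your strategy matches the paper's for items (1), (2), (3) and (5). The paper uses Schlesinger transformations $\widetilde Y=R(z)Y$ that leave every $S_k$ unchanged while $v_\infty\mapsto\zeta_6^{-1}v_\infty$, invariance of $A(z)$ itself under $r_1$, and a quarter-turn combined with $\sigma_1$ for $\sigma_{(01)}r_1r_0$.

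\textbf{The gap is in item (4).} Your realisation of $\vartheta$ fails as written. For $\widetilde Y(z)=C\,Y(\lambda z)$ one gets $\widetilde A(z)=\lambda\,C A(\lambda z)C^{-1}$, whose leading term is $\lambda^2 z\,C\sigma_3C^{-1}$. Keeping the normalisation $A_1=\sigma_3$ therefore forces $C$ to be off-diagonal exactly when $\lambda^2=-1$, which is item (5), and diagonal when $\lambda=-1$. With $\lambda=-1$ and $C=\sigma_1$ the transformed system has leading coefficient $-\sigma_3$, so it is not in the family defining $\mathcal M_w$. Equivalently, a sector shift by $2$ preserves parity, so no swap of upper and lower triangular Stokes matrices can occur.

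The correct realisation has no conjugation: $\widetilde Y(z)=Y(-z)$, $\widetilde A(z)=-A(-z)$, with gauge parameter $k\mapsto -k$. Then $\widetilde Y_k(z)=Y_{k-2}(e^{-\pi i}z)\,e^{-\pi i\theta_\infty\sigma_3}$ and $\widetilde S_k=e^{\pi i\theta_\infty\sigma_3}S_{k-2}e^{-\pi i\theta_\infty\sigma_3}$. Combined with $S_{k+4}=D_\infty S_kD_\infty^{-1}$ this gives
$\widetilde s_1=v_\infty^{-3}s_3$, $\widetilde s_2=v_\infty^{3}s_4$, $\widetilde s_3=v_\infty^{3}s_1$, $\widetilde s_4=v_\infty^{-3}s_2$.
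Hence $\widetilde y_1=v_\infty^{-6}y_4$, $\widetilde y_2=y_3$, $\widetilde y_3=y_2$, $\widetilde y_4=v_\infty^{6}y_1$, and so $\widetilde x=(x_4,x_3,x_2,x_1)$, which is the answer you predicted.

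\textbf{The paper's shortcut.} The paper avoids this computation: it proves (5) first and gets (4) from $(\sigma_{(01)}r_1r_0)^2=\vartheta$, since applying $Y(z)\mapsto\sigma_1Y(-iz)$ twice gives $Y(z)\mapsto Y(-z)$.

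\textbf{Bookkeeping in (5).} Make sure the argument is $-iz$. The paper uses $\widetilde Y(z)=\sigma_1Y(-iz)$ and $\widetilde Y_k(z)=\sigma_1Y_{k-1}(-iz)\sigma_1e^{\frac12 i\pi\theta_\infty\sigma_3}$, which gives $\widetilde t=it$. Using $iz$ gives $\widetilde t=-it$ and realises the inverse element. Also note $\widetilde\theta_\infty=-\theta_\infty$ there, i.e.\ $v_\infty\mapsto v_\infty^{-1}$; this is what turns $x_1=v_\infty^{-2}(1+v_\infty^6y_1)$ into $x_3$.
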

\begin{proof}
Regarding (1) and (2), the translation part of the extended affine Weyl group $\widetilde{W}(A_2^{(1)})$ can be realised as Schlesinger transformations of the linear problem \eqref{eq:pivlinearsystem1}, as shown in \cite{fokasmugan1988}. In particular, the corresponding Schlesinger transformation for $T_1$ takes the form
\begin{equation*}
    Y(z)\mapsto \widetilde{Y}(z)=R(z)Y(z),\quad A(z)\mapsto \widetilde{A}(z)=R(z)A(z)R(z)^{-1}+R_z(z)R(z)^{-1},
\end{equation*}
where $R(z)$ is a matrix function of the form
\begin{equation*}
    R(z)=z^{\frac{1}{2}}\begin{bmatrix}
        0 & 0\\
        0 & 1
    \end{bmatrix}+z^{-\frac{1}{2}}\begin{bmatrix}
        1 & r_{12}\\
        r_{21} & r_{12}r_{21}
    \end{bmatrix}.
\end{equation*}
Correspondingly, the canonical solutions at infinity are transformed as
\begin{equation*}
    Y_k(z)\mapsto \widetilde{Y}_k(z)=R(z)Y_k(z)\qquad (k\in\mathbb{Z}).
\end{equation*}
This means that the Stokes data are left completely invariant under the transformation, that is, $\tilde{s}_k=s_k$ for $1\leq k\leq 4$. On the other hand, the parameters $v_0$ and $v_\infty$ are transformed as follows,
\begin{equation*}
    \tilde{v}_0=-v_0,\quad \tilde{v}_\infty=\zeta_6^{-1}v_\infty,\quad \zeta_6=e^{\frac{\pi i}{3}}.
\end{equation*}
Correspondingly, we see that
\begin{equation*}
    \widetilde{x}_1=\zeta_3x_1,\quad \widetilde{x}_2=\zeta_3^{-1}x_2,\quad 
     \widetilde{x}_3=\zeta_3^{-1}x_3,\quad \widetilde{x}_4=\zeta_3x_4,\quad \widetilde{w}_1=\zeta_3w_1,\quad \widetilde{w}_2=\zeta_3^{-1}w_2.
\end{equation*}
The action of $T_2$ is worked out similarly.

For the translations that lie in the unextended affine Weyl group $W(A_2^{(1)})\subset \widetilde{W}(A_2^{(1)})$, the corresponding Schlesinger transformations act trivially on equivalence classes of generalised monodromy data. Indeed, one may check directly that $T_1^2T_2^{-1}$ and $T_1^{-1}T_2^2$ leave the $x_k$, $1\leq k\leq 4$ and $(w_1,w_2)$ invariant using the expressions in (1).

Regarding (3), the element $r_1$ leaves $A(z)$ completely invariant, and therefore also the canonical solutions at infinity and hence also the Stokes data. As $v_0$ and $v_\infty$ are also invariant under $r_1$, it follows that $r_1$ acts completely trivially on the $x_k$, $1\leq k\leq 4$ and $(w_1,w_2)$.

Regarding (4) and (5), we consider the transformation
\begin{equation*}
    Y(z)\mapsto \widetilde{Y}(z)=\sigma_1 Y(-iz),\quad A(z)\mapsto \widetilde{A}(z)=-i\sigma_1A(-iz)\sigma_1.
\end{equation*}
This transformation realises the action of $\sigma_{(01)}r_1r_0$ on $A(z)$ with $$k\mapsto \widetilde{k}=\frac{ i\left(q(4 p-q-2 t)+4 \theta _{\infty }\right)}{2 k}=\frac{i \left(f (g-f-2 t)+2 \theta _{\infty }-2 \theta _0\right)}{k}.$$
Correspondingly, we have the following transformation of the formal solution \eqref{eq:formalsolPIV},
\begin{equation*}
   \widetilde{Y}_{\operatorname{form}}(z)=\sigma_1 Y_{\operatorname{form}}(-iz)\sigma_1 e^{\frac{1}{2}i\pi  \theta_\infty\sigma_3}.
\end{equation*}
The canonical solutions at infinity then transform as
\begin{equation*}
   \widetilde{Y}_{k}(z)=\sigma_1 Y_{k-1}(-iz)\sigma_1 e^{\frac{1}{2}i\pi  \theta_\infty\sigma_3}.
\end{equation*}
Therefore, the Stokes matrices transform as
\begin{equation*}
   \widetilde{S}_{k}=e^{-\frac{1}{2}i\pi  \theta_\infty\sigma_3}\sigma_1 S_{k-1}\sigma_1 e^{\frac{1}{2}i\pi  \theta_\infty\sigma_3}.
\end{equation*}
It follows that
\begin{equation*}
        \widetilde{s}_1=v_\infty^{\frac{9}{2}}s_4, \quad 
        \widetilde{s}_2=v_\infty^{\frac{3}{2}}s_1, \quad 
        \widetilde{s}_3=v_\infty^{-\frac{3}{2}}s_2,\quad
        \widetilde{s}_4=v_\infty^{\frac{3}{2}}s_3.
\end{equation*}
We thus obtain
\begin{equation*}
        \widetilde{y}_1=v_\infty^{6}y_3, \quad
        \widetilde{y}_2=v_\infty^{6}y_1,\quad
        \widetilde{y}_3=y_4, \quad
        \widetilde{y}_4=y_2.
\end{equation*}
Combining this with the transformations of the parameters   $\tilde{v}_0=v_0$ and $\tilde{v}_\infty=v_\infty^{-1}$, yields
\begin{equation*}
        \widetilde{x}_1=x_3, \quad
        \widetilde{x}_2=x_1,\quad 
        \widetilde{x}_3=x_4, \quad 
        \widetilde{x}_4=x_2,\quad 
        \widetilde{w}_1=w_2,\quad
         \widetilde{w}_2=w_1.
\end{equation*}
This proves part (5). 
Finally, part (4) follows by applying (5) twice,  since $\vartheta$ corresponds to the transformation
\begin{equation*}
    Y(z)\mapsto\widetilde{Y}(z)=Y(-z),\quad A(z)\mapsto \widetilde{A}(z)=- A(-z),
\end{equation*}
with $k\mapsto \widetilde{k}=-k$, which is obtained by applying the transformation realising $\sigma_{(01)}r_1r_0$ twice, so we have
$(\sigma_{(01)}r_1r_0)^2=\vartheta,$
finishing the proof of the proposition.
\end{proof}

In the next lemma, we show that $r_0$ acts trivially on the monodromy surface, which together with \Cref{lem:PIVsymmetriesunderRHdirect} will suffice to obtain \Cref{thm:PIVsymmetriesunderRH}. To prove that $r_0$ acts trivially, we follow the strategy used in \cite{inabaiwasakisaito} to establish Theorem \ref{thm:PVIRHsymmetries}, namely, by keeping track of the asymptotics of the general solution near a critical point, as a function of monodromy data, under the relevant B\"acklund transformation.
\begin{lemma} \label{lem:PIVsymmetryunderRHasymptotic}
The symmetry $r_0$ acts on $\mathcal{M}$ as the identity when conjugated with the Riemann-Hilbert map.
\end{lemma}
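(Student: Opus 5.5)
We will follow the strategy of Inaba--Iwasaki--Saito indicated just before the statement. The idea is to find an asymptotic expansion of the general solution of $\pain{IV}$ near a critical point whose integration constants are explicit functions of the point on $\mathcal{M}_w$. We then check that applying $r_0$ to that family of asymptotics gives the same expansion with the same monodromy point.

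Concretely, the plan has four steps.

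\textbf{Step 1: asymptotics as a function of monodromy data.} We take the large-$t$ connection formulae for $\pain{IV}$, for instance along $t\to+\infty$, as established via Riemann--Hilbert/Deift--Zhou analysis in the literature (Its--Kapaev, Fokas et al.). For generic Stokes data they give
\[
f(t)= -\tfrac{2}{3}t \cdot(\text{const}) + t^{-1/2}\bigl(\alpha\, e^{i\phi(t)}+\beta\, e^{-i\phi(t)}\bigr)+\dots,
\]
or, on a suitable ray, the purely algebraic branch with an oscillatory correction. Here the amplitude $\nu$ and phase are explicit in $s_1,\dots,s_4$. We rewrite $\nu$ and the phase in terms of the invariants $x_1,\dots,x_4$ through \eqref{eq:ytosPIV} and \eqref{eq:xtoyPIV}. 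The pair (amplitude, phase), together with $w$, determines a point of $\mathcal{M}_w$ on an open dense set. This identification should be checked by counting dimensions and verifying that the two functions separate points generically.

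\textbf{Step 2: apply $r_0$ directly to the expansion.} Using \Cref{tab:PIV:symmetry:varsandparams}, $\tilde f=f-2a_0/(f-g+2t)$. We first recover $g$ from $f$ through the first equation of \eqref{eq:hamPIV}, namely $g=(f_t+f^2+2tf+2a_1)/(2f)$. We substitute the asymptotic series and expand. This step is routine: the leading algebraic term is unchanged, while the oscillatory correction is shifted. We then read off $\tilde\nu$ and the new phase.

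\textbf{Step 3: compare with the parameter change.} Since $\tilde a=(-a_0,a_1+a_0,a_2+a_0)$, \Cref{lem:weylactionwPIV} gives $w(\tilde a)=w(a)$. We insert $\tilde\theta_0,\tilde\theta_\infty$ into the connection formula from Step 1 and solve for the monodromy point producing $(\tilde\nu,\tilde\phi)$. We must show it equals the original point, which amounts to identities among Gamma functions and exponentials: shifts of $\theta$ by half-integers and sign changes produce factors that cancel. This verification is the main obstacle. The delicate parts are the branches of $v_\infty^{1/2}$-type quantities and the precise constant in the phase, which involves $\arg\Gamma$ and $\log t$ terms. We will control these by working with the $W(A_2^{(1)})$-invariant normalisation of the $x_i$, since the point of that normalisation is that spurious phases drop out.

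\textbf{Step 4: extend and conclude.} The conjugated map $\mathrm{RH}\circ r_0\circ\mathrm{RH}^{-1}$ is analytic on $\mathcal{M}_w$, and by Step 3 it equals the identity on an open set. By the identity theorem it is the identity for generic $a$, and by continuity for all $a$ where $\mathrm{RH}$ is defined. If Step 3 proves too cumbersome, we have a fallback. We would write $r_0=\sigma\, r_1\,\sigma^{-1}$ for a suitable Dynkin automorphism expressible through $\sigma_{(01)}r_1r_0$, $T_1,T_2$ and $r_1$, whose actions are known from \Cref{lem:PIVsymmetriesunderRHdirect}. This would work if such a relation avoids circularity, although it likely does not, because $\sigma_{(01)}$ alone is not yet known.
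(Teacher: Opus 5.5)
Your plan has the same shape as the paper's argument: compare the large-$t$ asymptotics of the general solution, written in terms of the point $x\in\mathcal{M}_w$, before and after applying $r_0$. But the proposal leaves out exactly the content that turns this into a proof, and the step you call the main obstacle rests on wrong expectations.

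First, you never identify a connection formula expressed through the $x_i$ of \emph{this} linear system. Kapaev's results are stated for Kitaev's Lax pair. The paper transfers them by an explicit gauge transformation, after a change of spectral variable, so that \cite[Th. 4(ii)]{kapaev1996} applies. This yields $\frac{f}{2t}+\frac{1}{3}\sim R_\pm(t)$ with $R_\pm(t)=c_\pm^{-1}\zeta_6\, e^{\frac{i}{\sqrt{3}}t^2}(2\sqrt{3}\,t^2)^{-\rho_\pm}$ and $0<\Re\rho_\pm<\frac{1}{2}$, where $\rho_\pm,c_\pm$ are determined by $x$ alone. Your ansatz with a $t^{-1/2}$ oscillatory term is not of this form.

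Second, two observations are decisive:
\begin{itemize}
\item this leading-order formula has \emph{no explicit dependence} on $(\theta_0,\theta_\infty)$;
\item since $f-g+2t\sim\frac{2}{3}t$, the correction $-2a_0/(f-g+2t)=O(t^{-1})$ produced by $r_0$ is negligible against $2tR_\pm(t)$, whose modulus is of order $t^{1-2\Re\rho_\pm}$.
\end{itemize}
So $f$ and its $r_0$-image have identical leading asymptotics. Evaluating the same $\theta$-independent formula at $(\tilde\theta,\tilde x)$ gives $R_\pm(t;x)=R_\pm(t;\tilde x)$. Hence $x=\tilde x$ on the dense set where $x_1x_3,x_2x_4,\tilde x_1\tilde x_3,\tilde x_2\tilde x_4\notin\mathbb{R}$.

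Your Step 2 asserts instead that the oscillatory correction is shifted. Your Step 3 reduces everything to Gamma-function identities that you neither state nor verify, and no such identities arise. Your appeal to the ``$W(A_2^{(1)})$-invariant normalisation of the $x_i$'' is circular. Only $w_1,w_2$ were normalised to be invariant, whereas invariance of $x$ under $r_0$ is exactly what is to be shown.

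The paper also avoids differentiating asymptotic expansions, which your recovery of $g$ from $f_t$ would need to justify. In Kitaev's variables, $y^{\text{Ki}}=f+\frac{2(\theta_0-\theta_\infty)}{2t+f-g}$ is precisely the $r_0$-image of $f$, with parameters $r_0(\theta)$. Kapaev's theorem gives its asymptotics directly in terms of the same $x$.

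Your suspicion about the fallback is right, and it cannot be repaired. In the coordinates $(a_1,a_2)$:
\begin{itemize}
\item the linear parts of $T_1,T_2,\vartheta$ are trivial;
\item the linear part of $r_1$ is $B=\left(\begin{smallmatrix}-1&0\\1&1\end{smallmatrix}\right)$;
\item the linear part of $\sigma_{(01)}r_1r_0$ is $-B$.
\end{itemize}
So the elements whose action is known from \Cref{lem:PIVsymmetriesunderRHdirect} generate only $\{\pm I,\pm B\}$ on linear parts. This excludes the linear part $\left(\begin{smallmatrix}0&-1\\-1&0\end{smallmatrix}\right)$ of $r_0$. Some genuinely analytic input, such as the asymptotic comparison above, is unavoidable.
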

\begin{proof}
  The asymptotics of the general solution of $\Pfour$ are given in \cite{kapaev1996} in terms of the monodromy of a linear problem derived by Kitaev \cite{kitaev85}. The first step in the proof is to recall the relation between this linear problem and \eqref{eq:pivlinearsystem}, which can be found in \cite{kapaevhubert}. Introducing the new independent variable $\xi=\frac{1}{2}z^2$, the transformation
  \begin{equation*}
    Y^{\text{Ki}}(\xi)=\begin{bmatrix}
        \xi^{-\frac{1}{2}} & 0\\
        0 & \xi^{\frac{1}{2}}
    \end{bmatrix}\begin{bmatrix}
        1 & -i\, u\\
        0 & 1
    \end{bmatrix}Y(\tfrac{1}{2}\xi^2),\qquad u=\frac{i\,k}{f-g+2 t},
  \end{equation*}
  gives
  \begin{align*}
      Y_\xi^{\text{Ki}}=A^{\text{Ki}}Y^{\text{Ki}},\\
      Y_t^{\text{Ki}}=B^{\text{Ki}}Y^{\text{Ki}},   
  \end{align*}
with
\begin{align*}
    A^{\text{Ki}}&=(\tfrac{1}{2}\xi^2+(t+u\,v)\xi+\alpha\,\xi^{-1})\sigma_3+i(u\,\xi^2+2\,t\,u+u_t)\sigma_++i(v\,\xi^2+2\,t\,v-v_t)\sigma_-,\\
     B^{\text{Ki}}&=(\tfrac{1}{2}\xi^2+u\,v)\sigma_3+i\,u\,\xi\,\sigma_++i\,v\,\xi\,\sigma_-,
\end{align*}
where $\sigma_+$/$\sigma_-$ denote the strictly upper/lower triangular $2\times 2$ matrices with their only non-trivial entry equal to $1$. This is the linear problem for $\Pfour$ by Kitaev \cite{kitaev85}.

We have the following further formulas relating the variables occurring in both Lax pairs,
\begin{align*}
    \alpha&=2\theta_0-\frac{1}{2},\\
    \beta&=2\theta_0+2\theta_\infty=u_tv-uv_t+2tuv-u^2v^2,\\
    v&=-i\,k^{-1}((f-g+2t)f+2\theta_0-2\theta_\infty),
\end{align*}
and $y^{\text{Ki}}=u\,v$ satisfies $\Pfour$ with parameter values $$\widetilde{\theta}_0=\tfrac{1}{2}(\theta_0+\theta_\infty),\quad 
\widetilde{\theta}_\infty=\tfrac{1}{2}(3\theta_0-\theta_\infty).$$
The last change of parameters coincides with the action of $r_0$ on them and $y^{\text{Ki}}$ is related to $(f,g)$ correspondingly,
\begin{align*}
    &f+\frac{2(\theta_0-\theta_\infty)}{2t+f-g}=y^{\text{Ki}},\\
    &g-f-t=\frac{\theta_0+\theta_\infty}{y^{\text{Ki}}}+\frac{y_t^{\text{Ki}}}{2y^{\text{Ki}}}-\frac{y^{\text{Ki}}}{2}.
\end{align*}

By applying \cite[Th. 4(ii)]{kapaev1996} to $y^{\text{Ki}}$
we obtain the asymptotics of the general solution $y^{\text{Ki}}$ of $\Pfour$ as $t\rightarrow +\infty$ and $t\rightarrow-\infty$ along the real line, and correspondingly for $f$ and $g$. The result is as follows. Under the conditions $ x_1x_3,x_2x_4\notin \mathbb{R}$,
the leading order behaviours of $y^{\text{Ki}}$, $f$ and $g$ as $t\rightarrow \pm\infty$, $t\in\mathbb{R}$, are given by 
\begin{equation}\label{eq:kap_asymptotics}
    \frac{y^{\text{Ki}}}{2t}+\frac{1}{3}\sim R_\pm(t),\qquad
    \frac{f}{2t}+\frac{1}{3}\sim R_\pm(t), \qquad
    \frac{g}{2t}-\frac{1}{3}\sim \zeta_6^{-1}R_\pm(t),
\end{equation}
where $R_\pm(t)$ are the following asymptotically vanishing functions
\begin{equation*}
    R_\pm(t)=c_\pm^{-1}\zeta_6e^{\frac{1}{\sqrt{3}}i\,t^2}(2\sqrt{3}t^2)^{-\rho_\pm}.
\end{equation*}
Here the exponent $\rho_\pm\in\mathbb{C}$, $0<\Re \rho_\pm<\frac{1}{2}$, and multipliers $c_\pm\in\mathbb{C}^*$, are uniquely determined in terms of $\{x_1,x_2,x_3,x_4\}$, and vice versa, through
\begin{align*}
    \rho_+&=\frac{1}{2\pi i}\log(x_2x_4-1), &  \rho_-&=\frac{1}{2\pi i}\log(x_1x_3-1),\\
    x_2&=\frac{\sqrt{2\pi}}{\Gamma(\frac{1}{2}+\rho_+)} e^{\frac{i\pi\rho_+}{2}}c_+^{-1}, &     x_1&=\frac{\sqrt{2\pi}}{\Gamma(\frac{1}{2}+\rho_-)} e^{\frac{i\pi\rho_-}{2}}c_-^{-1},\\
    x_4&=\frac{\sqrt{2\pi}}{\Gamma(\frac{1}{2}-\rho_+)} e^{\frac{i\pi\rho_+}{2}}c_+, &  x_3&=\frac{\sqrt{2\pi}}{\Gamma(\frac{1}{2}-\rho_-)} e^{\frac{i\pi\rho_-}{2}}c_-.
\end{align*}
We note, in particular, that the leading order asymptotics have no explicit dependence on the parameters $\theta_0,\theta_\infty$.

Next, $r_0$ conjugated with the Riemann-Hilbert map induces a biholomorphism from $\mathcal{M}_w$ to itself,
\begin{equation*}
    \mathfrak{r}_0:\mathcal{M}_w\rightarrow \mathcal{M}_w, \quad x\mapsto \widetilde{x}.
\end{equation*}
Denoting $f=f(t;\theta_0,\theta_\infty,x)$ and $y^{\text{Ki}}=y^{\text{Ki}}(t;\theta_0,\theta_\infty,x)$, recall that these are related by the symmetry $r_0$, which we write as
\begin{equation*}
    y^{\text{Ki}}(t;\theta_0,\theta_\infty,x)=f(t;\widetilde{\theta}_0,\widetilde{\theta}_\infty,\widetilde{x}).
\end{equation*}
Denoting $R_\pm=R_\pm(t;x)$, and assuming that $ x_1x_3,x_2x_4,\widetilde{x}_1\widetilde{x}_3,\widetilde{x}_2\widetilde{x}_4\notin \mathbb{R}$, applying the asymptotic formulas \eqref{eq:kap_asymptotics} to both sides, yields
\begin{equation*}
    R_\pm(t;x)=R_\pm(t;\widetilde{x}),
\end{equation*}
and therefore $x=\widetilde{x}$. This means that $\mathfrak{r}_0$ acts as the identity on the dense open subset $\{x\in\mathcal{M}_w: x_1x_3,x_2x_4\notin \mathbb{R}\}$ and thus equals the identity globally, finishing the proof of the lemma.
\end{proof}

\begin{remark}\label{rem:kapaevgenasymp} For the proof of Lemma \ref{lem:PIVsymmetryunderRHasymptotic}, we only required the generic leading order behaviour of solutions on the real line, given in equation \eqref{eq:kap_asymptotics}. The full content of \cite[Th. 4(ii)]{kapaev1996} translates to the following: for $t$ either real or purely imaginary, the large $t$ asymptotics of $f$ are described by
\begin{equation*}\displaystyle
    \left(\frac{f}{2t}+\frac{1}{3}\right)^{-1}=\begin{cases}\displaystyle
    1+\frac{\sqrt{2\pi}}{\Gamma(\frac{1}{2}-\rho_0)}\frac{e^{\frac{i\pi\rho_0}{2}}}{x_4}m_0(t)+\frac{\sqrt{2\pi}}{\Gamma(\frac{1}{2}+\rho_0)}\frac{e^{\frac{i\pi\rho_0}{2}}}{x_2}m_0(t)^{-1}+\mathcal{O}(t^{-1+2|\Re \rho_0|}) & \text{as $t\rightarrow+\infty$,}\\
   \displaystyle 1+\frac{\sqrt{2\pi}}{\Gamma(\frac{1}{2}-\rho_1)}\frac{e^{\frac{i\pi\rho_1}{2}}}{x_3}m_1(t)+\frac{\sqrt{2\pi}}{\Gamma(\frac{1}{2}+\rho_1)}\frac{e^{\frac{i\pi\rho_1}{2}}}{x_4}m_1(t)^{-1}+\mathcal{O}(t^{-1+2|\Re \rho_1|}) & \text{as $t\rightarrow+i\infty$,}\\
      \displaystyle 1+\frac{\sqrt{2\pi}}{\Gamma(\frac{1}{2}-\rho_2)}\frac{e^{\frac{i\pi\rho_2}{2}}}{x_1}m_2(t)+\frac{\sqrt{2\pi}}{\Gamma(\frac{1}{2}+\rho_2)}\frac{e^{\frac{i\pi\rho_2}{2}}}{x_3}m_2(t)^{-1}+\mathcal{O}(t^{-1+2|\Re \rho_2|}) & \text{as $t\rightarrow-\infty$,}\\
      \displaystyle 1+\frac{\sqrt{2\pi}}{\Gamma(\frac{1}{2}-\rho_{3})}\frac{e^{\frac{i\pi\rho_3}{2}}}{x_2}m_3(t)+\frac{\sqrt{2\pi}}{\Gamma(\frac{1}{2}+\rho_3)}\frac{e^{\frac{i\pi\rho_3}{2}}}{x_1}m_3(t)^{-1}+\mathcal{O}(t^{-1+2|\Re \rho_3|}) & \text{as $t\rightarrow-i\infty$,}\\
    \end{cases}
\end{equation*}
where
\begin{align*}
    m_0(t)&=e^{\frac{\pi i}{3}}e^{\frac{i}{\sqrt{3}}t^2}(2\sqrt{3}t^2)^{-\rho_0}, & \rho_0&=\frac{1}{2\pi i}\log(x_2x_4-1),\\
   m_1(t)&=e^{-\frac{\pi i}{3}}e^{-\frac{i}{\sqrt{3}}t^2}(-2\sqrt{3}t^2)^{-\rho_1}, & \rho_1&=\frac{1}{2\pi i}\log(x_4x_3-1),\\
   m_2(t)&=e^{\frac{\pi i}{3}}e^{\frac{i}{\sqrt{3}}t^2}(2\sqrt{3}t^2)^{-\rho_2}, & \rho_2&=\frac{1}{2\pi i}\log(x_3x_1-1),\\
   m_3(t)&=e^{-\frac{\pi i}{3}}e^{-\frac{i}{\sqrt{3}}t^2}(-2\sqrt{3}t^2)^{-\rho_3}, & \rho_3&=\frac{1}{2\pi i}\log(x_1x_2-1),
\end{align*}
with $-\frac{1}{2}<\Re\rho_j<\frac{1}{2}$ for $0\leq j\leq 3$. The respective conditions of validity for these asymptotic formulas are
\begin{equation*}
    x_2x_4-1\notin \mathbb{R}_{\leq 0},\quad  x_4x_3-1\notin \mathbb{R}_{\leq 0},\quad  x_3x_1-1\notin \mathbb{R}_{\leq 0},\quad  x_1x_2-1\notin \mathbb{R}_{\leq 0}.
\end{equation*}
Furthermore, note that, e.g. as $t\rightarrow+\infty$, the asymptotics are at leading order real, if and only if  $\overline{x}_2=x_4$ and $x_2x_4-1\in\mathbb{R}_{>0}$, so that in particular $\Re\rho_0=0$.
\end{remark}

\begin{proof}[Proof of \Cref{thm:PIVsymmetriesunderRH}]
    From \Cref{lem:PIVsymmetriesunderRHdirect}, we have that $r_1$ acts trivially on $\mathcal{M}\to \mathscr{W}_{\rm{IV}}$.
    It follows from the same lemma that $r_0r_1r_2r_1$ acts trivially, since it is in the translation part of $W(A_2^{(1)})$ generated by $T_1^{2}T_2^{-1}$ and $T_1^{-1}T_2^2$.
    Combining this with \Cref{lem:PIVsymmetryunderRHasymptotic} we have that the whole of $\langle r_0,r_1, r_2\rangle \cong W(A_2^{(1)})$ acts trivially.

    To prove the second part, we note that \Cref{lem:PIVsymmetriesunderRHdirect} gives the action of $\vartheta$. The action of the Dynkin diagram automorphisms in $\Aut(A_2^{(1)})=\langle \sigma_{(01)},\sigma_{(012)}\rangle$ are worked out as follows. 
    Since $r_0$ and $r_1$ act trivially, it follows from the action of  $\sigma_{(01)}r_1r_0$ given in \Cref{lem:PIVsymmetriesunderRHdirect} that $\sigma_{(01)}$ acts as described in \Cref{tab:PIV:symmetry:dynkinautos}.
    We obtain the actions of elements of the cyclic subgroup generated by $\sigma_{(012)}$ through their expressions in terms of $T_2,r_1,r_2$, 
    $$\sigma_{(012)} = T_2r_1r_2.$$ 
    Actions of the remaining Dynkin diagram automorphisms are obtained from those of $\sigma_{(01)}$ and $\sigma_{(012)}$, leading to the expressions in \Cref{tab:PIV:symmetry:dynkinautos}.
\end{proof}

\begin{remark} \label{rem:commutingRHsPIV}
Based on the literature, it is apparently difficult to realise the full symmetry group of $\pain{IV}$ on the level of the $2\times2$ linear problem. 
In \cite{puttop2013piv}, the symmetries were realised on the level of a $3\times3$ linear problem, and a Riemann-Hilbert map to the corresponding monodromy surface 
$\widehat{\operatorname{RH}}_{t,a}:\mathcal{X}_{t,a} \rightarrow \widehat{\mathcal{M}}_{\widehat{w}},$ 
was constructed. 
This induces a commutative diagram of biholomorphisms
\begin{equation*}
    \begin{tikzcd}
    & \mathcal{M}_{w(a)} \arrow[dd,leftrightarrow]\\
        \mathcal{X}_{t,a}  \arrow[ur, "\operatorname{RH}_{t,a}"] \arrow[dr,swap, "\widehat{\operatorname{RH}}_{t,a}"]& \\
    & \widehat{\mathcal{M}}_{\widehat{w}(a)}         
    \end{tikzcd}
\end{equation*}
providing an alternative pathway to prove \Cref{thm:PIVsymmetriesunderRH}, by working out the induced map between $\mathcal{M}_w$  and $\widehat{\mathcal{M}}_{\widehat{w}}$.
In the context of $\pain{VI}$, the induced map between two different monodromy surfaces was worked out in \cite{deganoguzzetti}, relating the trace coordinates on monodromy for a Fuchsian $2\times 2$ linear system with Stokes data for an irregular $3\times3$ system.
\end{remark}

\subsection{Moduli space}
The category of monodromy surfaces for $\pain{IV}$ is that of embedded affine Segre surfaces with a rectangle of lines at infinity.
The parameter space $\mathscr{A}_{\rm{IV}}$ of $\pain{IV}$ modulo the action of $W(A_2^{(1)})\rtimes \Aut(A_2^{(1)})$ provides the corresponding moduli space of objects.

\begin{definition}[Category $\mathfrak{C}_{\rm IV}$ of monodromy surfaces for $\pain{IV}$]
\label{def:categoryPIV}
Define the category $\mathfrak{C}_{\rm IV}$ with
\begin{itemize}
    \item an object being an embedded affine Segre surface $\mathcal{V}\subset \mathbb{A}^4$, such that its complement in its projective completion $\overline{\mathcal{V}}\setminus \mathcal{V}\subseteq \p^4$ is the union of four lines, which intersect like a rectangle, and consists of only smooth points of $\overline{\mathcal{V}}$. 
    \item a morphism between objects $\mathcal{V}_1$ and $\mathcal{V}_2$ being an affine linear $L \in \operatorname{End}(\mathbb{A}^4)$ such that $L(\mathcal{V}_1)\subseteq \mathcal{V}_2$.
\end{itemize}
\end{definition}

The family $\mathcal{M}\to \mathscr{W}$ from \Cref{def:monodromysurfacePIV} represents isomorphism classes of objects in $\mathfrak{C}_{\rm IV}$ in the following way.
\begin{proposition} \label{prop:oblomkovstylenormalformPIV}
    For any object in $\mathcal{V}$ in $\mathfrak{C}_{\rm IV}$, there exists a $w\in \mathscr{W}_{\rm{IV}}$, unique up to the actions of $\operatorname{Aut}(A_2^{(1)})$ in \Cref{tab:PIV:symmetry:dynkinautos},
  such that $\mathcal{V}$ is isomorphic to $\mathcal{M}_w$ in $\mathfrak{C}_{\rm IV}$.  The corresponding isomorphism is unique up to composition with automorphisms of $\mathcal{M}_w$.
  The automorphism group of $\mathcal{M}_w$ in $\mathfrak{C}_{\rm{IV}}$ is generated by $\langle\varpi_1,\varpi_2\rangle$, described by
      \begin{equation} \label{eq:PIVxsymmetriesvarpi}
    \varpi_{1} : x_1\leftrightarrow x_4,\quad \varpi_2 : x_2 \leftrightarrow x_3, \qquad \varpi_1\varpi_2 = \vartheta,
\end{equation}
  and $\operatorname{Stab}_{\Aut(A_2^{(1)})}\left(w\right)$, the latter being trivial for generic $w$.

\end{proposition}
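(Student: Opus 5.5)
The plan mirrors the proof of \Cref{prop:oblomkov}: use the lines at infinity to fix coordinates, then remove the remaining affine freedom step by step. Let $\mathcal{V}\subset\mathbb{A}^4$ be an object of $\mathfrak{C}_{\rm IV}$ with completion $\overline{\mathcal{V}}\subset\p^4$. Since $\overline{\mathcal{V}}$ is a degree four del Pezzo surface, it is an intersection of two quadrics. Its hyperplane section at infinity $X_0=0$ is a rectangle of lines, so we label the vertices cyclically. Opposite sides then span two planes in $\{X_0=0\}\cong\p^3$, namely $L_2L_3=0$ and $L_1L_4=0$, where the $L_k$ are linear forms on $\p^3$. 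Because the four vertices of the rectangle span $\p^3$, the four forms $L_1,\dots,L_4$ are linearly independent.

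The affine change of coordinates $x_k\mapsto L_k(x)$ then puts the pencil of quadrics into a form whose two members restrict at infinity to $x_2x_3$ and $x_1x_4$. Taking suitable combinations, the ideal of $\mathcal{V}$ is generated by
$$x_2x_3+\ell_1(x),\qquad x_1x_4+\ell_2(x),$$
with $\ell_1,\ell_2$ affine linear. Next use the translations $x_k\mapsto x_k+c_k$, together with adding multiples of one generator to the other where allowed, to kill the unwanted linear terms. Translating $x_2,x_3$ removes the $x_3,x_2$ coefficients from $\ell_1$, and translating $x_1,x_4$ does the same for $\ell_2$. This reduces the generators to
$$x_2x_3+\alpha_1x_1+\alpha_4x_4-c_1,\qquad x_1x_4+\beta_2x_2+\beta_3x_3-c_2.$$

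The smoothness hypothesis at infinity should force $\alpha_1,\alpha_4,\beta_2,\beta_3\neq0$. The check is at a vertex: at the point where the lines $X_2=0$ and $X_1=0$ meet, the Jacobian of the two homogenised quadrics involves exactly these coefficients, and it drops rank when one of them vanishes. Then rescale $x_k\mapsto\gamma_kx_k$ together with the two generators. This gives six scaling parameters to normalise four coefficients to $1$, leaving the invariants $w_1,w_2$ up to the residual discrete freedom.

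For uniqueness, I would compute all affine isomorphisms $\mathcal{M}_w\to\mathcal{M}_{\tilde w}$. Such a map must send the rectangle at infinity to the rectangle. It must therefore act on $\{x_1,x_2,x_3,x_4\}$ by an element of the dihedral group $D_4$ of the square, composed with diagonal scaling and then translation. Translations are excluded by the normal form, since they would reintroduce linear or constant cross terms.

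It remains to impose that $I_w$ maps to $I_{\tilde w}$. Orientation-reversing symmetries and rotations swap the roles of the two quadrics, and this forces the scalings to be cube roots of unity with the pattern shown in \Cref{tab:PIV:symmetry:dynkinautos}. The resulting solutions are
\begin{itemize}
\item the $\Aut(A_2^{(1)})$ actions, which change $w$ by $w\mapsto(\zeta_3^{\mp1}w_1,\zeta_3^{\pm1}w_2)$ possibly composed with the swap $w_1\leftrightarrow w_2$;
\item the reflections $\varpi_1,\varpi_2$, which fix each quadric setwise and hence fix $w$.
\end{itemize}
Consequently $\operatorname{Aut}(\mathcal{M}_w)$ consists of $\langle\varpi_1,\varpi_2\rangle$ together with those $\Aut(A_2^{(1)})$ elements fixing $w$. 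The claim that the isomorphism is unique up to automorphisms is then formal.

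I expect the main obstacle to be this last bookkeeping step. It requires checking that the scaling equations admit exactly the listed solutions, with no extra $w$-dependent ones. It also requires confirming that the generic stabiliser is trivial, for example by exhibiting a $w$ that none of the six maps fixes.
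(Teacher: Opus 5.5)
Your plan is correct and follows the paper's proof essentially step for step. You normalise the rectangle at infinity to $X_1X_4=X_2X_3=0$, translate away the unwanted linear terms, use smoothness at the four vertices to force the remaining four linear coefficients to be nonzero, and rescale. You then classify isomorphisms as monomial maps with permutation part in $D_4$; modulo $\langle\varpi_1,\varpi_2\rangle$ these split into a quadric-preserving part $\langle\sigma_{(012)}\rangle$ and a quadric-swapping coset $\sigma_{(01)}\langle\sigma_{(012)}\rangle$.

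A few statements need correcting when you write it up:
\begin{itemize}
\item Opposite sides of the rectangle are skew lines, so it is adjacent sides that span the planes $L_k=0$.
\item The scaling step is really four monomial equations in $\gamma_1,\dots,\gamma_4$, once $x_1x_4$ and $x_2x_3$ are kept monic. Their exponent matrix has determinant of absolute value $3$, which is exactly why there are three solutions.
\item Only the two $90^\circ$ rotations and the two edge reflections in $D_4$ swap the quadrics. The diagonal reflections $\varpi_1,\varpi_2$ and the half-turn $\vartheta$ preserve them.
\end{itemize}
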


\begin{proof}
The proof is analogous to that of \Cref{prop:oblomkov}.
Let $\mathcal{V}\subset \mathbb{A}^4$ be an object in $\mathfrak{C}_{\rm IV}$ and denote its projective completion by $\overline{\mathcal{V}} \subset \mathbb{P}^4$. 
The hyperplane section $\overline{\mathcal{V}}\setminus \mathcal{V}$ by assumption  consists of four lines intersecting according to a rectangle, so can be written as
\begin{equation*}
    L_1 L_4 =0,\quad L_2 L_3=0,\quad X_0=0,
\end{equation*}
where each $L_k\in \mathbb{C}[X_0,X_1,X_2,X_3,X_4]$ is homogeneous of degree 1. 
An affine map puts $\mathcal{V}$ into the form
\begin{equation}\label{eq:refinedsegre1aff}
\begin{aligned}
    x_1 x_4+ b_4x_4+b_3x_3+b_2x_2+b_1x_1+b_0 &= 0, \\
    x_2 x_3+ c_4x_4+c_3x_3+c_2x_2+c_1x_1+c_0&=0,
\end{aligned}
\end{equation}
for some $b_{k},c_{k}\in\mathbb{C}$, $1\leq k\leq 4$.
Applying $x_1\mapsto x_1-b_4,x_2\mapsto x_2-c_3,x_3\mapsto x_3-c_2,x_4\mapsto x_4-b_1$, we may eliminate the linear terms in $x_2,x_3$ and in $x_1,x_4$ from the first and second equations respectively in \eqref{eq:refinedsegre1aff}, yielding
\begin{equation*}
\begin{aligned}
    x_1 x_4 +d_3 x_3+d_2 x_2-e_1 &= 0, \\
    x_2 x_3+d_4 x_4+d_1 x_1-e_2    &=0,
\end{aligned}\end{equation*}
for some $d_k\in\mathbb{C}$, $1\leq k\leq 4$, and $e_1,e_2\in\mathbb{C}$.
Further scaling, 
$x_k\mapsto \gamma_k x_k$ for $k=1,2,3$, rescales the coefficients as
\begin{equation*}
    d_1\mapsto d_1 \frac{\gamma_1}{\gamma_2\gamma_3},\quad 
    d_2\mapsto d_2 \frac{\gamma_2}{\gamma_1\gamma_4},\quad
    d_3\mapsto d_3 \frac{\gamma_3}{\gamma_1\gamma_4},\quad
    d_4\mapsto d_4 \frac{\gamma_4}{\gamma_2\gamma_3}, \quad
   e_1\mapsto e_1 \frac{1}{\gamma_1\gamma_4},\quad 
    e_2\mapsto e_2 \frac{1}{\gamma_2\gamma_3}.
\end{equation*}
Since $\overline{\mathcal{V}}\setminus \mathcal{V}$ consists of only smooth points of $\overline{\mathcal{V}}$, the coefficients $d_k$, $1\leq k\leq 4$, are all nonzero and any of the three solutions $(\gamma_1,\gamma_2,\gamma_3,\gamma_4)\in(\mathbb{C}^*)^4$ to
$$
\begin{aligned}\frac{d_1}{d_4}\gamma_1^3&=d_1d_2d_3d_4, & 
\frac{d_2}{d_3}\gamma_2^3&=d_1d_2d_3d_4, &
\frac{d_3}{d_2}\gamma_3^3&=d_1d_2d_3d_4, &
\frac{d_4}{d_1}\gamma_4^3&=d_1d_2d_3d_4,\\
\frac{d_4}{d_1}\gamma_1&=\gamma_4, & \frac{d_2}{d_3}\gamma_3&=\gamma_2, & \gamma_2\gamma_4&=d_1d_3, & &
\end{aligned}$$
put the pair of equations into the form 
of the two generators of the ideal in \Cref{piv:idealIw} 
with 
$$w_1 = \frac{e_2}{\gamma_2\gamma_3}, \quad 
w_2 = \frac{e_1}{\gamma_1\gamma_4}.$$
This gives, for any $\mathcal{V}$, a $w \in \mathscr{W}_{\rm VI}$ and an isomorphism $\mathcal{V}\to \mathcal{M}_w$ determined up to the action on $\mathcal{M}\to \mathscr{W}_{\rm{IV}}$ of $\Aut(A_2^{(1)})$ and the automorphisms $\varpi_1,\varpi_2$ in \eqref{eq:PIVxsymmetriesvarpi}. 
This comes through the freedom of choice of enumeration of $L_1,L_2,L_3,L_4$ and the choice of $(\gamma_1,\gamma_2,\gamma_3,\gamma_4)$, which correspond exactly to the actions of $\operatorname{Aut}(A_2^{(1)})$ and $\vartheta$ on $x_i$ and $w_i$ in \Cref{tab:PIV:symmetry:dynkinautos} as well as of the automorphisms \eqref{eq:PIVxsymmetriesvarpi}.

To show that $\mathcal{M}_w$ is isomorphic to $\mathcal{M}_{\tilde{w}}$ if and only if $w$ and $\tilde{w}$ are related by the action of some $\sigma\in \Aut(A_2^{(1)})$ as given in \Cref{tab:PIV:symmetry:dynkinautos}, we compute directly as follows. 
Explicitly, up to permutation of coordinates via the action of $\langle \varpi_1,\varpi_2\rangle \cong \Z/2\Z\times\Z/2\Z$, to match the rectangle of lines at infinity $\overline{\mathcal{M}}_{w}\setminus \mathcal{M}_{w}$ and $\overline{\mathcal{M}}_{\tilde{w}}\setminus \mathcal{M}_{\tilde{w}}$ the isomorphism must be either of the form $(x_1,x_2,x_3,x_4)\mapsto (A_1 x_1,A_2 x_2, A_3x_3, A_4x_4)$ for some nonzero $A_1,A_2,A_3,A_4$, or of the form $(x_1,x_2,x_3,x_4)\mapsto (B_2 x_2,B_1 x_1, B_4x_4, B_3x_3)$ for some nonzero $B_1,B_2,B_3,B_4$. 
In the former case, requiring that $I_w$ is sent to $I_{\tilde{w}}$ gives 
\begin{equation*}
    A_4=A_1,\quad A_3=A_2, \qquad A_1=A_2^2,\quad A_2=A_1^2\qquad \tilde{w}_1 = A_1 w_1, \quad \tilde{w}_2 = A_2 w_2, 
\end{equation*}
the solutions of which correspond to the action of $\langle \sigma_{(012)}\rangle$.
In the latter case, we require 
\begin{equation*}
    B_4=B_1,\quad B_3=B_2, \qquad B_1=B_2^2,\quad B_2=B_1^2\qquad \tilde{w}_1 = B_2 w_2, \quad \tilde{w}_2 = B_1 w_1, 
\end{equation*}
the solutions of which correspond to the action of the coset $\sigma_{(01)}\langle \sigma_{(012)}\rangle$, which accounts for the remaining part of $\operatorname{Aut}(A_2^{(1)})$.
An automorphism of $\mathcal{M}_w$ then must come from a combination of $\langle \varpi_1,\varpi_2\rangle$ and the action of some $\sigma \in \Aut(A_2^{(1)})$ such that $\sigma(w)=w$, so generically the isomorphism $\mathcal{V}\to \mathcal{M}_w$ is unique up to $\langle \varpi_1,\varpi_2\rangle$ for fixed $w$.
\end{proof}

To describe the moduli space of $\mathfrak{C}_{\rm{IV}}$, from \Cref{prop:oblomkovstylenormalformPIV} we get a mapping 
\begin{equation} \label{eq:mapPhiPIV}
\begin{gathered}
    \Phi : \operatorname{ob}(\mathfrak{C}_{\rm{IV}})\to \mathscr{O}_{\rm{IV}}:= \mathscr{W}_{\rm{IV}}\sslash \Aut(A_2^{(1)}) = \Spec \C[o_1,o_2], \\
    o_1 = w_1 w_2, \quad o_2 = w_1^3 + w_2^3. 
\end{gathered}
\end{equation}
The singular locus $\mathscr{W}_{\rm{IV}}^{\operatorname{sing}}\subset \mathscr{W}_{\rm{IV}}$ corresponds to $\mathscr{O}_{\rm{IV}}^{\operatorname{sing}}\subset\mathscr{O}_{\rm{IV}}$ given by 
$$ o_1^2+18 o_1 - 4 o_2 -27 = 0.$$
The following is proved using \Cref{prop:oblomkovstylenormalformPIV} in an analogous way to \Cref{cor:modulispacePVI} was proved in the $\pain{VI}$ case.
\begin{corollary} \label{cor:modulispacePIV}
    The map $\Phi$ given in \eqref{eq:mapPhiPIV} induces a canonical bijection 
    $$\operatorname{Iso}(\mathfrak{C}_{\rm{IV}}) \to \mathscr{O}_{\rm{IV}}(\C)\cong\C^2,$$
    where $\mathscr{O}_{\rm{IV}} = \mathscr{W}_{\rm{IV}} \sslash \operatorname{Aut}(A_2^{(1)})$.
\end{corollary}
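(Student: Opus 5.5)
The plan is to mirror the proof of \Cref{cor:modulispacePVI}, using \Cref{prop:oblomkovstylenormalformPIV} as the main input. The argument has three parts: $\Phi$ is well defined on isomorphism classes, the induced map is injective, and it is surjective. The identification $\mathscr{O}_{\rm{IV}}(\C)\cong\C^2$ comes from the invariant theory of the finite group $\Aut(A_2^{(1)})\cong\mathfrak{S}_3$ acting on $\mathscr{W}_{\rm{IV}}$.

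\emph{Invariant ring.} First I pin down the invariant ring. By \Cref{tab:PIV:symmetry:dynkinautos}, the group acts on $\C[w_1,w_2]$ as follows:
\begin{itemize}
    \item $\sigma_{(012)}$ acts diagonally by $(w_1,w_2)\mapsto(\zeta_3^{-1}w_1,\zeta_3w_2)$;
    \item $\sigma_{(01)}$ swaps $w_1$ and $w_2$.
\end{itemize}
The invariants of the cyclic part are generated by $w_1w_2$, $w_1^3$ and $w_2^3$. Taking swap invariants then gives $o_1=w_1w_2$, $o_2=w_1^3+w_2^3$, together with $w_1^3w_2^3=o_1^3$. Hence $\C[w_1,w_2]^{\Aut(A_2^{(1)})}=\C[o_1,o_2]$, and the two generators are algebraically independent. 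This is consistent with $\Aut(A_2^{(1)})$ acting on $\C^2$ as a reflection group of type $A_2$, with degrees $2$ and $3$. Consequently $\mathscr{O}_{\rm{IV}}\cong\mathbb{A}^2$.

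\emph{Points of the quotient are orbits.} Since the group is finite, the quotient map $\mathscr{W}_{\rm{IV}}\to\mathscr{O}_{\rm{IV}}$ is surjective on complex points, and its fibres are exactly the $\Aut(A_2^{(1)})$-orbits. This is the standard fact that invariants of a finite group separate orbits. It can also be checked directly: $(o_1,o_2)$ determines the unordered pair $\{w_1^3,w_2^3\}$ as the roots of $T^2-o_2T+o_1^3$. Given $w_1$, if $w_1\neq0$ then $w_2=o_1/w_1$, and the remaining freedom is precisely the action of $\sigma_{(012)}$ and the swap. The case $w_1w_2=0$ needs a separate check, since $o_1$ no longer determines the second coordinate. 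If, say, $w_1=0$, then $w_2^3=o_2$, and the three cube roots of $o_2$ are permuted by the cyclic action, so the orbit is still recovered. This degenerate case is the only step needing real care.

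\emph{Well-definedness and injectivity.} For an object $\mathcal{V}$, define $\Phi(\mathcal{V})$ as the image of any $w$ with $\mathcal{V}\cong\mathcal{M}_w$. By \Cref{prop:oblomkovstylenormalformPIV}, such a $w$ exists and is unique up to the $\Aut(A_2^{(1)})$ action, so $\Phi$ is well defined and constant on isomorphism classes. Conversely, suppose $\Phi(\mathcal{V}_1)=\Phi(\mathcal{V}_2)$. Then the corresponding $w$ and $\tilde{w}$ lie in one orbit by the previous step. The second half of the proof of \Cref{prop:oblomkovstylenormalformPIV} shows that orbit-related parameters give isomorphic $\mathcal{M}_w\cong\mathcal{M}_{\tilde{w}}$, via the explicit coordinate maps in \Cref{tab:PIV:symmetry:dynkinautos}. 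Hence $\mathcal{V}_1\cong\mathcal{V}_2$, and the induced map on $\operatorname{Iso}(\mathfrak{C}_{\rm{IV}})$ is injective.

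\emph{Surjectivity.} Every point $o\in\mathscr{O}_{\rm{IV}}(\C)$ lifts to some $w\in\mathscr{W}_{\rm{IV}}$. It remains to check that $\mathcal{M}_w$ is an object of $\mathfrak{C}_{\rm{IV}}$ for every $w$, singular or not. This holds because the remark following \Cref{def:monodromysurfacePIV} shows that the divisor at infinity is a rectangle of lines consisting of smooth points, independently of $w$. Thus $\Phi(\mathcal{M}_w)=o$, and the bijection is canonical because it depends only on the invariant polynomials.
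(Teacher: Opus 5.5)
Your proposal is correct and follows the paper's route. The paper proves this corollary exactly as it proves \Cref{cor:modulispacePVI}: \Cref{prop:oblomkovstylenormalformPIV} gives well-definedness and injectivity, and lifting points of $\mathscr{O}_{\rm{IV}}(\C)$ to $\mathscr{W}_{\rm{IV}}$ gives surjectivity. Your extra checks are details the paper leaves implicit: that $\C[w_1,w_2]^{\operatorname{Aut}(A_2^{(1)})}=\C[o_1,o_2]$, that the fibres of the quotient map are exactly orbits, and that every $\mathcal{M}_w$, including singular $w$, is an object of $\mathfrak{C}_{\rm{IV}}$.
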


\begin{remark} \label{rem:mysterysymmetryPIV}
    As long as $\theta$ lies in 
        $$\Theta_{\rm{IV}}\setminus \Theta_{\rm{IV}}^{\operatorname{sing}} = \left\{ \theta = (\theta_0,\theta_{\infty})\in \C^2 : 2\theta_0\not\in  \Z, \,\,\theta_0+\theta_{\infty}\not\in \Z, \,\,\theta_0-\theta_{\infty}\not\in \Z \right\},$$
so that $w\in \mathscr{W}_{\rm{IV}}\setminus \mathscr{W}_{\rm{IV}}^{\operatorname{sing}}$, the automorphisms $\varpi_1,\varpi_2$ of $\mathcal{M}_w$ introduced in \cref{eq:PIVxsymmetriesvarpi} conjugate under the Riemann-Hilbert map $\operatorname{RH}_{t,a}$ to biholomorphic involutions of the (open) initial value space $\mathcal{X}_{t,a}$.
These must be transcendental, and as far as we know no such symmetries of $\pain{IV}$ have appeared in the literature.
\end{remark}

\subsection{Lines on the monodromy surface}

For $w\in \mathscr{W}_{\rm{IV}}\setminus \mathscr{W}_{\rm{IV}}^{\operatorname{sing}}$, $\mathcal{M}_{w}$ is a smooth Segre surface and as such contains 16 lines.
To write these down, introduce
$$\mathscr{U}_{\rm{IV}} = \left\{ u = (u_1,u_2) \in (\C^{*})^{\times 2} \right\},$$
where 
\begin{equation}\label{eq:thetatouPIV}
    u_1=e^{ \frac{2\pi i}{3}\theta_\infty+ 2 \pi i\theta_0}, \quad u_2=e^{\frac{2\pi i}{3}\theta_\infty-2\pi i\theta_0}.
\end{equation}
The surface $\overline{\mathcal{M}}_w$ has, in addition to the four lines at infinity, 12 affine lines, which are given in terms of these parameters by
\begin{equation} \label{eq:linesPIV}
\begin{aligned}
    L_k : x_1 &= b_k,\quad &&x_2= \frac{1}{b_k},\quad &x_3  + b_k x_4  = w_2 - b_k^{-1}, \quad &&k=1,2,3,\\
    L_k : x_1 &= b_k,\quad &&x_3= \frac{1}{b_k},\quad &x_2  + b_k x_4  = w_2 - b_k^{-1}, \quad &&k=4,5,6,\\
    L_k : x_4 &= b_k,\quad &&x_2= \frac{1}{b_k},\quad &x_1 + b_k^{-1} x_3 = w_1 - b_k, \quad &&k=7,8,9,    \\
    L_k : x_4 &= b_k,\quad &&x_3= \frac{1}{b_k},\quad &x_1 + b_k^{-1} x_2  = w_1 - b_k, \quad &&k=10,11,12,    
    \end{aligned}
    \end{equation}
where $b_k$, $k=1,\dots,12$, are given by
\begin{equation*} 
    b_k = u_1, \text{ if } k = 1 \mod 3 , \quad 
    b_k = u_2, \text{ if } k = 2 \mod 3, \quad 
    b_k = \frac{1}{u_1u_2}, \text{ if } k = 3 \mod 3.
\end{equation*}    
We will also use $L_k$, $k=1,\dots,12$, to denote the closures of the above affine lines in $\p^4$ under the embedding.
We denote the four lines at infinity by
\begin{equation} \label{eq:linesatinfinityPIV}
\begin{aligned}
    L_{13} &= \{X \in \p^4 : X_0=0, X_1=0,X_2=0\},\\
    L_{14} &= \{X \in \p^4 : X_0=0, X_1=0,X_3=0\},\\
    L_{15} &= \{X \in \p^4 : X_0=0, X_3=0,X_4=0\},\\
    L_{16} &= \{X \in \p^4 : X_0=0, X_2=0,X_4=0\}.
\end{aligned}
\end{equation}

To describe the field extension from $\C(\mathscr{W}_{\rm{IV}})$ to $\C(\mathscr{U}_{\rm{IV}})$ used to write the lines on $\mathcal{M}_w$, we use the following lemma, which is proved similarly to \Cref{lem:PvimorphismUtoW}, using the auxiliary field $\C(t_1,t_2)$, where $t_1=e^{\frac{2\pi i }{3}\theta_\infty},t_2=e^{2\pi i \theta_0}$.
\begin{lemma}     \label{lem:morphismUtoW}
The definitions of the $u_i,w_i$ in terms of $\theta_j$ yields the field homomorphism 
    \begin{equation*}
    \C(\mathscr{W}_{\rm{IV}}) \longrightarrow \C(\mathscr{U}_{\rm{IV}}),
    \end{equation*}
    defined by
\begin{align*}
    w_1&= u_1 + u_2 + \frac{1}{u_1 u_2}, \\
    w_2&= \frac{1}{u_1}+\frac{1}{u_2} +u_1 u_2,
\end{align*}
 \end{lemma}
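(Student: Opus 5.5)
The argument mirrors \Cref{lem:PvimorphismUtoW}. I would pass to the auxiliary field $\C(t_1,t_2)$ with $t_1=e^{\frac{2\pi i}{3}\theta_\infty}$ and $t_2=e^{2\pi i\theta_0}$. The earlier notation then reads $v_\infty=t_1$ and $v_0=t_2$, and by \eqref{eq:thetatouPIV} we have $u_1=t_1t_2$ and $u_2=t_1t_2^{-1}$. This gives a field homomorphism $\C(\mathscr{U}_{\rm{IV}})\to\C(t_1,t_2)$, and it is injective because $u_1,u_2$ are algebraically independent (indeed $u_1u_2=t_1^2$ and $u_1/u_2=t_2^2$). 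Likewise, \eqref{eq:wtothetaandrootvarsPIV} gives
\begin{equation*}
w_1=t_1(t_2+t_2^{-1})+t_1^{-2},\qquad w_2=t_1^{-1}(t_2+t_2^{-1})+t_1^{2},
\end{equation*}
which defines a homomorphism $\C(\mathscr{W}_{\rm{IV}})\to\C(t_1,t_2)$. This one is also injective, since $w_1,w_2$ are algebraically independent; their Jacobian with respect to $(t_1,t_2)$ is generically nonzero.

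Next I would verify the stated formulas directly inside $\C(t_1,t_2)$. For $w_1$ we have $u_1+u_2=t_1(t_2+t_2^{-1})$ and $\frac{1}{u_1u_2}=t_1^{-2}$, which reproduces $w_1$. For $w_2$ we have $\frac1{u_1}+\frac1{u_2}=t_1^{-1}(t_2^{-1}+t_2)$ and $u_1u_2=t_1^2$, which reproduces $w_2$.

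It follows that the assignment in the lemma, sending $w_1,w_2$ to the stated rational functions of $u_1,u_2$, defines a homomorphism $\C(\mathscr{W}_{\rm{IV}})\to\C(\mathscr{U}_{\rm{IV}})$ whose composite with $\C(\mathscr{U}_{\rm{IV}})\hookrightarrow\C(t_1,t_2)$ agrees with $\C(\mathscr{W}_{\rm{IV}})\to\C(t_1,t_2)$. Since the map $\C(\mathscr{U}_{\rm{IV}})\to\C(t_1,t_2)$ is injective, this homomorphism is the unique one making the triangle commute. It is well defined because the images of $w_1,w_2$ are algebraically independent, so no nonzero polynomial in $w_1,w_2$ maps to zero and the map extends from the polynomial ring to the fraction field.

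There is no real obstacle here. The only points that need care are the injectivity and algebraic-independence checks, together with correctly matching $v_0,v_\infty$ to $t_2,t_1$ using \eqref{eq:defiv0vinfPIV}.
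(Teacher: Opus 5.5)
Your proposal is correct and follows the paper's route: the paper proves this lemma the same way as \Cref{lem:PvimorphismUtoW}, embedding both function fields into the auxiliary field $\C(t_1,t_2)$ with $t_1=e^{\frac{2\pi i}{3}\theta_\infty}$, $t_2=e^{2\pi i\theta_0}$, and checking the identities there. Your substitutions $u_1=t_1t_2$, $u_2=t_1t_2^{-1}$, $v_\infty=t_1$, $v_0=t_2$ and the resulting checks are correct, and your injectivity and algebraic-independence remarks fill in details the paper leaves implicit.
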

 
In terms of the parameters $u$, the singular locus $\mathscr{W}_{\rm{IV}}^{\operatorname{sing}}\subset \mathscr{W}_{\rm{IV}}$ of the family $\mathcal{M}\to \mathscr{W}_{\rm{IV}}$ pulls back to $\mathscr{U}_{\rm{IV}}^{\operatorname{sing}}\subset \mathscr{U}_{\rm{IV}}$  defined by 
\begin{equation*}
\left(u_1 - u_2 \right)^2 \left(u_1 - u_1^{-1}u_2^{-1} \right)^2 \left(u_2 - u_1^{-1}u_2^{-1} \right)^2 = 0.
\end{equation*}

\begin{remark}
    When $u\in \mathscr{U}_{\rm{IV}}^{\operatorname{sing}}$, some of the lines are merged. 
For example, when $u_1=u_2$, the following pairs of lines coincide: 
\begin{equation*}
    (L_1,L_2), \quad (L_4,L_5), \quad (L_7,L_{8}),\quad (L_{10},L_{11}).
\end{equation*}
Under the Riemann-Hilbert correspondence, $\mathscr{U}_{\rm{IV}}^{\operatorname{sing}}$ corresponds to parameter values for the Riccati solutions of the fourth Painlev\'e equation \cites{kapaev1996,kapaev1998,saitoterajimanodalcurvesriccati}.

\end{remark}

    We will now describe the relation between the models of the monodromy surface for $\pain{IV}$ as embedded affine Segre and cubic surfaces respectively.
    Let $w \in \mathscr{W}_{\rm{IV}}\setminus \mathscr{W}_{\rm{IV}}^{\operatorname{sing}}$, and consider the Segre surface as 
    \begin{equation*}
        \begin{aligned}
            \overline{\mathcal{M}}_w &= \operatorname{Proj} \C[X_0,X_1,X_2,X_3,X_4]/\overline{I}_w, \\
            \overline{I}_w&=(X_2 X_3+X_0(X_1+X_4)-w_1 X_0^2, X_1X_4+X_0(X_2+X_3)-w_2 X_0^2),
        \end{aligned}
    \end{equation*}
    and the singular projective cubic surface as 
    \begin{equation*}
        \begin{aligned}
            \overline{\mathcal{C}}_w &= \operatorname{Proj} \C[Y_0,Y_1,Y_2,Y_3]/\overline{J}_w, \\
            \overline{J}_w &= \left( Y_1 Y_2 Y_3 + Y_0Y_1^2 - Y_0^2(Y_2 +Y_3 + w_1 Y_1)+w_2Y_0^3 \right).
        \end{aligned}
    \end{equation*}

    Note that there is one more affine line on the monodromy surface when realised as a cubic as in \eqref{eq:affinecubicPIV}.
    This corresponds to a distinguished conic on  $\overline{\mathcal{M}}_w$, of which there are in fact four. 
    In the next lemma, we describe geometrically the relation between $\overline{\mathcal{M}}_w$ and $\overline{\mathcal{C}}_w$, which in particular will be used to explain these distinguished conics.

    \begin{lemma} \label{lem:segretocubicPIV}
Let $w\in \mathscr{W}_{\rm{IV}}\setminus \mathscr{W}_{\rm{IV}}^{\operatorname{sing}}$ and 
    \begin{equation} \label{eq:rhobirationalsegretocubicPIV}
    \begin{aligned}
        \rho : \overline{\mathcal{M}}_w &\dashrightarrow \overline{\mathcal{C}}_w,\\
        [X_0:X_1:X_2:X_3:X_4] &\mapsto [Y_0:Y_1:Y_2:Y_3]= [X_0:X_1:X_2:X_3].
    \end{aligned}
    \end{equation}
     be the extension of the isomorphism $\mathcal{M}_w\to\mathcal{C}_w$ in \eqref{eq:isomsegretocubicPIV}.
    Denote by $\operatorname{Bl}_q : \widetilde{\overline{\mathcal{M}}}_w\to \overline{\mathcal{M}}_w$ the blowup at the point $q\in \overline{\mathcal{M}}_w$ defined by $[X_0:X_1:X_2:X_3:X_4]=[0:0:0:0:1]$.
    Then the map $\widetilde{\overline{\mathcal{M}}}_w \to \overline{\mathcal{C}}_w$ defined by the following commutative diagram is a minimal resolution of the two $A_1$ singularities of $\overline{\mathcal{C}}_w$:
    \begin{equation*} 
        \begin{tikzcd}
            &   \widetilde{\overline{\mathcal{M}}}_w \arrow[dl,swap,  "\operatorname{Bl}_{q}"] \arrow[dr] & \\
            \overline{\mathcal{M}}_w  \arrow[rr,dashed, "\rho"]& & \overline{\mathcal{C}}_w
        \end{tikzcd}
    \end{equation*}
    Further, 
    when $\overline{\mathcal{M}}_w$ is realised as $\p^2$ blown up at five points $b_1,\dots,b_5$, with the blowup morphism contracting two of the disjoint lines at infinity, the point $q$ lies on the intersection of one of these with one of the two others.
\end{lemma}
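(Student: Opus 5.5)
The proof starts from the observation that the point $q=[0:0:0:0:1]$ lies on $\overline{\mathcal{M}}_w$ and is exactly the corner $L_{13}\cap L_{14}$ of the rectangle at infinity \eqref{eq:linesatinfinityPIV}. The map $\rho$ is then the linear projection of $\overline{\mathcal{M}}_w\subset\p^4$ from $q$. The plan is to identify this projection, after blowing up $q$, with the anticanonical morphism of a weak del Pezzo surface of degree $3$. We then check directly that its image is $\overline{\mathcal{C}}_w$.

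\emph{Step 1 (the image lies in $\overline{\mathcal{C}}_w$).} We eliminate $X_4$ between the two generators of $\overline{I}_w$. The first generator gives $X_0X_4=w_1X_0^2-X_2X_3-X_0X_1$. Multiplying the second generator by $X_0$ and substituting, we obtain
\begin{equation*}
X_0\bigl(X_1X_4+X_0(X_2+X_3)-w_2X_0^2\bigr)=-\bigl(X_1X_2X_3+X_0X_1^2-X_0^2(X_2+X_3+w_1X_1)+w_2X_0^3\bigr)
\end{equation*}
modulo $\overline{I}_w$. Hence $\rho$ maps into $V(\overline{J}_w)$. Since $\rho$ extends the isomorphism \eqref{eq:isomsegretocubicPIV}, it is birational onto $\overline{\mathcal{C}}_w$.

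\emph{Step 2 (blow-up and anticanonical model).} Let $\h$ be the hyperplane class on $\overline{\mathcal{M}}_w$, so that $-\mathcal{K}_{\overline{\mathcal{M}}_w}=\h$, since a smooth Segre surface is an anticanonically embedded del Pezzo surface of degree $4$. On $\widetilde{\overline{\mathcal{M}}}_w$ we then have $-\mathcal{K}=\h-\E$, where $\E$ is the exceptional curve over $q$, and $(\h-\E)^2=3$. The linear system $|\h-\E|$ consists of hyperplane sections through $q$, so it is exactly the linear system defining the projection, and $\rho\circ\operatorname{Bl}_q$ is a morphism.

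\emph{Step 3 (contracted curves).} An irreducible curve $C$ is contracted precisely when $(\h-\E)\cdot C=0$. For $C\neq \E$ this happens exactly when $C$ is the strict transform of a line through $q$. On a smooth Segre surface at most two lines pass through any point, so these are only $\widetilde L_{13}$ and $\widetilde L_{14}$. Each has self-intersection $-1-1=-2$ and $\mathcal{K}\cdot\widetilde L=0$, so they are disjoint $(-2)$-curves that are contracted to $A_1$ points. One can check this explicitly: $L_{13}$ maps to $[0:0:0:1]$ and $L_{14}$ maps to $[0:0:1:0]$, which are two corners of the triangle $Y_0=Y_1Y_2Y_3=0$, while $\E$ maps to the third line $Y_0=Y_1=0$. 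Since $-\mathcal{K}$ is nef and big on this weak del Pezzo surface, the morphism is birational onto a normal cubic surface with exactly these two $A_1$ points. By Step 1 this cubic is $\overline{\mathcal{C}}_w$. The morphism contracts only $(-2)$-curves, so no $(-1)$-curves are contracted and the resolution is minimal.

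\emph{Step 4 (position of $q$).} Among the lines at infinity, $L_{13}$ and $L_{15}$ are disjoint. Any pair of disjoint exceptional curves on a degree $4$ del Pezzo surface extends to a set of five pairwise disjoint exceptional curves; one can justify this via $W(D_5)$-transitivity on such pairs. Contracting that set exhibits $\overline{\mathcal{M}}_w$ as $\p^2$ blown up at five points, with $L_{13}$ and $L_{15}$ among the contracted curves. Then $q=L_{13}\cap L_{14}$ lies on the contracted line $L_{13}$ and on $L_{14}$, which is one of the two remaining lines at infinity.

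The main obstacle is Step 3: certifying that no other curves are contracted and that the singularities are exactly of type $A_1$. The weak del Pezzo argument handles this cleanly. As a fallback, I would verify singularities of $\overline{\mathcal{C}}_w$ directly at $Y_0=0$ using the Jacobian, using that $Q(w)\neq0$.
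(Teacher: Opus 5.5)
Your proof is correct, and it reaches the result by a more conceptual route than the paper.

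\emph{The paper's route.} It fixes an explicit blow-down $\pi:\overline{\mathcal{M}}_w\to\p^2$ contracting $L_{13},L_{15},L_9,L_7,L_8$, with formulas for $\pi$ and for $b_1,\dots,b_5$. It writes the lines at infinity as $\E_1,\ \h-\E_1-\E_2,\ \E_2,\ 2\h-\E_1-\dots-\E_5$. It observes that $q=L_{13}\cap L_{14}$, so the strict transforms of these two lines have classes $\E_1-\F$ and $\h-\E_1-\E_2-\F$. It then simply asserts that ``a direct computation'' shows $\rho\circ\operatorname{Bl}_q$ is a birational morphism contracting these two $(-2)$-curves onto the $A_1$ points $[0:0:0:1]$ and $[0:0:1:0]$, minimally.

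\emph{Your route.} You identify $\rho$ as the projection from $q$, so $\rho\circ\operatorname{Bl}_q$ is the morphism given by $|{-K}|=|\h-\F|$ on a weak del Pezzo surface of degree $3$ (your $\E$ is the paper's $\F$). The relation $X_0\cdot(\text{second generator})-X_1\cdot(\text{first generator})=-(\text{cubic})$ pins down the image. The curves with $(\h-\F)\cdot C=0$ are exactly the strict transforms of lines through $q$; this implicitly uses $\deg C\ge\operatorname{mult}_q C$, with equality only for lines through $q$, which is worth stating. There are only two such lines, $L_{13}$ and $L_{14}$.

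\emph{What each buys.} Your argument actually explains the paper's ``direct computation'': base-point-freeness, the $2A_1$ type and minimality all follow from standard weak del Pezzo theory. It also adapts directly to \Cref{prop:delPezzotocubicPII}, where $-K=\h-\F_1-\F_2-\F_3$ on the blow-up of the three corners and the contracted curves are the three conics at infinity. The paper's explicit marking, in turn, is reused immediately afterwards to identify the class $2\h-\E_1-\E_3-\E_4-\E_5-\F$ of the extra line $L_0$.

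For Step 4, you can drop the $W(D_5)$-transitivity appeal by exhibiting the five disjoint lines $L_{13},L_{15},L_9,L_7,L_8$, as the paper does. You should also note that the statement is really about any blow-down contracting a disjoint pair at infinity. Since $q$ is the corner $L_{13}\cap L_{14}$, it lies on exactly one line of each of the two disjoint pairs $\{L_{13},L_{15}\}$ and $\{L_{14},L_{16}\}$, so both cases are immediate, not only the one you construct. Framing Step 4 as extending a given disjoint pair is also more accurate than the paper's remark that all sixteen five-sets of disjoint lines contain one of these pairs. Each disjoint pair lies in exactly two such five-sets, so at most four of them do.
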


\begin{proof}
Being a smooth del Pezzo surface of degree 4, $\overline{\mathcal{M}}_w$ can be realised as $\p^2$ blown up at 5 points.
    For any choice of a set of five disjoint lines on $\mathcal{M}_w$ we have a birational morphism 
    $$\pi : \overline{\mathcal{M}}_w \to \p^2,$$
    which contracts these five lines, and the Picard group is 
    $$\Pic(\overline{\mathcal{M}}_w)\cong \Z\mathcal{H}\oplus \Z \mathcal{E}_1\oplus\Z\mathcal{E}_2\oplus \Z \mathcal{E}_3\oplus \Z \mathcal{E}_4\oplus \Z \mathcal{E}_5,$$
    and the canonical divisor class is written as 
    $$\mathcal{K}_{\overline{\mathcal{M}}_w} = -3 \mathcal{H}+\mathcal{E}_1+\mathcal{E}_2+\mathcal{E}_3+\mathcal{E}_4 + \mathcal{E}_5.$$
    The exceptional curves on $\overline{\mathcal{M}}_{w}$ correspond to the following elements of $\Pic(\overline{\mathcal{M}}_w)$:
    $$\mathcal{E}_i,\quad \mathcal{H}-\mathcal{E}_j-\mathcal{E}_k, \quad 2 \mathcal{H}-\E_1-\E_2-\E_3-\E_4-\mathcal{E}_5, \qquad \text{where} \quad i,j,k\in \{1,\dots,5\}, j\neq k,$$
    so a line on $\mathcal{M}_w$ is mapped by $\pi$ to either one of the five points, a line joining a pair of them, or the unique conic passing through all five.

    There are sixteen choices of a set of five disjoint lines on $\overline{\mathcal{M}}_w$, all of which include one of the pairs $(L_{13},L_{15})$ and $(L_{14},L_{16})$ of disjoint lines at infinity. 
    We choose such that the lines at infinity correspond to the following elements of $\Pic(\overline{\mathcal{M}}_w)$:
    \begin{equation*}
        \mathcal{L}_{1}^{\infty} = \mathcal{E}_1,\quad 
        \mathcal{L}_{2}^{\infty} = \mathcal{H}-\mathcal{E}_1-\mathcal{E}_2,\quad 
        \mathcal{L}_3^{\infty}=\mathcal{E}_2,\quad 
        \mathcal{L}_4^{\infty} = 2\mathcal{H}-\mathcal{E}_1- \mathcal{E}_2-\mathcal{E}_3- \mathcal{E}_4-\mathcal{E}_5.
    \end{equation*}
    Explicitly the five lines can be chosen to such that $\mathcal{E}_{1},\dots,\mathcal{E}_{5}$ correspond to $L_{13},L_{15},L_{9},L_{7},L_{8}$,
    and the morphism in this case becomes
    \begin{equation*}
            \pi : 
             \left[X_0:X_1:X_2:X_3:X_4\right] \mapsto \left[ X_0  : X_0  - u_1 u_2 X_4 : X_0 - \tfrac{1}{u_1u_2}X_2\right],
    \end{equation*}
    with the lines contracted to the points 
    \begin{gather*}
        b_1 = [0:1:0],\quad b_2 = [0:0:1], \quad b_3 = [1:0:0],\\ \quad b_4 =[1: 1-u_1^2u_2:u_1u_2-u_1^{-1}],\quad b_5 =[1:1-u_1u_2^2:u_1u_2-u_2^{-1}].
    \end{gather*}
    Then consider the blowup $\operatorname{Bl}_q : \widetilde{\overline{\mathcal{M}}}_w \to \overline{\mathcal{M}}_w$ centred at the point 
$$q : [X_0:X_1:X_2:X_3:X_4]=[0:0:0:0:1],
$$
and denote the class of the exceptional divisor of this by $\mathcal{F}$ so 
$$\Pic( \widetilde{\overline{\mathcal{M}}}_w) \cong \Z\mathcal{H}\oplus \Z \mathcal{E}_1\oplus\Z\mathcal{E}_2\oplus \Z \mathcal{E}_3\oplus \Z \mathcal{E}_4\oplus \Z \mathcal{E}_5\oplus \Z \mathcal{F},$$
where we abuse notation in using the same symbols for generators of $\Pic(\overline{\mathcal{M}}_w)$ as for their pullbacks under the blowup.  
Note that $q$ lies at the intersection of the lines $L_{13}$ and $L_{14}$ on $\overline{\mathcal{M}}_w$ and the strict transforms of these correspond to  
$$ \mathcal{E}_1 - \mathcal{F}, \qquad \mathcal{H}-\mathcal{E}_1-\mathcal{E}_2-\mathcal{F}.$$
Then $\widetilde{\overline{\mathcal{M}}}_w$ is smooth, and a direct computation shows that the map $\rho\circ \operatorname{Bl}_q : \widetilde{\overline{\mathcal{M}}}_w \rightarrow \overline{\mathcal{C}}_w$ is a birational morphism which contracts these $-2$ curves onto the two $A_1$ singularities of $\overline{\mathcal{C}}_w$, which are at 
$$p_1 : [Y_0:Y_1:Y_2:Y_3]=[0:0:0:1], \quad p_2: [Y_0:Y_1:Y_2:Y_3]=[0:0:1:0],$$
and this is a minimal resolution. 
We give an illustration of the relation between the hyperplane sections at infinity $\overline{\mathcal{M}}_w\setminus \mathcal{M}_w$ and $\overline{\mathcal{C}}_w\setminus\mathcal{C}_w$ under $\rho : \overline{\mathcal{M}}_w \dashrightarrow \overline{\mathcal{C}}_w$ in \Cref{fig:segretocubicdivisoratinfinity}. 
\end{proof}

\begin{figure}[htb]
       \begin{equation*}
    \begin{tikzpicture}[redpoint/.style={circle,draw=red!100,fill=red!100,thick, inner sep=0pt,minimum size=1mm},bluepoint/.style={circle,draw=blue!100,fill=blue!100,thick, inner sep=0pt,minimum size=1mm}]
    

\coordinate (T) at (0,1);
\coordinate (R) at (1,0);
\coordinate (B) at (0,-1);
\coordinate (L) at (-1,0);

\def\eps{0.1}
\draw[line width=.75pt] ($(T)!-\eps!(R)$) -- ($(R)!-\eps!(T)$);
\draw[line width=.75pt, blue] ($(R)!-\eps!(B)$) -- ($(B)!-\eps!(R)$);
\draw[line width=.75pt, blue] ($(B)!-\eps!(L)$) -- ($(L)!-\eps!(B)$);
\draw[line width=.75pt] ($(L)!-\eps!(T)$) -- ($(T)!-\eps!(L)$);
\node[redpoint, label={[yshift=-4pt]below:$q$}] at (B) {};

\node at (-.8,-.8) {\tiny $L_{13}$};
\node at (.8,-.8) {\tiny $L_{14}$};
\node at (.8,.8) {\tiny $L_{15}$};
\node at (-.8,.8) {\tiny $L_{16}$};

\draw[<-] (1.75,0) -- node[pos=0.5, above] {$\operatorname{Bl}_q$} (3.25,0);


\begin{scope}[xshift=5cm]
\foreach \i in {1,...,5} {
    \coordinate (P\i) at ({90 + 72*(\i-1)}:1);
}

\foreach \i/\j in {1/2,2/3,3/4,4/5,5/1} {
    \draw[line width=.75pt]
      ($(P\i)!-\eps!(P\j)$)
      --
      ($(P\j)!-\eps!(P\i)$);
}

    \draw[line width=1pt, blue] ($(P2)!-\eps!(P3)$)
      --
      ($(P3)!-\eps!(P2)$);

      \draw[line width=1pt, blue] ($(P4)!-\eps!(P5)$)
      --
      ($(P5)!-\eps!(P4)$);

    \draw[line width=1pt, red] ($(P3)!-\eps!(P4)$)
      --
      ($(P4)!-\eps!(P3)$);
\draw[->] (1.5,0) -- node[pos=0.5, above] {$\rho\circ\operatorname{Bl}_q$}(3,0);

\end{scope}


\begin{scope}[xshift=10cm,yshift=-.2cm]
\def\eps{0.10}

\foreach \i in {1,...,3} {
    \coordinate (P\i) at ({90 + 120*(\i-1)}:1);
}
    \draw[line width=.75pt]
      ($(P1)!-\eps!(P2)$)
      --
      ($(P2)!-\eps!(P1)$);
    \draw[line width=.75pt, red]
      ($(P2)!-\eps!(P3)$)
      --
      ($(P3)!-\eps!(P2)$);
    \draw[line width=.75pt]
      ($(P3)!-\eps!(P1)$)
      --
      ($(P1)!-\eps!(P3)$);
\node at (1.1,0.5) {\tiny $\{Y_0=Y_3=0\}$}; 
\node at (-1.1,0.5) {\tiny $\{Y_0=Y_2=0\}$}; 
\node at (0,-0.75) {\tiny $\{Y_0=Y_1=0\}$}; 

\node[bluepoint, label={[yshift=-4pt]below left:$p_1$}] at (P2) {};
\node[bluepoint, label={[yshift=-4pt]below right:$p_2$}] at (P3) {};

\end{scope}


\node at (0,1.75) {$\overline{\mathcal{M}}_w$};
\node at (5,1.75) {$\widetilde{\overline{\mathcal{M}}}_w$};
\node at (10,1.75) {$\overline{\mathcal{C}}_w$};
    \end{tikzpicture}
\end{equation*}
\caption{Relation of hyperplane sections at infinity under $\rho : \overline{\mathcal{M}}_w \dashrightarrow \overline{\mathcal{C}}_w$ for $\pain{IV}$ in \Cref{lem:segretocubicPIV}.}
\label{fig:segretocubicdivisoratinfinity}
\end{figure}

We denote the additional line on $\mathcal{C}_w$, which does not correspond to a line on $\mathcal{M}_w$, by
\begin{equation*}
    L_{0} =\{ x_1 = 0,\, x_2+x_3= w_2\}\subset \mathcal{C}_w.
\end{equation*}
This extra line does not require the field extension from $\C(\mathscr{W}_{\rm{IV}})$ to $\C(\mathscr{U}_{\rm{IV}})$ to write, and will play no role in the description of monodromy that follows. 
This extra line $L_0$ corresponds under $\rho$ to 
$$\rho^* L_0 = \{w_2 X_0X_3 +X_0X_4 =w_1 X_0^2+X_3^2\}\cap\overline{\mathcal{M}}_w.$$
Then $\rho^* L_0$ is characterised as follows relative to the point $q\in \overline{\mathcal{M}}_w$ and the points $b_1,\dots,b_{5}\in \p^2$ to which the birational morphism $\pi : \overline{\mathcal{M}}_w\to \p^2$ contracts the lines $L_{13},L_{15},L_{9},L_{7},L_{8}$.
The curve $\rho^* L_0$ passes through $q$, and it becomes a conic in $\p^2$ passing through $b_1,b_3,b_4,b_5$, so all images of contracted lines except for that of $L_{15}$, so $\rho^* L_0$ corresponds to
\begin{equation} \label{eq:fakelineinPicPIV}
2\h -\mathcal{E}_1-\mathcal{E}_3-\mathcal{E}_4-\mathcal{E}_5-\mathcal{F}\in\Pic(\widetilde{\overline{\mathcal{M}}}_w).
\end{equation}
Thus it is a $-1$ curve on $\widetilde{\overline{\mathcal{M}}}_w$ and its image in $\p^2$ is the unique conic which passes through $b_1,b_3,b_4,b_5$ and which has strict transform containing $q$.
Further, this is the unique conic in $\p^2$ which corresponds to a line on $\overline{\mathcal{C}}_w$.
Such a conic must come from a rational curve on $\widetilde{\overline{\mathcal{M}}}_w$ of self-intersection $-1$ disjoint from the $-2$ curves, so corresponds to 
$$\mathcal{D}= 2\h-a_1 \E_1-a_2 \E_2-\cdots -a_5 \E_5 - b \F\in \Pic(\widetilde{\overline{\mathcal{M}}}_w),$$
for some $a_1,\dots,a_5,b\in\Z$ such that 
$$\mathcal{D}\cdot\mathcal{D} = -1, \quad 
\mathcal{D}\cdot\mathcal{K}_{\widetilde{\overline{\mathcal{M}}}_w}= -1 \quad 
\mathcal{D}\cdot(\E_1-\F)=0, \quad 
\mathcal{D}\cdot(\h-\E_1-\E_2-\F)=0,$$
in which the canonical divisor class of $\widetilde{\overline{\mathcal{M}}}_w$ is 
$$\mathcal{K}_{\widetilde{\overline{\mathcal{M}}}_w} = -3 \mathcal{H}+\mathcal{E}_1+\mathcal{E}_2+\mathcal{E}_3+\mathcal{E}_4 + \mathcal{E}_5+\mathcal{F}.$$
The unique solution $a_1,\dots,a_5,b\in\Z$ gives the element corresponding to $L_0$ in \eqref{eq:fakelineinPicPIV}.

    In addition to $\rho$ in \eqref{eq:rhobirationalsegretocubicPIV}, there are three more ways to map birationally from the Segre surface to different copies of the cubic, corresponding to different choices of $x_1,\dots,x_4$ to eliminate. 
    The corresponding copies of $\overline{\mathcal{C}}_w$ will be related by combinations of $\varpi_1,\varpi_2$ in \eqref{eq:PIVxsymmetriesvarpi} and $\sigma_{(01)}$ in \Cref{tab:PIV:symmetry:dynkinautos}.
    Each of the four birational maps will produce a different line, coming from a distinguished curve on $\overline{\mathcal{M}}_w$, which corresponds to a distinguished conic in $\p^2$ under the blowup projection.
    We denote these by
    \begin{equation*} 
    \begin{aligned}
        &C_1 : \quad x_1 = 0, \qquad x_2 +x_3 = w_2, \qquad x_2 x_3+x_4=w_1, \\
        &C_2 : \quad x_2 = 0, \qquad x_1 +x_4 = w_1, \qquad x_1 x_4+x_3=w_2, \\
        &C_3 : \quad x_3 = 0, \qquad x_1 +x_4 = w_1, \qquad x_1 x_4+x_2=w_2, \\
        &C_4 : \quad x_4 = 0, \qquad x_2 +x_3 = w_2, \qquad x_2 x_3+x_1=w_1.
    \end{aligned}
    \end{equation*}

We next explain the meaning of the lines $L_k$, $k=1,\dots,12$,  and the curves $C_{\ell}$, $\ell=1,\dots,4$, 
in terms of generalised monodromy data and solutions of the linear problem.
In order to do this, we introduce the following notation for the solutions $Y_k(z)$, which we recall are asymptotic to the formal solution $Y_{\operatorname{form}}(z)$ given in \cref{eq:formalsolPIV} as $z\to\infty$ in $\Sigma_{k}\cup\Sigma_{k+1}$.
Denote the columns of the matrix function in \eqref{eq:matrixPasymptoticPIV} according to $P(z)=\begin{pmatrix}
        P_+(z) & P_-(z)
    \end{pmatrix}.$
Define $\Psi_k(z)$ as the unique column vector solution of \eqref{eq:pivlinearsystem1} satisfying
\begin{equation*}
    \Psi_k(z)\sim P_\pm(z)e^{\pm(\frac{1}{2}z^2+tz)}z^{\mp \theta_\infty},
\end{equation*}
with sign $\pm 1=(-1)^k$ determined by the parity of $k$,
as $z\rightarrow \infty$ in $\Sigma_{k-1}\cup\Sigma_k\cup\Sigma_{k+1}$. Then we have, for $k\in\mathbb{Z}$,
\begin{equation*}
    Y_k=\begin{pmatrix}
        \Psi_k,\Psi_{k+1}
    \end{pmatrix}\quad \text{if $k$ even},\qquad
     Y_k=\begin{pmatrix}
        \Psi_{k+1},\Psi_k
    \end{pmatrix}\quad \text{if $k$ odd,}
\end{equation*}
and the Stokes phenomenon takes the form
\begin{equation}\label{eq:stokesmult}
    \Psi_{k+1}=\Psi_{k-1}+s_k\Psi_k.
\end{equation}

\begin{proposition}\label{prop:curves_explained}
    The lines $L_1,\dots,L_{12}$ correspond to the following subsets of the space $M$ of Stokes multipliers given in \Cref{eq:monodromyspaceMpiv}:
    \begin{equation}\label{eq:stokesdatalinesPIV}
        \begin{aligned}
            L_1 : &\quad  \left\{s_2 + e^{2\pi i( \theta_{\infty}-\theta_{0}) } s_4 = 0, 
            \quad 1 + s_1s_2 = e^{-2\pi i (\theta_{\infty}+\theta_0)} \right\}, \\
            L_2 : &\quad  \left\{ s_2 + e^{2\pi i( \theta_{\infty}+\theta_{0}) } s_4 = 0,
            \quad 1 + s_1s_2 = e^{-2\pi i (\theta_{\infty}-\theta_0)} \right\},\\
            L_3 : &\quad \left\{ s_1=0 \right\}\cap M, \\
            L_4 : &\quad  \left\{s_3 + e^{2\pi i( \theta_{\infty}-\theta_{0}) } s_1 = 0, 
            \quad 1 + s_3s_4= e^{-2\pi i (\theta_{\infty}+\theta_0)} \right\}, \\
            L_5 : &\quad  \left\{s_3 + e^{2\pi i( \theta_{\infty}+\theta_{0}) } s_1 = 0, 
            \quad 1 + s_3s_4= e^{-2\pi i (\theta_{\infty}-\theta_0)} \right\}, \\
            L_6 : &\quad \left\{ s_4=0 \right\}\cap M, \\
            L_7 : &\quad \left\{s_1 + e^{2\pi i( \theta_{\infty}+\theta_0) } s_3 = 0, 
            \quad 1 + s_2 s_3 = e^{-2\pi i (\theta_{\infty}+\theta_{0})}\right\}, \\
            L_8 : &\quad \left\{s_1 + e^{2\pi i( \theta_{\infty}-\theta_0) } s_3 = 0, 
            \quad 1 + s_2 s_3 = e^{-2\pi i (\theta_{\infty}-\theta_{0})}\right\},\\
            L_9 : &\quad \left\{ s_2=0\right\} \cap M,\\
            L_{10} : &\quad \left\{ s_2 + e^{2\pi i( \theta_{\infty}+\theta_0) } s_4 = 0,
            \quad  1 + s_3 s_4 = e^{-2\pi i (\theta_{\infty}+\theta_{0})}\right\}, \\
            L_{11} : &\quad \left\{ s_2 + e^{2\pi i( \theta_{\infty}-\theta_0) } s_4 = 0,
            \quad  1 + s_3 s_4 = e^{-2\pi i (\theta_{\infty}-\theta_{0})}\right\} \\
            L_{12} : &\quad \left\{ s_3=0\right\} \cap M.            
        \end{aligned}
    \end{equation}
        Further, the curves $C_1,\dots,C_4$ correspond to the following:
    \begin{equation} \label{eq:stokesdataconicsPIV}
    \begin{aligned}
            &C_1 : \left\{ s_1 s_4 + e^{-4 \pi i \theta_{\infty}}=0\right\}\cap M, \\
            &C_2 : \left\{ s_1 s_2 + 1=0\right\}\cap M, \\
            &C_3 : \left\{ s_3 s_4 + 1=0\right\}\cap M, \\
            &C_4 : \left\{ s_2 s_3 + 1=0\right\}\cap M. 
    \end{aligned}
    \end{equation}
From an analytic point of view, they each correspond to a different quantisation condition on the spectral equation:
 \begin{enumerate}
     \item $\Psi_{k+1}=c\,\Psi_{k-1}$, for some $c\in\mathbb{C}^*$, is equivalent to $s_k=0$, in which case $c=1$, corresponding to lines $L_3,L_9,L_{12},L_{6}$ for the respective values $k=1,2,3,4$.
     \item $\Psi_{k+2}=c\,\Psi_{k-1}$, for some $c\in\mathbb{C}^*$, is equivalent to $1+s_ks_{k+1}=0$, in which case $c=s_{k+1}$, corresponding to conics $C_2,C_4,C_3,C_1$ for the respective values $k=1,2,3,4$.
     \item Finally, $\Psi_k(e^{-2\pi i}z)=c\,\Psi_k(z)$, for some $c\in\mathbb{C}^*$, is equivalent to $s_{k+1}+s_{k+3}+s_{k+1}s_{k+2}s_{k+3}=0$, in which case there are two possible values for the eigenvalue $c$ of the analytic continuation operator, $c=e^{\pm 2\pi i\theta_0}$. Then, the condition $\Psi_k(e^{-2\pi i}z)=e^{+ 2\pi i\theta_0}\,\Psi_k(z)$ corresponds to lines $ L_{11}, L_{4}, L_2,L_7$ for the respective values $k=1,2,3,4$. The condition $\Psi_k(e^{-2\pi i}z)=e^{- 2\pi i\theta_0}\,\Psi_k(z)$ corresponds to lines $ L_{10}, L_{5}, L_1,L_8$ for the respective values $k=1,2,3,4$.
 \end{enumerate}
    \end{proposition}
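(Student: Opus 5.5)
The proof splits into an algebraic translation of each curve into Stokes data, followed by an analytic reading of those Stokes conditions.

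\textbf{Algebraic part.} We pull each curve back along $y_1=s_1s_4$, $y_2=s_1s_2$, $y_3=s_3s_4$, $y_4=s_2s_3$ and then through \eqref{eq:xtoyPIV}.

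For the conics this is immediate. $x_2=0$ is the same as $y_2=-1$, i.e. $1+s_1s_2=0$. Similarly $x_3=0$ gives $1+s_3s_4=0$ and $x_4=0$ gives $1+s_2s_3=0$. Finally $x_1=0$ gives $s_1s_4=-v_\infty^{-6}=-e^{-4\pi i\theta_\infty}$. The remaining defining equations of $C_\ell$ are then automatic from $I_w$, which gives \eqref{eq:stokesdataconicsPIV}.

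For the lines, we use the parametrisation $u_1=v_\infty v_0$, $u_2=v_\infty v_0^{-1}$ and $1/(u_1u_2)=v_\infty^{-2}$ coming from \eqref{eq:thetatouPIV}.
\begin{itemize}
\item Take $L_3$. The condition $x_1=v_\infty^{-2}$ means $y_1=0$, and $x_2=v_\infty^2$ means $y_2=0$. Both follow from $s_1=0$. Conversely, $y_1=y_2=0$ with $s_1\neq0$ forces $s_2=s_4=0$, and that is excluded by $i_v$ for generic $v$.
\item Take $L_1$. The condition $x_1=u_1$ gives $1+v_\infty^6 s_1s_4=v_\infty^3v_0$. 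The condition $x_2=1/u_1$ gives $1+s_1s_2=v_\infty^{-3}v_0^{-1}=e^{-2\pi i(\theta_\infty+\theta_0)}$. Dividing the two relations yields the linear relation $s_2+e^{2\pi i(\theta_\infty-\theta_0)}s_4=0$ after simplification.
\item The other eight lines follow the same pattern, or can be obtained from $L_1$ and $L_3$ using the cyclic symmetry $s_k\mapsto s_{k+1}$ induced by $\sigma_{(01)}r_1r_0$ from \Cref{lem:PIVsymmetriesunderRHdirect}. In each case the third (linear) equation of the line follows from $i_v$.
\end{itemize}
The main bookkeeping obstacle is matching branches: each $b_k$ has to be paired with the correct sign of $\theta_0$.

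\textbf{Analytic part (1) and (2).} We use the Stokes relation \eqref{eq:stokesmult} together with the linear independence of $\Psi_{k-1}$ and $\Psi_k$. These have opposite exponential behaviour on the common sector, so they form a fundamental system.

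For (1), $\Psi_{k+1}=c\Psi_{k-1}$ is equivalent to $(1-c)\Psi_{k-1}+s_k\Psi_k=0$. Independence forces $s_k=0$ and $c=1$.

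For (2), iterate \eqref{eq:stokesmult}:
\[
\Psi_{k+2}=\Psi_k+s_{k+1}\Psi_{k+1}=s_{k+1}\Psi_{k-1}+(1+s_ks_{k+1})\Psi_k .
\]
Hence $\Psi_{k+2}$ is proportional to $\Psi_{k-1}$ if and only if $1+s_ks_{k+1}=0$, and then $c=s_{k+1}$. For indices beyond $4$, we reduce them using \eqref{eq:pivstokescyclic}; this is why $C_1$ carries the factor $v_\infty^{-6}$.

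\textbf{Analytic part (3).} We express analytic continuation in the basis $(\Psi_{k-1},\Psi_k)$. Since $Y_k(e^{-2\pi i}z)=Y_kM_\infty^{(k)}$, we have $\Psi_k(e^{-2\pi i}z)=\Psi_{k+4}(z)$ up to the diagonal factor $D_\infty$.

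Iterating \eqref{eq:stokesmult} four times writes $\Psi_{k+4}$ as a combination of $\Psi_{k-1}$ and $\Psi_k$. Its $\Psi_{k-1}$-coefficient is proportional to $s_{k+1}+s_{k+3}+s_{k+1}s_{k+2}s_{k+3}$, after using the cyclic relation. So $\Psi_k$ is an eigenvector of continuation exactly when this coefficient vanishes. The eigenvalue is then the $\Psi_k$-coefficient, which must be one of $e^{\pm2\pi i\theta_0}$ by \eqref{eq:pivtracecondition}.

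It remains to identify which sign gives which line. This comes from comparing the eigenvalue with the values $1+s_ks_{k+1}=e^{-2\pi i(\theta_\infty\pm\theta_0)}$ found in the algebraic part. We expect this sign and index matching to be the most error-prone step, and we would check it on $k=1$ (lines $L_{11}$ and $L_{10}$) before propagating it by cyclicity.
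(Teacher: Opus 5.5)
Your route is the paper's: pull each curve back through \eqref{eq:ytosPIV}--\eqref{eq:xtoyPIV}, iterate \eqref{eq:stokesmult}, and use that continuation has eigenvalues $e^{\pm2\pi i\theta_0}$. Your treatment of the conics, of $L_1$ and $L_3$, and of parts (1)--(2) is fine. Your basis computation in (3) is also right: the $\Psi_{k-1}$-coefficient of $\Psi_{k+4}$ is exactly $s_{k+1}+s_{k+3}+s_{k+1}s_{k+2}s_{k+3}$, and when it vanishes the $\Psi_k$-coefficient reduces to $1+s_{k+2}s_{k+3}$. However, the two computations you defer will not close against what is printed.

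First, running ``the same pattern'' on $L_7$ gives $x_4=u_1=v_\infty v_0$, so $1+s_2s_3=v_\infty^2x_4=e^{2\pi i(\theta_\infty+\theta_0)}$. Combined with $1+s_1s_2=v_\infty^{-2}x_2=e^{-2\pi i(\theta_\infty+\theta_0)}$, this yields $s_1+e^{-2\pi i(\theta_\infty+\theta_0)}s_3=0$. For $L_8$, replace $\theta_0$ by $-\theta_0$. The printed equations for $L_7,L_8$ therefore have both exponents with the wrong sign. As printed they force $x_4=v_\infty^{-5}v_0^{\mp1}$ and $x_2=1/x_4$, which is impossible on $\mathcal{M}_w$ unless $x_4$ is a root of $b^3-w_1b^2+w_2b-1$; so those printed sets are generically empty.

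Second, the $k=1$ sign check you plan will fail with the printed ingredients. From $Y_k(e^{-2\pi i}z)=Y_{k+4}(z)D_\infty$ and the stated columns of $Y_k$ one gets $\Psi_k(e^{-2\pi i}z)=e^{(-1)^k2\pi i\theta_\infty}\Psi_{k+4}(z)$. With \eqref{eq:stokesmult}, the $k=1$ eigenvalue is then $e^{-2\pi i\theta_\infty}(1+s_3s_4)$. On $L_{10},L_{11}$ this equals $e^{-2\pi i(2\theta_\infty\pm\theta_0)}$, not $e^{\mp2\pi i\theta_0}$. Dropping the $D_\infty$ factor, as your ``the eigenvalue is the $\Psi_k$-coefficient'' does, fails too.

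The cause is an index shift. $Y_{k+1}=Y_kS_k$, with $Y_k=(\Psi_k,\Psi_{k+1})$ for $k$ even and $(\Psi_{k+1},\Psi_k)$ for $k$ odd, actually gives $\Psi_{k+2}=\Psi_k+s_k\Psi_{k+1}$, so \eqref{eq:stokesmult} is off by one. Indeed, directly from $M_\infty^{(1)}=S_1S_2S_3S_4D_\infty$, $\Psi_1$ is an eigenvector iff $s_1+s_3+s_1s_2s_3=0$.

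The consistent repair is to read \eqref{eq:stokesmult} as the labelling: let $\Psi_k$ denote the solution recessive in $\Sigma_{k+1}$ (the paper's $\Psi_{k+1}$). Then (1) and (2) stand exactly as you argue, and the continuation relation becomes $\Psi_k(e^{-2\pi i}z)=e^{-(-1)^k2\pi i\theta_\infty}\Psi_{k+4}(z)$. So the eigenvalue is $e^{-(-1)^k2\pi i\theta_\infty}(1+s_{k+2}s_{k+3})$. It must be compared through $1+s_{k+2}s_{k+3}$, not $1+s_ks_{k+1}$, and the values of that product on the lines are not all of the form $e^{-2\pi i(\theta_\infty\pm\theta_0)}$. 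This gives $1+s_{k+2}s_{k+3}=e^{2\pi i((-1)^k\theta_\infty\pm\theta_0)}$. That is the relation the paper's proof ends with, although it does not follow from the factor $e^{(-1)^k2\pi i\theta_\infty}$ displayed just before it.

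Using $s_5=v_\infty^6s_1$, $s_6=v_\infty^{-6}s_2$, $s_7=v_\infty^6s_3$, the four cases give $L_{11}/L_{10}$, $L_4/L_5$, $L_2/L_1$ and $L_7/L_8$, the last with the corrected signs. Do the four cases separately rather than propagating by cyclicity. Neither $k\mapsto k+1$ (the parity factor flips and $v_\infty^{\pm6}$ appears) nor $\sigma_{(01)}r_1r_0$ is a cyclic relabelling on a fixed fibre; the latter inverts $v_\infty$, swaps $w_1,w_2$ and carries factors $v_\infty^{\pm3/2}$.
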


\begin{proof}
    For the conditions on Stokes data corresponding to the lines and curves, the formulas in \eqref{eq:stokesdatalinesPIV} and \eqref{eq:stokesdataconicsPIV} can be derived directly from the expressions for $x_i$ in terms of Stokes multipliers that follow from \eqref{eq:ytosPIV} and \eqref{eq:xtoyPIV}. 

    Regarding the analytic interpretation of the lines and special conics, note that (1) follows directly from equation \eqref{eq:stokesmult}.
    To obtain (2), we apply \eqref{eq:stokesmult} twice, yielding
    \begin{equation*}
        \Psi_{k+2}=s_{k+1}\Psi_{k-1}+(1+s_k s_{k+1})\Psi_k.
    \end{equation*}
This means that $\Psi_{k+2}$ is a multiple of  $\Psi_{k-1}$ if and only if $1+s_k s_{k+1}=0$, in which case $\Psi_{k+2}=s_{k+1}\Psi_{k-1}$.
Finally, to obtain (3), we apply \eqref{eq:stokesmult} thrice, yielding
    \begin{equation*}
        \Psi_{k+4}=(1+s_{k+2}s_{k+3})\Psi_{k-1}+(s_{k+1}+s_{k+3}+s_{k+1}s_{k+2}s_{k+3})\Psi_k.
    \end{equation*}
Thus, $\Psi_{k+4}$ is a multiple of  $\Psi_k$ if and only if 
\begin{equation}\label{eq:triple_product}
   s_{k+1}+s_{k+3}+s_{k+1}s_{k+2}s_{k+3}=0,
\end{equation}
in which case $\Psi_{k+4}=(1+s_{k+2}s_{k+3})\Psi_{k}$. On the other hand, we know that the eigenvalues of the operator of analytic continuation around $z=0$ are $e^{\pm 2\pi i\theta_0}$. Combining this with
\begin{equation*}
   e^{(-1)^k2\pi i\theta_\infty} \Psi_{k+4}(z)=\Psi_{k}(e^{-2\pi i}z),
\end{equation*}
yields $$1+s_{k+2}s_{k+3}=e^{2\pi i(\pm \theta_0+(-1)^k\theta_\infty)}.$$
The last equation and \eqref{eq:triple_product} cut out the different lines $L_j$, $j\in\{1,2,4,5,7,8,10,11\}$, in $\mathcal{M}_w$ as described in the proposition. 
\end{proof}

\begin{remark}
The asymptotic descriptions of the general solution of $\pain{IV}$ for large real and imaginary times in  Remark \ref{rem:kapaevgenasymp} degenerate on the lines and distinguished conics. Kapaev \cite{kapaev1996} provided degenerate asymptotics of the solution for each of these special cases, collected in different theorems.
\begin{enumerate}
    \item Degenerate asymptotics corresponding to the  conics $C_1,C_2,C_3,C_4$ are described in  \cite[Th. 6]{kapaev1996}.
    \item Degenerate asymptotics corresponding to the lines $L_3,L_6,L_9,L_{12}$ are described in  \cite[Th. 7]{kapaev1996}.
    \item Degenerate asymptotics corresponding to the lines $L_2,L_5,L_7,L_{10}$ are described in  \cite[Th. 8]{kapaev1996}.
    \item Degenerate asymptotics corresponding to the lines $L_1,L_4,L_8,L_{11}$ are described in  \cite[Th. 9]{kapaev1996}.
\end{enumerate}

\begin{remark}
The automorphism group of $\mathcal{M}_w$ is maximal when $w_1=w_2=0$, generated by $\langle\varpi_1,\varpi_2\rangle$ and $\Aut(A_2^{(1)})$, see Proposition \ref{prop:oblomkovstylenormalformPIV}. In this case, there is a unique global fixed point, $x=(0,0,0,0)$, where all four distinguished conics $C_k$, $1\leq k\leq 4$, intersect. The corresponding solutions of $\pain{IV}$ under the Riemann-Hilbert map are the Okamoto rational solutions \cites{kapaev1998,maro18}.
\end{remark}

\end{remark}

\subsection{Combinatorial monodromy}

We define the monodromy of the family $\mathcal{M}\to\mathscr{W}_{\rm{IV}}$ first in terms of permutations of lines that preserve intersections as follows. 
These correspond to automorphisms of the famous Clebsch graph, which encodes the intersection configuration of lines on a generic Segre surface, that fix the nodes corresponding to a rectangle of lines at infinity.

\begin{definition} \label{def:PIVintersectiongraph}
    Let $u\in\mathscr{U}_{\rm{IV}}\setminus\mathscr{U}_{\rm{IV}}^{\operatorname{sing}}$ and consider the lines $L_1,\dots,L_{16}$ on $\overline{\mathcal{M}}_w$, for $w\in \mathscr{W}_{\rm{IV}}$ given as in \Cref{lem:morphismUtoW}.
    Form the intersection graph of lines on $\overline{\mathcal{M}}_w$, with vertices corresponding to lines at infinity coloured red and the remaining coloured blue, with edges encoding pairwise intersections, similarly to \Cref{def:pviintersectiongraph}. Denote this intersection graph by $\mathcal{G}_{\rm{IV}}$, and the the subgraph generated by the red vertices by $\mathcal{G}_{\rm{IV}}^\infty$.
\end{definition}
The intersection graph $\mathcal{G}_{\rm{IV}}$ can be computed from the expressions for the lines on $\overline{\mathcal{M}}_w$ given in \eqref{eq:linesPIV} and \eqref{eq:linesatinfinityPIV}, and is shown in \Cref{fig:linesgraphpiv}.
\begin{figure}
    \centering
    \includegraphics[width=0.6\linewidth]{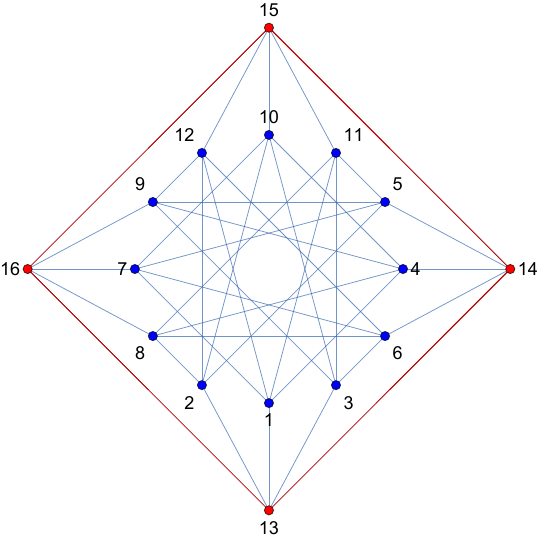}
    \caption{Graph $\mathcal{G}_{\rm{IV}}$ encoding intersections of lines on the monodromy variety corresponding to $\pain{IV}$,
    with in red the subgraph $\mathcal{G}_{\rm{IV}}^\infty$ induced by the vertices $(L_{13},L_{14},L_{15},L_{16})$.
    }
    \label{fig:linesgraphpiv}
\end{figure}

\begin{definition} \label{def:combinatorialmonodromyPIV}
The \emph{combinatorial monodromy} of the family $\mathcal{M}\to\mathscr{W}_{\rm{IV}}$ is the group 
$$\operatorname{Fix}_{\operatorname{Aut}(\mathcal{G}_{\rm{IV}})}\left( \mathcal{G}_{\rm{IV}}^{\infty}\right)$$
of automorphisms of the graph $\mathcal{G}_{\rm{IV}}$ in \Cref{fig:linesgraphpiv} that pointwise fix the vertices $\{13,14,15,16\}$.
\end{definition}
We will show that the combinatorial monodromy of $\mathcal{M}\to \mathscr{W}_{\rm{IV}}$ is isomorphic to  
$$W(A_2) = \langle r_1,r_2~|~ r_1^2=r_2^2=1, \,\,r_1r_2r_1=r_2 r_1 r_2 \rangle \cong \mathfrak{S}_3,$$
which forms the underlying finite Weyl group of the symmetry group of $\pain{IV}$.

\begin{proposition} \label{prop:combinatorialmonodromyPIV}
    The combinatorial monodromy of the family $\mathcal{M}\to \mathscr{W}_{\rm{IV}}$ is isomorphic to $W(A_2)$, with an explicit isomorphism
    \begin{equation*}
        W(A_2) \xrightarrow{\sim}   \operatorname{Fix}_{\operatorname{Aut}(\mathcal{G}_{\rm{IV}})}(\mathcal{G}_{\rm{IV}}^\infty), \quad r\mapsto \tilde{r},
    \end{equation*}
defined by sending the generators $r_1,r_2$ to
\begin{align*}
    \tilde{r}_1&= (1\,\, 2)\,(4\,\,5)\,(7\,\,8)\,(10\,\,11),\\
    \tilde{r}_2&= (1\,\, 3)\,(4\,\,6)\,(7\,\,9)\,(10\,\,12).
\end{align*}
\end{proposition}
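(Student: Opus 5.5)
We follow the strategy of the proof of \Cref{prop:combinatorialmonodromyPVI}, translating graph automorphisms into isometries of $\Pic(\overline{\mathcal{M}}_w)$.

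Fix $w\in\mathscr{W}_{\rm{IV}}\setminus\mathscr{W}_{\rm{IV}}^{\operatorname{sing}}$. Then $\overline{\mathcal{M}}_w$ is a smooth del Pezzo surface of degree four. We take the blowup model $\pi:\overline{\mathcal{M}}_w\to\p^2$ from the proof of \Cref{lem:segretocubicPIV}, with $\E_1,\dots,\E_5$ the classes of $L_{13},L_{15},L_9,L_7,L_8$. In this model the lines at infinity are
$$\mathcal{L}^\infty_1=\E_1,\quad \mathcal{L}^\infty_2=\h-\E_1-\E_2,\quad \mathcal{L}^\infty_3=\E_2,\quad \mathcal{L}^\infty_4=2\h-\E_1-\dots-\E_5.$$
The sixteen lines are exactly the elements of $\operatorname{EX}=\{\F: \F\cdot\F=\F\cdot\mathcal{K}=-1\}$, and the intersection graph is the Clebsch graph. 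Its automorphism group coincides with the group of isometries of $\Pic$ fixing $\mathcal{K}$, namely $W(D_5)$, of order $1920$.

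A graph automorphism fixing the four red vertices fixes their classes, and these span $\mathcal{K}$ together with one further direction. It therefore corresponds to an element of $W(D_5)$ fixing $\mathcal{L}^\infty_1,\dots,\mathcal{L}^\infty_4$ pointwise. The next step is to compute the sublattice of the root lattice $\mathcal{K}^\perp$ orthogonal to these four classes. Orthogonality to $\E_1$, $\E_2$ and $\h-\E_1-\E_2$ forces the root to be a combination of $\h,\E_3,\E_4,\E_5$ with no $\E_1,\E_2$ component and zero $\h$-coefficient. Orthogonality to $\mathcal{L}^\infty_4$ and $\mathcal{K}$ then leaves the span of $\beta_1=\E_3-\E_4$ and $\beta_2=\E_4-\E_5$. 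This is an $A_2$ root lattice, and the reflections $r_{\beta_1},r_{\beta_2}$, defined via \eqref{eq:reflectionformulapic}, generate a copy of $W(A_2)\cong\mathfrak{S}_3$ fixing the rectangle pointwise.

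To identify these reflections with $\tilde r_1,\tilde r_2$, we compute their action on the twelve affine lines, using the classes of $L_1,\dots,L_{12}$ read off from the explicit lines \eqref{eq:linesPIV} and the map $\pi$. Up to relabelling of the generators, they should give the stated permutations, which swap the roots $u_1\leftrightarrow u_2$ (respectively $u_1\leftrightarrow (u_1u_2)^{-1}$) within each triple $\{L_{3j+1},L_{3j+2},L_{3j+3}\}$. Alternatively, one checks directly from the coordinates that $\tilde r_1$ and $\tilde r_2$ preserve incidences, and observes that they generate $\mathfrak{S}_3$ acting faithfully.

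The main obstacle is showing that there are no further automorphisms. For this we use a counting argument analogous to the triangle count in the proof of \Cref{prop:combinatorialmonodromyPVI}. Rectangles of lines (4-cycles) on a degree four del Pezzo surface are the hyperplane sections through two conic-bundle pairs, and $W(D_5)$ acts transitively on them. Each rectangle comes from choosing two of the ten conic-bundle classes forming an orthogonal pair; one checks that there are $40$ such rectangles. The setwise stabiliser of a rectangle then has order $1920/40=48$. The dihedral group of order $8$ acts on the rectangle's vertices, and we check that it is realised in full, for instance via $\varpi_1,\varpi_2$ and further elements. Hence the pointwise stabiliser has order $48/8=6=|W(A_2)|$.

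If the count of rectangles proves awkward, a fallback is to argue directly on the graph. Each affine line meets exactly two opposite lines at infinity, so fixing the red vertices preserves the four triples $\{L_1,L_2,L_3\},\dots,\{L_{10},L_{11},L_{12}\}$. The incidences between triples then force the permutation on one triple to determine it on all the others. This bounds the group by $\mathfrak{S}_3$, completing the isomorphism.
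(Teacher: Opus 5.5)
Your set-up and lower bound match the paper's. You use the same blow-down with $\E_1,\dots,\E_5$ the classes of $L_{13},L_{15},L_9,L_7,L_8$. You make the same computation that the roots orthogonal to the four classes at infinity are $\pm(\E_i-\E_j)$ with $i,j\in\{3,4,5\}$. The reflections are identified with $\tilde r_1,\tilde r_2$ as in the paper: your $r_{\E_3-\E_4}$ swaps $L_9$ and $L_7$, so it is $\tilde r_2$, and $r_{\E_4-\E_5}$ is $\tilde r_1$.

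The genuine difference is the upper bound. The paper simply asserts that the pointwise stabiliser of the four classes is generated by reflections in the orthogonal roots. This is the standard fact (Steinberg) that the pointwise stabiliser of a subspace in a finite reflection group is generated by the reflections it contains, applied to the projections of the four classes to $\mathcal{K}^\perp\otimes\mathbb{R}$. It gives the parabolic $A_2$ structure at once, and the $\pain{II}$ case is handled identically. You instead transplant the orbit--stabiliser count used for triangles in the $\pain{VI}$ case. This avoids that theorem, but it needs transitivity of $W(D_5)$ on the $40$ rectangles and surjectivity of the setwise stabiliser onto the dihedral group of order $8$; you only sketch both. Your fallback graph argument is the most elementary of all, and it also explains \Cref{rem:combinatorialremarkPIV}.

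Several details need fixing.
\begin{itemize}
\item Each affine line meets exactly one line at infinity, not two opposite ones, since $\ell\cdot(-\mathcal{K})=1$ and the rectangle is an anticanonical divisor. Your stated premise would only give two sets of six lines, not the four triples. With the correct premise, the triples are the sets of affine lines meeting a fixed line at infinity. Between two triples the incidences form a perfect matching (adjacent lines at infinity) or its complement (opposite ones), so the permutation of one triple does determine the others.
\item In the count, $\varpi_1,\varpi_2$ only induce $(13\,16)(14\,15)$ and $(13\,14)(15\,16)$, a Klein four-group. To get the full dihedral group, add for example the rotation $(13\,16\,15\,14)$ induced by $\sigma_{(01)}$. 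This is an automorphism of $\mathcal{G}_{\rm IV}$ because the graph does not depend on $w$.
\item No two conic classes are orthogonal: distinct unpaired ones have intersection number $1$. A clean count is that the Clebsch graph is strongly regular with $\mu=2$. Each of the $80$ non-adjacent pairs is then opposite in exactly one $4$-cycle, giving $80/2=40$ rectangles.
\item The four classes at infinity span a rank-$4$ sublattice, not $\mathcal{K}$ plus one direction. This slip is harmless to the argument.
\end{itemize}
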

\begin{proof}
        Similarly to in the proof of \Cref{prop:combinatorialmonodromyPVI} for $\pain{VI}$, using the realisation of $\overline{\mathcal{M}}_w$ as the blowup of $\p^2$ at five points as in the proof of \Cref{lem:segretocubicPIV}, the combinatorial monodromy can be realised as a group of lattice automorphisms of $\Pic(\overline{\mathcal{M}}_w)$.
        
        Recall that the lines on $\overline{\mathcal{M}}_w$ correspond to the set $\operatorname{EX}\subset\Pic(\overline{\mathcal{M}}_w)$ of classes of exceptional curves, formed by the sixteen elements 
        $$\mathcal{E}_i,\quad \mathcal{H}-\mathcal{E}_j-\mathcal{E}_k, \quad 2 \mathcal{H}-\E_1-\E_2-\E_3-\E_4-\mathcal{E}_5, \qquad \text{where} \quad i,j,k\in \{1,\dots,5\}, j\neq k.$$
        The group of Cremona isometries of $\Pic(\overline{\mathcal{M}}_w)$ is the Weyl group $W(D_5)$ forming the monodromy of del Pezzo surfaces of degree 4 \cites{dolgachevclassicalAG,manincubicforms}. 
        With the choice of birational morphism $\pi : \overline{\mathcal{M}}_w \to \p^2$ as in \Cref{lem:segretocubicPIV}, this can be generated by reflections 
            $$r_{\alpha_i} (\F) = \F + (\F\cdot \alpha_i)\,\alpha_i,$$
        associated to the simple roots 
        $$ \alpha_1 = \mathcal{E}_1-\mathcal{E}_2, \quad \alpha_2 = \mathcal{E}_2-\mathcal{E}_3, \quad \alpha_3=\mathcal{E}_3-\mathcal{E}_4 \quad \alpha_4=\mathcal{E}_4-\mathcal{E}_5, \quad \alpha_5=\mathcal{H}-\mathcal{E}_1-\mathcal{E}_2-\mathcal{E}_3.$$
       A permutation of lines fixing the lines at infinity corresponds to an automorphism of $\Pic(\overline{\mathcal{M}}_w)$ fixing the elements 
        \begin{equation*}
        \mathcal{E}_1,\quad 
        \mathcal{H}-\mathcal{E}_1-\mathcal{E}_2,\quad 
        \mathcal{E}_2,\quad 
        2\mathcal{H}-\mathcal{E}_1- \mathcal{E}_2-\mathcal{E}_3-\mathcal{E}_4-\mathcal{E}_5.
    \end{equation*}
        The subgroup of such automorphisms is generated by reflections 
    corresponding to roots orthogonal to these, of which it suffices to take  
    $$\beta_1 = \alpha_4=\E_4-
    \E_5,\qquad \beta_2=\alpha_3 = \E_3-\E_4.$$
    These play the role of simple roots of the $A_2$ root system, corresponding to the $A_2$ subdiagram of $D_5$ in \Cref{fig:dynkinD5}. 
    The reflections $r_{
    \beta_1},r_{\beta_2}$ acting on $\operatorname{EX} \subset \Pic(\overline{\mathcal{M}}_w)$ induce the permutations of lines $\tilde{r}_1,\tilde{r}_2$ in the proposition.
\end{proof}

\begin{figure}[htb]
    \centering
            \begin{tikzpicture}[elt/.style={circle,draw=black!100,fill=black!100,thick, inner sep=0pt,minimum size=1.6mm},aff/.style={circle,draw=black!100,thick, inner sep=0pt,minimum size=1.6mm},scale=0.8]
 		\path 	(-2,0) 	node 	(a1) [elt, label={[xshift=0pt, yshift = -21 pt] $\alpha_1$} ] {}
                    (-1,0) 	node 	(a2) [elt, label={[xshift=0pt, yshift = -21 pt] $\alpha_2$} ] {}
                    (0,0) 	node 	(a3) [aff, label={[xshift=0pt, yshift = -21 pt] $\alpha_3$} ] {}
                    (1,1) 	node 	(a4) [aff, label={[xshift=0pt, yshift = -21 pt] $\alpha_4$} ] {}
                    (1,-1) 	node 	(a5) [elt, label={[xshift=0pt, yshift = -21 pt] $\alpha_5$} ] {}
 		       ;
 		\draw [black,line width=1pt ] (a1) -- (a2) -- (a3) -- (a4);
 		\draw [black,line width=1pt ] (a3) -- (a5);
 	\end{tikzpicture}
   \caption{Dynkin diagram $D_5$ with $A_2$ subdiagram indicated by unfilled nodes.}
    \label{fig:dynkinD5}
\end{figure}

\begin{remark} \label{rem:combinatorialremarkPIV}
    As in the case of the Jimbo-Fricke cubic in \Cref{rem:combinatorialremarkPVI}, the affine lines are naturally partitioned into subsets according to which line at infinity they intersect.  
    By definition, the combinatorial monodromy must respect this partition and we observe that $W(A_2)$ acts faithfully when restricted to any of these four sets of three lines.
\end{remark}

The nontrivial action of the Dynkin diagram automorphisms and the symmetry $\vartheta$ induce permutations of lines, including those at infinity.

\begin{proposition}
    The action of $\Aut(A_2^{(1)})$ and $\langle \vartheta \rangle$ on $\mathscr{A}_{\rm{IV}}\cong\Theta_{\rm{IV}}$ in \Cref{tab:PIV:symmetry:varsandparams} induces the action on $\mathscr{U}_{\rm{IV}}$ given by 
        \begin{equation*}
        \begin{aligned}
            \sigma_{(01)} &: (u_1,u_2)\mapsto (\tilde{u}_1,\tilde{u}_2), \qquad \tilde{u}_1 = u_1 u_2, &  
            &\tilde{u}_2 = \frac{1}{u_2}, \\
            \sigma_{(12)} &: (u_1,u_2)\mapsto (\tilde{u}_1,\tilde{u}_2), \qquad \tilde{u}_1 =\zeta_3^{-1} \frac{1}{u_1} , &
            &\tilde{u}_2 = \zeta_3^{-1} u_1 u_2, \\            
            \sigma_{(02)} &: (u_1,u_2)\mapsto (\tilde{u}_1,\tilde{u}_2), \qquad \tilde{u}_1 =\zeta_3 \frac{1}{u_2} , & 
            &\tilde{u}_2 = \zeta_3\frac{1}{u_1}, \\
            \vartheta &: (u_1,u_2) \mapsto (\tilde{u}_1,\tilde{u}_2), \qquad \tilde{u}_1=u_1, & &\tilde{u}_2=u_2.
            \end{aligned}
    \end{equation*} 
    This, together with the action on $\mathcal{M}\to \mathscr{W}_{\rm{IV}}$ given in \Cref{tab:PIV:symmetry:dynkinautos} 
    induces the following automorphisms of the graph $\mathcal{G}_{\rm{IV}}$:
    \begin{align*}
        \sigma_{(01)} &= (1\,\,6\,\,10\,\,9)\,(2\,\,5\,\,11\,\,8)\,(3\,\,4\,\,12\,\,7)\,(13\,\,16\,\,15\,\,14),\\
        \sigma_{(12)} &= (1\,\,7\,\,10\,\,4)\,(2\,\,9\,\,11\,\,6)(3\,\,8\,\,12\,\,5)\,\,(13\,\,14\,\,15\,\,16), \\
        \sigma_{(02)} &= (1\,\,8\,\,10\,\,5)\,(2\,\,7\,\,11\,\,4)\,(3\,\,9\,\,12\,\,6)\,(13\,\,14\,\,15\,\,16),\\
        \vartheta &= (1\,\,10)\,(2\,\,11)\,(3\,\,12)\,(4\,\,7)\,(5\,\,8)\,(6\,\,9)\,(13\,\,15)\,(14\,\,16).
    \end{align*}
In particular, the cyclic subgroup $\langle \sigma_{(012)} \rangle$ acts trivially on the rectangle of lines at infinity and preserves the partition of the affine lines as in \Cref{rem:combinatorialremarkPIV}, with 
$$\sigma_{(021)}= (1\,\,2\,\,3)\,(4\,\,5\,\,6)\,(7\,\,8\,\,9)\,(10\,\,11\,\,12).$$
Further the automorphisms $\varpi_1,\varpi_2$ of $\overline{\mathcal{M}}_w$ written in \eqref{eq:PIVxsymmetriesvarpi}, extended to act trivially on $\mathscr{U}_{\rm{IV}}$, induce the following:
\begin{align*}
    \varpi_1 &= (1\,\,7)\,(2\,\,8)\,(3\,\,9)\,(4\,\,10)\,(5\,\,11)\,(6\,\,12)\,(13\,\,16)\,(14\,\,15),\\
    \varpi_2 &= (1\,\,4)\,(2\,\,5)\,(3\,\,6)\,(7\,\,10)\,(8\,\,11)\,(9\,\,12)(13\,\,14)\,(15\,\,16).
\end{align*}
\end{proposition}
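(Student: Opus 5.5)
The plan mirrors the proof of the analogous statement for $\pain{VI}$: a direct computation in two stages, first on parameters and then on lines.

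\textbf{Stage 1: action on $\mathscr{U}_{\rm{IV}}$.} For each generator $\sigma_{(01)},\sigma_{(12)},\sigma_{(02)},\vartheta$, I take the action on $(\theta_0,\theta_\infty)$ from \Cref{tab:PIV:symmetry:varsandparams} and substitute it into \eqref{eq:thetatouPIV}. For example, $\sigma_{(01)}$ sends $(\theta_0,\theta_\infty)\mapsto(-\tfrac{\theta_0}{2}+\tfrac{\theta_\infty}{2},\tfrac{3\theta_0}{2}+\tfrac{\theta_\infty}{2})$. Then $\tilde u_1=e^{\frac{2\pi i}{3}\tilde\theta_\infty+2\pi i\tilde\theta_0}=e^{\frac{4\pi i}{3}\theta_\infty}=u_1u_2$, and $\tilde u_2=e^{-\frac{2\pi i}{3}\theta_\infty+2\pi i\theta_0}=u_2^{-1}$. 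The remaining generators are handled the same way; the constant shifts of $\tfrac12$ in $\tilde\theta$ produce the factors $\zeta_3^{\pm1}$. Since $\vartheta$ fixes the parameters, it fixes $u$.

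As a consistency check, pushing the lifted actions through \Cref{lem:morphismUtoW} must reproduce the $w$-actions in \Cref{tab:PIV:symmetry:dynkinautos}. This confirms that each lift is a well-defined lift of the $\mathscr{W}_{\rm{IV}}$-action.

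\textbf{Stage 2: induced permutations of lines.} Given $(\tilde x,\tilde w,\tilde u)$, I apply the coordinate map of \Cref{tab:PIV:symmetry:dynkinautos} to each line $L_k$ in \eqref{eq:linesPIV}. I then rewrite the image in terms of $\tilde b_j$ and identify it with the line $L_j$ of the tilde family.

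Take $\sigma_{(01)}$ acting on $L_1$, where $x_1=u_1$, $x_2=u_1^{-1}$ and $x_3+u_1x_4=w_2-u_1^{-1}$. The substitution $\tilde x_2=x_1$, $\tilde x_4=x_2$, $\tilde x_1=x_3$, $\tilde x_3=x_4$ gives $\tilde x_2=u_1$ and $\tilde x_4=u_1^{-1}$. In terms of $\tilde u$ we have $u_1=\tilde u_1\tilde u_2$, so $\tilde x_4=(\tilde u_1\tilde u_2)^{-1}=\tilde b_9$ and $\tilde x_2=1/\tilde b_9$. By \eqref{eq:linesPIV} this is $L_9$, as claimed.

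Lines at infinity are permuted by reading off which coordinate hyperplanes are exchanged in \eqref{eq:linesatinfinityPIV}. The cycle $\sigma_{(021)}$ is obtained by composing $\sigma_{(01)}$ and $\sigma_{(12)}$ as permutations, using the group relations in the remark following \Cref{tab:PIV:symmetry:varsandparams}. For $\varpi_1,\varpi_2$, $u$ is held fixed and $x_1\leftrightarrow x_4$ or $x_2\leftrightarrow x_3$ is applied directly.

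\textbf{Main obstacle.} The conceptual content is light; the difficulty is bookkeeping. The two places most likely to cause trouble are the following.

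\emph{Lift conventions.} The lift of $\sigma_{(12)}$ and $\sigma_{(02)}$ to $\mathscr{U}_{\rm{IV}}$ depends on choices of $\zeta_3$-phases. These must be consistent with \Cref{tab:PIV:symmetry:dynkinautos}, where the $x_i$ are also multiplied by powers of $\zeta_3$.

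\emph{Order of composition.} The convention for composing permutations when writing $\sigma_{(021)}$ must be fixed carefully.

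I would organise the line identifications by reading off which coordinate is fixed to $b_k$ and which to $1/b_k$. This partitions the lines into the four triples of \Cref{rem:combinatorialremarkPIV}, so the computation reduces to determining which triple maps to which and how $\{u_1,u_2,(u_1u_2)^{-1}\}$ is permuted.
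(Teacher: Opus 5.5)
Your plan is the same as the paper's proof. That proof says only that the $\mathscr{U}_{\rm{IV}}$-action is induced through \eqref{eq:thetatouPIV} and that the permutations follow by direct computation. Your Stage~1 is correct: all four $\tilde u$-formulas come out as stated. There is also no $\zeta_3$-phase choice to worry about, because the lift is pinned down by the $\Theta_{\rm{IV}}$-action; lifting from $\mathscr{W}_{\rm{IV}}$ alone would fix it only up to $W(A_2)$.

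The step that fails is in Stage~2, at the words ``as claimed''. Your computation that $\sigma_{(01)}$ maps $L_1$ onto $L_9$ is correct, but the stated cycle $(1\,6\,10\,9)$ sends $1\mapsto 6$. Reading the affine cycles backwards would repair this, but then the lines at infinity break. Pushing forward with $\tilde X_2=X_1$, $\tilde X_4=X_2$ sends $L_{13}=\{X_1=X_2=0\}$ onto $L_{16}=\{X_2=X_4=0\}$, which is what $(13\,16\,15\,14)$ says in the ordinary reading. So no single convention reproduces the stated $\sigma_{(01)}$. In fact it is not an automorphism of $\mathcal{G}_{\rm{IV}}$: it sends the edge $\{1,13\}$ to $\{6,16\}$, but $L_6$ meets $L_{14}$, not $L_{16}$. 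The same defect occurs for $\sigma_{(12)}$ and $\sigma_{(02)}$. Carried out in full, your computation gives the push-forward permutations
\begin{align*}
\sigma_{(01)}&=(1\,9\,10\,6)(2\,8\,11\,5)(3\,7\,12\,4)(13\,16\,15\,14),\\
\sigma_{(12)}&=(1\,4\,10\,7)(2\,6\,11\,9)(3\,5\,12\,8)(13\,14\,15\,16),\\
\sigma_{(02)}&=(1\,5\,10\,8)(2\,4\,11\,7)(3\,6\,12\,9)(13\,14\,15\,16),\\
\sigma_{(021)}&=(1\,3\,2)(4\,6\,5)(7\,9\,8)(10\,12\,11),
\end{align*}
while $\vartheta$, $\varpi_1$, $\varpi_2$ and the $\tilde u$-formulas agree with the statement. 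The statement records the affine cycles in the pull-back convention and the cycles at infinity in the push-forward convention. What your argument can actually establish is this corrected list, or its inverse throughout, not the statement as written.

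Two safeguards belong in your write-up. First, test every result against the partition of \Cref{rem:combinatorialremarkPIV}: the triple of affine lines meeting $L_j$ must go to the triple meeting the image of $L_j$. Second, compute $\sigma_{(021)}$ directly rather than composing via the relations after \Cref{tab:PIV:symmetry:varsandparams}. Those relations are in the substitution convention, so $\sigma_{(01)}\sigma_{(12)}=\sigma_{(012)}$ holds only when $\sigma_{(01)}$ is applied first. Directly, $\sigma_{(021)}$ is just $x\mapsto(\zeta_3x_1,\zeta_3^{-1}x_2,\zeta_3^{-1}x_3,\zeta_3x_4)$ together with $u\mapsto(\zeta_3u_2,\zeta_3/(u_1u_2))$.
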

\begin{proof}
     
    We define the action on $\mathscr{U}_{\rm{IV}}$ through the map from $\mathscr{A}_{\rm{IV}}\cong\Theta_{\rm{IV}}$  to $\mathscr{U}_{\rm{IV}}$ defined by $u_1=e^{ \frac{2\pi i}{3}\theta_\infty+ 2 \pi i\theta_0},  u_2=e^{\frac{2\pi i}{3}\theta_\infty-2\pi i\theta_0}$.
        The induced permutations of lines are derived by direct computation.
\end{proof}

\subsection{Algebraic monodromy}
\label{subsec:algmonodromyPIV}
We will describe the monodromy of the family $\mathcal{M}\to \mathscr{W}_{\rm{IV}}$ algebraically in terms of an extension of the function field $\C(\mathscr{W}_{\rm{IV}})$ coming from the incidence variety of lines on the monodromy surface.

\begin{definition}
    The incidence variety of lines on $\mathcal{M}_w$ is 
    \begin{equation*}
        \Gamma\xrightarrow{\rho} \mathscr{W}_{\rm{IV}}, \quad\Gamma = \{ (w,\ell)\in \mathscr{W}_{\rm{IV}} \times \mathbb{G}(1,4)~|~\ell \subset \overline{\mathcal{M}}_w, \,\, \ell \not\subset \overline{\mathcal{M}}_w\setminus \mathcal{M}_w\},
    \end{equation*}
    where $\mathbb{G}(1,4)$ is the Grassmannian of projective lines in $\p^4$.
\end{definition}

The restriction to the nonsingular locus of $\mathcal{M}\to\mathscr{W}_{\rm{IV}}$ gives a covering $\rho : \Gamma^{\operatorname{ns}}\to \mathscr{W}_{\rm{IV}}\setminus \mathscr{W}_{\rm{IV}}^{\operatorname{sing}}$.
The fibre over $w\in \mathscr{W}_{\rm{IV}}\setminus \mathscr{W}_{\rm{IV}}^{\operatorname{sing}}$ consists of 12 points corresponding to the affine lines $L_{1},\dots,L_{12}$ on $\mathcal{M}_{w}$.

Similarly to the case of $\pain{VI}$ described in \Cref{subsec:algmonodromyPVI}, $\Gamma$ is reducible, and we define $K_{\Gamma}$ to be the compositum of the normal closures of the function fields of its irreducible components within the algebraic closure of $\C(\mathscr{W}_{\rm{IV}})$. 

\begin{definition}
    The \emph{algebraic monodromy} of the family $\mathcal{M}\to \mathscr{W}_{\rm{IV}}$ is the Galois group of the (normal closure of the) field extension $\C(\mathscr{W}_{\rm{IV}})\subset K_{\Gamma}$:
    \begin{equation*}
        \operatorname{Gal}\left( K_{\Gamma}/ \C(\mathscr{W}_{\rm{IV}})\right).
    \end{equation*}
\end{definition}
The algebraic monodromy here also forms $W(A_2)$, the underlying finite Weyl group of the symmetry group of $\pain{IV}$.

\begin{proposition}  \label{prop:algebraicmonodromyPIV}
    The algebraic monodromy of $\mathcal{M}\to \mathscr{W}_{\rm{IV}}$ is 
    $$\operatorname{Gal}\left( K_{\Gamma}/ \C(\mathscr{W}_{\rm{IV}})\right) \cong \operatorname{Gal}\left( \C(\mathscr{U}_{\rm{IV}})/ \C(\mathscr{W}_{\rm{IV}})\right) \cong W(A_2),$$
    where the first isomorphism comes from a $\C(\mathscr{W}_{\rm{IV}})$-linear isomorphism $K_{\Gamma}\cong \C(\mathscr{U}_{\rm{IV}})$,  and the second isomorphism is given by the action of $W(A_2)$ on $\C(\mathscr{U}_{\rm{IV}})$ defined by 
    \begin{equation*}
        \begin{aligned}
    r_1 &: &&u_1\to u_2, \quad 
    &&u_2\to u_1, \\
    r_2 &: &&u_1\to \frac{1}{u_1u_2}, \quad 
    &&u_2\to u_2,
        \end{aligned}
    \end{equation*}
    which keeps $(w_1,w_2)$ fixed and induces the same permutations of lines as in \Cref{prop:combinatorialmonodromyPIV}.
\end{proposition}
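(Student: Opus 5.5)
I will mirror the proof of \Cref{prop:algebraicmonodromyPVI}. The first step is to describe the incidence variety $\Gamma$ in Pl\"ucker coordinates on $\mathbb{G}(1,4)\subset\p^9$, computed from the explicit lines \eqref{eq:linesPIV}. By \Cref{rem:combinatorialremarkPIV}, the affine lines split into four triples according to which line at infinity they meet: $\{L_1,L_2,L_3\}$, $\{L_4,L_5,L_6\}$, $\{L_7,L_8,L_9\}$, $\{L_{10},L_{11},L_{12}\}$. On $\Gamma$ this should appear as a product of four Pl\"ucker coordinates vanishing identically, which cuts $\Gamma$ into four components $\Gamma_1,\dots,\Gamma_4$.

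On each component I will work in a suitable affine chart and eliminate the remaining Pl\"ucker coordinates. For instance, on $\Gamma_1$ the lines have $x_1=b$, $x_2=1/b$ and $x_3+bx_4=w_2-b^{-1}$. Substituting into the two generators of $I_w$ should reduce the incidence equations to a single cubic in $b$. I expect this cubic to be
\begin{equation*}
\Lambda_1(b)=b^3-w_1b^2+w_2b-1,
\end{equation*}
since by \Cref{lem:morphismUtoW} its roots are exactly $u_1$, $u_2$ and $1/(u_1u_2)$. I will verify this by expanding the incidence condition for generic $w$, where all the relevant coordinates are nonzero.

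The lines on $\Gamma_2$ involve the same values of $b_k$, so they should give the same cubic. For $\Gamma_3$ and $\Gamma_4$ the parametrisation is by $x_4=b$, and I expect the same cubic, or at worst its reciprocal, which has the same splitting field. Each $\C(\Gamma_r)$ is then generated over $\C(\mathscr{W}_{\rm{IV}})$ by a root of this cubic. Its normal closure is the splitting field, which equals $\C(u_1,u_2)=\C(\mathscr{U}_{\rm{IV}})$. Hence $K_\Gamma\cong\C(\mathscr{U}_{\rm{IV}})$, and the extension is Galois because it is a splitting field.

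To identify the Galois group, I first check that the maps $r_1,r_2$ preserve $w_1=u_1+u_2+1/(u_1u_2)$ and $w_2$. Indeed, both act by permuting the three roots $u_1$, $u_2$, $1/(u_1u_2)$: $r_1$ swaps the first two and $r_2$ swaps the first and third. They therefore generate $\mathfrak S_3\cong W(A_2)$ inside the Galois group. Conversely, the Galois group embeds in the permutations of the three roots, since these generate the field. So the group is exactly $\mathfrak S_3$, and the extension has degree $6$; equivalently, the discriminant $(u_1-u_2)^2(u_1-1/(u_1u_2))^2(u_2-1/(u_1u_2))^2$ is not a square in $\C(w)$.

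Finally, I will check on the $b_k$ that $r_1$ realises $(1\,2)(4\,5)(7\,8)(10\,11)$ and $r_2$ realises $(1\,3)(4\,6)(7\,9)(10\,12)$. This matches \Cref{prop:combinatorialmonodromyPIV} and gives injectivity into the combinatorial monodromy, exactly as in the $\pain{VI}$ proof. The main obstacle is the Pl\"ucker elimination showing that each component really reduces to this cubic. The non-generic cases where some coordinate vanishes must be discarded, which I will handle by restricting to generic $w$.
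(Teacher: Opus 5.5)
Your proposal is correct and takes essentially the same route as the paper: Pl\"ucker coordinates on $\mathbb{G}(1,4)$ split $\Gamma$ into four components (the paper finds $P_{1,2}P_{1,3}P_{2,4}P_{3,4}=0$), each of which reduces for generic $w$ to the cubic $b^3-w_1b^2+w_2b-1$ with roots $u_1,u_2,1/(u_1u_2)$, so $K_\Gamma\cong\C(\mathscr{U}_{\rm{IV}})$, and the induced permutations of lines are then checked directly. Your argument that $r_1,r_2$ generate a copy of $\mathfrak{S}_3$ inside a Galois group that already embeds in $\mathfrak{S}_3$ usefully spells out a step the paper leaves implicit.
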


\begin{proof}
    We proceed similarly to in the proof of \Cref{prop:algebraicmonodromyPVI}. 
    We first derive the defining equations of $\Gamma$ in Pl\"ucker coordinates.
    For a projective line in $\p^4$ defined as the intersection of three hyperplanes
    \begin{equation*}
        \ell = \left\{ [X_0:X_1:X_2:X_3:X_4]\in \p^4  : 
        \begin{aligned}
            &a_0 X_0+a_1X_1+a_2X_2+a_3X_3 + a_4 X_4=0 \\
            &\hspace{0.2mm}b_0\hspace{0.2mm} X_0+\hspace{0.2mm}b_1\hspace{0.2mm}X_1+\hspace{0.2mm}b_2\hspace{0.2mm}X_2+\hspace{0.2mm}b_3\hspace{0.2mm}X_3 + \hspace{0.2mm}b_4\hspace{0.2mm} X_4=0 \\
            &\hspace{0.2mm}c_0\hspace{0.2mm} X_0+\hspace{0.2mm}c_1\hspace{0.2mm}X_1+\hspace{0.2mm}c_2\hspace{0.2mm}X_2+\hspace{0.2mm}c_3\hspace{0.2mm}X_3 + \hspace{0.2mm}c_4\hspace{0.2mm} X_4=0 
        \end{aligned}
        \right\},
    \end{equation*}  
    we take Pl\"ucker coordinates of $\ell$ to be $[P_{0,1}:P_{0,2}:P_{0,3}:P_{0,4}:P_{1,2}:P_{1,3}:P_{1,4}:P_{2,3}:P_{2,4}:P_{3,4}]$, where $P_{i,j}$ is the $3\times 3$ minor of the matrix 
    $$\left(\begin{array}{ccccc}a_0 & a_1 & a_2 & a_3 & a_4 \\b_0 & b_1 & b_2 & b_3 & b_4 \\c_0 & c_1 & c_2 & c_3 & c_4\end{array}\right),$$
    with respect to the two columns corresponding to indices $0\leq i<j\leq 4$.
    The Pl\"ucker coordinates of the lines are given in \Cref{tab:PIV:plucker}.
    Under the Pl\"ucker embedding the incidence variety $\Gamma$ becomes an algebraic set in $\mathscr{W}_{\rm{IV}}\times \p^9$.
    From \Cref{tab:PIV:plucker}, it is contained in 
    $$ P_{1,2} P_{1,3}P_{2,4}P_{3,4}=0.$$
    The first component $P_{1,2}$ contains the points corresponding to $L_1,L_2,L_3$, for which $P_{1,2}=0, P_{0,4}\neq0$, so to study this part we restrict to the affine chart $(p_{0,1},p_{0,2},p_{0,3},p_{1,3},p_{1,4},p_{2,3},p_{2,4},p_{3,4})\in\C^8$ given by
    \begin{gather*} [P_{0,1}:P_{0,2}:P_{0,3}:P_{0,4}:P_{1,2}:P_{1,3}:P_{1,4}:P_{2,3}:P_{2,4}:P_{3,4}]= \\
    [p_{0,1}:p_{0,2}:p_{0,3}:1:0:p_{1,3}:p_{1,4}:p_{2,3}:p_{2,4}:p_{3,4}].\end{gather*}
    The defining equations for $\Gamma$ in this chart, including the Pl\"ucker relations, become
    \begin{equation*} 
        \begin{gathered}
            p_{0,1}=0, \quad p_{0,2}p_{0,3}=0, \quad
            p_{0,2}p_{1,3}=0, \quad p_{0,2}p_{1,4}=0, \\
            p_{1,3}- p_{0,3}p_{1,4}=0,\quad 
            p_{2,3} - p_{0,3}p_{2,4}+ p_{0,2}p_{3,4},\quad
            p_{1,4}p_{2,3}-p_{1,3}p_{2,4} = 0,\\
            p_{0,3}p_{2,4}+ p_{0,2}p_{3,4}=1,\quad 
            p_{0,2} = p_{0,3}+p_{1,4},\\
            p_{1,4}+p_{2,4}p_{3,4} + w_1=0,\quad p_{3,4}- p_{2,4} + w_2 = 0.
        \end{gathered}
    \end{equation*}
    The lines $L_1,L_2,L_3$ have $p_{0,3}\neq 0$, for generic $w$, and in this case the equations reduce to the following single equation for $b=p_{0,3}$:
    $$b^3 - w_1 b^2 + w_2 b-1=0,$$
    which splits over $\C(\mathscr{U}_{\rm{IV}})$ with roots $u_1,u_2, \tfrac{1}{u_1u_2}$. 
    Equations for Pl\"ucker coordinates of the remaining three groups of lines $\{L_{4},L_5,L_6\}$, $\{L_7,L_8,L_9\}$ and $\{L_{10},L_{11},L_{12}\}$ also reduce to the same polynomial, 
    and similarly to the proof of \Cref{prop:algebraicmonodromyPVI} we find that $K_{\Gamma}\cong \C(\mathscr{U}_{\rm{IV}})$.
    The claimed correspondence between elements of $\operatorname{Gal}\left(\C(\mathscr{U}_{\rm{IV}})/\C(\mathscr{W}_{\rm{IV}})\right)$ and permutations of lines is checked directly.
\end{proof}

\begingroup

\setlength{\tabcolsep}{0pt} 
\renewcommand{\arraystretch}{1.5} 

\begin{table}[h]
\makebox[\textwidth][c]{%
    $\begin{array}{c||c|c|c|c|c|c|c|c|c|c|}
             & P_{0,1}   & P_{0,2}    & P_{0,3}   & P_{0,4} & P_{2,1}   & P_{3,1} &  P_{1,4}   & P_{2,3}   & P_{4,2}    & P_{4,3}     \\
        \hline\hline 
        L_1 &  0 & 0 & u_1^2 u_2 & u_1 u_2& 0& u_1^3 u_2& -u_1^2 u_2 & u_1 u_2 & -u_2 & u_1^2 u_2^2 + u_1  \\
        \hline 
        L_2 &  0 & 0 & u_1 u_2^2 & u_1 u_2 & 0 & u_1 u_2^3 & -u_1 u_2^2 & u_1 u_2 & -u_1 & u_1^2u_2^2 + u_2 \\
        \hline 
        L_3 & 0 & 0 & u_1 u_2 & u_1^2 u_2^2 & 0 & 1 & -u_1 u_2 & u_1^2 u_2^2 & -u_1^3 u_2^3 & u_1^2u_2+u_1u_2^2 \\
        \hline
        L_{4} &  0 & u_1^2 u_2 & 0 & u_1 u_2 & -u_1^3 u_2 & 0 & u_1^2 u_2 & u_1 u_2 & u_1^2 u_2^2+u_1& -u_2 \\
        \hline 
        L_{5} & 0 & u_1 u_2^2 & 0 & u_1 u_2 & -u_1 u_2^3 & 0 & u_1 u_2^2 & u_1 u_2 & u_1^2u_2^2 + u_2 & -u_1\\
        \hline 
        L_{6} & 0 & u_1 u_2 & 0 & -u_1^2 u_2^2 & 1 & 0 & u_1 u_2 & u_1^2 u_2^2 & u_1^2u_2+ u_1u_2^2 & -u_1^3 u_2^3 \\
        \hline
        L_{7} & -u_1 u_2 & 0 & u_1^2 u_2 & 0 & -u_2 &  u_1^2 u_2^2+u_1 & u_1^2 u_2 & u_1 u_2 & 0 & u_1^3 u_2 \\
        \hline 
        L_{8} & -u_1 u_2 & 0 & u_1 u_2^2 & 0 & -u_1 &  u_1^2u_2^2+u_2 & u_1 u_2^2 & u_1 u_2&  0 & u_1 u_2^3 \\
        \hline
        L_{9} & -u_1^2 u_2^2 & 0 & u_1 u_2 & 0 & -u_1^3 u_2^3 & u_1^2u_2+u_1u_2^2 & u_1 u_2 & u_1^2 u_2^2 & 0 & 1
        \\
        \hline
        L_{10} & u_1 u_2 & u_1^2 u_2 & 0 & 0 &  u_1^2 u_2^2+u_1 & -u_2 & -u_1^2 u_2 & u_1 u_2 & u_1^3 u_2 & 0 \\
        \hline
        L_{11} & u_1 u_2 & u_1 u_2^2 & 0 & 0 &  u_1^2u_2^2+u_2 & -u_1 & -u_1 u_2^2 & u_1 u_2 & u_1 u_2^3 & 0\\
        \hline
        L_{12} & u_1^2 u_2^2 & u_1 u_2 & 0 & 0 & 
        u_1^2u_2+u_1u_2^2 
        & -u_1^3 u_2^3 & -u_1 u_2& u_1^2 u_2^2 & 1 & 0 \\
        \hline
    \end{array}$
    }
    \caption{Pl\"ucker coordinates of lines $L_1,\dots,L_{12}$ on the monodromy surface for $\pain{IV}$.
    }
    \label{tab:PIV:plucker}
\end{table}
\endgroup
\subsection{Analytic monodromy}
Our final result of this section shows that the combinatorial monodromy of the family $\mathcal{M}\to\mathscr{W}_{\rm{IV}}$ is induced by deformations over loops in $\mathscr{W}_{\rm{IV}}$.

Similarly to in \cref{subsec:analyticmonodromyPVI}, take $w_* \in \mathscr{W}_{\rm{IV}}\setminus \mathscr{W}_{\rm{IV}}^{\operatorname{sing}}$ and consider the homomorphism
\begin{equation} \label{eq:monodromyhomPIV}
    \pi_1(\mathscr{W}_{\rm{IV}}\setminus \mathscr{W}_{\rm{IV}}^{\operatorname{sing}} ; w_*) \to \mathfrak{S}_{16},
\end{equation}
which sends a loop to the permutation of lines on $\overline{\mathcal{M}}_{w_*}$ induced by deformation over it.

\begin{definition}
    The \emph{analytic monodromy} of the family $\mathcal{M}\to\mathscr{W}_{\rm{IV}}$ is the subgroup of $\mathfrak{S}_{16}$ given by the image of the homomorphism \eqref{eq:monodromyhomPIV}, defined up to overall conjugation, within $\mathfrak{S}_{16}$, induced by changing enumeration of lines and choice of basepoint.
\end{definition}

Similarly to in the proof of \Cref{prop:analyticmonodromyPVI}, we will show that the analytic monodromy exhausts the combinatorial monodromy group and therefore the two must coincide.

\begin{proposition} \label{prop:analyticmonodromyPIV}
    The analytic monodromy of the family $\mathcal{M}\to\mathscr{W}_{\rm{IV}}$ is $W(A_2)$.
\end{proposition}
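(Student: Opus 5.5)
We follow the strategy of the proof of \Cref{prop:analyticmonodromyPVI}. The idea is to show two inclusions: the analytic monodromy is contained in the combinatorial monodromy, and conversely each generator of the combinatorial monodromy is realised by an explicit loop in $\mathscr{W}_{\rm{IV}}\setminus\mathscr{W}_{\rm{IV}}^{\operatorname{sing}}$. Together with \Cref{prop:combinatorialmonodromyPIV} this identifies the analytic monodromy with $W(A_2)$.

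\textbf{The easy inclusion.} Deformation along a path avoiding $\mathscr{W}_{\rm{IV}}^{\operatorname{sing}}$ is a continuous family of smooth Segre surfaces. Along such a family:
\begin{itemize}
\item the four lines at infinity, given by $X_0=0$ and coordinate equations independent of $w$, are fixed;
\item intersection numbers of lines are locally constant.
\end{itemize}
Hence the image of \eqref{eq:monodromyhomPIV} lies in $\operatorname{Fix}_{\operatorname{Aut}(\mathcal{G}_{\rm{IV}})}(\mathcal{G}_{\rm{IV}}^\infty)\cong W(A_2)$.

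\textbf{Setting up base points.} Choose $w_*\in\mathscr{W}_{\rm{IV}}\setminus\mathscr{W}_{\rm{IV}}^{\operatorname{sing}}$ and a lift $u_*\in\mathscr{U}_{\rm{IV}}\setminus\mathscr{U}_{\rm{IV}}^{\operatorname{sing}}$ under the map of \Cref{lem:morphismUtoW}. This lift fixes the labelling of the lines via \eqref{eq:linesPIV}. Then choose $\theta_*\in\Theta_{\rm{IV}}$ mapping to $u_*$ via \eqref{eq:thetatouPIV}. The preimage of $\mathscr{U}^{\operatorname{sing}}_{\rm{IV}}$ in $\Theta_{\rm{IV}}$ is the countable union of complex lines
$$2\theta_0\in\Z,\qquad \theta_0\pm\theta_\infty\in\Z,$$
which matches $\Theta_{\rm{IV}}\setminus\Theta_{\rm{IV}}^{\operatorname{sing}}$ in \Cref{rem:mysterysymmetryPIV}. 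Its complement in $\C^2$ is path-connected, since a countable union of real-codimension-two subsets does not disconnect $\C^2$.

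\textbf{Realising the generators.} For each generator $r_i$, $i=1,2$, we need an element $g_i\in W(A_2^{(1)})$ with two properties:
\begin{itemize}
\item its action on $\Theta_{\rm{IV}}$ from \Cref{tab:PIV:symmetry:varsandparams} induces on $\mathscr{U}_{\rm{IV}}$ the Galois action of $r_i$ given in \Cref{prop:algebraicmonodromyPIV};
\item it preserves $w$, which holds for all of $W(A_2^{(1)})$ by \Cref{lem:weylactionwPIV}.
\end{itemize}
A path in $\Theta_{\rm{IV}}\setminus\Theta_{\rm{IV}}^{\operatorname{sing}}$ from $\theta_*$ to $g_i(\theta_*)$ then projects to a path in $\mathscr{U}_{\rm{IV}}\setminus\mathscr{U}_{\rm{IV}}^{\operatorname{sing}}$ from $u_*$ to $r_i(u_*)$. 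This in turn projects to a loop at $w_*$ whose induced permutation of lines is $\tilde r_i$. The permutation can be read off from the formulas \eqref{eq:linesPIV}, since these vary continuously in $u$.

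Concretely, we take $g_1=r_1$, which sends $\theta_0\mapsto-\theta_0$ and fixes $\theta_\infty$. This swaps $u_1$ and $u_2$, giving exactly the Galois element $r_1$.

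\textbf{Main obstacle.} The step needing care is finding $g_2$. The natural candidate is $r_2$, with $\tilde\theta_0=\tfrac12+\tfrac{\theta_0}{2}-\tfrac{\theta_\infty}{2}$ and $\tilde\theta_\infty=\tfrac32-\tfrac{3\theta_0}{2}-\tfrac{\theta_\infty}{2}$. Substituting into \eqref{eq:thetatouPIV}, we expect $u_1\mapsto (u_1u_2)^{-1}$ and $u_2\mapsto u_2$, possibly up to a cube root of unity. If a root of unity factor appears, we compose with a translation in the root lattice of $A_2$, i.e.\ with $T_1^2T_2^{-1}$ or $T_1^{-1}T_2^2$, adjusting $\theta$ by integers or half-integers to remove it; if instead the computation gives $u_2\mapsto(u_1u_2)^{-1}$, we use $r_1r_2r_1$ in place of $r_2$. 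Since \Cref{lem:weylactionwPIV} guarantees that every element of $W(A_2^{(1)})$ fixes $w$, the induced map on $\mathscr{U}_{\rm{IV}}$ is a deck transformation over $\mathscr{W}_{\rm{IV}}$. Its image in $\operatorname{Gal}(\C(\mathscr{U}_{\rm{IV}})/\C(\mathscr{W}_{\rm{IV}}))\cong\mathfrak{S}_3$ must therefore be a permutation of the roots $u_1,u_2,(u_1u_2)^{-1}$ of $b^3-w_1b^2+w_2b-1$, so it is one of the transpositions and the search is finite.

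Given both generators, the analytic monodromy contains $W(A_2)$. Combined with the easy inclusion, the two coincide.
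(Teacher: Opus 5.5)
Your proposal is correct and follows the paper's proof essentially verbatim. Both arguments use the inclusion into the combinatorial monodromy as in the $\pain{VI}$ case, path-connectedness of $\Theta_{\rm{IV}}\setminus\Theta_{\rm{IV}}^{\operatorname{sing}}$, and paths from $\theta_*$ to $r_i(\theta_*)$ that descend to loops realising $\tilde r_1,\tilde r_2$. The step you flag as the main obstacle is a direct substitution: $r_2$ gives exactly $\tilde u_1=e^{-\frac{4\pi i}{3}\theta_\infty}=(u_1u_2)^{-1}$ and $\tilde u_2=u_2$, with no root of unity, so no correction is needed. Your proposed fallback via $T_1^2T_2^{-1}$ or $T_1^{-1}T_2^2$ would not have worked anyway, since these root-lattice translations act trivially on $\mathscr{U}_{\rm{IV}}$.
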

\begin{proof}
    Choose $u_*\in\mathscr{U}_{\rm{IV}}\setminus \mathscr{U}_{\rm{IV}}^{\operatorname{sing}}$ lying above $w_*$, which determines an enumeration of the lines on $\overline{\mathcal{M}}_w$. Choose a $\theta_*\in\Theta_{\rm{IV}}$ lying above $u_*$ under the map $\theta\mapsto u$ defined by \cref{eq:thetatouPIV}. 
    Then $\mathscr{U}_{\rm{IV}}\setminus\mathscr{U}_{\rm{IV}}^{\operatorname{sing}}$ lifts under this same map to
    $$\Theta_{\rm{IV}}\setminus \Theta_{\rm{IV}}^{\operatorname{sing}} = \left\{ \theta = (\theta_0,\theta_{\infty})\in \C^2 : 2\theta_0\not\in  \Z, \,\,\theta_0+\theta_{\infty}\not\in \Z, \,\,\theta_0-\theta_{\infty}\not\in \Z \right\},$$
    which is path-connected.
    Then for each of the generators $r_i$, $i=1,2$ of $W(A_2)$, one can choose a path in $\Theta_{\rm{IV}}$ from $\theta_*$ to $r_i(\theta_*)$ that avoids $\Theta_{\rm{IV}}^{\operatorname{sing}}$ and descends to a loop in $\mathscr{W}_{\rm{IV}}\setminus \mathscr{W}_{\rm{IV}}^{\operatorname{sing}}$, inducing the permutation of lines corresponding to $r_i$ in \Cref{prop:combinatorialmonodromyPIV}.
\end{proof}

\Cref{thm:PIVsymmetriesunderRH} establishes \Cref{mainthm:conjugatedsymmetries},   \Cref{prop:oblomkovstylenormalformPIV} and \Cref{cor:modulispacePIV} establish \Cref{mainthm:categoryandmodulispace},
and \Cref{prop:combinatorialmonodromyPIV,prop:algebraicmonodromyPIV,prop:analyticmonodromyPIV} establish \Cref{mainthm:mon} in the case of Painlev\'e-IV.

\section{The second Painlev\'e equation}\label{sec:PII}

We consider the second Painlev\'e equation $\pain{II}$ in the form 
\begin{equation*}
\pain{II}:\quad y_{tt}=2\,y^3+t\,y+\theta+\tfrac{1}{2},
\end{equation*}
and denote its parameter space by
\begin{equation*}
    \Theta_{\rm{II}} = \left\{ \theta \in \C \right\}.
\end{equation*}
The root variables in this case form the space
\begin{equation*}
    \mathscr{A}_{\rm{II}} = \left\{ (a_0, a_1) \in \C^2 ~|~ a_0 + a_1 = 1 \right\}.
\end{equation*}
which is related to $\Theta_{\rm{II}}$ according to 
\begin{equation*}
    a_0=-\theta, \quad a_1 = 1 + \theta,
\end{equation*}
We use the Hamiltonian form of $\pain{II}$ given by
\begin{equation} \label{eq:hamPII}
    \left\{
    \begin{aligned}
        f_t &= f^2 + g + \frac{t}{2}  = \frac{\partial H}{\partial g}, \\
        g_t &= -2 f g +a_1 - 1 = -\frac{\partial H}{\partial f},\\
    \end{aligned}
    \right.
    \qquad 
    H = \frac{1}{2}g^2 + f^2 g + f + \frac{t}{2}g - a_1 f.
\end{equation} 
Eliminating $g(t)$ from \eqref{eq:hamPII} leads to $\pain{II}$ for $f(t)$.

\subsection{B\"acklund transformations}

The B\"acklund transformations for $\pain{II}$ form the extended affine Weyl group of type $A_1^{(1)}$, which we write as
\begin{equation*}
    \widetilde{W}(A_1^{(1)}) \defeq W(A_1^{(1)}) \rtimes \Aut(A_1^{(1)}) \cong \left< r_0, r_1\right>\rtimes\left< \sigma \right>,\quad
       \raisebox{-15pt}{
\begin{tikzpicture}[scale=.4,elt/.style={circle,draw=black!100,thick, inner sep=0pt,minimum size=1.5mm}]
		\path 	(-1,-1) 	node 	(a0) [elt ] {}
		        ( 1,-1) 	node  	(a1) [elt  ] {};
		\node at ($(a0.west) + (-.3,+0.0)$) 	{ \tiny ${0}$};
		\node at ($(a1.east) + (+.3,+0.0)$) 	{ \tiny ${1}$};

		\draw [black,line width=1pt ] (a0) -- (a1) node[midway,above ]{$\infty$};
		\node at ($(d2.east) + (+0,-2)$) 	{ \small ${A_1^{(1)}}$};

	\end{tikzpicture} }
\end{equation*}
where the generators $r_0$, $r_1$ are free, and  $\sigma\in\Aut(A_1^{(1)}) \cong\mathfrak{S}_2$ corresponds to the permutation of the two nodes of the $A_1^{(1)}$ Dynkin diagram, acting by conjugation of the reflections as $\sigma r_0=r_1 \sigma$.
The B\"acklund transformations of system \eqref{eq:hamPII} corresponding to these generators are  given in \Cref{tab:PII:symmetry:varsandparams}. 
There is also a cyclic subgroup $\langle \vartheta\rangle\cong \Z/3\Z$ of symmetries which leave the equation invariant without changing parameters, which we also give in \Cref{tab:PII:symmetry:varsandparams}. 

\begingroup

\setlength{\tabcolsep}{15pt} 
\renewcommand{\arraystretch}{1.75} 

\begin{table}[h]
    \begin{equation*}
    \begin{array}{c||c|c||c|c||c|c|}
               & \tilde{f}     & \tilde{g}     & \tilde{a}_0   & \tilde{a}_1  & \tilde{\theta} & \tilde{t}\\
        \hline\hline 
        r_0    & f + \frac{a_0}{g}    & g  & -a_0   & 2a_0+a_1   & -\theta &  t  \\
        \hline
        r_1    & f + \frac{a_1}{g+2f^2+t}    & g+2 f^2 - 2 \big(f + \frac{a_1}{g+2f^2+t}\big)^2  & a_0+2a_1   & -a_1   & -2 -\theta   & t
        \\
        \hline \hline
        \sigma   & - f    & - g - 2 f^2 - t & a_1   & a_0   & -1-\theta    & t \\
        \hline\hline       
        \vartheta   & \zeta_3^{-1} f    & \zeta_3 g & a_0   & a_1   & \theta    &  \zeta_3 t\\
        \hline
        \vartheta^{-1}   & \zeta_3 f   & \zeta_3^{-1} g  & a_0   & a_1   & \theta    & \zeta_3^{-1} t \\
        \hline
    \end{array}
    \end{equation*}
    \caption{Symmetries of $\pain{II}$ on variables $(f,g)$, parameter $\theta$, root variables $a$ and independent variable $t$.}
    \label{tab:PII:symmetry:varsandparams}
\end{table}
\endgroup

\subsection{Initial value space}

For $\pain{II}$ we have a family of Sakai surfaces
\begin{equation*}
    \overline{\mathcal{X}} \to \mathscr{T}_{\rm{II}}\times \mathscr{A}_{\rm{II}},
\end{equation*}
where $\mathscr{T}_{\rm{II}}= \C$ is the independent variable space of the system \eqref{eq:hamPII}.
The fibre $\overline{\mathcal{X}}_{t,a}$ is a Sakai surface of type $E_7^{(1)}$, with irreducible components of its effective anticanonical divisor $D_{t,a}$ intersecting according to the $E_7^{(1)}$ Dynkin diagram.
Denote the family obtained by removing the support of this divisor from each fibre of $\overline{\mathcal{X}}$ by $\mathscr{\mathcal{X}}\to \mathscr{T}_{\rm{II}}\times \mathscr{A}_{\rm{II}}$.

\begin{definition}[Initial value space for $\pain{II}$]
    The initial value space at $t\in \mathscr{T}_{\rm{II}}$ for $\pain{II}$ with parameters $a\in\mathscr{A}_{\rm{II}}$ is the fibre $\mathcal{X}_{t,a}$ of the family $\mathcal{X}_a\to\mathscr{T}_{\rm{II}}$.
\end{definition}

For each $w \in \widetilde{W}(A_2^{(1)})$, the corresponding B\"acklund transformation gives an automorphism of the family $\mathcal{X}$ and we have an automorphism
$$w : \mathscr{T}_{\rm{II}}\times \mathscr{A}_{\rm{II}} \to \mathscr{T}_{\rm{II}}\times \mathscr{A}_{\rm{II}}, \qquad (t,a)\mapsto (\tilde{t},\tilde{a}),$$
and corresponding isomorphisms 
$$w : \mathcal{X}_{t,a} \to \mathcal{X}_{\tilde{t},\tilde{a}}.$$

\subsection{Associated linear problems}

The Painlev\'e II equation governs isomonodromic deformations within two different classes of rank two systems of linear ODEs on $\p^1$. 
The first class concerns rank two systems with a single irregular singularity of Poincar\'e rank 3, considered by Jimbo and Miwa \cite{jimbomiwaII1981}.
The second class concerns rank two systems with an irregular singularity of the same type in addition to a single regular singularity, with an overall symmetry constraint, considered by Flaschka and Newell \cite{flaschkanewell}. 
The two linear problems are related through an integral transform \cite{joshikitaevtreharneP1P2}, which causes a change in singularity structure.
These lead to different models of the monodromy surface of $\pain{II}$ as affine cubic surfaces \cites{putsaito,chekhovdecorated}, which are isomorphic \cite[Prop. 5.9]{JMR}. 
We will show that the two linear problems lead to the same family of affine del Pezzo surfaces.

\subsubsection{The Jimbo-Miwa linear problem}
In \cite{jimbomiwaII1981}, Jimbo and Miwa derived a Lax pair for the Painlev\'e II equation, given by
\begin{subequations} \label{eq:JMlaxpair}
    \begin{align}
    Y_z&=A^{\text{JM}}Y, & A^{\text{JM}}&=z^2\sigma_3+zA_1^{\text{JM}}+A_0^{\text{JM}} \label{eq:JMlaxpair1}\\
    Y_t&=B^{\text{JM}}Y, & B^{\text{JM}}&=\tfrac{1}{2}z\sigma_3+ \tfrac{1}{2}A_1^{\text{JM}}, \label{eq:JMlaxpair2}
\end{align}
\end{subequations}
in which \eqref{eq:JMlaxpair2} defines an isomonodromic deformation of $\eqref{eq:JMlaxpair1}$ with respect to $t$.
The matrix $A^{\text{JM}}$ is characterised analytically by the conditions that
\begin{equation*}
    \operatorname{Tr}A_0^{\text{JM}}= \operatorname{Tr}A_1^{\text{JM}}=0,\quad |A^{\text{JM}}|=- \left( z^4+tz^2+2\theta z\right)+\mathcal{O}(1)\quad (z\rightarrow \infty),
\end{equation*}
for fixed $\theta$.
Parametrising the matrices by $f,g,k$ according to 
\begin{equation*}
A_1^{\text{JM}}=\begin{bmatrix}
    0 & k\\
    -2 g/k & 0\\
\end{bmatrix},\qquad
A_0^{\text{JM}}=\begin{bmatrix}
    g+\frac{1}{2}t &-f\,k\\
    -2(f g - \theta)/k & -g-\frac{1}{2}t
\end{bmatrix},
\end{equation*}
the compatibility of gives the system \eqref{eq:hamPII} for $f,g$, as well as
\begin{equation*}
    \frac{k_t}{k}=-f.
\end{equation*}

\subsubsection{The Flaschka-Newell linear problem}
In \cite{flaschkanewell}, Flaschka and Newell derived the Lax pair
\begin{subequations} \label{eq:FNlaxpair}
    \begin{align}
    Y_z&=A^{\text{FN}}Y, & A^{\text{FN}}&=4z^2\sigma_3+zA_1^{\text{FN}}+A_0^{\text{FN}}+z^{-1}A_{-1}^{\text{FN}} \label{eq:FNlaxA}\\
    Y_t&=B^{\text{FN}}Y, & B^{\text{FN}}&=-z\,\sigma_3-\tfrac{1}{4}A_1^{\text{FN}},
\end{align}
\end{subequations}
where $A^{\operatorname{FN}}$ is characterised by
\begin{equation*}
    \operatorname{Tr}A^{\text{FN}}=0,\quad \operatorname{Spec}(A_{-1}^{\operatorname{FN}}) = \{\theta+\tfrac{1}{2}, -\theta-\tfrac{1}{2}\},\quad |A^{\text{FN}}|=-16\,z^4+8\,t\,z^2+\mathcal{O}(1)\quad (z\rightarrow \infty)
\end{equation*}
and 
\begin{equation}\label{eq:Asymmetry}
    A^{\text{FN}}(-z)=\sigma_1 A^{\text{FN}}(z)\sigma_1.
\end{equation}
Parametrising the matrices by $y,r$ according to
\begin{equation*}
A_1^{\text{FN}}=-4\,y\, \sigma_1,\quad
A_0^{\text{FN}}=\begin{bmatrix}
    -t-2y^2 & 2r\\
    -2 r & t+2y^2
\end{bmatrix},\quad A_{-1}^{\text{FN}}=-(\theta+\tfrac{1}{2})\sigma_1,
\end{equation*}
leads to the compatibility conditions
\begin{equation*}
    y_t=r,\qquad r_t=t\,y+2y^3+\tfrac{1}{2}+\theta,
\end{equation*}
which is equivalent to $\pain{II}$ for $y$.

\subsection{Derivation of monodromy surface} \label{subsec:spaceofmonodromydataPII}

We now derive the monodromy surface for $\pain{II}$ as an embedded affine del Pezzo surface of degree six with a triangle of conics at infinity.
We will do this here using the Jimbo-Miwa linear problem \eqref{eq:JMlaxpair}, and provide the derivation of the same monodromy surface from the Flaschka-Newell linear problem in \Cref{app:derivationofmonodromysurfaceFN}.

Begin with the unique formal solution of the linear system \eqref{eq:JMlaxpair1} of the form
\begin{equation}\label{eq:formalsolPII}
    Y_{\text{form}}(z)=P(z)e^{(\frac{1}{3}z^3+\frac{1}{2}tz)\sigma_3}z^{\theta \sigma_3},
\end{equation}
where $P(z)$ is a power series around $z=\infty$,
\begin{equation}\label{eq:matrixPasymptoticPII}
    P(z)=I+\sum_{n=1}^\infty z^{-n}U_n.
\end{equation}
Define Stokes sectors by
\begin{equation*}
    \Sigma_k=\left\{\left|\arg z- \frac{(k-1)\pi}{3}\right|<\frac{\pi}{6}\right\}\subseteq \widetilde{\mathbb{C}^*}, \qquad (k \in \Z)
\end{equation*}
within which $e^{\frac{1}{3}z^3+\frac{1}{2}tz}$ is alternately exponentially small or large as $z\to \infty$, corresponding to even and odd $k$ respectively.

For any $k\in\mathbb{Z}$, there exists a unique solution $Y_k$ of the linear problem that satisfies
\begin{equation*}
    Y_k(z)\sim Y_\text{form}(z)\qquad (z\in \Sigma_k\cup \Sigma_{k+1}, z\rightarrow \infty),
\end{equation*}
and we have the Stokes phenomenon
\begin{equation*}
    Y_{k+1}(z)=Y_k(z)S_k,
\end{equation*}
where
\begin{equation*}
    S_k=\begin{bmatrix}
        1 & 0\\
        s_k & 1
    \end{bmatrix}\,\,\, \text{if $k$ even},\qquad
    S_k=\begin{bmatrix}
        1 & s_k\\
        0 & 1
    \end{bmatrix}\,\,\, \text{if $k$ odd}.
\end{equation*}
The $Y_k(z)$ are single-valued analytic matrix functions on $\mathbb{C}$, thus we have the cyclic property
\begin{equation*}
    Y_{k+6}(z)=Y_k(z)e^{2\pi i \theta \sigma_3}.
\end{equation*}
and correspondingly
\begin{align}
    &S_{k+6}=e^{-2\pi i \theta \sigma_3}S_k e^{2\pi i \theta \sigma_3},\label{eq:stokesPIIeqscycle}\\ 
    &S_k\cdot S_{k+1}\cdot\ldots\cdot S_{k+5}=e^{2\pi i \theta\sigma_3},\label{eq:stokesPIIeqs}
\end{align}
for $k\in\mathbb{Z}$. By the cyclic relation \eqref{eq:stokesPIIeqscycle}, equation \eqref{eq:stokesPIIeqs} is equivalent to the same set of four conditions on the Stokes multipliers for any choice of $k\in\mathbb{Z}$, which is the vanishing of the following four polynomials, in which 
$u=e^{\pi i \theta}$,
\begin{equation} \label{eq:idealpiv}
    \begin{aligned}
        i_1 &= 1-u^{-2}+ s_2 s_3+s_2 s_5+s_4 s_5+s_2 s_3 s_4 s_5 ,\\
        i_2 &= s_1+s_3+s_5 + s_1 s_2 s_3 +s_1 s_2 s_5 +s_1  s_4 s_5 +s_3 s_4 s_5   +s_1 s_2 s_3 s_4 s_5 ,\\
        i_3 &= s_2+s_4+s_6+ s_2 s_3 s_4 +s_2 s_3 s_6  +s_2 s_5 s_6 +s_4 s_5 s_6  +s_2 s_3 s_4 s_5 s_6, \\
        i_4 &= 1-u^{+2} +s_1 s_2+s_3 s_4 +s_1 s_4+s_1 s_6+s_3 s_6 +s_5 s_6\\
        & \;\;\,\,+s_1 s_2 s_3 s_4+s_1 s_2 s_3 s_6+s_1 s_2 s_5 s_6+s_1 s_4 s_5 s_6+s_3 s_4 s_5 s_6+s_1 s_2 s_3 s_4 s_5 s_6.
    \end{aligned}
\end{equation}
 We interpret the set of monodromy data for the linear problem \eqref{eq:JMlaxpair1} with $\theta$ fixed as the affine variety 
\begin{equation}\label{eq:monodromyspaceMPII} 
    M_u = \operatorname{Spec} \C [s_1,\dots,s_6]/I_u,
\end{equation}
where $I_u = (i_1,i_2,i_3,i_4)$ the ideal generated by the four polynomials in \eqref{eq:idealpiv}.

We note that the generators $i_1,i_2,i_3,i_4$ are algebraically dependent, satisfying the relation
\begin{equation*}
    i_2 i_3 = i_1 i_4 + u^{+2} i_1 + u^{-2} i_4,
\end{equation*}
and that the dimension of $M_u$ is three.

Rescaling of the auxiliary variable $k$ additional to $f,g$ in the parametrisation of $A^{\operatorname{JM}}$,
\begin{equation*}
    k\mapsto \widetilde{k}=c\, k, \qquad c\in \C^*,
\end{equation*}
leads to the following scaling of the coefficient matrix, fundamental solutions and Stokes matrices respectively,
\begin{align*}
    &A^{\operatorname{JM}}\mapsto \widetilde{A}^{\operatorname{JM}}=c^{\tfrac{1}{2}\sigma_3}Ac^{-\tfrac{1}{2}\sigma_3},\\
    &Y_{k}\mapsto \widetilde{Y}_k=c^{\tfrac{1}{2}\sigma_3}Y_kc^{-\tfrac{1}{2}\sigma_3},\\
    &S_k\mapsto \widetilde{S}_k=c^{\tfrac{1}{2}\sigma_3}S_k c^{-\tfrac{1}{2}\sigma_3}.
\end{align*}
This induces the $\C^*$-action on $M_u$ given by 
\begin{equation*}
    s_{2n}\mapsto \widetilde{s}_{2n}=c^{-1}\, s_{2n},\quad s_{2n-1}\mapsto \widetilde{s}_{2n-1}=c^{+1} s_{2n-1},\qquad (1\leq n\leq 3,\,c\in\mathbb{C}^*).
\end{equation*}
To take the GIT quotient by this $\C^*$-action, we note that the ring of invariants in $R=\C[s_1,\dots,s_6]/I_u$ under the $\C^*$-action is generated by 
\begin{equation} \label{eq:ytosPII}
    \begin{aligned}
    y_1 &= s_1 s_2, &\quad &y_2 = s_1 s_4, &\quad &y_3 = s_1 s_6, \\
    y_4 &= s_2 s_3, &\quad &y_5 = s_3 s_4, &\quad &y_6 = s_3 s_6, \\
    y_7 &= s_2 s_5, &\quad &y_8 = s_4 s_5, &\quad &y_9 = s_5 s_6,
    \end{aligned}
\end{equation}
among which there are the following three linear relations in $R$,
\begin{equation*}
    y_1+1=u^{+2}( y_8+1),\quad y_5+1=u^{-2} (y_3+ u^{4}) , \quad y_9+1=u^{+2}(y_4+1).
\end{equation*}
Introducing 
\begin{equation}\label{eq:xtoyPII}
    \begin{aligned}
    x_1&=u^{+1} \left(y_8+1\right), & \quad 
    x_2&=u^{-1} \left(y_9+1\right), &\quad 
    x_3&=u^{-1}\left(y_5+1\right),
     \\
    x_4&=1-u^{-2} y_6,&\quad 
    x_5&= 1- u^{-2}y_2,&\quad 
    x_6&=1-u^{+2} y_7, 
    \end{aligned}
\end{equation}
the ring of invariants $R^{\C^*}$ is realised as 
\begin{equation}\label{eq:ringofinvariants}
    R^{\C^*} \simeq \C[x_1,\dots,x_6]/K_w,
\end{equation}
where $K_w=(k_0,k_1,k_2,k_3)$ is generated by
\begin{equation*}
    \begin{aligned}
        k_0 &= x_1 x_2 x_3 - x_1 -x_2 -x_3  + w, \\
        k_1 &= x_4 - x_2 x_3, \\
        k_2 &= x_5 - x_1 x_3, \\
        k_3 &= x_6 - x_1 x_2, 
    \end{aligned}
\end{equation*}
in which
\begin{equation*}
    w=u + u^{-1}=2\cos(\pi \theta) ,\qquad u=e^{\pi i \theta}.
\end{equation*}
We then arrive at the following description of the monodromy surface.

\begin{definition} \label{def:monodromysurfacePII}
    The monodromy surface of $\pain{II}$ is the embedded affine variety
    \begin{equation*}
    \begin{aligned}
        \mathcal{M}_w &= \Spec \C[x_1,x_2,x_3,x_4,x_5,x_6]/K_{w}, \\  
        K_w &= (x_1 x_2 x_3 - x_1 -x_2 -x_3  + w, \,\,
        x_4 - x_2 x_3,\,\,
        x_5 - x_1 x_3, \,\,
        x_6 - x_1 x_2),
    \end{aligned}
    \end{equation*}
    where $w\in \mathscr{W}_{\rm{II}}:= \left\{ w\in \C\right\}$.
    This gives the family
    \begin{equation*}
        \mathcal{M}\to\mathscr{W}_{\rm{II}},
    \end{equation*}
    with fibre over any $w\in\mathscr{W}_{\rm{II}}$ being $\mathcal{M}_{w}$.
\end{definition}

Under the embedding 
\begin{equation} \label{eq:embeddingPII}
    \begin{aligned}
        \mathbb{A}^6 &\rightarrow \p^6,\\
        (x_1,x_2,x_3,x_4,x_5,x_6) &\mapsto [1:x_1:x_2:x_3:x_4:x_5:x_6],
    \end{aligned}
\end{equation}
the projective completion of $\mathcal{M}_w$ can be described as follows.
\begin{lemma} \label{lem:projectivecompletionPII}
The projective completion of $\mathcal{M}_w$ under  \eqref{eq:embeddingPII} is given by
\begin{equation*}
    \begin{gathered}
    \overline{\mathcal{M}}_{w} = \operatorname{Proj} \C[X_0,X_1,\dots,X_6]/\overline{K}_w,      \qquad \operatorname{deg}(X_i)=1, \qquad \overline{K}_w= \left( \bar{k}_1,\dots,\bar{k}_9\right),
    \end{gathered}
\end{equation*} 
where
\begin{equation*}
            \begin{aligned}
            \bar{k}_1 &= X_0 X_4- X_2 X_3,\\
            \bar{k}_2 &= X_0 X_5- X_1 X_3,\\
            \bar{k}_3 &= X_0 X_6- X_1 X_2,\\
            \bar{k}_4 &= w X_0^2-X_0 X_1 -X_0 X_2 - X_0 X_3 +X_1 X_4,\\
            \bar{k}_5 &= w X_0^2-X_0 X_1 -X_0 X_2 - X_0 X_3 +X_2 X_5,\\
            \bar{k}_6 &= w X_0^2-X_0 X_1 -X_0 X_2 - X_0 X_3 +X_3 X_6,\\
            \bar{k}_7 &= w X_0 X_1-X_1^2-X_0 X_5-X_0 X_6+X_5 X_6, \\
            \bar{k}_8 &= w X_0 X_2-X_2^2-X_0 X_4-X_0 X_6+X_4 X_6, \\
            \bar{k}_9 &= w X_0 X_3-X_3^2-X_0 X_4-X_0 X_5+X_4 X_5.
        \end{aligned}
\end{equation*}
The hyperplane section at infinity is the union of the three conics 
  \begin{equation} \label{eq:conicsatinfinityPII}
        \begin{aligned}
            C^{\infty}_1 &: \quad X_0 = X_1=X_2=X_6 = 0 ,\quad X_4 X_5 = X_3^2, \\
            C^{\infty}_2 &: \quad X_0 = X_1=X_3=X_5 = 0 ,\quad X_4 X_6 = X_2^2, \\
            C^{\infty}_3 &: \quad X_0 = X_2=X_3=X_4 = 0 ,\quad X_5 X_6 = X_1^2. 
        \end{aligned}
        \end{equation}
\end{lemma}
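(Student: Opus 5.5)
The plan is to compute the homogeneous ideal of the closure directly. Set $X_0=1$, $X_i=x_i$. First I will check that each $\bar k_j$ dehomogenises into $K_w$. For $\bar k_1,\bar k_2,\bar k_3$ this is immediate, since they are the homogenisations of $k_1,k_2,k_3$. For $\bar k_4$, note that $x_1x_4-x_1-x_2-x_3+w=k_0-x_1k_1$, and $\bar k_5,\bar k_6$ are handled the same way. For $\bar k_7$, substitute $x_5x_6=x_1^2x_2x_3$ and use $x_2x_3=x_1+x_2+x_3-w$ modulo $k_0$. This turns $x_5x_6-x_1^2-x_5-x_6+wx_1$ into $x_1(x_1+x_2+x_3-w)-x_1^2-x_1x_3-x_1x_2+wx_1=0$, and $\bar k_8,\bar k_9$ follow by symmetry. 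Hence $\overline{K}_w\subseteq K_w^{h}$, the homogenisation ideal.

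Next I will show equality, i.e.\ that $V(\overline{K}_w)$ is the projective closure and that $\overline{K}_w$ is saturated with respect to $X_0$. There are two ways to do this. The first is a Gröbner-basis computation: in a degree-compatible order with $X_0$ smallest, check that the $\bar k_j$ form a Gröbner basis of $K_w^h$, equivalently that $\overline{K}_w : X_0^\infty=\overline{K}_w$. The second is a structural route. Show that $V(\overline{K}_w)$ is a surface of degree $6$ (by Hilbert polynomial $3n^2+3n+1$) and that it contains no component inside $\{X_0=0\}$. Because $\mathcal{M}_w$ is already defined by $K_w$ in $\mathbb{A}^6$, the closure has the same Hilbert polynomial, so the two coincide. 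The key input for the second route is the intersection with $X_0=0$. Setting $X_0=0$, the $\bar k_j$ become
$$X_2X_3=X_1X_3=X_1X_2=0,\quad X_1X_4=X_2X_5=X_3X_6=0,\quad X_5X_6=X_1^2,\quad X_4X_6=X_2^2,\quad X_4X_5=X_3^2 .$$

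The third step is to solve this system. The first three equations force at most one of $X_1,X_2,X_3$ to be nonzero. Consider first $X_1=X_2=0$, which gives $X_4X_6=0$ and $X_5X_6=0$ together with $X_3X_6=0$ and $X_4X_5=X_3^2$.
If $X_6\neq0$, then $X_3=X_4=X_5=0$, and we get the point $[0:\dots:0:1]$. That point is the common point of $C_2^\infty$ and $C_3^\infty$.
If $X_6=0$, we get the conic $C_1^\infty$: $X_4X_5=X_3^2$ in the plane $X_0=X_1=X_2=X_6=0$.
The remaining cases follow by the cyclic symmetry $(1,2,3)\leftrightarrow(6,5,4)$ and give $C_2^\infty$ and $C_3^\infty$. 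So the section at infinity is the union of the three conics in \eqref{eq:conicsatinfinityPII}. Each conic is a one-dimensional set, so there is no two-dimensional component at infinity. Each has degree $2$, for a total of $6=\deg$, consistent with a hyperplane section. Finally, the conics meet pairwise at the three coordinate points $[0:0:0:0:1:0:0]$, $[0:\dots:1:0]$ and $[0:\dots:0:1]$, forming a triangle.

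The main obstacle is the saturation/equality step, showing that the nine quadrics generate the full homogeneous ideal and not just an ideal with the right zero set. I would settle it by the Gröbner-basis check, which is finite and routine. As a cross-check, the anticanonical degree-six del Pezzo surface is cut out by $9$ quadrics in $\mathbb{P}^6$, which matches the number of generators here.
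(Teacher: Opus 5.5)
Your proposal is correct, but it handles the closure step by a different route from the paper; there are also two small slips and one invalid alternative.

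\textbf{Where you agree with the paper.} Your first step is the paper's first step: each $\bar k_j$ dehomogenises into $K_w$. Two slips there: $x_1x_4-x_1-x_2-x_3+w=k_0+x_1k_1$, not $k_0-x_1k_1$; and for $\bar k_7$ the relation you need is $x_1x_2x_3=x_1+x_2+x_3-w$, not $x_2x_3=\dots$ (the expression you then write down is correct). Your explicit solution of the system at $X_0=0$ is exactly the ``direct calculation'' the paper leaves implicit.

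\textbf{Where you differ.} The paper's argument is only set-theoretic. Its proof announces that generation of the homogeneous ideal must be shown, but it establishes only the containment $\overline{K}_w\subseteq K_w^h$. It then argues that $\mathcal{M}_w$ is irreducible (being isomorphic to the cubic $k_0=0$) and that the locus at infinity is one-dimensional, and concludes irreducibility. Strictly speaking, a one-dimensional locus at infinity could still contain a stray component. This can be repaired: $\varpi_1,\varpi_2$ preserve $\overline{K}_w$ and permute the three conics transitively, while the closure must meet $\{X_0=0\}$ in a curve, so it contains all three conics. Your saturation test $\overline{K}_w:X_0^\infty=\overline{K}_w$ gives more. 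It yields the ideal-theoretic equality that the $\operatorname{Proj}$ description literally asserts, and it excludes stray components automatically. The cost is a computer-algebra check.

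\textbf{Caveats.} First, your ``structural route'' is not valid as written. The Hilbert polynomial of the closure does not follow from $K_w$ defining $\mathcal{M}_w$. Likewise, the count $2+2+2=6$ does not show that each conic lies on the closure. Rely on the Gr\"obner route instead.

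Second, whether the $\bar k_j$ form a Gr\"obner basis depends on how $X_1,\dots,X_6$ are ordered. With degrevlex and $X_6>X_5>\dots>X_1>X_0$, the leading terms are $X_2X_3,X_1X_3,X_1X_2,X_1X_4,X_2X_5,X_3X_6,X_5X_6,X_4X_6,X_4X_5$. This is the Stanley--Reisner ideal of a hexagon, and coning it over $X_0$ gives Hilbert function $3n^2+3n+1$. None of these terms involves $X_0$ and all coefficients are $\pm1$. So a successful Buchberger check over $\mathbb{Q}[w]$ specialises to every $w$, including $w=\pm2$, and shows $X_0$ is a nonzerodivisor. By contrast, for $X_1>\dots>X_6>X_0$ the leading terms of $\bar k_7,\bar k_8,\bar k_9$ become $X_1^2,X_2^2,X_3^2$, and the nine quadrics are not a Gr\"obner basis.
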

\begin{proof} 
Let $\overline{K}$ denote the homogenisation of the ideal $K$. To prove the lemma, it needs to be shown that $\overline{K}$  is generated by $\overline{k}_1,\ldots,\overline{k}_9$ and that $\operatorname{Proj} \C[X_0,X_1,\dots,X_6]/\overline{K}_w$ is the closure of $\mathcal{M}_w$.  
Let $k_i$ denote $\overline{k}_i$ with $X_0=1$ and $X_j=x_j$ for $1\leq j\leq 6$, $1\leq i\leq 9$.
By definition $k_i\in K$ for $0\leq i\leq 3$. Furthermore,
\begin{equation*}
\begin{aligned}
    k_4&=k_0+x_1 k_1,\\
    k_5&=k_0+x_2 k_2,\\
    k_6&=k_0+x_3 k_3,\\
    k_7&= x_1 (k_0+x_2 k_2)+ (1-x_5) k_3+k_2,    \\
    k_8&= x_2 (k_0+x_3 k_3)+ (1-x_6) k_1+k_3,    \\
    k_9&= x_3 (k_0+x_1 k_1)+ (1-x_4) k_2+k_1.
\end{aligned}
\end{equation*}
hence $(\overline{k}_1,\ldots,\overline{k}_9)\subset \overline{K}_w$.
Restricting to the affine chart $X_0\neq0$, the equations for $\overline{\mathcal{M}}_w$ reduce to those of $\mathcal{M}_w$. Note furthermore, that $\mathcal{M}_w$ is irreducible, since, as an affine variety, it is isomorphic to the irreducible cubic surface $\operatorname{Spec}\C[x_1,x_2,x_3]/(x_1 x_2 x_3 - x_1 -x_2 -x_3  + w)\}$.

By direct calculation, the complement of $\mathcal{M}_w$ in $\overline{\mathcal{M}}_w$ is the union of the three curves \eqref{eq:conicsatinfinityPII}, and in particular is of dimension one. 
It follows that $\overline{\mathcal{M}}_w$ is irreducible and thus the projective closure of $\mathcal{M}_w$.
\end{proof}

\begin{remark} \label{rem:propertiesofmonodromysurfacePII}
We have the following properties of $\mathcal{M}_w$.
    \begin{itemize}
        \item The three conics at infinity \eqref{eq:conicsatinfinityPII}
        pairwise intersect forming a triangle, and consist only of smooth points of $\overline{\mathcal{M}}_w$.
        \item The singular locus $\mathscr{W}^{\operatorname{sing}}_{\rm{II}}\subset \mathscr{W}_{\rm{II}}$, where $\mathcal{M}_w$ is singular, is given by $$w^2=4.$$
    \end{itemize}
\end{remark}

The representation in \Cref{lem:projectivecompletionPII} of $\overline{\mathcal{M}}_w$ as an intersection of nine quadrics in $\p^6$ is related to the fact that it, like the projective completions of monodromy surfaces of $\pain{VI}$ and $\pain{IV}$ above, is a del Pezzo surface. 
This will be used below when we define the category of monodromy surfaces for $\pain{II}$ so we establish it in the following lemma.

\begin{lemma}\label{lem:delpezzodeg6PII}
    For $w \in \mathscr{W}_{\rm{II}}\setminus \mathscr{W}_{\rm{II}}^{\operatorname{sing}}$, the surface $\overline{\mathcal{M}}_w$ in \Cref{lem:projectivecompletionPII} is a smooth del Pezzo surface of degree 6. When $w \in  \mathscr{W}_{\rm{II}}^{\operatorname{sing}}$, it is a singular del Pezzo surface of degree 6 of type $(i')$ in the notation of \cite{dolgachevclassicalAG} containing three lines which meet at its single singularity of type $A_1$.
    
\end{lemma}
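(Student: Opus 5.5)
The plan is to exhibit $\overline{\mathcal{M}}_w$ as the image of $\p^2$ blown up at three non-collinear points under the anticanonical linear system, i.e.\ cubics through those points. For this I will write down an explicit birational parametrisation and check that it matches the nine quadrics of \Cref{lem:projectivecompletionPII}.

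The affine cubic $x_1x_2x_3 - x_1 - x_2 - x_3 + w=0$ is linear in $x_3$, giving $x_3 = (x_1+x_2-w)/(x_1x_2-1)$. Homogenising $x_1 = Z_1/Z_0$, $x_2 = Z_2/Z_0$ and clearing the denominator $Z_0(Z_1Z_2 - Z_0^2)$, the map
\[
[Z_0:Z_1:Z_2]\mapsto[X_0:\dots:X_6],\qquad X_0 = Z_0(Z_1Z_2-Z_0^2),\quad X_1 = Z_1(Z_1Z_2-Z_0^2),\quad X_2 = Z_2(Z_1Z_2-Z_0^2),
\]
\[
X_3 = Z_0^2(Z_1+Z_2-wZ_0),\quad X_4 = Z_0Z_2(Z_1+Z_2-wZ_0),\quad X_5 = Z_0Z_1(Z_1+Z_2-wZ_0),\quad X_6 = Z_1Z_2(Z_1Z_2-Z_0^2)/Z_0
\]
is not yet given by cubics, because $X_6$ has a denominator. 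I would therefore change model and use $\p^1\times\p^1$ with coordinates $(x_1,x_2)$, where the graph of $x_3$ has bidegree $(1,1)$ poles. The anticanonical class of $\p^1\times\p^1$ is $(2,2)$. The coordinates $1, x_1, x_2, x_1x_2$ and $x_3, x_1x_3, x_2x_3$, each multiplied by $x_1x_2-1$, are all of bidegree at most $(2,2)$, and they vanish at the base points where $x_1x_2=1$ and $x_1+x_2=w$. Generically there are two such points. Blowing them up gives a degree $8-2=6$ surface, with the seven sections forming exactly $h^0(-K)=7$ anticanonical sections.

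The key steps are then as follows. First, I identify the two base points $p_\pm$, given by $x_1x_2=1$, $x_1+x_2=w$, which are distinct precisely when $w^2\neq 4$. Second, I check that the seven listed functions are a basis of $H^0(-K)$ on $\operatorname{Bl}_{p_+,p_-}(\p^1\times\p^1)$. The anticanonical map of this degree $6$ del Pezzo surface is an embedding into $\p^6$, and its image satisfies $\bar k_1,\dots,\bar k_9$; the two agree because both are the closure of $\mathcal{M}_w$ by \Cref{lem:projectivecompletionPII}. The blown-up points must not lie on a common ruling, since $p_\pm$ have different coordinates when $w^2\neq4$, which ensures the surface is del Pezzo. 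Third, as a consistency check, the hexagon of $(-1)$-curves consists of the two exceptional curves together with the four rulings through $p_\pm$. The triangle of conics at infinity should be the images of the three boundary curves $x_1=\infty$, $x_2=\infty$, and the strict transform of $x_1x_2=1$.

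For $w^2=4$ the two base points collide into an infinitely near point on the curve $x_1x_2=1$, tangent to it. This produces a $(-2)$-curve, namely the first exceptional curve minus the second, giving a single $A_1$ singularity after the anticanonical morphism. The three lines through the singularity are the strict transforms of the two rulings through the point and the last exceptional curve, which matches type $(i')$ in Dolgachev's table. The main obstacle is this degenerate case: I must verify that the anticanonical model still embeds into $\p^6$ with the same nine quadrics, and identify the singularity concretely. I would do the latter by a direct Jacobian computation at $w=\pm2$, for instance at the point $x=(1,1,1)$ when $w=2$, to confirm it is an ordinary double point and to read off the three lines through it.
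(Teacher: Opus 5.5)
Your proposal is correct, and it takes a genuinely different route from the paper. The paper argues by explicit linear algebra. It writes down a $7\times 7$ projectivity, depending on $u$, carrying $\overline{\mathcal{M}}_w$ onto the standard anticanonical model of the degree six del Pezzo surface (the $2\times 2$ minors of a $3\times 3$ matrix); its determinant $(u^{-1}-u)^{11}$ vanishes exactly at $w=\pm 2$. For $w=\pm2$ it writes down a second projectivity onto the model built from cubics through three collinear points.

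You instead construct the anticanonical map itself from the projection to $(x_1,x_2)$. Let $F$ and $G$ be the bihomogenisations of $x_1x_2-1$ and $x_1+x_2-w$. Your seven sections are $F\cdot H^0(\mathcal{O}(1,1))$ together with $G$ times the $(1,1)$-forms vanishing at $(\infty,\infty)$. They are independent because $F,G$ are coprime and $F(\infty,\infty)\neq 0$, so they span $H^0(\mathcal{O}(2,2)\otimes I_Z)$ for $Z=V(F,G)$. Density together with Lemma~\ref{lem:projectivecompletionPII} then identifies the image with $\overline{\mathcal{M}}_w$.

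What your route buys is uniformity in $w$ and geometric transparency. At $w=\pm2$, $Z$ is a curvilinear length-two scheme at $(u,u)$ with tangent direction $(1,-1)$, which is not a ruling direction. So the iterated blow-up is a weak del Pezzo surface with the single $(-2)$-curve $e_1-e_2$. The same density argument shows that its anticanonical morphism, which contracts exactly that curve to an $A_1$ point (here $(u,u,u,1,1,1)$), has image $\overline{\mathcal{M}}_{\pm 2}$. Hence the step you flag as the main obstacle needs no separate check of the nine quadrics, and your Jacobian computation is only a sanity check.

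You also read off for free that $L_1,L_4$ are the exceptional curves, $L_2,L_3,L_5,L_6$ are the rulings through $p_\pm$, and $C^\infty_9,C^\infty_8,C^\infty_7$ are the images of $x_1=\infty$, $x_2=\infty$ and $x_1x_2=1$. The basis change $h=f_1+f_2-e_1$, $E_i'=f_i-e_1$ ($i=1,2$), $E_3'=e_2$ turns $e_1-e_2$ into $h-E_1'-E_2'-E_3'$. This recovers the three-collinear-points type the paper cites.

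The paper's route, in exchange, yields explicit linear isomorphisms with standard models. It reuses these later, in the normal-form proposition for $\mathfrak{C}_{\mathrm{II}}$ and to build the projection $\overline{\mathcal{M}}_w\to\mathbb{P}^2$. It also avoids appealing to the general theory of anticanonical models of weak del Pezzo surfaces.
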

\begin{proof}
    We will relate the representation of $\overline{\mathcal{M}}_w$ in \Cref{lem:projectivecompletionPII} to the standard model of a smooth del Pezzo surface of degree 6 under its anticanonical embedding in $\p^6$, see \cite[Th. 8.4.9]{dolgachevclassicalAG}.
    Recall that this is given by the vanishing locus of the nine $2\times2$ minors $m_1,\dots,m_9$ of the matrix
    $$     \begin{bmatrix}
        g & d & b\\
        c & g & a\\
        e & f & g
    \end{bmatrix}.$$ 
    Denoting this by 
    \begin{equation} \label{eq:standardmodeldepPezzoPII}
        \overline{\mathcal{S}}=\Proj \C[a,b,c,d,e,f,g] / (m_1,\dots,m_9)\subset \p^6,
    \end{equation}
    we have a projective equivalence between $\overline{\mathcal{S}}$ and $\overline{\mathcal{M}}_w$, given by
    \begin{equation}\label{eq:projectivityStoMPII}
    \begin{bmatrix}
    a\\
    b\\
    c\\
    d\\
    e\\
    f\\
    g
    \end{bmatrix}
    = \begin{bmatrix}
 u+2u^{-1} & -u^{-2}-1 & -2 & -2 & u & u^{-1} & u^{-1} \\
 2 u+u^{-1} & -2 & -2 & -u^2-1 & u & u & u^{-1} \\
 u^2+2 & -2 u & -u-u^{-1} & -2 u & 1 & u^2 & 1 \\
 2 u+u^{-1} & -u^2-1 & -2 & -2 & u^{-1} & u & u \\
 u+2u^{-1} & -2 & -2 & -u^{-2}-1 & u^{-1} & u^{-1} & u \\
 u^{-2}+2 & -2 u^{-1} & -u-u^{-1} & -2 u^{-1} & 1 & u^{-2} & 1 \\
 u^2+u^{-2}+1 & -u - u^{-1} & -u - u^{-1} & -u-u^{-1} & 1 & 1 & 1 \\
\end{bmatrix}
    \begin{bmatrix}
    X_0\\
    X_1\\
    X_2\\
    X_3\\
    X_4\\
    X_5\\
    X_6
    \end{bmatrix},
    \end{equation}
    where again $u=e^{i \pi \theta}$, so $w=u+u^{-1}$, which can be verified by direct calculation\footnote{We obtained this map by starting with a general projectivity and imposing that the lines and their points of intersection on $\mathcal{M}_w$ get mapped to those on $\mathcal{S}$ and a further $4$ points on $\mathcal{M}_w$ get mapped into $\mathcal{S}$.}. The determinant of the matrix on the right-hand side is $(u^{-1}-u)^{11}$ and thus nonzero for $w \in \mathscr{W}_{\rm{II}}\setminus \mathscr{W}_{\rm{II}}^{\operatorname{sing}}$, and the first part of the lemma follows.

    For the second part, we use the following standard form of a degree 6 singular del Pezzo surface with three lines intersecting at its single singularity of type $A_1$:
    \begin{equation} \label{eq:standardmodeldepPezzoPIIsing}
        \overline{\mathcal{S}}_{\operatorname{sing}} =\Proj \C[a,b,c,d,e,f,g] / (m_1,\dots,m_9)\subset \p^6,
    \end{equation}
    where 
    \begin{equation*}
        \begin{aligned}
        m_1 &= a g - b f, \\
        m_2 &= a b -e g, \\
        m_3 &= a c -b g , 
        \end{aligned}
        \qquad
        \begin{aligned}
            m_4 &= a^2- e f , \\
        m_5 &= b^2-c  e, \\
        m_6 &= g^2 - c f
        \end{aligned}
        \qquad 
        \begin{aligned}
        m_7 &= a d - c f +  f g  , \\
        m_8 &= b d - c g  +  c f, \\
        m_9 &= d e - b g  +  b f  .
        \end{aligned}
    \end{equation*}
This model is obtained by considering the vector space of cubic forms in $Z_0,Z_1,Z_2$ vanishing at the three collinear points $[0:0:1], [0:1:0],[0:1:1]$ in $\p^2$, and will be used below in  \Cref{prop:oblomkovstylenormalformPII}.

    We have a projective equivalence between $\overline{\mathcal{S}}_{\operatorname{sing}}$ and $\overline{\mathcal{M}}_{w}$, $w=\pm 2$, given by
    \begin{equation} \label{eq:projectivityStoMsingPII}
        \begin{bmatrix}
    a\\
    b\\
    c\\
    d\\
    e\\
    f\\
    g
    \end{bmatrix}
    = 
\begin{bmatrix}
 -1 & \pm 1 & 0 & \pm 1 & 0 & -1 & 0 \\
 1 & \mp 1 & \mp 1 & 0 & 0 & 0 & 1 \\
 -1 & 0 & 0 & 0 & 0 & 0 & 1 \\
 0 & \pm 1 & 0 & 0 & 0 & 0 & 0 \\
 3 & \mp 2 & \mp 2 & \mp 2 & 1 & 1 & 1 \\
 -1 & 0 & 0 & 0 & 0 & 1 & 0 \\
 -1 & \pm 1 & 0 & 0 & 0 & 0 & 0 \\
\end{bmatrix}
    \begin{bmatrix}
    X_0\\
    X_1\\
    X_2\\
    X_3\\
    X_4\\
    X_5\\
    X_6
    \end{bmatrix}.
    \end{equation}
    The determinant of the matrix on the right-hand side is $\pm 1$, which finishes the proof of the second part of the lemma.
\end{proof}

Note that $\mathcal{M}_w$ is isomorphic, as an affine variety, to
\begin{equation*}
    \begin{aligned}
        \mathcal{C}_w &= \operatorname{Spec}\C[x_1,x_2,x_3]/(k_0),\\
        k_0&=x_1 x_2 x_3 - x_1 -x_2 -x_3  + w,
    \end{aligned}
\end{equation*}
which is the model of the monodromy surface for the Jimbo-Miwa linear problem for $\pain{II}$ as an affine cubic surface as in \cites{putsaito,chekhovdecorated}, but with a different scaling. see \Cref{rem:reltovdPScubicPII}. 
The isomorphism is provided by the projection 
\begin{equation} \label{eq:MwtocubicPII}
    \begin{aligned}
    \rho : \mathcal{M}_w &\rightarrow \mathcal{C}_w,\\
    \left(x_1,x_2,x_3,x_4,x_5,x_6\right) &\mapsto \left(x_1,x_2,x_3\right).
    \end{aligned}
\end{equation}
The projective completion $\overline{\mathcal{C}}_w$ has $A_1$ singularities at the three corners of a triangle of lines at infinity, see \cite[Table 4]{JMR}.

\begin{remark} \label{rem:reltovdPScubicPII}
    As in the $\pain{IV}$ case, we have a slightly different parametrisation of the affine cubic to that appearing in \cite{putsaito}, which we write as 
    \begin{equation*}
        \tilde{x}_1\tilde{x}_2\tilde{x}_3-\tilde{x}_1-\alpha  \tilde{x}_2 - \tilde{x}_3 + \alpha+1=0, \quad \alpha\in\C^*.
    \end{equation*}
    This can be related to $\mathcal{C}_w$ according to 
    $$ \tilde{x}_1=u x_1, \quad \tilde{x}_2= u^{-1} x_2, \quad \tilde{x}_3= u x_3,$$
    with parameter correspondence
    $$\alpha= u^2.$$
    We choose our particular normalisation since $w = u+u^{-1}$ is invariant under the action of ${W}(A_1^{(1)})$ by B\"acklund transformations.
    
\end{remark}

\subsection{Riemann-Hilbert map and symmetries}

We again write the Riemann-Hilbert map as 
\begin{equation*}
    \mathcal{X}_{t,a} \xrightarrow{\operatorname{RH}_{t,a}} \mathcal{M}_{w(a)}.
\end{equation*}
This is defined in the obvious analogous way to the $\pain{IV}$ and $\pain{VI}$ cases above, and is a biholomorphism for $a$ such that $a_1=\theta+1 \notin \Z$, see \cites{fokas,putsaito}.
The Riemann-Hilbert map associated to the Flaschka-Newell linear problem is also biholomorphic \cites{flaschkanewell,fokas, putsaito}, which will be relevant below, see \Cref{rem:commutingRHsPII}.

The map $\mathscr{A}_{\rm{II}}\cong\Theta_{\rm{II}}\to\mathscr{W}_{\rm{II}}$ is defined as above via the intermediate parameter $u = e^{\pi i \theta}$, which we recall is
\begin{equation*} 
    w = u+u^{-1} = 2 \cos (\pi \theta)
    .
\end{equation*}
We describe the result of conjugating the symmetries of $\pain{II}$ as given in \Cref{tab:PII:symmetry:varsandparams} by the Riemann-Hilbert map in the following theorem.

\begin{theorem} \label{thm:symmetryunderRHPII}
    For any $g\in W(A_1^{(1)})$, the corresponding symmetry of $\pain{II}$ conjugates to the identity under the Riemann-Hilbert map.
    The nontrivial Dynkin diagram automorphism $\sigma$ and the additional symmetries in $\langle\vartheta\rangle \cong\Z/3\Z$ act nontrivially, as described in \Cref{tab:PII:symmetriesunderRH}.
\end{theorem}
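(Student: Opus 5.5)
We will argue as in the proof of \Cref{thm:PIVsymmetriesunderRH}: first record how each generator acts on parameters, then realise the symmetries directly on the Jimbo-Miwa linear problem \eqref{eq:JMlaxpair1} wherever this is possible, and finally deal with the remaining reflection by an asymptotic argument.

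\emph{Action on parameters.} Since $w=u+u^{-1}=2\cos(\pi\theta)$, we check from \Cref{tab:PII:symmetry:varsandparams} that $r_0:\theta\mapsto-\theta$ and $r_1:\theta\mapsto-2-\theta$ both fix $w$. By contrast, $\sigma:\theta\mapsto -1-\theta$ sends $w\mapsto -w$, and $\vartheta$ fixes $w$. Hence $W(A_1^{(1)})$ acts trivially on $\mathscr{W}_{\rm{II}}$.

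\emph{Symmetries realised on the linear problem.}
\begin{itemize}
\item $\vartheta$ is realised by $Y(z)\mapsto Y(\zeta z)$ for a suitable cube root of unity, with $A^{\rm JM}\mapsto \zeta A^{\rm JM}(\zeta z)$, possibly after conjugation by a constant diagonal matrix. This relabels the canonical solutions $Y_k\mapsto Y_{k\pm2}$ up to diagonal factors, so the Stokes multipliers undergo a cyclic shift $s_k\mapsto c_k s_{k+2}$ with explicit powers of $u$. Substituting into \eqref{eq:ytosPII} and \eqref{eq:xtoyPII} should give a cyclic permutation of $(x_1,x_2,x_3)$, together with the induced permutation of $(x_4,x_5,x_6)$.
\item The translation $T=\sigma r_0$ (equivalently $\sigma r_1$) shifts $\theta$ by $\pm1$. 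It is a Schlesinger transformation $Y\mapsto R(z)Y$ with $R$ rational, as in part (1) of \Cref{lem:PIVsymmetriesunderRHdirect}. It therefore leaves all $s_k$ invariant and sends $u\mapsto \pm i u$ or $-u$, depending on the normalisation. From \eqref{eq:xtoyPII} we then read off its action as sign changes on the $x_i$ and $w\mapsto -w$.
\item The translation $T^2\in W(A_1^{(1)})$ is then seen to act trivially, by the same computation applied twice.
\end{itemize}

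\emph{The remaining reflection $r_0$.} This is the main obstacle, since no direct realisation on \eqref{eq:JMlaxpair1} is evident. We will follow the method of \Cref{lem:PIVsymmetryunderRHasymptotic}. We take the generic asymptotics of $\pain{II}$ solutions as $t\to-\infty$, given by Its-Kapaev in terms of Stokes data: an oscillatory $(-t)^{-1/4}$ behaviour whose amplitude and phase are determined by $s_1,s_3$ and similar data. Pulled back to our coordinates, these leading terms depend on $x$ but not explicitly on $\theta$. The same data can also be read off from the Flaschka-Newell side via \Cref{app:derivationofmonodromysurfaceFN}.

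The key point is that $r_0$ changes $f$ only by $a_0/g$, which is subleading as $t\to-\infty$, so the leading asymptotics of $f$ and $r_0(f)$ coincide. Comparing the two asymptotic formulas yields $x=\tilde x$ on a dense open set of $\mathcal{M}_w$, and hence everywhere. The difficulty is in checking carefully that the correction $a_0/g$ is indeed subleading in the generic regime, and in translating the Stokes-data parametrisation of the asymptotics into the coordinates $x_i$.

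\emph{Conclusion.} Since $r_0$ and $T^2$ act trivially and together generate $W(A_1^{(1)})$ (because $r_1=T r_0 T^{-1}$ and $r_0,T^2$ generate), the whole affine Weyl group acts trivially. The action of $\sigma=T r_0$ then equals that of $T$, and the action of $\vartheta$ was computed above; together these fill in \Cref{tab:PII:symmetriesunderRH}.
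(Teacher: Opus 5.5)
Your treatment of the parameters, of $\vartheta$, of the Schlesinger transformation $T$, and the group-theoretic bookkeeping ($r_1=Tr_0T^{-1}$, $\sigma=Tr_0$) agree with the paper. Note that $T$ sends $u\mapsto -u$, not $\pm iu$, since $\theta$ shifts by exactly $1$. The gap is the step for $r_0$.

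First, your premise there is wrong: $r_0$ \emph{is} directly realised on \eqref{eq:JMlaxpair1}, by $Y(z)\mapsto\sigma_1Y(-z)$, $A(z)\mapsto-\sigma_1A(-z)\sigma_1$.
\begin{itemize}
\item Since $\sigma_1\sigma_3\sigma_1=-\sigma_3$, this preserves the leading term $z^2\sigma_3$.
\item Since $|A(-z)|=-(z^4+tz^2-2\theta z)+\mathcal{O}(1)$, it sends $\theta\mapsto-\theta$, i.e.\ $u\mapsto u^{-1}$.
\item The canonical solutions transform as $\widetilde{Y}_{k+3}(z)=\sigma_1Y_k(e^{-\pi i}z)\sigma_1e^{-\pi i\theta\sigma_3}$. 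Hence $\tilde s_{k+3}=s_ku^{2}$ for $k$ even and $\tilde s_{k+3}=s_ku^{-2}$ for $k$ odd.
\item Insert this into \eqref{eq:ytosPII} and \eqref{eq:xtoyPII} with $\tilde u=u^{-1}$, and use the three linear relations among the $y_j$. One finds $\tilde x_i=x_i$ for all $i$.
\end{itemize}
This is exactly how the paper proceeds, and no asymptotic analysis is needed for $\pain{II}$.

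Second, the asymptotic substitute you propose does not close as stated. The generic $t\to-\infty$ asymptotics of Its--Kapaev are parametrised by Flaschka--Newell Stokes data. The theorem, however, concerns the Riemann--Hilbert map of the Jimbo--Miwa problem, which is also the map you use to compute $T$ and $\vartheta$. \Cref{app:derivationofmonodromysurfaceFN} only shows that both linear problems produce the same abstract surface $\mathcal{M}_w$. It does not show that a given solution is sent to the same point by both maps; that is precisely the open problem of \Cref{rem:commutingRHsPII}. So your comparison would at best show that $r_0$ is trivial under $\operatorname{RH}^{\rm FN}$. Transferring this to $\operatorname{RH}^{\rm JM}$ would require knowing that the transition map $\Upsilon$ is unchanged under $\theta\mapsto-\theta$, which is not available. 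As it stands, your conclusion $\sigma=Tr_0$ combines conjugations by two different Riemann--Hilbert maps.

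There is a further unchecked point. The claim that the leading-order terms have no explicit $\theta$-dependence when written in the $x_i$ is not automatic here. The reflection $r_0$ inverts $u$, and the $x_i$ in \eqref{eq:xtoyPII} carry explicit powers of $u$, so this needs an actual verification. The subleading nature of the correction $a_0/g\sim -2a_0/t$ is the easy part of your argument.
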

\begin{proof}
    We will lift the actions of $r_0$, $T$, and $\vartheta$ to the linear problem and compute the corresponding action on monodromy, which suffices to establish the theorem since $T=\sigma r_0 = r_1\sigma$.
    
    First, $r_0$ is realised on the linear problem \eqref{eq:JMlaxpair1} by 
    $$Y(z)\mapsto \widetilde{Y}(z)=\sigma_1 Y(-z),$$
    which transforms the coefficient matrix according to 
    $$A(z)\mapsto \widetilde{A}(z) = - \sigma_1 A(-z)\sigma_1,$$
    corresponding to the action of $r_0$ on $f,g$ as in \Cref{tab:PII:symmetry:varsandparams} as well as 
    $$ k \mapsto \tilde{k}= -\frac{2 g}{k}.$$
    The formal solution transforms as 
    $$ \widetilde{Y}_{\operatorname{form}}(z) = \sigma_1 Y_{\operatorname{form}}(e^{-\pi i}z)\sigma_1 e^{-\pi i \theta \sigma_3},$$
    and the solutions $Y_k$ transform as
 $$\widetilde{Y}_{k+3}(z) = \sigma_1 Y_k(e^{-\pi i }z)\sigma_1 e^{-\pi i \theta \sigma_3}.$$
Then the new Stokes matrices are
$$\widetilde{S}_{k+3}=e^{\pi i \theta \sigma_3}\sigma_1 S_k\sigma_1 e^{-\pi i \theta \sigma_3}$$
so that
$$\tilde{s}_{k+3}=\begin{cases}
    s_k e^{+2\pi i\theta} &\text{if $k$ even,}\\
    s_k e^{-2\pi i\theta} &\text{if $k$ odd.}
\end{cases}$$
    Then $x_i$, $1\leq i\leq 6$, defined in terms of Stokes multipliers through \eqref{eq:ytosPII} and \eqref{eq:xtoyPII} transform trivially as claimed.    
    The action of $r_0$ on $\theta$ leaves $w$ invariant and we have established the action of $r_0$ as in \Cref{tab:PII:symmetriesunderRH}.

    Next we lift the translation $T = \sigma r_0$ to the linear problem as a Schlesinger transformation, which can be found in \cite[Sec. 6.1]{fokas}.
    The corresponding Schlesinger transformation for $T$ takes the form
\begin{equation*}
    Y(z)\mapsto \widetilde{Y}(z)=G(z)Y(z),\quad A(z)\mapsto \widetilde{A}(z)=G(z)A(z)G(z)^{-1}+G_z(z)G(z)^{-1},
\end{equation*}
where $G(z)$ is given by 
\begin{equation*}
    G(z)=z\begin{bmatrix}
        0 & 0\\
        0 & 1
    \end{bmatrix}+\begin{bmatrix}
        0 & -\frac{k}{f} \\
 \frac{g}{k} & \frac{\theta }{g}-f
    \end{bmatrix},
\end{equation*}
which corresponds to the action of $T$ on $f,g$ obtained by composing those of $\sigma$ and $r_0$ in \Cref{tab:PII:symmetry:varsandparams}, as well as $k\mapsto\tilde{k}=\frac{2g}{k}$.
The solutions $Y_k(z)$ transform according to 
$$ \widetilde{Y}_k(z)=G(z) Y_k(z)$$ 
and the Stokes multipliers are left completely invariant.
The induced action on $x_i$, $1\leq i\leq 6$, is given by 
$$\tilde{x}_1 = - x_1, \quad \tilde{x}_2=-x_2, \quad \tilde{x}_3 = x_3, \quad \tilde{x}_4=x_4, \quad \tilde{x}_5=x_5, \quad \tilde{x}_6=x_6.$$
The action on $w$ can be computed from that of $T$ on $\theta$, namely $\tilde{\theta}=\theta-1$.
Since $r_0$ acts trivially, this allows us to deduce the action of $\sigma = T r_0$ given  in \Cref{tab:PII:symmetriesunderRH}.
This also shows that the action of $r_1$ is trivial, since $r_1 =T \sigma$.

Lastly, we derive the action of $\vartheta$. 
Consider the transformation
$$Y(z) \mapsto \widetilde{Y}(z)= Y(e^{-\frac{2\pi i}{3}}z),\quad A(z)\mapsto \widetilde{A}(z) = e^{-\frac{2\pi i}{3}}A(e^{-\frac{2\pi i }{3}}z),$$
which corresponds to the action of $\vartheta$ on $f,g$ as in \Cref{tab:PII:symmetry:varsandparams} as well as $k \mapsto \tilde{k}= e^{-\frac{2 \pi i}{3}} k.$
The formal solution transforms as 
    $$ \widetilde{Y}_{\operatorname{form}}(z) =  Y_{\operatorname{form}}(e^{-\frac{2\pi i}{3}}z) e^{\frac{2\pi i}{3} \theta \sigma_3},$$
    and the solutions $Y_k$ transform as
 $$\widetilde{Y}_{k+2}(z) = Y_{k}(e^{\frac{-2\pi i}{3} }z) e^{\frac{2\pi i}{3} \theta \sigma_3},$$
so the new Stokes matrices are 
$$\widetilde{S}_{k+2} = e^{-\frac{2\pi i}{3} \theta \sigma_3} S_k e^{\frac{2\pi i}{3} \theta \sigma_3}.$$
Accordingly, the Stokes multipliers transform as 
$$\tilde{s}_{k+2} = \begin{cases}
    s_k e^{+\frac{4\pi i}{3}\theta} &\text{if $k$ even,}\\
    s_k e^{-\frac{4\pi i}{3}\theta} &\text{if $k$ odd.}
\end{cases}$$
Then the invariants are transformed as
$$\tilde{y}_{1} = y_9, \quad 
\tilde{y}_2 = u^4 y_7, \quad
\tilde{y}_3 = u^4 y_8, \quad 
\tilde{y}_4 = { u^{-4} y_3}, \quad 
\tilde{y}_5 = y_1, \quad 
\tilde{y}_6 = y_2, \quad 
\tilde{y}_7 = { u^{-4} y_6}, \quad 
\tilde{y}_8 = y_4, \quad 
\tilde{y}_9 = y_5,
$$
which induces via the equations \eqref{eq:xtoyPII} the action of $\vartheta$ on $x_i$, $1\leq i\leq 6$, as given in \Cref{tab:PII:symmetriesunderRH}.
\end{proof}

\begingroup
\setlength{\tabcolsep}{15pt} 
\renewcommand{\arraystretch}{1.5} 
\begin{table}[h]
    \begin{equation*}
    \begin{array}{c||c|c|c|c|c|c||c|}
              & \tilde{x}_1    & \tilde{x}_2     & \tilde{x}_3    & \tilde{x}_4     & \tilde{x}_5   & \tilde{x}_6& \tilde{w}           \\
        \hline\hline 
        r_0   &   x_1    &  x_2    &  x_3 &    x_4 & x_5   &  x_6 & w             \\
        \hline
        r_1  &   x_1    &  x_2    &  x_3 &    x_4 & x_5   &  x_6 & w        \\
        \hline\hline
        \sigma   &   -x_1    &   -x_2    & -x_3   & x_4    &  x_5 &  x_6 & -w         
        \\
        \hline\hline
        \vartheta   &  x_2  &   x_3    & x_1 & x_5 & x_6  & x_4   &      w\\
        \hline
    \end{array}
    \end{equation*}
    \caption{Action of symmetries of $\pain{II}$ on $\mathcal{M}\rightarrow\mathscr{W}_{\rm{II}}$ under the Riemann-Hilbert map}
    \label{tab:PII:symmetriesunderRH}
\end{table}
\endgroup

\begin{remark} \label{rem:symmetriesonlinearproblemFNPII}
    The action of symmetries of $\pain{II}$ on monodromy data for the Flaschka-Newell linear problem is given in \cite[Eq. (2.6)]{kapaev91_essential}. 
    Via the identification of this monodromy data with points on $\mathcal{M}_w$ as detailed in \Cref{app:derivationofmonodromysurfaceFN}, we have the same action of $r_0,r_1,\sigma,\vartheta$ as given in \Cref{tab:PII:symmetriesunderRH}.

\end{remark}

\begin{remark} \label{rem:commutingRHsPII}
    While we show that the Jimbo-Miwa and Flaschka-Newell linear problems lead to the same monodromy surface, we have not established the relationship between the associated Riemann-Hilbert maps, which seems to be an open problem \cite[Sec. 2.2]{millerincreasing18}. 
    This problem can be translated into describing the induced map $\Upsilon$ from $\mathcal{M}_w$ to itself in the following commutative diagram:
    \begin{equation*}
    \begin{tikzcd}
    & \mathcal{M}_{w(a)} \arrow[dd,dotted, "\Upsilon"]\\
        \mathcal{X}_{t,a}  \arrow[ur, "\operatorname{RH}^{\operatorname{JM}}_{t,a}"] \arrow[dr,swap, "\operatorname{RH}^{\operatorname{FN}}_{t,a}"]& \\
    & \mathcal{M}_{w(a)}         
    \end{tikzcd}
\end{equation*}
We conjecture it to be the identity map, up to cyclic relabeling of coordinates (so as to be compatible with the action of $\vartheta$).

Note that for the increasing tritronqu\'ee solution $y=y(t;\theta)$ of $\pain{II}$ uniquely characterised by the asymptotic behaviour
\begin{equation*}
    y(t;\theta)\sim -\frac{1}{2}\sqrt{2t} \qquad \text{as }t\rightarrow \infty\text{ with }|\arg t|<\frac{2\pi}{3},
\end{equation*}
the corresponding monodromy data of both the Jimbo-Miwa linear problem  and the Flaschka-Newell linear problem  are available in the literature. 
From \cite{millerincreasing18} we see that for this solution $s_1^{\operatorname{JM}}=s_6^{\operatorname{JM}}=0$ and from \cite[Sec. 11.5]{fokas}\footnote{We note the correspondence $(s_1^{\operatorname{FN}},s_2^{\operatorname{FN}},s_3^{\operatorname{FN}})=(is_1^{\operatorname{FIKN}},-is_2^{\operatorname{FIKN}},is_3^{\operatorname{FIKN}})$ between our Stokes multipliers for the Flaschka-Newell linear problem and those for the gauge-transformed one used in \cite[Sec. 11]{fokas}.} we obtain $(s_1^{\operatorname{FN}},s_2^{\operatorname{FN}},s_3^{\operatorname{FN}})=(i u^{-1},iu,iu^{-1})$, so that in both cases the corresponding point on $\mathcal{M}_w$ is
\begin{equation*}
    x_1=u^{-1},\quad x_2=u,\quad x_3=u^{-1},\quad x_4=1,\quad
    x_5=u^{-2},\quad
    x_6=1.
\end{equation*}
This allows us to refine the conjecture to state that the induced map $\Upsilon$ is simply the identity.
\end{remark}

\subsection{Moduli space}

For $\pain{II}$, the category of monodromy surfaces is again one of embedded affine surfaces whose projective completions are del Pezzo of a certain degree, characterised by a special hyperplane section at infinity.

\begin{definition}[Category $\mathfrak{C}_{\rm{II}}$ of monodromy surfaces for $\pain{II}$]
Define the category $\mathfrak{C}_{\rm{II}}$ with 
\begin{itemize}
    \item an object being an embedded affine surface $\mathcal{V}\subset \mathbb{A}^6$, such that its projective completion $\overline{\mathcal{V}}\subset \p^6$ is a (possibly singular) del Pezzo surface of degree six, and further that the hyperplane section at infinity $\overline{\mathcal{V}}\setminus \mathcal{V}$ is the union of three conics which intersect like a triangle, and consists of only smooth points of $\overline{\mathcal{V}}$.
    \item a morphism between objects $\mathcal{V}_1$ and $\mathcal{V}_2$ being an affine linear $L\in\operatorname{End}(\mathbb{A}^6)$ such that $L(\mathcal{V}_1)\subseteq\mathcal{V}_2$.
\end{itemize}
\end{definition}
As in the cases of $\pain{VI}$ and $\pain{IV}$ above, the family $\mathcal{M}\to \mathscr{W}_{\rm{II}}$ from \Cref{def:monodromysurfacePII} represents isomorphism classes of objects in $\mathfrak{C}_{\rm{II}}$.

\begin{proposition} \label{prop:oblomkovstylenormalformPII}
    For any $\mathcal{V}\in \operatorname{ob}(\mathfrak{C}_{\rm II})$, there exists a $w\in \mathscr{W}_{\rm{II}}$, unique up to the action of $\operatorname{Aut}(A_1^{(1)})=\langle\sigma\rangle\cong\mathfrak{S}_2$ in \Cref{tab:PII:symmetriesunderRH}, such that $\mathcal{V}$ is isomorphic to $\mathcal{M}_w$ in $\mathfrak{C}_{\rm II}$.  
    The corresponding isomorphism is unique up to composition with automorphisms of $\mathcal{M}_w$ in $\mathfrak{C}_{\rm{II}}$. The automorphism group of $\mathcal{M}_w$ contains
    \begin{equation*}
    \begin{aligned}
    &\varpi_{1} : x_1\leftrightarrow x_2, \quad x_4 \leftrightarrow x_5 \\
    &\varpi_2 : x_2 \leftrightarrow x_3, \quad x_5 \leftrightarrow x_6,
    \end{aligned}
    \end{equation*}
    which form an action of the group $\langle \varpi_1,\varpi_2\rangle\cong\mathfrak{S}_3$, 
    and $\vartheta = \varpi_1\varpi_2$.
    The full automorphism group of $\mathcal{M}_w$ in $\mathfrak{C}_{\rm{II}}$ is then generated by $\mathfrak{S}_3$ and $\operatorname{Stab}_{\Aut(A_1^{(1)})}\left(w\right)$, the latter being trivial for generic $w$.

\end{proposition}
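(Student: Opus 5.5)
The plan follows the pattern of the proofs of \Cref{prop:oblomkov} and \Cref{prop:oblomkovstylenormalformPIV}, but uses the del Pezzo structure to handle the conics at infinity.

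Let $\mathcal{V}\subset\mathbb{A}^6$ be an object of $\mathfrak{C}_{\rm II}$, with completion $\overline{\mathcal{V}}\subset\p^6$. It is a degree six del Pezzo surface spanning $\p^6$, so it is anticanonically embedded. Its hyperplane section at infinity is an anticanonical divisor $C_1+C_2+C_3$ consisting of three conics forming a triangle. Each $C_i$ spans a plane $\Pi_i\subset\{X_0=0\}\cong\p^5$. Any two of the planes meet in the common point $C_i\cap C_j$, and together they span the hyperplane at infinity.

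First I will choose projective coordinates on $\{X_0=0\}$ adapted to this configuration. Let $p_{ij}=C_i\cap C_j$ denote the three corner points. In $\Pi_3$ take coordinates $(X_1,X_5,X_6)$ with $p_{13}=[X_5]$, $p_{23}=[X_6]$, and $X_1$ cutting out the line through them. Normalise so that $C_3$ is $X_5X_6=X_1^2$. Doing the same in $\Pi_1,\Pi_2$ and matching the shared corners gives coordinates in which the conics at infinity are exactly \eqref{eq:conicsatinfinityPII}. The three planes together span $\p^5$, and this shows that such a choice exists. This yields an affine linear change of coordinates on $\mathbb{A}^6$ with the same leading behaviour as $\mathcal{M}_w$ at infinity.

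Next I will determine the affine equations. The ideal of $\overline{\mathcal{V}}$ is generated by nine quadrics, since a degree six del Pezzo surface is cut out by quadrics. Their restrictions to $X_0=0$ must generate the ideal of the triangle of conics. I therefore expect quadrics whose top-degree parts are $X_2X_3,\,X_1X_3,\,X_1X_2,\,X_1X_4,\dots,X_5X_6-X_1^2,\dots$. The affine equations then take the form $x_4=x_2x_3+\ell_1$, and similarly for $x_5,x_6$, where the $\ell_i$ are affine linear, together with a cubic relation in $x_1,x_2,x_3$.

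I will use translations $x_k\mapsto x_k-c_k$ to absorb the mixed terms, as in \Cref{prop:oblomkov}. The goal is to reach the normal form $x_1x_2x_3+a_1x_1+a_2x_2+a_3x_3+w'=0$, with the $x_4,x_5,x_6$ relations exact. Smoothness of $\overline{\mathcal{V}}$ along the corners forces $a_1a_2a_3\neq0$. A scaling $x_k\mapsto\gamma_kx_k$, with $x_4,x_5,x_6$ scaled compatibly, then normalises $a_i=-1$ up to the choices of $\gamma$.

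The main obstacle is controlling this reduction rigorously. I need to show that the quadric relations force the reduction to a cubic of that specific shape, with no extra $x_i^2$ terms. I would address this by comparing with the standard model $\overline{\mathcal{S}}$ or $\overline{\mathcal{S}}_{\operatorname{sing}}$ from \Cref{lem:delpezzodeg6PII}. In the smooth case, the hexagon of lines on $\overline{\mathcal{S}}$ together with the triangle of conics at infinity determines the relevant classes in $\Pic$. Here each conic has class $\h-\E_i$ type, and the three conics form a triangle in anticanonical class. The automorphism group of the torus $(\C^*)^2\rtimes(\mathfrak{S}_3\times\mathfrak{S}_2)$ acting on these configurations should reduce everything to a one-parameter family. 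The singular case is handled analogously with $\overline{\mathcal{S}}_{\operatorname{sing}}$.

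Uniqueness will be argued as in the earlier proofs. An isomorphism $\mathcal{M}_w\to\mathcal{M}_{\tilde w}$ must match the triangles of conics, so up to permuting the conics it is diagonal. Permutations of the conics give $\langle\varpi_1,\varpi_2\rangle\cong\mathfrak{S}_3$, which preserves $w$. For diagonal maps $x_k\mapsto A_kx_k$, preserving the cubic forces $A_1A_2A_3=A_k$ for all $k$, hence $A_1=A_2=A_3=\pm1$ and $\tilde w=\pm w$. This is exactly the action of $\sigma$ from \Cref{tab:PII:symmetriesunderRH}. The statements about automorphisms follow, with $\operatorname{Stab}\langle\sigma\rangle$ nontrivial only when $w=0$.
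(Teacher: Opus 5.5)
Your uniqueness argument is sound, and more explicit than the paper's. Once an element of $\langle\varpi_1,\varpi_2\rangle$ makes the linear part diagonal, composing $k_0$ with the map gives a polynomial in $x_1,x_2,x_3$ that vanishes on $\mathcal{C}_w$, hence a scalar multiple of $k_0$. This kills any translation, since no quadratic terms may appear, and forces $A_1=A_2=A_3=\pm1$ and $\tilde w=\pm w$. Say explicitly that translations are excluded this way.

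The genuine gap is existence, and your coordinate route stops exactly where the content lies. Knowing the leading forms $X_2X_3,\dots,X_4X_5-X_3^2$ of the nine quadrics only tells you they have the shape $Q_j+X_0\ell_j+c_jX_0^2$. Nothing you say shows any of the following:
\begin{itemize}
\item that the $X_2X_3$-quadric has a nonzero $X_0X_4$ coefficient (smoothness at the corner $[0:0:0:0:1:0:0]$ only shows that \emph{some} quadric among those vanishing to second order there does);
\item that the $\ell_j$ do not involve $x_5,x_6$;
\item that eliminating $x_4,x_5,x_6$ leaves a cubic with no $x_i^2$ or mixed quadratic terms.
\end{itemize}
Equivalently, you have not shown that translations and torus scalings preserving the configuration at infinity cut the space of such surfaces down to one parameter.

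Your fallback sentence names the right tools but omits the argument, which in the paper has four parts:
\begin{enumerate}
\item[(i)] Identify the classes of the conics. This is immediate from adjunction: each $C_i$ is smooth rational with $-K\cdot C_i=2$, so $C_i^2=0$ and $C_i\in|\h-\E_i|$ for three distinct $i$. The conics are therefore strict transforms of lines through the three blown-up points.
\item[(ii)] Normalise such a triangle of lines, by projectivities fixing the three points and by relabelling, to $Z_0-Z_1=0$, $vZ_1-Z_2=0$, $Z_2-Z_0=0$ with $v\neq0,1$, unique up to $v\mapsto v^{-1}$.
\item[(iii)] Blow down $\overline{\mathcal{M}}_w$ explicitly (the composite of \eqref{eq:projectivityStoMPII} and \eqref{eq:blowdownStoP2}) to see that its triangle at infinity is this configuration with $v=u^2$. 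Hence every $v$ is realised, with $w^2=v+2+v^{-1}$.
\item[(iv)] Observe that an abstract isomorphism of anticanonically embedded surfaces carrying one hyperplane section at infinity to the other is induced by a projectivity of $\p^6$ preserving $\{X_0=0\}$. It is therefore an affine linear map, i.e.\ a morphism of $\mathfrak{C}_{\rm II}$.
\end{enumerate}
Steps (ii)--(iv) are absent from your proposal, and ``should reduce everything to a one-parameter family'' does not replace them.

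The singular case is also not ``analogous'' until you know what a singular object looks like. On the minimal resolution, the three components at infinity are irreducible curves in the classes $\h-\E_1,\h-\E_2,\h-\E_3$. If two blown-up points were infinitely near, say $b_2$ on the exceptional curve over $b_1$, then $\E_1-\E_2$ is an irreducible $(-2)$-curve with $(\h-\E_2)\cdot(\E_1-\E_2)=-1$. Then $|\h-\E_2|$ would have no irreducible member. So the three points are distinct. Not being in general position, they are collinear, which gives a single $(-2)$-curve $\h-\E_1-\E_2-\E_3$, one $A_1$ point, and exactly three lines through it. Only after this classification can you identify the anticanonical model with $\overline{\mathcal{S}}_{\operatorname{sing}}$, and hence with $\overline{\mathcal{M}}_{\pm2}$ via \eqref{eq:projectivityStoMsingPII}. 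Without it you have not ruled out singular objects isomorphic to no $\mathcal{M}_w$.
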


\begin{proof}
    Let $\mathcal{V}$ be an object in $\mathfrak{C}_{\rm{II}}$. 
    First consider the case when $\mathcal{V}$ is smooth, so $\overline{\mathcal{V}}\subset\p^6$ can be realised as the blowup of $\p^2$ at three non-collinear points, which we take to be at the corner points $[0:0:1],[0:1:0],[1:0:0]\in\p^2$.
    The Picard group is then realised as 
    $$\Pic(\overline{\mathcal{V}}) = \Z \h \oplus\Z \E_1\oplus\Z \E_2\oplus\Z \E_3,$$
    where $\mathcal{H}$ is the class of the pullback of a line on $\p^2$, and $\E_1,\E_2,\E_3$ come from the exceptional divisors.
    Since we are working with $\overline{\mathcal{V}}$ in its anticanonical embedding, any hyperplane section of $\overline{\mathcal{V}}$ is an effective anticanonical divisor, i.e. in the linear system of 
    $$ -\mathcal{K}_{\overline{\mathcal{V}}} = 3 \h-\E_1-\E_2-\E_3.$$
    By assumption that $\mathcal{V}$ is an object in $\mathfrak{C}_{\rm{II}}$, i.e. it has a hyperplane section consisting of a triangle of conics at infinity, the del Pezzo surface $\overline{\mathcal{V}}$ must admit an effective anticanonical divisor which decomposes into three rational curves that intersect like a triangle, and that are not among the lines on $\overline{\mathcal{V}}$.
    
    So there is an effective anticanonical divisor $D\in |-\mathcal{K}_{\overline{\mathcal{V}}}|$ that decomposes as 
    $$D = D_1+D_2+D_3,$$
    where $D_i$ are rational curves. 
    The genus formula gives $D_i^2 \geq -2$ for $i=1,2,3$.
        There are no $-2$-curves on a smooth del Pezzo surface of degree six, and since $D_i$ are not among the lines on $\overline{\mathcal{V}}$, we know $D_i^2\neq -1$.
    Thus $D_i^2\geq 0$ for $i=1,2,3$.
    Since the curves are required to intersect like a triangle we have $D_i\cdot D_j=1$, $1\leq i<j\leq 3$,
    so expanding $D^2=6$ gives 
$D_1^2 +D_2^2 +D_3^2=0,$
 from which we deduce $D_i^2=0$ for $i=1,2,3$, and the components must be in the linear equivalence classes
    $$ \h-\E_1,\quad \h-\E_2,\quad\h-\E_3.$$
    
    Hence the three conics forming the hyperplane section at infinity must be given by the strict transforms of lines on $\p^2$ passing through  $[0:0:1],[0:1:0],[1:0:0]$.
    Up to a projectivity of $\p^2$ that fixes these corner points, and the $\mathfrak{S}_3$ freedom of permuting the labels of the homogeneous coordinates $[Z_0:Z_1:Z_2]$,  these lines must be given by 
    \begin{equation} \label{eq:coniclinesstandard}
        Z_0-Z_1 = 0, \quad v Z_1 -  Z_2 =0, \quad Z_2-Z_0=0,
    \end{equation}
    for some $v\in \C^*$, unique up to $v\mapsto v^{-1}$, with $v\neq 1$, as the three conics do not intersect at a common point.
    So the object $\overline{\mathcal{V}}$ is isomorphic to the blowup of $\p^2$ at the three corner points, with the triangle of conics being sent to the three lines \eqref{eq:coniclinesstandard}, for some $v\in \C\setminus\{0,1\}$.
    We will show that this is isomorphic to $\overline{\mathcal{M}}_w$ for some $w\in \mathscr{W}_{\rm{II}}$.
    
    Recall the standard realisation of the del Pezzo surface $\overline{\mathcal{S}}$ in \eqref{eq:standardmodeldepPezzoPII}.
    The birational morphism 
    \begin{equation} \label{eq:blowdownStoP2}
    \begin{aligned}
        \pi : \overline{\mathcal{S}} &\to \p^2, \\
        [a:b:c:d:e:f:g] &\mapsto [Z_0:Z_1:Z_2] = [g : u^{-1} d : c], 
    \end{aligned}
    \end{equation} 
    contracts three lines on $\overline{\mathcal{S}}$ to the corner points on $\p^2$.
    By composing the projectivity $\overline{\mathcal{M}}_w\to \overline{\mathcal{S}}$ in \cref{eq:projectivityStoMPII} with $\pi : \overline{\mathcal{S}}\to\p^2$ in \eqref{eq:blowdownStoP2}, we get a birational morphism 
    realising $\overline{\mathcal{M}}_w$ as the blowup of $\p^2$ at corner points.
    In this case, the three conics at infinity are sent to the same three lines on $\p^2$ in \eqref{eq:coniclinesstandard} when $u^2=v$. 
    Then the desired isomorphism  from $\overline{\mathcal{V}}$  to $\overline{\mathcal{M}}_w$ is realised through the anticanonical embedding by the projectivity in \cref{eq:projectivityStoMPII}.
    This is guaranteed to be an isomorphism in $\mathfrak{C}_{\rm{II}}$ by the fact that both $\overline{\mathcal{V}}$ and $\overline{\mathcal{M}}_w$ are anticanonically embedded. Note that $w^2 = v+ 2 + v^{-1}$, so that
     $w$ is invariant under $v\to v^{-1}$ and unique up to the action of $\sigma$ as claimed.
    Note also that the condition $w \neq \pm 2$ for $\mathcal{M}_w$ to be smooth is equivalent to $v\neq 1$.

    For the required isomorphism in the case when $\overline{\mathcal{V}}$ is singular, it suffices to show that any $\mathcal{V}\in \operatorname{ob}(\mathfrak{C}_{\rm{II}})$ which is singular is isomorphic to the surface $\mathcal{M}_{w}$ with $w=\pm2$. 

    Let $\overline{\mathcal{V}} \in \operatorname{ob}(\mathfrak{C}_{\rm{II}})$ be singular. 
    We will first show that $\overline{\mathcal{V}}\subset \p^6$ must have exactly one singularity of type $A_1$, and this lies at the intersection of the three lines contained in $\overline{\mathcal{V}}$.
    Since $\overline{\mathcal{V}}$ is a singular del Pezzo surface of degree 6, its minimal resolution is isomorphic to the blowup of $\p^2$ at three points which are not in general position in the sense of \cite[Sec. 8.4]{dolgachevclassicalAG}, so they are either infinitely near, collinear, or both.
    Denote this minimal resolution by  
    $$ \widetilde{\overline{\mathcal{V}}}\to \overline{\mathcal{V}},$$ 
    and the birational morphism to $\p^2$ from the three blowups by 
    $$ \pi : \widetilde{\overline{\mathcal{V}}}\to \p^2.$$
    Then we have 
    $$\Pic(\widetilde{\overline{\mathcal{V}}}) = \Z \h\oplus  \Z \E_1\oplus \Z \E_2  \oplus \Z\E_3.$$
    By assumption any singularities of $\overline{\mathcal{V}}$ are away from the the three conics at infinity, so by a similar argument to the smooth case there must be an effective anticanonical divisor on $\widetilde{\overline{\mathcal{V}}}$, given by the sum of three irreducible curves of self-intersection $0$.
    Since $-\mathcal{K}_{\widetilde{\overline{\mathcal{V}}}}=3 \h - \E_1-\E_2-\E_3$, these must be representatives of the classes 
    $$ \h-\E_1, \quad \h-\E_2, \quad \h-\E_3.$$
    Since these must be effective classes, the points in $\p^2$ onto which the morphism $\pi$ contracts the curves corresponding to $\E_1,\E_2,\E_3$ must all be distinct.
    By the classification of \cite[Sec. 8.4.2]{dolgachevclassicalAG}, these three points must be collinear and $\overline{\mathcal{V}}$ contains exactly three lines, corresponding to the three $(-1)$-curves on $\widetilde{\overline{\mathcal{V}}}$ giving $\E_1,\E_2,\E_3$.
    Then there is a single $(-2)$-curve on $\widetilde{\overline{\mathcal{V}}}$, corresponding to 
    $$\h - \E_1-\E_2-\E_3,$$
    which is contracted onto the single singularity of type $A_1$ on $\overline{\mathcal{V}}$.
    Since $(\h-\E_1-\E_2-\E_3)\cdot \E_1=(\h-\E_1-\E_2-\E_3)\cdot \E_2=(\h-\E_1-\E_2-\E_3)\cdot \E_3= 1$, the images of the lines under $\widetilde{\overline{\mathcal{V}}}\to \overline{\mathcal{V}}$ must all intersect at the singularity.

    We will construct the desired isomorphism from $\overline{\mathcal{V}}$ to $\overline{\mathcal{M}}_{\pm 2}$ by showing that the anticanonical model of $\overline{\mathcal{V}}$ coincides with the surface $\overline{\mathcal{S}}_{\operatorname{sing}}$ in \eqref{eq:standardmodeldepPezzoPIIsing}, then using the projective equivalence given by \eqref{eq:projectivityStoMsingPII}.
    Up to a projective coordinate change on $\p^2$ we can take the points onto which $\pi$ contracts curves to be 
    \begin{equation*}
    b_1 : [Z_0:Z_1:Z_2] =[0:0:1] , \quad 
    b_2 :[Z_0:Z_1:Z_2] =[0:1:0], \quad 
    b_3 : [Z_0:Z_1:Z_2]=[0:1:1].
      \end{equation*}
The anticanonical linear system $|-\mathcal{K}_{\widetilde{\overline{\mathcal{V}}}}|$ can be computed by finding a basis for the vector space of cubic forms in $Z_0,Z_1,Z_2$ vanishing at the points $b_1,b_2,b_3$. 
    For this we take  
        \begin{equation*} a = Z_0^2 Z_2,\quad 
    b = Z_0^2 Z_1,\quad 
    c = Z_0 Z_1^2, \quad 
    d = Z_1^2 Z_2 - Z_1 Z_2^2, \quad
    e = Z_0^3, \quad 
    f = Z_0 Z_2^2, \quad 
    g = Z_0 Z_1 Z_2,\end{equation*}
    and the birational morphism $\phi  : \widetilde{\overline{\mathcal{V}}} \to \p^6$ coming from the anticanonical linear system gives 
        \begin{equation*} 
        \overline{\mathcal{S}}_{\operatorname{sing}} = \phi (\widetilde{\overline{\mathcal{V}}})=\Proj \C[a,b,c,d,e,f,g] / (m_1,\dots,m_9)\subset \p^6,
    \end{equation*}
    which is isomorphic in $\mathfrak{C}_{\rm{II}}$ to $\mathcal{M}_w$ with $w=\pm 2$, via the projectivity \eqref{eq:projectivityStoMsingPII}.
    The freedom of choice of sign in $w$ corresponds to the the action of $\sigma$ in \Cref{tab:PII:symmetriesunderRH}, and the isomorphism $\overline{\mathcal{V}}\to \overline{\mathcal{M}}_{\pm 2}$ is unique up to the automorphism group $\langle \varpi_1, \varpi_2\rangle \cong \mathfrak{S}_3$ of $\overline{\mathcal{M}}_{\pm 2}$ in $\mathfrak{C}_{\rm{II}}$.
    \end{proof}

From \Cref{prop:oblomkovstylenormalformPII} we get a mapping
\begin{equation} \label{eq:mapPhiPII}
\begin{gathered}
    \Phi : \operatorname{ob}(\mathfrak{C}_{\rm{II}})\to \mathscr{O}_{\rm{II}}:= \mathscr{W}_{\rm{II}}\sslash \Aut(A_1^{(1)}) = \Spec \C[o], \\
    o = w^2. 
\end{gathered}
\end{equation}
The singular locus $\mathscr{W}_{\rm{II}}^{\operatorname{sing}}\subset \mathscr{W}_{\rm{II}}$ corresponds to $\mathscr{O}_{\rm{II}}^{\operatorname{sing}}\subset\mathscr{O}_{\rm{II}}$ given by 
$$ o = 4.$$
Using \Cref{prop:oblomkovstylenormalformPII}, we obtain the following along the sames lines as \Cref{cor:modulispacePVI,cor:modulispacePIV}.
\begin{corollary} \label{cor:modulispacePII}
The map $\Phi$ given in \eqref{eq:mapPhiPII} induces a canonical bijection 
    $$\operatorname{Iso}(\mathfrak{C}_{\rm{II}}) \to \mathscr{O}_{\rm{II}}(\C)\cong\C,$$
    where $\mathscr{O}_{\rm{II}} = \mathscr{W}_{\rm{II}} \sslash \operatorname{Aut}(A_1^{(1)})$.
    \end{corollary}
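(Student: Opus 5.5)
The plan is to follow the pattern of \Cref{cor:modulispacePVI}. The map $\Phi$ is defined on objects by sending $\mathcal{V}$ to $w^2$, where $w$ is any parameter with $\mathcal{V}\cong\mathcal{M}_w$ in $\mathfrak{C}_{\rm{II}}$.

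The first step is well-definedness. By \Cref{prop:oblomkovstylenormalformPII}, such a $w$ exists for every object and is unique up to the action of $\sigma$, which by \Cref{tab:PII:symmetriesunderRH} sends $w\mapsto -w$. Since $o=w^2$ is $\sigma$-invariant, $\Phi(\mathcal{V})$ does not depend on the choice of $w$. It also depends only on the isomorphism class of $\mathcal{V}$: if $\mathcal{V}\cong\mathcal{V}'$ and $\mathcal{V}'\cong\mathcal{M}_{w'}$, then $\mathcal{V}\cong\mathcal{M}_{w'}$, so the same $w'$ serves for both. Hence $\Phi$ descends to a map $\operatorname{Iso}(\mathfrak{C}_{\rm{II}})\to\mathscr{O}_{\rm{II}}(\C)$.

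The second step is injectivity. Suppose $\Phi(\mathcal{V}_1)=\Phi(\mathcal{V}_2)$, with $\mathcal{V}_i\cong\mathcal{M}_{w_i}$. Then $w_1^2=w_2^2$, so $w_2=\pm w_1$. If $w_2=w_1$ there is nothing to show. If $w_2=-w_1$, the action of $\sigma$ on $x_1,\dots,x_6$ in \Cref{tab:PII:symmetriesunderRH} is a linear map carrying $K_{w}$ to $K_{-w}$, so it gives an isomorphism $\mathcal{M}_{w_1}\to\mathcal{M}_{-w_1}$ in $\mathfrak{C}_{\rm{II}}$. I will check this directly: under $x_1,x_2,x_3\mapsto -x_1,-x_2,-x_3$ with $x_4,x_5,x_6$ fixed, $k_0$ goes to $-k_0$ with $w$ replaced by $-w$, and $k_1,k_2,k_3$ are preserved. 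Composing isomorphisms then gives $\mathcal{V}_1\cong\mathcal{V}_2$.

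The third step is surjectivity. Any $o\in\C$ equals $w^2$ for some $w\in\C$. For that $w$, $\mathcal{M}_w$ is an object of $\mathfrak{C}_{\rm{II}}$: its completion is a (possibly singular) degree six del Pezzo surface by \Cref{lem:delpezzodeg6PII}, and the triangle of conics at infinity consists of smooth points by \Cref{rem:propertiesofmonodromysurfacePII}. Then $\Phi(\mathcal{M}_w)=o$. This includes the singular values $w=\pm2$, which the proposition also covers.

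No step here is a real obstacle, since the substantive work is already done in \Cref{prop:oblomkovstylenormalformPII}. The only point needing care is checking that $\sigma$ is a genuine isomorphism between fibres in $\mathfrak{C}_{\rm{II}}$, meaning an affine linear map sending $\mathcal{M}_w$ onto $\mathcal{M}_{-w}$, and that verification is immediate from the generators of $K_w$.
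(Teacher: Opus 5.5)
Your proposal is correct and is essentially the paper's argument. The paper proves this exactly as in \Cref{cor:modulispacePVI}: well-definedness and injectivity come from the uniqueness-up-to-$\sigma$ part of \Cref{prop:oblomkovstylenormalformPII}, and surjectivity from choosing $w$ with $w^2=o$. Your direct check that $(x_1,x_2,x_3)\mapsto(-x_1,-x_2,-x_3)$ carries $\mathcal{M}_w$ onto $\mathcal{M}_{-w}$, and that every fibre (including $w=\pm2$) is an object of $\mathfrak{C}_{\rm{II}}$ via \Cref{lem:delpezzodeg6PII}, makes explicit what the paper leaves implicit.
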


\begin{remark} \label{rem:mysterysymmetryPII}
    For $\theta$ in 
        $$\Theta_{\rm{II}}\setminus \Theta_{\rm{II}}^{\operatorname{sing}} = \left\{ \theta \in \C : \theta\not\in  \Z \right\},$$
so that $w$ is away from the singular locus, the automorphisms $\varpi_1,\varpi_2$ of $\mathcal{M}_w$ introduced in \Cref{prop:oblomkovstylenormalformPII} conjugate under $\operatorname{RH}_{t,a}$ to biholomorphic involutions of $\mathcal{X}_{t,a}$.
These must be transcendental automorphisms of the initial value space, and as far as we know no such symmetries of $\pain{II}$ have appeared in the literature.
\end{remark}

\subsection{Lines on the monodromy surface}

To write down the lines on $\mathcal{M}_w$ for $w\in \mathscr{W}_{\rm{II}}\setminus \mathscr{W}_{\rm{II}}^{\operatorname{sing}}$, we let 
$$\mathscr{U}_{\rm{II}} = \left\{ u \in \C^{*} \right\},$$
and relate this to $\Theta_{\rm{II}}$ via $$u  = e^{\pi i \theta}.$$
The surface $\mathcal{M}_w$ has 6 affine lines, which can be written rationally in terms of the parameter $u$ by
\begin{equation} \label{eq:linesPII}
\begin{aligned}
    L_1 : \quad  x_1 &= u^{-1},\quad &&x_2= u,\quad &&x_6= 1,\quad &&x_4 = u\,x_3, \quad &&x_5= u^{-1} x_3,\\
    L_2 : \quad  x_3 &= u,\quad &&x_1= u^{-1},\quad &&x_5= 1,\quad &&x_6 = u^{-1} x_2, \quad &&x_4= u\,x_2,\\
    L_3 : \quad  x_2 &= u^{-1},\quad &&x_3= u,\quad &&x_4= 1,\quad &&x_5 = u\,x_1, \quad &&x_6= u^{-1} x_1,\\
    L_4 : \quad  x_1 &= u,\quad &&x_2= u^{-1},\quad &&x_6= 1,\quad &&x_4 = u^{-1} x_3, \quad &&x_5= u\,x_3,\\
    L_5 : \quad  x_3 &= u^{-1},\quad &&x_1= u,\quad &&x_5= 1,\quad &&x_6 = u\,x_2, \quad &&x_4= u^{-1} x_2,\\
    L_6 : \quad  x_2 &= u,\quad &&x_3= u^{-1},\quad &&x_4= 1,\quad &&x_5 = u^{-1} x_1, \quad &&x_6= u\,x_1.
    \end{aligned}
    \end{equation}


We will also use $L_k$, $k=1,\dots,6$ to denote the closures of the above lines in $\p^6$ under the embedding \eqref{eq:embeddingPII}. The hyperplane section of $\overline{\mathcal{M}}_w$ at infinity consists of the three conics as in \Cref{rem:propertiesofmonodromysurfacePII}. 
For the purpose of enumerating the intersection graph of lines and these three conics on $\overline{\mathcal{M}}_w$, we will denote these with the numbering
\begin{equation} \label{eq:conicsatinfinityPII2}
    \begin{aligned}
        C_{7}^{\infty} &= \{ X \in \p^6 \,:\, X_0=X_1=X_2=X_6=0, \quad X_4X_5=X_3^2 \}, \\
        C_{8}^{\infty} &= \{ X \in \p^6 \,:\, X_0=X_1=X_3=X_5=0, \quad X_4X_6=X_2^2 \}, \\
        C_{9}^{\infty} &= \{ X\in \p^6 \,:\, X_0=X_2=X_3=X_4=0, \quad X_5X_6=X_1^2 \}. 
    \end{aligned}
\end{equation}
The field extension used to write down the lines is described by the field homomorphism 
    \begin{equation} \label{lem:morphismUtoWPII}
        \C(\mathscr{W}_{\rm{II}}) \longrightarrow \C(\mathscr{U}_{\rm{II}}),
    \end{equation}
    defined by 
    $$w = u + u^{-1}.$$
The singular locus $\mathscr{W}_{\rm{II}}^{\operatorname{sing}}\subset \mathscr{W}_{\rm{II}}$ pulls back to $\mathscr{U}_{\rm{II}}^{\operatorname{sing}}\subset \mathscr{U}_{\rm{II}}$ defined by 
\begin{equation*}
    (u-1)^2(u+1)^2=0.
\end{equation*}

 \begin{remark}
     When $u\in \mathscr{U}^{\operatorname{sing}}_{\rm{II}}$, the following pairs of lines coincide:
     $$(L_1,L_4),\quad (L_2,L_5), \quad (L_3,L_6).$$
     Under the Riemann-Hilbert correspondence, $\mathscr{U}_{\rm{II}}^{\operatorname{sing}}$ corresponds to $\theta\in\Z$, or equivalently $a_0,a_1\in\Z$, which are parameter values for the Riccati solutions of $\pain{II}$ expressed in terms of Airy functions.
 \end{remark}

 On the affine cubic surface $\mathcal{C}_w$ there are three extra lines, which do not correspond to lines on $\mathcal{M}_w$.
As was the case for the extra line on the cubic for $\pain{IV}$, these can be written rationally in terms of $w$ and do not play a role in the monodromy of $\mathcal{M}_w$. 
These extra lines come from the following curves on $\mathcal{M}_w$ : 
\begin{equation*}
    \begin{aligned}
    &C_1 : \quad x_1 = x_5 =x_6 = 0,\quad x_2+x_3=w, \quad x_4 +x_2^2= w x_2,\\
    &C_2 : \quad x_2 = x_6 =x_4 = 0,\quad x_3+x_1=w, \quad x_5 +x_3^2= w x_3,\\
    &C_3 : \quad x_3 = x_4 =x_5 = 0,\quad x_1+x_2=w, \quad x_6 +x_1^2= w x_1.
    \end{aligned}
\end{equation*}
A geometric characterisation of these curves forms part of the following lemma, and their meaning in terms of monodromy data will be provided as part of  \Cref{prop:monodromydatalinescurvesexplainedPII}.

\begin{lemma} 
For any $w \in \mathscr{W}_{\rm{II}}\setminus \mathscr{W}_{\rm{II}}^{\operatorname{sing}}$, the birational morphism 
\begin{equation*}
    \begin{aligned}
        \pi : \overline{\mathcal{M}}_w&\to \p^2, \\
        [X_0:X_1:X_2:X_3:X_4: X_4:X_5:X_6] = X &\mapsto [Z_0,Z_1,Z_2]=[F_0(X):F_1(X):F_2(X)],
    \end{aligned}
\end{equation*}
where 
\begin{equation*}
    \begin{aligned}
    F_0(X) &= 
    \left(1-u^2\right) X_0
    +\left(u-u^{-1}\right) X_2
    +\left(u-u^{-1}\right) X_3
    +\left(u^{-2}-1\right) X_4,\\
    F_1(X) &= 
    \left(u^{-1}-u\right) X_0
    +\left(u^2-1\right) X_1
    +\left(1-u^{-2}\right) X_2
    +\left(u^{-1}-u\right) X_6,\\
    F_2(X) &= 
    \left(u^{-2}-1\right) X_0
    +\left(u-u^{-1}\right) X_1
    +\left(u-u^{-1}\right) X_3
    +\left(1-u^2\right) X_5,
    \end{aligned}
\end{equation*}
contracts the triple of disjoint lines $L_1,L_3,L_5$ on $\overline{\mathcal{M}}_w$ onto the following points of $\p^2$, 
$$L_1 \to b_1 = [1:0:-1], \quad 
L_3 \to b_2 = [0:1:-u], \quad 
L_5 \to b_3 = [1:-u:0].$$ 
The remaining lines $L_2,L_4,L_6$ are sent to the following lines between pairs of these points:
$$L_2 \to \{Z_0+ uZ_1+Z_2=0\}, \quad 
L_4 \to \{ uZ_0+Z_1+u^{-1}Z_2=0\}, \quad 
L_6 \to \{ Z_0 + u^{-1} Z_1 + Z_2=0\}.$$
The conics at infinity are sent by $\pi$ to the coordinate lines according to:
$$C_7^{\infty} \to  \{Z_1=0\}, \quad
C_8^{\infty} \to \{Z_2 = 0\}, \quad 
C_9^{\infty} \to \{Z_0 = 0\}.$$
The curves corresponding to the extra lines on $\overline{\mathcal{C}}_w$ are sent to the lines
$$C_1 \to  \{u Z_1 + Z_2=0\}, \quad
C_2 \to \{u Z_0 + Z_1\}, \quad 
C_3 \to \{Z_0+Z_2 = 0\}.$$
These are characterised uniquely as lines that join a corner point of $\p^2$ with the $b_i$ that lies on the opposite edge, with 
$$\pi(C_3) \ni b_1,[0:1:0], \quad 
\pi(C_1) \ni b_2, [1:0:0], \quad 
\pi(C_2)\ni b_3, [0:0:1].$$
\end{lemma}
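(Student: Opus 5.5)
The plan is a direct verification through the projectivity $\overline{\mathcal{M}}_w\to\overline{\mathcal{S}}$ of \Cref{lem:delpezzodeg6PII} and the blowdown $\overline{\mathcal{S}}\to\p^2$ of \eqref{eq:blowdownStoP2}. Composing these shows that some degree-one linear map $\overline{\mathcal{M}}_w\to\p^2$ is a birational morphism contracting three disjoint lines. Indeed, on the model $\overline{\mathcal{S}}$ the coordinates $g,d,c$ (or any triple of linear forms spanning the same net) realise the blowdown of three corner points.

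First, I check that $F_0,F_1,F_2$ arise in this way, possibly after a projectivity of $\p^2$. Concretely, I verify that the net $\langle F_0,F_1,F_2\rangle$ restricted to $\overline{\mathcal{M}}_w$ is the linear system $|\h|$, which consists of the hyperplane sections containing the three lines $L_1,L_3,L_5$ with multiplicity giving class $-\mathcal{K}-(\h-\E_1)-\dots$. In practice the simplest route is a substitution check:
\begin{itemize}
\item plugging the parametrisations \eqref{eq:linesPII} of $L_1,L_3,L_5$ into $(F_0,F_1,F_2)$ gives constant points up to scalar, namely $b_1,b_2,b_3$;
\item for $L_2,L_4,L_6$ the image satisfies the stated linear relations.
\end{itemize}
For example, on $L_1$ we have $x_1=u^{-1}$, $x_2=u$, $x_6=1$, $x_4=ux_3$, $x_5=u^{-1}x_3$. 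Linearity in $x_3$ makes $(F_0,F_1,F_2)$ an affine function of $x_3$, so the check reduces to showing that its two coefficient vectors are both proportional to $(1,0,-1)$.

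Second, I treat the conics at infinity and the curves $C_i$ the same way. On $C^\infty_7$ one has $X_0=X_1=X_2=X_6=0$, so $F_1=0$ identically, and similarly for $C_8^\infty,C_9^\infty$. On $C_1$, with $x_1=x_5=x_6=0$, $x_3=w-x_2$ and $x_4=wx_2-x_2^2$, I compute $uF_1+F_2$ and check that it vanishes identically after substituting $w=u+u^{-1}$. Here the quadratic terms $X_4$ occur only in $F_0$, so they never enter this combination. $C_2,C_3$ are handled likewise. The incidence claims (for instance $b_1\in\pi(C_3)$) are then immediate from evaluating the line equations at the points.

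Third, I establish that $\pi$ is a morphism that is birational and contracts exactly these lines. Base-point-freeness follows because the common zero locus of $F_0,F_1,F_2$ on $\overline{\mathcal{M}}_w$ is empty. I check this on the hyperplane section at infinity, where it reduces to the conics, and in the affine chart. The alternative is to identify $\pi$ with the composition of \eqref{eq:projectivityStoMPII} and \eqref{eq:blowdownStoP2} followed by a linear change of $[Z_0:Z_1:Z_2]$. That identification is the cleaner argument: writing $g,u^{-1}d,c$ in terms of the $X_i$ via the matrix in \eqref{eq:projectivityStoMPII}, I find the $3\times3$ matrix relating them to the $F_i$ modulo the ideal $\overline{K}_w$.

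The main obstacle is this identification, since the $F_i$ may agree with a linear combination of $g,d,c$ only modulo the quadrics $\bar k_j$, or not at all. If that fails, I fall back on the intersection-theoretic argument. A hyperplane section of $\overline{\mathcal{M}}_w$ has class $3\h-\E_1-\E_2-\E_3$. The sections $\{F_i=0\}$ each contain a conic at infinity together with two of the lines, leaving a residual class $\h$. Hence the $F_i$ define the complete system $|\h|$, which by the del Pezzo description is the blowdown of the sixer-type triple $L_1,L_3,L_5$.
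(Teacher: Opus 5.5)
Your overall strategy (identify $\pi$ with the standard blowdown via the projectivity of \Cref{lem:delpezzodeg6PII}, then check images by substitution) is the paper's. But you miss the one subtlety the paper explicitly flags, and as a result two of your steps fail.

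\textbf{The substitution check fails on $L_1$, $L_2$, $L_6$.} The linear forms $F_0,F_1,F_2$ vanish identically on these three lines. On $L_1$, your own worked example,
\[
F_0=(1-u^2)+(u-u^{-1})u+x_3\,[(u-u^{-1})+(u^{-2}-1)u]=0,
\]
and likewise $F_1=F_2=0$; the same happens on $L_2$ and $L_6$. So both coefficient vectors of your affine function of $x_3$ are zero. The check passes vacuously and proves nothing, because $[F_0:F_1:F_2]$ is a $0/0$ expression at every point of these lines. The claims $L_1\to b_1$, $L_2\to\{Z_0+uZ_1+Z_2=0\}$ and $L_6\to\{Z_0+u^{-1}Z_1+Z_2=0\}$ are therefore not established. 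By contrast, substitution does work on $L_3,L_4,L_5$; for example on $L_3$ one gets $(F_0,F_1,F_2)=(u-u^{-1})^2(x_1-u^{-1})(0,1,-u)$.

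\textbf{Your base-point-freeness claim is false.} In your third step, the common zero locus of $F_0,F_1,F_2$ on $\overline{\mathcal{M}}_w$ is not empty: it is $L_1\cup L_2\cup L_6$. Your first step also names the wrong lines: the fixed curves are $L_1,L_2,L_6$, not $L_1,L_3,L_5$. This is unavoidable. Any linear projection from $\p^6$ realising the blowdown cuts out a net in $|-\mathcal{K}|$ whose fixed part has class $-\mathcal{K}-\h=2\h-\E_1-\E_2-\E_3$, here exactly $L_1+L_2+L_6$. The same issue would arise if you went through \eqref{eq:blowdownStoP2}.

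\textbf{Your fallback count is also wrong.} A hyperplane section has anticanonical degree $6$, so it cannot be a conic plus two lines. For instance, $\{F_0=0\}\cap\overline{\mathcal{M}}_w=C_9^\infty+L_3+L_1+L_2+L_6$. The correct statement is that the moving part of the net is the base-point-free system $|\h|$, which is what makes $\pi$ a morphism.

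\textbf{How to repair it.} To get the images of $L_1,L_2,L_6$ you must, as the paper says, use the defining equations of $\overline{\mathcal{M}}_w$, rewriting the map near these lines with representatives that do not vanish there. Alternatively you can argue by incidence:
\begin{itemize}
\item $L_1$ meets both $C_7^\infty$ and $C_3$, and neither is in the base locus. Hence $\pi(L_1)=\{Z_1=0\}\cap\{Z_0+Z_2=0\}=b_1$.
\item $L_2$ meets $L_1$ and $L_3$, so $\pi(L_2)$ is the line through $b_1$ and $b_2$.
\item $L_6$ meets $L_1$ and $L_5$, so $\pi(L_6)$ is the line through $b_1$ and $b_3$.
\end{itemize}
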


\begin{proof}

The morphism $\pi$ is the standard way to project an anticanonically embedded del Pezzo surface of degree six to $\p^2$, adapted using the projectivity in \Cref{lem:delpezzodeg6PII}.
Here the choice of points $b_1,b_2,b_3$ onto which to contract is made so that the conics were sent to coordinate lines and so that the degeneration when $u=\pm1\in\mathscr{U}^{\operatorname{sing}}$ can be seen clearly.
The claims about where lines and conics are sent by the  morphism can be verified mostly by direct substitution.
The exceptions are some of the claims involving lines $L_1,L_2,L_6$, which are contained in the indeterminacy locus when $\pi$ is considered on $\p^6$, and require consideration of the defining equations of $\overline{\mathcal{M}}_w$. 
\end{proof}

In \Cref{fig:blowdownmodelPII}, we give an illustration of the images under the morphism from the previous lemma of the lines $L_1,\dots,L_6$, conics at infinity $C_7^{\infty},C_8^{\infty},C_9^{\infty}$, and conics $C_1,C_2,C_3$ coming from extra lines on the cubic.

\begin{figure}[htb]
       \begin{equation*}
    \begin{tikzpicture}[redpoint/.style={circle,draw=red!100,fill=red!100,thick, inner sep=0pt,minimum size=1mm},bluepoint/.style={circle,draw=blue!100,fill=blue!100,thick, inner sep=0pt,minimum size=1mm},blackpoint/.style={circle,draw=black!100,fill=black!100,thick, inner sep=0pt,minimum size=1mm},smallpoint/.style={circle,draw=black!100,fill=black!100,thick, inner sep=0pt,minimum size=.5mm}]
    






      


\begin{scope}[xshift=5.5cm,yshift=-.2cm, scale=2.75]
\def\epss{0.45}
\def\eps{0.10}
\def\epst{0.10}
\def\epsl{1.2}
\def\epsh{.02}
\def\epssh{.05}

\foreach \i in {1,...,3} {
    \coordinate (P\i) at ({90 + 120*(\i-1)}:1);
}
\draw[blue, line width=.75pt]
      ($(P1)!-\epss!(P2)$)
      --
      ($(P2)!-\epst!(P1)$) node[pos=1,below] {\tiny $\pi(C_8^{\infty}):Z_2=0$};
\draw[blue, line width=.75pt]
      ($(P2)!-\epss!(P3)$)
      --
      ($(P3)!-\epst!(P2)$) node[pos=1,right] {\tiny $\pi(C_7^{\infty}):Z_1=0$};   
\draw[blue, line width=.75pt]
      ($(P3)!-\epss!(P1)$)
      --
      ($(P1)!-\epst!(P3)$) node[pos=1,xshift=2pt,yshift=-1pt,above left] {\tiny $\pi(C_9^{\infty}):Z_0=0$};
      

\coordinate (B1) at (0.3,-0.5);
\coordinate (B2) at (0.3,0.5);
\coordinate (B3) at (-0.57735,0);

\draw[black, line width=.75pt]
      ($(B1)!-\eps!(B2)$)
      --
      ($(B2)!-\epsl!(B1)$);

\draw[black, line width=.75pt]
      ($(B2)!-\eps!(B3)$)
      --
      ($(B3)!-\epsl!(B2)$);      

\draw[black, line width=.75pt]
      ($(B3)!-\eps!(B1)$)
      --
      ($(B1)!-\epsl!(B3)$);     


\draw[black, dashed, line width=.75pt]
      ($(B1)!-\epsh!(P1)$)
      --
      ($(P1)!-\epsh!(B1)$);

\draw[black, dashed, line width=.75pt]
      ($(B2)!-\epsh!(P2)$)
      --
      ($(P2)!-\epsh!(B2)$);
\draw[black, dashed, line width=.75pt]
      ($(B3)!-\epsh!(P3)$)
      --
      ($(P3)!-\epsh!(B3)$);

\node[bluepoint] at (P1) {};
\node[bluepoint] at (P2) {};
\node[bluepoint] at (P3) {};
      
\node[redpoint,label={[xshift=+1pt,yshift=-0pt]below left:\tiny{\color{red}$\pi (L_1)$}}] at (B1) {};
\node[redpoint,label={[yshift=-1pt]right:\tiny{\color{red}$\pi (L_3)$}}] at (B2) {};
\node[redpoint,label={[yshift=0pt]left:\tiny{\color{red}$\pi (L_5)$}}] at (B3) {};

\coordinate (int1) at (0.3,1.52);
\coordinate (int2) at (-1.45, -0.497513);
\coordinate (int3) at (1.14172, -0.982487);

\node[blackpoint,label={[yshift=0pt]right:\tiny $\pi(L_2\cap C^{\infty}_8)$}] at (int1) {};
\node[blackpoint,label={[xshift=+1pt,yshift=0pt]above left:\tiny $\pi(L_4\cap C^{\infty}_7)$}] at (int2) {};
\node[blackpoint,label={[yshift=1pt]right:\tiny $\pi(L_6\cap C^{\infty}_9)$}] at (int3) {};

\node at (0.45,1) {\tiny $\pi(L_2)$};
\node at (0.65,-.85) {\tiny $\pi(L_6)$};
\node at (-1.15,-.2) {\tiny $\pi(L_4)$};

\node[label={[yshift=0pt]right:\tiny $\pi(C_1)$}] at (-0.725,-.35) {};
\node[label={[yshift=0pt]:\tiny $\pi(C_3)$}] at (-0.07,.5) {};
\node[label={[yshift=2pt]:\tiny $\pi(C_2)$}] at (0.55,-.45) {};


\node[smallpoint] at (-0.365,-0.0725) {};
\node[smallpoint] at (0.2575,-0.29) {};
\node[smallpoint] at (0.13,.355) {};


\node[smallpoint] at (-0.4,-0.1) {};
\node[smallpoint] at (0.3,-0.305) {};
\node[smallpoint] at (0.12,0.4) {};

\end{scope}
    \end{tikzpicture}
\end{equation*}
\caption{Images of lines and distinguished conics on the monodromy surface for $\pain{II}$ under the birational morphism $\pi : \overline{\mathcal{M}}_w\to \p^2$}
\label{fig:blowdownmodelPII}
\end{figure}

We next describe the movement of lines and conics under the map from $\overline{\mathcal{M}}_w$ to the cubic model of the monodromy surface for $\pain{II}$.
Consider the extension of the isomorphism of affine varieties in \eqref{eq:MwtocubicPII} to the del Pezzo surface $\overline{\mathcal{M}}_w$ and the projective completion of the cubic, which we write as 
\begin{equation*}
        \begin{aligned}
            \overline{\mathcal{C}}_w &= \operatorname{Proj} \C[Y_0,Y_1,Y_2,Y_3]/\overline{J}_w, \\
            \overline{J}_w &= \left( Y_1 Y_2 Y_3 - Y_0^2(Y_1+Y_2+Y_3)+w Y_0^3 \right).
        \end{aligned}
    \end{equation*}
    
\begin{proposition} \label{prop:delPezzotocubicPII}
    Let $w \in \mathscr{W}_{\rm{II}}\setminus \mathscr{W}_{\rm{II}}^{\operatorname{sing}}$
    and 
    \begin{equation*}
    \begin{aligned}
        \rho : \overline{\mathcal{M}}_w &\dashrightarrow \overline{\mathcal{C}}_w,\\
        [X_0:X_1:X_2:X_3:X_4:X_5:X_6] &\mapsto [Y_0:Y_1:Y_2:Y_3]= [X_0:X_1:X_2:X_3].
    \end{aligned}
    \end{equation*}
     be the extension of the isomorphism $\mathcal{M}_w\to\mathcal{C}_w$ in \eqref{eq:MwtocubicPII}.
   Denote the blow up at the three points $q_1,q_2,q_3\in \overline{\mathcal{M}}_w$ lying at the pairwise intersections of the three conics at infinity by $\widetilde{\overline{\mathcal{M}}}_w:=\operatorname{Bl}_{q_1,q_2,q_3}\overline{\mathcal{M}}_w$. 
   Then the map $\widetilde{\overline{\mathcal{M}}}_w\to \overline{\mathcal{C}}_w$ defined by the following commutative diagram is a minimal resolution of the three $A_1$ singularities of $\overline{\mathcal{C}}_w$:
    \begin{equation*} 
        \begin{tikzcd}
            &   \widetilde{\overline{\mathcal{M}}}_w \arrow[dl,swap,  "\operatorname{Bl}_{q_1,q_2,q_3}"] \arrow[dr] & \\
            \overline{\mathcal{M}}_w  \arrow[rr,dashed, "\rho"]& & \overline{\mathcal{C}}_w
        \end{tikzcd}
    \end{equation*}
\end{proposition}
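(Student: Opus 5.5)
The plan mirrors the proof of \Cref{lem:segretocubicPIV}, carried out in $\Pic$.

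\textbf{Step 1: locate the base points.} The map $\rho$ is projection from the plane $\Pi=\{X_0=X_1=X_2=X_3=0\}\subset\p^6$. I will compute $\overline{\mathcal{M}}_w\cap\Pi$ from the equations of \Cref{lem:projectivecompletionPII}. Setting $X_0=\dots=X_3=0$ leaves the relations $X_4X_5=X_4X_6=X_5X_6=0$ from $\bar k_7,\bar k_8,\bar k_9$. Hence the intersection consists of the three coordinate points $e_4,e_5,e_6$. Comparing with \eqref{eq:conicsatinfinityPII}, $e_4$ lies on $C_7^\infty$ and $C_8^\infty$, $e_5$ on $C_7^\infty$ and $C_9^\infty$, and $e_6$ on $C_8^\infty$ and $C_9^\infty$. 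These are therefore exactly the corners $q_1,q_2,q_3$ of the triangle of conics, and they are the only base points of $\rho$.

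\textbf{Step 2: the linear system in $\Pic$.} Since $\overline{\mathcal{M}}_w$ is anticanonically embedded (\Cref{lem:delpezzodeg6PII}), $\rho$ is given by the subsystem of $|-\mathcal{K}|$ of hyperplane sections through $\Pi$. The point $[0{:}0{:}0{:}0{:}1{:}0{:}0]$ is a smooth point of $\overline{\mathcal{M}}_w$ at which, in the affine chart $X_4=1$, the coordinates are $X_2,X_3$, and the section $X_0$ vanishes to order $2$ there (as $X_0=X_2X_3$ by $\bar k_1$). So I expect $\rho$ to be resolved by a single blowup at each $q_i$, with exceptional classes $\F_1,\F_2,\F_3$. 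The pulled-back system should then be $-\mathcal{K}_{\widetilde{\overline{\mathcal{M}}}_w}=-\mathcal{K}-\F_1-\F_2-\F_3$, which is base point free of degree $6-3=3$. This matches $\overline{\mathcal{C}}_w$ being a cubic. I will check base-point freeness by verifying that the linear forms $X_1,X_2,X_3$ do not vanish simultaneously along any exceptional curve, using the local chart above.

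\textbf{Step 3: identify the contracted curves.} Using the blowdown of Lemma~(the $\pi$ to $\p^2$), the classes of the conics at infinity are $\h-\E_i$. Their strict transforms are $\h-\E_i-\F_j-\F_k$, with self-intersection $-2$ and orthogonal to $-\mathcal{K}_{\widetilde{\overline{\mathcal{M}}}_w}$. They are therefore contracted, and they are pairwise disjoint because the corners have been separated. Each of them maps to a corner of the triangle $Y_0=0$, $Y_1Y_2Y_3=0$, which the cubic equation shows to be an $A_1$ point. The exceptional curves $\F_i$, with $\F_i\cdot(-\mathcal{K}_{\widetilde{\overline{\mathcal{M}}}_w})=1$, map to the three lines at infinity of $\overline{\mathcal{C}}_w$. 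No other curve is contracted: any contracted curve must be orthogonal to $-\mathcal{K}_{\widetilde{\overline{\mathcal{M}}}_w}$, hence a $(-2)$-curve, and a root-lattice check shows that these three are the only effective ones. The resulting morphism is birational, because it restricts to the isomorphism \eqref{eq:MwtocubicPII} on $\mathcal{M}_w$.

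\textbf{Step 4: minimality.} The resolution is minimal because the exceptional locus consists only of $(-2)$-curves, with no $(-1)$-curves in any fibre.

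The main obstacle is Step 2: confirming that one blowup at each $q_i$ suffices, that is, that the tangency order is $2$ and no infinitely near base points remain. I would settle this directly in the affine charts $X_{4}=1$, $X_5=1$, $X_6=1$ using the quadrics $\bar k_i$.
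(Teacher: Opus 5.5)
Your proposal is correct and follows essentially the same route as the paper: blowing up the three corners $q_i$ turns the conics at infinity into the pairwise disjoint $(-2)$-curves $\h-\E_i-\F_j-\F_k$, which are contracted onto the three $A_1$ points of $\overline{\mathcal{C}}_w$, so the result is a minimal resolution. Your Steps 1--2 spell out the ``direct computation'' that the paper only invokes: the base locus of $\rho$ on $\overline{\mathcal{M}}_w$ is exactly $\{q_1,q_2,q_3\}$, and the base ideal is the maximal ideal at each $q_i$, so one blowup at each corner suffices. The root-lattice check in Step 3 can be replaced by a shorter argument: the morphism restricts to an isomorphism on $\mathcal{M}_w$, so any contracted curve must lie in the boundary.
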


\begin{proof}
    
    We will make use of computations in the Picard groups of $\overline{\mathcal{M}}_w$ and $\widetilde{\overline{\mathcal{M}}}_w$.
    The former can be described as 
$$\Pic(\overline{\mathcal{M}}_w)\cong \Z\mathcal{H}\oplus \Z \mathcal{E}_1\oplus\Z\mathcal{E}_2\oplus \Z \mathcal{E}_3,$$
with the intersection form given by 
$$\mathcal{H}\cdot \mathcal{H}=1, \quad \mathcal{E}_i \cdot \mathcal{E}_i=-1,\quad i \in \{1,2,3\},$$
and zero on all other pairs of generators.
The six lines on $\overline{\mathcal{M}}_{w}$ are the exceptional curves,  which intersect like a hexagon, and correspond to the following elements of $\Pic(\overline{\mathcal{M}}_w)$:
    $$L_1 :\mathcal{E}_1,\quad 
    L_2 :\mathcal{H}-\mathcal{E}_1-\mathcal{E}_2,\quad 
    L_3 : \mathcal{E}_2,\quad 
    L_4 :\mathcal{H}-\mathcal{E}_2-\mathcal{E}_3, \quad 
    L_5 :\mathcal{E}_3,\quad 
    L_6 : \mathcal{H}-\mathcal{E}_1-\mathcal{E}_3. $$
    The anticanonical divisor class is written as 
    $$-\mathcal{K}_{\overline{\mathcal{M}}_w} = 3 \mathcal{H}-\mathcal{E}_1-\mathcal{E}_2-\mathcal{E}_3,$$
    and the triangle of conics at infinity gives an effective anticanonical divisor whose three irreducible components correspond to the elements 
    $$C^{\infty}_7 : \mathcal{H}-\mathcal{E}_1,\quad 
    C^{\infty}_8 : \mathcal{H}-\mathcal{E}_3, \quad  
    C^{\infty}_9 : \mathcal{H}-\mathcal{E}_2.$$
    
        Now consider the blowup $\operatorname{Bl}_{q_1,q_2,q_3} : \widetilde{\overline{\mathcal{M}}}_w \to \overline{\mathcal{M}}_w$ 
        centred at the three points 
\begin{align*}
    q_1 &= C^{\infty}_8\cap C^{\infty}_9: [X_0:X_1:X_2:X_3:X_4:X_5:X_6]=[0:0:0:0:0:0:1],   \\ 
    q_2 &= C^{\infty}_7\cap C^{\infty}_9: [X_0:X_1:X_2:X_3:X_4:X_5:X_6]=[0:0:0:0:0:1:0],  \\ 
    q_3 &= C^{\infty}_7\cap C^{\infty}_8: [X_0:X_1:X_2:X_3:X_4:X_5:X_6]=[0:0:0:0:1:0:0].   
\end{align*}
Denote the classes of the exceptional divisors of the blowups of $q_1,q_2,q_3$ by $\mathcal{F}_1,\mathcal{F}_2,\mathcal{F}_3$, respectively, so 
$$\Pic( \widetilde{\overline{\mathcal{M}}}_w) \cong \Z\mathcal{H}\oplus \Z \mathcal{E}_1\oplus\Z\mathcal{E}_2\oplus \Z \mathcal{E}_3\oplus \Z \mathcal{F}_1\oplus \Z \mathcal{F}_2\oplus \Z \mathcal{F}_3.$$
Then the strict transforms of the three conics correspond to 
$$C^{\infty}_7 : \mathcal{H}-\mathcal{E}_1-\mathcal{F}_2-\mathcal{F}_3, \quad
C^{\infty}_8 : \mathcal{H}-\mathcal{E}_2-\mathcal{F}_1-\mathcal{F}_3, \quad
C^{\infty}_9 : \mathcal{H}-\mathcal{E}_3-\mathcal{F}_1-\mathcal{F}_2.$$
Then $\widetilde{\overline{\mathcal{M}}}_w$ is smooth, and a direct computation shows that the map $\rho\circ \operatorname{Bl}_{q_1,q_2,q_3} : \widetilde{\overline{\mathcal{M}}}_w \rightarrow \overline{\mathcal{C}}_w$ is a birational morphism which contracts these three $-2$ curves onto the $A_1$ singularities of $\overline{\mathcal{C}}_w$ at 
\begin{align*}
    p_1 : [Y_0:Y_1:Y_2:Y_3] &= [0:1:0:0], \\
p_2 : [Y_0:Y_1:Y_2:Y_3] &= [0:0:1:0], \\
p_3 : [Y_0:Y_1:Y_2:Y_3] &= [0:0:0:1],
\end{align*}
and this is a minimal resolution. 
We give an illustration of the relation between the hyperplane sections at infinity $\overline{\mathcal{M}}_w\setminus \mathcal{M}_w$ and $\overline{\mathcal{C}}_w\setminus\mathcal{C}_w$ under $\rho : \overline{\mathcal{M}}_w \dashrightarrow \overline{\mathcal{C}}_w$ in \Cref{fig:delpezzotocubicdivisoratinfinityPII}. \end{proof}

 \begin{figure}[htb]
       \begin{equation*}
    \begin{tikzpicture}[redpoint/.style={circle,draw=red!100,fill=red!100,thick, inner sep=0pt,minimum size=1mm},bluepoint/.style={circle,draw=blue!100,fill=blue!100,thick, inner sep=0pt,minimum size=1mm}]
    

\begin{scope}[scale=0.8,yshift=.2cm]
\def\eps{0.25}

\coordinate (q1) at (-1,0.6);
\coordinate (q2) at ( 1,0.6);
\coordinate (q3) at ( 0,-1);

\draw[line width=.75pt, blue]
  ($(q1)!-\eps!(q2)$)
  to[bend left=15]
  ($(q2)!-\eps!(q1)$);
\node at (+0,1.2) {\tiny $C_9^{\infty}$};

\draw[line width=.75pt, blue]
  ($(q1)!-\eps!(q3)$)
  to[bend right=35]
  ($(q3)!-\eps!(q1)$);
\node at (-1.3,-.5) {\tiny $C_8^{\infty}$};
\draw[line width=.75pt, blue]
  ($(q2)!-\eps!(q3)$)
  to[bend left=35]
  ($(q3)!-\eps!(q2)$);
\node at (+1.3,-.5) {\tiny $C_7^{\infty}$};

\node[redpoint] at ($(q1)+(-0.25,0.05)$) {};
\node[above left] at ($(q1)+(-0.25,0.05)$) {$q_1$};
\node[redpoint] at ($(q2)+(+0.25,0.05)$) {};
\node[above right] at ($(q2)+(+0.25,0.05)$) {$q_2$}; 
\node[redpoint] at ($(q3)+(0.0,-0.275)$) {};
\node[below] at ($(q3)+(0.0,-0.35)$) {$q_3$}; 
\end{scope}

\draw[<-] (1.85,0) -- node[pos=0.5, above] {$\operatorname{Bl}_{q_1,q_2,q_3}$} (3.25,0);


\begin{scope}[xshift=5cm]

\def\eps{0.10}

\foreach \i in {1,...,6}{
  \coordinate (P\i) at ({60*(\i-1)}:1);
}

\foreach \i/\j in {1/2,2/3,3/4,4/5,5/6,6/1}{
  \draw[line width=.75pt]
    ($(P\i)!-\eps!(P\j)$)
    --
    ($(P\j)!-\eps!(P\i)$);
}

        \draw[line width=1pt, red] ($(P1)!-\eps!(P2)$)
      --
      ($(P2)!-\eps!(P1)$);
      
      \draw[line width=1pt, blue] ($(P2)!-\eps!(P3)$)
      --
      ($(P3)!-\eps!(P2)$);

    \draw[line width=1pt, red] ($(P3)!-\eps!(P4)$)
      --
      ($(P4)!-\eps!(P3)$);
      
      \draw[line width=1pt, blue] ($(P4)!-\eps!(P5)$)
      --
      ($(P5)!-\eps!(P4)$);
    
    \draw[line width=1pt, red] ($(P5)!-\eps!(P6)$)
      --
      ($(P6)!-\eps!(P5)$);
      
      \draw[line width=1pt, blue] ($(P6)!-\eps!(P1)$)
      --
      ($(P1)!-\eps!(P6)$);

\draw[->] (1.5,0) -- node[pos=0.5, above] {$\rho\circ\operatorname{Bl}_{q_1,q_2,q_3}$}(3,0);

\end{scope}


\begin{scope}[xshift=10.5cm,yshift=-.2cm]
\def\eps{0.10}

\foreach \i in {1,...,3} {
    \coordinate (P\i) at ({90 + 120*(\i-1)}:1);
}
    \draw[line width=.75pt, red]
      ($(P1)!-\eps!(P2)$)
      --
      ($(P2)!-\eps!(P1)$);
    \draw[line width=.75pt, red]
      ($(P2)!-\eps!(P3)$)
      --
      ($(P3)!-\eps!(P2)$);
    \draw[line width=.75pt, red]
      ($(P3)!-\eps!(P1)$)
      --
      ($(P1)!-\eps!(P3)$);
\node at (1.1,0.5) {\tiny $\{Y_0=Y_1=0\}$}; 
\node at (-1.1,0.5) {\tiny $\{Y_0=Y_2=0\}$}; 
\node at (0,-0.75) {\tiny $\{Y_0=Y_3=0\}$}; 

\node[bluepoint,label={[yshift=4pt]above:$p_3$}] at (P1) {};
\node[bluepoint, label={[yshift=-4pt]below left:$p_1$}] at (P2) {};
\node[bluepoint, label={[yshift=-4pt]below right:$p_2$}] at (P3) {};

\end{scope}


\node at (0,2) {$\overline{\mathcal{M}}_w$};
\node at (5.1,2) {$\widetilde{\overline{\mathcal{M}}}_w$};
\node at (10.5,2) {$\overline{\mathcal{C}}_w$};
    \end{tikzpicture}
\end{equation*}
\caption{Relation of hyperplane sections at infinity under $\rho : \overline{\mathcal{M}}_w \dashrightarrow \overline{\mathcal{C}}_w$ for $\pain{II}$. }
\label{fig:delpezzotocubicdivisoratinfinityPII}
\end{figure}

We next explain the meaning of the lines $L_k$, $1\leq k\leq 6$,  and the curves $C_{\ell}$, $1\leq \ell\leq3$, 
in terms of generalised monodromy data and solutions of the linear problem.
In order to do this, we introduce the following notation for the solutions $Y_k(z)$, which we recall are asymptotic to the formal solution $Y_{\operatorname{form}}(z)$ given in \cref{eq:formalsolPII} as $z\to\infty$ in $\Sigma_{k}\cup\Sigma_{k+1}$.
Denote the columns of the matrix function in \eqref{eq:matrixPasymptoticPII} according to $P(z)=\begin{pmatrix}
        P_+(z) & P_-(z)
    \end{pmatrix}.$
Define $\Psi_k(z)$ as the unique column vector solution of \eqref{eq:JMlaxpair1} satisfying
\begin{equation*}
    \Psi_k(z)\sim P_\pm(z)e^{\pm(\frac{1}{3}z^3+\frac{1}{2}tz)}z^{\mp \theta},
\end{equation*}
with sign $\pm 1=(-1)^k$ determined by the parity of $k$,
as $z\rightarrow \infty$ in $\Sigma_{k-1}\cup\Sigma_k\cup\Sigma_{k+1}$. Then we have, for $k\in\mathbb{Z}$,
\begin{equation*}
    Y_k=\begin{pmatrix}
        \Psi_k,\Psi_{k+1}
    \end{pmatrix}\quad \text{if $k$ even},\qquad
     Y_k=\begin{pmatrix}
        \Psi_{k+1},\Psi_k
    \end{pmatrix}\quad \text{if $k$ odd,}
\end{equation*}
and the Stokes phenomenon takes the form
\begin{equation*}
    \Psi_{k+1}=\Psi_{k-1}+s_k\Psi_k.
\end{equation*}
The following is proven by direct analogy with \Cref{prop:curves_explained}. 
\begin{proposition} \label{prop:monodromydatalinescurvesexplainedPII}
    The lines $L_1,\dots,L_{6}$ correspond to the following distinguished hyperplane sections of the the space $M$ of Stokes multipliers given in \Cref{eq:monodromyspaceMPII}:
    \begin{equation*}
    \begin{aligned}
    L_1 &: \left\{ s_2 = 0 \right\} \cap M, \quad
    &&L_2 : \left\{ s_3 = 0 \right\} \cap M, \quad
    &&L_3 : \left\{ s_4 = 0 \right\} \cap M, \\
    L_4 &: \left\{ s_5 = 0 \right\} \cap M, \quad 
    &&L_5 : \left\{ s_6 = 0 \right\} \cap M, \quad
    &&L_6 : \left\{ s_1 = 0 \right\} \cap M. 
    \end{aligned}
    \end{equation*}
The three distinguished conics on $\mathcal{M}_w$ that become lines on $\mathcal{C}_w$ correspond to the following:
\begin{equation*}
    C_1 : \left\{s_5 s_6+1=0 \right\} \cap M, \quad
    C_2 : \left\{s_1 s_2+1=0 \right\} \cap M, \quad 
    C_3 : \left\{s_3 s_4+1=0 \right\} \cap M.
\end{equation*}
From an analytic point of view, they each correspond to a different quantisation condition on the spectral equation:
 \begin{enumerate}
     \item $\Psi_{k+1}=c\,\Psi_{k-1}$, for some $c\in\mathbb{C}^*$, is equivalent to $s_k=0$, in which case $c=1$, corresponding to line $L_k,$ for $1\leq k\leq 6$.
     \item $\Psi_{k+2}=c\,\Psi_{k-1}$, for some $c\in\mathbb{C}^*$, is equivalent to $1+s_ks_{k+1}=0$, in which case $c=s_{k+1}$, corresponding to conic $C_k$, for $k=1,2,3$.
 \end{enumerate}
\end{proposition}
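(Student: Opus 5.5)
The plan mirrors the proof of \Cref{prop:curves_explained}, in two parts: an algebraic identification of the curves inside the space $M$ of Stokes multipliers, and an analytic interpretation via the relation $\Psi_{k+1}=\Psi_{k-1}+s_k\Psi_k$.

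For the algebraic part, I would express $x_1,\dots,x_6$ directly in terms of Stokes multipliers by combining \eqref{eq:ytosPII} with \eqref{eq:xtoyPII}. This gives, for instance,
\begin{equation*}
x_1=u(s_4s_5+1),\quad x_2=u^{-1}(s_5s_6+1),\quad x_3=u^{-1}(s_3s_4+1),\quad x_5=1-u^{-2}s_1s_4.
\end{equation*}
Take $L_1$ as the model case, which is cut out by $x_1=u^{-1}$, $x_2=u$ and $x_6=1$. The equation $x_6=1$ reads $s_2s_5=0$. The remaining two conditions, $s_4s_5=u^{-2}-1$ and $s_5s_6=u^2-1$, force $s_5\neq 0$ on the line, so $s_2=0$.

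Conversely, suppose $s_2=0$ in $M$. The ideal generators \eqref{eq:idealpiv} collapse: $i_1$ gives $s_4s_5=u^{-2}-1$, and then $i_4$, $i_2$, $i_3$ give the remaining linear conditions on $x_4,x_5$ relative to $x_3$. Hence $\{s_2=0\}\cap M$ maps onto $L_1$.

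The other lines follow from this case by the cyclic shift of indices. Concretely, the action of $\vartheta$ computed in \Cref{thm:symmetryunderRHPII} sends $s_k$ to a multiple of $s_{k+2}$ and permutes the lines $L_1\to L_3\to L_5$. The lines $L_2,L_4,L_6$ are handled either by the same direct check or by the analogous substitution. Since the index shift and the variable labels must match exactly, I would verify each case explicitly from \eqref{eq:linesPII}.

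For the conics, the identification is immediate from the formulas above. The curve $C_3$ is $x_3=0$, which is exactly $s_3s_4+1=0$. Likewise $C_1$ is $x_2=0$, i.e.\ $s_5s_6=-1$. For $C_2$, the condition $x_1=0$ reads $s_4s_5=-1$, which is rewritten as $s_1s_2=-1$ using the linear relation $y_1+1=u^2(y_8+1)$. The other defining equations of each $C_\ell$ (such as $x_5=x_6=0$) then follow on $M$ from the generators of $K_w$.

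For the analytic part, statement (1) is read off directly from $\Psi_{k+1}=\Psi_{k-1}+s_k\Psi_k$. Here $\Psi_{k-1}$ and $\Psi_k$ are linearly independent because their asymptotics differ, having opposite exponential factors on the common sector $\Sigma_k$. So $\Psi_{k+1}$ is a multiple of $\Psi_{k-1}$ exactly when $s_k=0$, and then $c=1$.

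For statement (2), I would apply the relation twice to get
\begin{equation*}
\Psi_{k+2}=s_{k+1}\Psi_{k-1}+(1+s_ks_{k+1})\Psi_k,
\end{equation*}
exactly as in \Cref{prop:curves_explained}. Thus $\Psi_{k+2}$ is proportional to $\Psi_{k-1}$ iff $1+s_ks_{k+1}=0$, with $c=s_{k+1}$, which is nonzero because $s_ks_{k+1}=-1$.

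The main obstacle I expect is bookkeeping rather than anything conceptual. The index convention for $L_k$ versus $s_k$ is shifted, since $L_1$ corresponds to $s_2$, while the conics $C_k$ are matched to the products $s_5s_6$, $s_1s_2$ and $s_3s_4$. Each of these correspondences needs to be checked against the defining equations of the lines and conics, using the linear relations among the $y_i$ valid on $M$.
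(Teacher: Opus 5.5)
Your route is the same as the paper's, which only says the result follows ``by direct analogy'' with \Cref{prop:curves_explained}. You write the $x_i$ in terms of the $s_j$ via \eqref{eq:ytosPII} and \eqref{eq:xtoyPII}, check each curve, and iterate $\Psi_{k+1}=\Psi_{k-1}+s_k\Psi_k$ once and twice. Several pieces are fine: your treatment of $L_1$, the linear independence of $\Psi_{k-1},\Psi_k$, and the recursion giving $c=1$ and $c=s_{k+1}$. The gap is in the labels, and it is not mere bookkeeping. With the paper's definitions in \eqref{eq:linesPII} and of $C_1,C_2,C_3$, the identifications you claim for $L_2,L_3,L_5,L_6$ and for $C_1,C_2$ are false. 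Your proposal therefore ``proves'' assertions that do not hold.

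\textbf{Lines.} The $\vartheta$-transport runs the wrong way.
\begin{itemize}
\item By \Cref{tab:PII:symmetriesunderRH}, a point of $L_1$ (with $x_1=u^{-1}$, $x_2=u$, $x_6=1$) is sent to $\tilde x_1=u$, $\tilde x_3=u^{-1}$, $\tilde x_5=1$. That is a point of $L_5$.
\item Meanwhile $\tilde s_{k+2}\propto s_k$ sends $\{s_2=0\}$ to $\{s_4=0\}$. So the transport gives $\{s_4=0\}\cap M\to L_5$, not $L_3$.
\item Direct substitution confirms this. Setting $s_4=0$ kills $y_2,y_5,y_8$, which gives $x_1=u$, $x_3=u^{-1}$, $x_5=1$. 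But $L_3$ has $x_3=u\neq u^{-1}$.
\end{itemize}
The same happens for the other lines, using the relations $y_1+1=u^2(y_8+1)$, $y_5+1=u^{-2}(y_3+u^4)$ and $y_9+1=u^2(y_4+1)$:
\begin{itemize}
\item $s_1=0$ gives $(x_1,x_3,x_5)=(u^{-1},u,1)$, which is $L_2$.
\item $s_3=0$ gives $(x_2,x_3,x_4)=(u,u^{-1},1)$, which is $L_6$.
\item $s_6=0$ gives $(x_2,x_3,x_4)=(u^{-1},u,1)$, which is $L_3$.
\end{itemize}
So $\{s_k=0\}\cap M$ maps onto $L_{3-k}$, with indices mod $6$. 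The displayed list is correct only for $L_1$ and $L_4$. The phrase ``corresponding to line $L_k$'' in item (1) is wrong for every $k$. Your own remark that $L_1$ corresponds to $s_2$ already contradicts item (1), so (1) cannot simply be ``read off''.

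\textbf{Conics.} You interchanged the definitions of $C_1$ and $C_2$. In the paper, $C_1$ is the curve with $x_1=x_5=x_6=0$, and $C_2$ is the curve with $x_2=x_4=x_6=0$. Your own computations then give
\[
\{s_1s_2+1=0\}=\{x_1=0\}=C_1,\qquad \{s_5s_6+1=0\}=\{s_2s_3+1=0\}=\{x_2=0\}=C_2 .
\]
This agrees with item (2), but not with the displayed list, where $C_1$ and $C_2$ are swapped.

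\textbf{What your method proves.} It establishes a corrected statement: $\{s_k=0\}\cap M$ maps onto $L_{3-k}$, and $\{1+s_ks_{k+1}=0\}\cap M$ maps onto $C_k$ for $k=1,2,3$. The analytic characterisations (1) and (2) hold with these labels attached. To get there, carry out every case explicitly, as you yourself propose at the end. Do not transport by $\vartheta$ using the cycle $(1\,3\,5)$ in the direction opposite to the $s$-shift, and do not adjust the definitions of $C_1,C_2$ to fit the printed labels.
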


\begin{remark}
    Under the Riemann-Hilbert map defined by the Flaschka-Newell linear problem, each of the lines $L_k$, $1\leq k\leq 6$, corresponds to a one-parameter family of increasing tronqu\'ee solutions and their points of intersection correspond to the six increasing tritronqu\'ee solutions of $\pain{II}$ \cite[Sec. 11.5]{fokas}. Similarly, each of the distinguished conics $C_k$, $1\leq k\leq 3$, correspond to a one-parameter family of decreasing tronqu\'ee solutions and their points of intersection correspond to the three decreasing tritronqu\'ee solutions  of $\pain{II}$ \cite[Sec. 11.6]{fokas}. The six intersection points among the lines and distinguished conics correspond to simultaneous increasing tronqu\'ee and decreasing tronqu\'ee solutions, one of which is the Hastings-McLeod solution \cite[Sec. 11.7]{fokas}.
\end{remark}

\begin{remark}
    The automorphism group of $\mathcal{M}_w$ inside $\mathfrak{C}_{\rm{II}}$ is maximal when $w=0$, generated by $\langle\varpi_1,\varpi_2\rangle$ and $\Aut(A_1^{(1)})$, see Proposition \ref{prop:oblomkovstylenormalformPII}.  In this case, there is a unique global fixed point, $x=(0,0,0,0,0,0)$, where all three distinguished conics $C_k$, $1\leq k\leq 3$, intersect. The corresponding solutions of $\pain{II}$, under
    both instances of the Riemann-Hilbert correspondence induced by the Jimbo-Miwa and
     Flaschka-Newell Lax pairs, are the rational solutions, expressible in terms of Yablonskii-Vorob'ev polynomials \cite{millerrational}. The Stokes data in the two cases can be taken as 
     $s_k^{\operatorname{JM}}=i$, $1\leq k\leq 6$, and
$s_k^{\operatorname{FN}}=0$, $1\leq k\leq 3$, respectively.
\end{remark}

\subsection{Combinatorial monodromy}

We will describe the monodromy of the family $\overline{\mathcal{M}}\to \mathscr{W}_{\rm{II}}$ first in terms of a graph encoding intersections among lines on $\overline{\mathcal{M}}_w$ as well as the triangle of conics at infinity.
\begin{definition} \label{def:PIIintersectiongraph}
    Let $u\in\mathscr{U}_{\rm{II}}\setminus\mathscr{U}_{\rm{II}}^{\operatorname{sing}}$ and consider the lines $L_1,\dots,L_{6}$ on $\overline{\mathcal{M}}_w$ and the conics $C_7^{\infty},C_8^{\infty},C_9^{\infty}$, with $w\in \mathscr{W}_{\rm{II}}$ given by $w=u+u^{-1}$.
    Form the graph with six blue vertices corresponding to the lines and three red vertices corresponding to the conics, with edges encoding pairwise intersections.
     Denote this intersection graph by $\mathcal{G}_{\rm{II}}$, and the subgraph generated by the red vertices by $\mathcal{G}_{\rm{II}}^\infty$.
\end{definition}
The intersection graph $\mathcal{G}_{\rm{II}}$ can be computed from the expressions for the lines and conics on $\overline{\mathcal{M}}_w$ given in \eqref{eq:linesPII} and \eqref{eq:conicsatinfinityPII2}, and is shown in \Cref{fig:linesgraphpII}.
\begin{figure}
    \centering
    \includegraphics[width=0.35\linewidth]{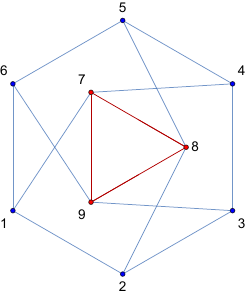}
    \caption{Graph $\mathcal{G}_{\rm{II}}$ encoding intersections of lines and the conics at infinity on the monodromy variety corresponding to $\pain{II}$,
    with in red the subgraph $\mathcal{G}_{\rm{II}}^\infty$ induced by the vertices $(C_{7}^\infty,C_{8}^\infty,C_{9}^\infty)$.
    }
    \label{fig:linesgraphpII}
\end{figure}

\begin{definition} \label{def:combinatorialmonodromyPII}
The \emph{combinatorial monodromy} of the family $\mathcal{M}\to\mathscr{W}_{\rm{II}}$ is the group 
$$\operatorname{Fix}_{\operatorname{Aut}(\mathcal{G}_{\rm{II}})}\left( \mathcal{G}_{\rm{II}}^{\infty}\right)$$
of automorphisms of the graph $\mathcal{G}_{\rm{II}}$ in \Cref{fig:linesgraphpII} that pointwise fix the vertices $\{7,8,9\}$.
\end{definition}
We will show that the combinatorial monodromy of $\mathcal{M}\to \mathscr{W}_{\rm{IV}}$ is isomorphic to  
$$W(A_1) = \langle r_1~|~ r_1^2=1\rangle \cong \mathfrak{S}_2,$$
which forms the underlying finite Weyl group of the symmetry group of $\pain{II}$.

\begin{proposition} \label{prop:combinatorialmonodromyPII}
    The combinatorial monodromy of the family $\mathcal{M}\to \mathscr{W}_{\rm{II}}$ is isomorphic to $W(A_1)$, with an explicit isomorphism
    \begin{equation*}
        W(A_1) \xrightarrow{\sim}   \operatorname{Fix}_{\operatorname{Aut}(\mathcal{G}_{\rm{II}})}(\mathcal{G}_{\rm{II}}^\infty), \quad r_1\mapsto \tilde{r}_1,
    \end{equation*}
defined by 
$$    \tilde{r}_1= (1\,\, 4)\,(2\,\,5)\,(3\,\,6).$$
\end{proposition}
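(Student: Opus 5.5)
The plan is to argue in $\Pic(\overline{\mathcal{M}}_w)$, as in the proofs of \Cref{prop:combinatorialmonodromyPVI,prop:combinatorialmonodromyPIV}, now using the degree six model from the proof of \Cref{prop:delPezzotocubicPII}. There $\Pic(\overline{\mathcal{M}}_w)\cong\Z\h\oplus\Z\E_1\oplus\Z\E_2\oplus\Z\E_3$, the lines $L_1,\dots,L_6$ have classes $\E_1,\ \h-\E_1-\E_2,\ \E_2,\ \h-\E_2-\E_3,\ \E_3,\ \h-\E_1-\E_3$, and the conics at infinity have classes $C^\infty_7:\h-\E_1$, $C^\infty_8:\h-\E_3$, $C^\infty_9:\h-\E_2$. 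Intersection numbers in this lattice give $\mathcal{G}_{\rm{II}}$: the six lines form a hexagon, the three conics form a triangle, and each conic meets exactly two opposite lines. Concretely, $C^\infty_7$ meets $L_1,L_4$, $C^\infty_9$ meets $L_3,L_6$, and $C^\infty_8$ meets $L_5,L_2$. One can also read this off directly from \eqref{eq:linesPII} and \eqref{eq:conicsatinfinityPII2}.

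\textbf{Step 1: the combinatorial monodromy has order at most $2$.} Let $\phi$ be a graph automorphism fixing $7,8,9$ pointwise. Then $\phi$ preserves each set of blue neighbours $\{L_1,L_4\}$, $\{L_3,L_6\}$, $\{L_2,L_5\}$. Because $\phi$ also preserves the hexagon, it is determined by its image of $L_1$. If $\phi(L_1)=L_1$, then the hexagon neighbours $\{L_2,L_6\}$ of $L_1$ are preserved setwise. Membership in the pairs forces $L_2\mapsto L_2$ and $L_6\mapsto L_6$, so $\phi$ is the identity. If $\phi(L_1)=L_4$, then $\phi$ is uniquely determined, and the only candidate is $(1\,4)(2\,5)(3\,6)$.

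\textbf{Step 2: $\tilde r_1$ lies in the group.} I check directly that $(1\,4)(2\,5)(3\,6)$ maps hexagon edges to hexagon edges: $12\mapsto45$, $23\mapsto56$, $34\mapsto61$, and so on. It fixes each conic vertex, and it preserves every conic–line edge, since each conic's pair of neighbours is one of its transpositions. Therefore $\operatorname{Fix}_{\operatorname{Aut}(\mathcal{G}_{\rm{II}})}(\mathcal{G}^\infty_{\rm{II}})=\{1,\tilde r_1\}\cong W(A_1)$.

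\textbf{Step 3: identify $\tilde r_1$ with a root reflection.} In the same style as before, the roots of $A_2\times A_1$ orthogonal to the three conic classes are spanned by $\beta=2\h-\E_1-\E_2-\E_3$ up to sign. This has $\beta^2=4-3=1$, so instead I use the lattice isometry $\F\mapsto -\F+(\F\cdot\beta)\beta$ restricted to the hexagon, which is the Cremona involution. It sends $\E_i\mapsto\h-\E_j-\E_k$ and fixes each $\h-\E_i$, and so it induces exactly $\tilde r_1$. This step is optional; Step 1 alone suffices.

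\textbf{Main obstacle.} The only real work is fixing the correct conic–line incidences. I will do this via the Picard classes, checking for example $(\h-\E_1)\cdot\E_1=1$, $(\h-\E_1)\cdot(\h-\E_2-\E_3)=1$, and $(\h-\E_1)\cdot(\h-\E_1-\E_2)=0$.
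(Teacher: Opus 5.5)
Your Steps 1 and 2 are correct and make explicit the paper's primary argument. The paper likewise inspects $\mathcal{G}_{\rm{II}}$ (a hexagon of lines, a triangle of conics, each conic joined to an opposite pair of lines) and concludes that the pointwise stabiliser of the red vertices is the subgroup of hexagon symmetries respecting the pairs $\{1,4\},\{2,5\},\{3,6\}$, namely $\{1,\tilde r_1\}$. Your case split on $\phi(L_1)\in\{L_1,L_4\}$ is a clean way to carry out that inspection. Your conic--line incidences also agree with the explicit equations \eqref{eq:linesPII} and \eqref{eq:conicsatinfinityPII2}.

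Your optional Step 3, however, is wrong as written and should be dropped or replaced.
\begin{itemize}
\item $\beta=2\h-\E_1-\E_2-\E_3$ is not orthogonal to the conic classes, since $\beta\cdot(\h-\E_1)=1$.
\item $\beta$ is not a root, since $\beta^2=1$.
\item The map $\F\mapsto-\F+(\F\cdot\beta)\beta$ is not an isometry: it sends $\beta$ to $0$.
\item It sends $\E_1$ to $2\h-2\E_1-\E_2-\E_3$, not to $\h-\E_2-\E_3$.
\end{itemize}
The lattice-theoretic confirmation in the paper uses the root $\alpha_3=\h-\E_1-\E_2-\E_3$, which has $\alpha_3^2=-2$ and is orthogonal to each $\h-\E_i$. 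Its reflection $\F\mapsto\F+(\F\cdot\alpha_3)\alpha_3$ is the quadratic Cremona involution, with $\h\mapsto 2\h-\E_1-\E_2-\E_3$ and $\E_i\mapsto\h-\E_j-\E_k$. It fixes the three conic classes and induces exactly $\tilde r_1$. You appear to have conflated the root $\alpha_3$ with the image $2\h-\E_1-\E_2-\E_3$ of $\h$.
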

\begin{proof}
Allowing permutations to act nontrivially on the red vertices, $\Aut(\mathcal{G}_{\rm{II}})$ is isomorphic to the dihedral group $\mathfrak{D}_6$ of order 12, i.e. the symmetry group of the hexagon encoding the intersections between the six lines on $\overline{\mathcal{M}}_{w}$.
It can be seen by inspection of the graph that the combinatorial monodromy $\operatorname{Fix}_{\operatorname{Aut}(\mathcal{G}_{\rm{II}})}(\mathcal{G}_{\rm{II}}^\infty)$ is the subgroup that respects the partition $\{(1,4),(2,5),(3,6)\}$ of the blue vertices into pairs, and is generated by $\tilde{r}_1$ as claimed.

This can also be seen on the level of Cremona isometries, using the realisation of $\overline{\mathcal{M}}_w$ as $\p^2$ blown up at three points as in the proof of \Cref{prop:delPezzotocubicPII}.
We have 
$$\Pic(\overline{\mathcal{M}}_w)\cong \Z\mathcal{H}\oplus \Z \mathcal{E}_1\oplus\Z\mathcal{E}_2\oplus \Z \mathcal{E}_3,$$
with the intersection form given by 
$$\mathcal{H}\cdot \mathcal{H}=1, \quad \mathcal{E}_i \cdot \mathcal{E}_i=-1,\quad i \in \{1,2,3\},$$
and zero on all other pairs of generators.
    The Cremona isometries of $\Pic(\overline{\mathcal{M}}_w)$ form $\mathfrak{D}_6$ and act by permutation on the set of exceptional classes in $\Pic(\overline{\mathcal{M}}_w)$:
    $$\operatorname{EX} = 
    \left\{\mathcal{E}_1,\,\,
    \mathcal{H}-\mathcal{E}_1-\mathcal{E}_2,\,\,
    \mathcal{E}_2,\,\,
    \mathcal{H}-\mathcal{E}_2-\mathcal{E}_3, \,\,
    \mathcal{E}_3, \,\,
    \mathcal{H}-\mathcal{E}_1-\mathcal{E}_3\right\}.$$
    This can be described as the finite Weyl group $W(A_1+A_2)\cong W(A_1)\times W(A_2)\cong \mathfrak{S}_2\times \mathfrak{S}_3\cong \mathfrak{D}_6$, generated by reflections 
    $$r_{\alpha_i}(\mathcal{F})=\mathcal{F}+\left( \mathcal{F}\cdot \alpha_i\right) \alpha_i,$$
    associated to the
    simple roots in $\Pic(\overline{\mathcal{M}}_w)$ given by
    $$\alpha_1 = \E_1-\E_2, \qquad \alpha_2 = \E_2-\E_3, \qquad \alpha_3 = \h-\E_1-\E_2-\E_3,$$
    of which $\alpha_1,\alpha_2$ generate the $A_2$ root subsystem, and $\alpha_3$ corresponds to the $A_1$ root subsystem.
    
While the conics at infinity are not among the lines on $\overline{\mathcal{M}}_w$, they correspond to the following elements of $\Pic(\overline{\mathcal{M}}_w)$:
$$ \h-\E_1, \quad \h-\E_2, \quad \h-\E_3.$$
Thus the combinatorial monodromy is the subgroup of $W(A_2)\times W(A_1)$ that fixes pointwise these elements. 
This subgroup is generated by the reflections associated to roots orthogonal to the elements corresponding to the conics, of which there are two, and to generate the subgroup it is sufficient to take 
$$ \beta_1 = \alpha_3 = \h-\E_1-\E_2-\E_3.$$
 This is the simple root for the $A_1$ root system, and the associated reflection $r_{\beta_1}$ acting on $\operatorname{EX}$ induces the permutation $\tilde{r}_1$ as claimed. 
\end{proof}

The nontrivial action of the Dynkin diagram automorphism $\sigma$, as well as the discrete symmetry $\vartheta$ also induce permutations of lines.

\begin{proposition}
    The action of $\langle\sigma\rangle\cong \Aut(A_1^{(1)})$ in $\mathscr{A}_{\rm{II}}\cong \Theta_{\rm{II}}$ in \Cref{tab:PII:symmetry:varsandparams} induces the action on $\mathscr{U}_{\rm{II}}$ given by 
    $$\sigma: u \mapsto \tilde{u}, \qquad\tilde{u} = - \frac{1}{u}.$$
    Together with the action on $\mathcal{M}\to \mathscr{W}_{\rm{II}}$ given in \Cref{tab:PII:symmetriesunderRH}, this induces the following automorphism of the graph $\mathcal{G}_{\rm{II}}$:
\begin{equation*}
     \sigma = (1\,\,4)\,(2\,\,5)\,(3\,\,6).
\end{equation*}       
The automorphisms $\varpi_1,\varpi_2$ of $\overline{\mathcal{M}}_w$, written in \Cref{prop:oblomkovstylenormalformPII}, induce the following when extended to act trivially on $\mathscr{U}_{\rm{II}}$:
\begin{align*}
    \varpi_1 &= (1\,\,4)\,(2\,\,3)\,(5\,\,6),\\
    \varpi_2 &= (1\,\,2)\,(3\,\,6)\,(4\,\,5),
\end{align*} 
and correspondingly $\vartheta=\varpi_1\varpi_2$, also acting trivially on $\mathscr{U}_{\rm{II}}$, consistent with its action on $\theta$, induces
\begin{equation*}
    \vartheta = (1\,\,3\,\,5)\,(2\,\,4\,\,6).
\end{equation*}
\end{proposition}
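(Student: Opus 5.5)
This is a direct computation, modelled on the corresponding statements for $\pain{VI}$ and $\pain{IV}$.

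\textbf{Lift of $\sigma$ to $\mathscr{U}_{\rm{II}}$.} By \Cref{tab:PII:symmetry:varsandparams}, $\sigma$ acts by $\theta\mapsto -1-\theta$. Since $u=e^{\pi i\theta}$, this gives
$$\tilde u=e^{-\pi i}e^{-\pi i\theta}=-u^{-1}.$$
It follows that $\tilde w=\tilde u+\tilde u^{-1}=-w$, which agrees with \Cref{tab:PII:symmetriesunderRH}.

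\textbf{Action of $\sigma$ on the lines.} I substitute $x\mapsto\tilde x$ as in \Cref{tab:PII:symmetriesunderRH} (sign flip on $x_1,x_2,x_3$, with $x_4,x_5,x_6$ fixed) and $u\mapsto -u^{-1}$ into the equations \eqref{eq:linesPII}. For example, take $L_1$: $x_1=u^{-1}$, $x_2=u$, $x_6=1$, $x_4=ux_3$, $x_5=u^{-1}x_3$. Under the map it becomes
$$-x_1=-u,\quad -x_2=-u^{-1},\quad x_6=1,\quad x_4=(-u^{-1})(-x_3)=u^{-1}x_3,\quad x_5=ux_3,$$
which is exactly $L_4$. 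The remaining five lines are handled the same way and give the pairs $(2\,5)$ and $(3\,6)$.

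\textbf{The conics at infinity.} Each $C^\infty_i$ is cut out by the vanishing of three of $X_1,\dots,X_6$ together with a quadric $X_jX_k=X_l^2$. These equations are invariant under sign changes of $X_1,X_2,X_3$, so every conic at infinity is fixed.

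\textbf{$\varpi_1$ and $\varpi_2$.} With $u$ held fixed, I apply the coordinate permutations directly to \eqref{eq:linesPII}. For instance, $\varpi_1$ sends $L_1$ to
$$x_2=u^{-1},\quad x_1=u,\quad x_6=1,\quad x_5=ux_3,\quad x_4=u^{-1}x_3,$$
which is $L_4$. It sends $L_2$ to $x_3=u$, $x_2=u^{-1}$, $x_4=1$, $x_6=u^{-1}x_1$, $x_5=ux_1$, which is $L_3$. The other cases are checked the same way, and $\varpi_2$ is treated likewise. The product $\varpi_1\varpi_2$ is then composed as permutations. I will fix the composition order to match the convention $\vartheta=\varpi_1\varpi_2$ from \Cref{prop:oblomkovstylenormalformPII}. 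As an independent check, I will apply $\vartheta$ directly from \Cref{tab:PII:symmetriesunderRH}, namely $(x_1,\dots,x_6)\mapsto(x_2,x_3,x_1,x_5,x_6,x_4)$, to \eqref{eq:linesPII} and confirm the cycles $(1\,3\,5)(2\,4\,6)$.

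\textbf{Expected difficulty.} There is no real obstacle here. The only care needed is in the bookkeeping of the index convention: whether "sending line $L_i$ to $L_j$" means pullback or pushforward of the defining equations, and the composition order in $\varpi_1\varpi_2$. The direct check of $\vartheta$ resolves both points consistently.
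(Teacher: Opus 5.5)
Your proposal is correct and is essentially the paper's own proof, which likewise substitutes $\tilde u=-u^{-1}$ and the coordinate actions from \Cref{tab:PII:symmetriesunderRH} into \eqref{eq:linesPII}. If you keep your substitution (pullback) convention throughout, you get $\vartheta\colon L_1\mapsto L_3$ and hence $(1\,3\,5)(2\,4\,6)$, which agrees with $\varpi_1\varpi_2$ composed right to left; note that as point maps $\vartheta=\varpi_2\circ\varpi_1$, and the pushforward convention would give the inverse cycles.
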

\begin{proof}
    Combining the actions on $\mathscr{U}_{\rm{II}}$ in the proposition with the actions on $x_i$, $1\leq i\leq 6$ in \Cref{tab:PII:symmetriesunderRH}, the claimed permutations are obtained directly from the expressions for the lines in \Cref{eq:linesPII}. 
\end{proof}

\subsection{Algebraic monodromy}

We will show that the field extension from $\C(\mathscr{W}_{\rm{II}})$ to $\C(\mathscr{U}_{\rm{II}})$ coincides with that coming from the incidence variety of lines on $\mathcal{M}_w$, and use this to show that algebraic monodromy is also given by $W(A_1)$.

\begin{definition}
    The incidence variety of lines on $\mathcal{M}_w$ is 
    \begin{equation*}
        \Gamma\xrightarrow{\rho} \mathscr{W}_{\rm{II}}, \quad\Gamma = \{ (w,\ell)\in \mathscr{W}_{\rm{II}} \times \mathbb{G}(1,6)~|~\ell \subset \overline{\mathcal{M}}_w, \,\, \ell \not\subset \overline{\mathcal{M}}_w\setminus \mathcal{M}_w\},
    \end{equation*}
    where $\mathbb{G}(1,6)$ is the Grassmannian of projective lines in $\p^6$.
\end{definition}

The restriction of $\rho$ to the nonsingular locus of $\mathcal{M}\to\mathscr{W}_{\rm{II}}$ gives a covering of $\mathscr{W}_{\rm{II}}\setminus \mathscr{W}_{\rm{II}}^{\operatorname{sing}}$, the fibre of which over $w$ consist of six points, corresponding to the lines on $\mathcal{M}_w$.
Similarly to the cases of $\pain{VI}$ and $\pain{IV}$ described in \Cref{subsec:algmonodromyPVI} and \Cref{subsec:algmonodromyPIV} respectively, $\Gamma$ is reducible. 
We define $K_{\Gamma}$ to be the compositum of the normal closures of the function fields of its irreducible components within the algebraic closure of $\C(\mathscr{W}_{\rm{II}})$. 

\begin{definition}
    The \emph{algebraic monodromy} of the family $\mathcal{M}\to \mathscr{W}_{\rm{II}}$ is the Galois group of the (normal closure of the) field extension $\C(\mathscr{W}_{\rm{II}})\subset K_{\Gamma}$:
    \begin{equation*}
        \operatorname{Gal}\left( K_{\Gamma}/ \C(\mathscr{W}_{\rm{II}})\right).
    \end{equation*}
\end{definition}
The algebraic monodromy here also forms $W(A_1)$, the underlying finite Weyl group of the symmetry group of $\pain{II}$.

\begin{proposition}  \label{prop:algebraicmonodromyPII}
    The algebraic monodromy of $\mathcal{M}\to \mathscr{W}_{\rm{IV}}$ is 
    $$\operatorname{Gal}\left( K_{\Gamma}/ \C(\mathscr{W}_{\rm{II}})\right) \cong \operatorname{Gal}\left( \C(\mathscr{U}_{\rm{II}})/ \C(\mathscr{W}_{\rm{II}})\right) \cong W(A_1),$$
    where the first isomorphism comes from a $\C(\mathscr{W}_{\rm{II}})$-linear isomorphism $K_{\Gamma}\cong \C(\mathscr{U}_{\rm{II}})$, and the second isomorphism is given by the action of $W(A_1)$ on $\C(\mathscr{U}_{\rm{II}})$ defined by 
    \begin{equation*}
    r_1 : u\to \frac{1}{u},
    \end{equation*}
    which keeps $w$ fixed and induces the same permutation of lines as in \Cref{prop:combinatorialmonodromyPII}.
\end{proposition}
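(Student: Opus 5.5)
I will follow the same pattern as in the proofs of \Cref{prop:algebraicmonodromyPVI} and \Cref{prop:algebraicmonodromyPIV}: describe $\Gamma$ explicitly in Pl\"ucker coordinates on $\mathbb{G}(1,6)$, identify its irreducible components, and compute the function fields of these components over $\C(\mathscr{W}_{\rm{II}})$.

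\textbf{Pl\"ucker coordinates of the lines.} A line in $\p^6$ is cut out by five hyperplanes. I take as Pl\"ucker coordinates the $5\times 5$ minors, or equivalently the $2\times 2$ minors $P_{i,j}$ of the $2\times 7$ matrix of two spanning points. From \eqref{eq:linesPII} each $L_k$ is spanned by two explicit points.
\begin{itemize}
\item Its affine point at $x_{j}=0$ for the free coordinate.
\item Its point at infinity. For $L_1$ this is $[0:0:0:1:u:u^{-1}:0]$.
\end{itemize}
The Pl\"ucker coordinates of each $L_k$ are therefore Laurent monomials and polynomials in $u$.

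\textbf{Components of $\Gamma$.} The lines $L_k$ and $L_{k+3}$ intersect the same conic at infinity. Indeed $L_1,L_4$ have point at infinity in $X_1=X_2=X_6=0$, i.e. on $C_7^\infty$, and similarly for the other pairs. This splits $\Gamma$ into three components $\Gamma_1,\Gamma_2,\Gamma_3$, according to which of $C_7^\infty,C_8^\infty,C_9^\infty$ the line meets. Each component is cut out by the vanishing of appropriate Pl\"ucker coordinates, for example $P_{0,1}=P_{0,2}=P_{0,6}=\dots$ for lines of the form $x_1,x_2,x_6$ constant.

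On the component containing $L_1,L_4$, I restrict to an affine chart and reduce the defining equations (the nine quadrics $\bar{k}_i$ restricted to the line, together with the Pl\"ucker relations) to a single equation in the parameter $b=x_1$. This should yield
$$b^2-wb+1=0:$$
substituting $x_1=b$, $x_2=c$, $x_6=1$ into $k_3$ gives $bc=1$, and $k_0$ then forces $b+c=w$. The roots are $u^{\pm1}$, and the remaining coordinates of the line are rational in $b$. The other two components are handled by the cyclic symmetry $\vartheta$ and give the same quadratic.

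\textbf{Conclusion.} Each $\C(\Gamma_r)$ is therefore $\C(\mathscr{W}_{\rm{II}})(b)=\C(u)$. This is already normal, being the splitting field of an irreducible quadratic: the discriminant $w^2-4$ is not a square. Their compositum is $K_\Gamma\cong\C(\mathscr{U}_{\rm{II}})$, with Galois group of order two generated by $u\mapsto u^{-1}$. This automorphism fixes $w$, and from \eqref{eq:linesPII} it swaps $L_1\leftrightarrow L_4$, $L_2\leftrightarrow L_5$, $L_3\leftrightarrow L_6$, which is $\tilde r_1$ of \Cref{prop:combinatorialmonodromyPII}.

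\textbf{Main obstacle.} The only delicate step is the elimination showing that the chart equations reduce to the quadratic and have no spurious components, for generic $w$. I expect to handle this by assuming the relevant Pl\"ucker coordinates are nonzero, as was done in the $\pain{VI}$ case.
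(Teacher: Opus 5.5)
Your proposal is correct and follows essentially the same route as the paper. You split $\Gamma$ into three components according to which conic at infinity is met by the pairs $(L_1,L_4)$, $(L_2,L_5)$, $(L_3,L_6)$, reduce each component to $b^2-wb+1=0$ with roots $u^{\pm1}$, and conclude $K_\Gamma\cong\C(\mathscr{U}_{\rm{II}})$, with $u\mapsto u^{-1}$ inducing $(1\,4)(2\,5)(3\,6)$. The only differences are cosmetic: you eliminate directly via $k_3$ and $k_0$ in affine coordinates and transport the result to the other two components with $\vartheta$, whereas the paper writes out the defining equations in an explicit Pl\"ucker chart for each of $\Gamma_1,\Gamma_2,\Gamma_3$.
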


\begin{proof}

We will establish the $\C(\mathscr{W}_{\rm{II}})$-linear isomorphism of fields by deriving the equations of $\Gamma$ in Pl\"ucker coordinates for $\mathbb{G}(1,6)$. 
Similarly to in the proof of \Cref{prop:algebraicmonodromyPIV}, we take the Pl\"{u}cker coordinates of a line in $\p^6$ to be
$P_{i,j}$, $0\leq i,j\leq 6$, where $P_{i,j}$ is the $5\times5$ minor of the matrix 
$$\begin{bmatrix}
    a_0 & a_1 & a_2 & a_3 & a_4 & a_5 &a_6\\
b_0 & b_1 & b_2 & b_3 & b_4 & b_5 &b_6 \\
c_0 & c_1 & c_2 & c_3 & c_4  & c_5 &c_6 \\
d_0 & d_1 & d_2 & d_3 & d_4  & d_5 &d_6 \\
e_0 & e_1 & e_2 & e_3 & e_4  & e_5 &e_6 
\end{bmatrix}
$$
with respect to the columns corresponding to indices $i,j$.
The line is given by the intersection of the five hyperplanes corresponding to linear forms in $X_0,\dots,X_6$ with coefficients given in the five rows of the matrix above.

Under the Pl\"ucker embedding, $\Gamma$ becomes an algebraic set in $\mathscr{W}_{\rm{II}}\times \p^{20}$. The Pl\"ucker coordinates of the lines $L_1,\dots,L_6$ can be computed directly using the expressions in \cref{eq:linesPII}, and from these we see that $\Gamma$ can be expressed as a disjoint union of three subsets, contained in quasiprojective subvarieties of $\mathscr{W}_{\rm{II}}\times \p^{20}$ as follows:
\begin{equation*}
    \begin{aligned}
        \Gamma_1 &\subset \left \{P_{0,3}\neq 0 ,  P_{0,1}=P_{0,2}=P_{0,6}=P_{1,2}=P_{1,6}=P_{2,6}=P_{3,4}=P_{3,5}=P_{4,5}=0 \right\}, \\ 
        \Gamma_2 &\subset \left \{P_{1,4}\neq 0 , P_{0,1}=P_{0,3}=P_{0,5}=P_{1,3}=P_{1,5}=P_{2,4}=P_{2,6}=P_{3,5}=P_{4,6}=0 \right\}, \\ 
        \Gamma_3 &\subset \left \{P_{2,5}\neq 0 ,P_{0,2}=P_{0,3}=P_{0,4}=P_{1,5}=P_{1,6}=P_{2,3}=P_{2,4}=P_{3,4}=P_{5,6}=0 \right\}.
    \end{aligned}
\end{equation*}
The components $\Gamma_1,\Gamma_2,\Gamma_3$ correspond to the pairs of lines $(L_1,L_4)$, $(L_2,L_5)$, $(L_3,L_6)$ respectively.
The defining equations of these components are computed similarly to in the proof of \Cref{prop:algebraicmonodromyPIV}.
For $\Gamma_1$, we have $P_{0,3}\neq 0$, so we work in the affine chart 
$$p_{i,j}=\frac{P_{i,j}}{P_{0,3}},$$ 
in which $\Gamma_1$ is given by 
\begin{equation*}
    \begin{gathered}
    p_{0,1}=0,\quad p_{0,2}=0, \quad p_{0,6}=0,\\ 
    p_{1,2}=0, \quad p_{1,6}=0, \quad p_{2,6}=0, \\ 
    p_{3,4}=0,\quad p_{3,5}=0, \quad p_{4,5}=0, \\
         p_{0,4}=-p_{2,3}, \quad p_{0,5} p_{2,3}= 1, \quad p_{1,3} p_{2,3}= -1, \quad p_{3,6}=-1, \\
         p_{2,3}^2-w p_{2,3}+1=0.
    \end{gathered}
\end{equation*}
For $\Gamma_2$, we have $P_{1,4}\neq 0$, so we work in the affine chart 
$$\tilde{p}_{i,j}=\frac{P_{i,j}}{P_{1,4}},$$ 
in which $\Gamma_2$ is given by 
\begin{equation*}
\begin{gathered}
    \tilde{p}_{0,1}=0,\quad \tilde{p}_{0,3}=0, \quad \tilde{p}_{0,5}=0, \\
    \tilde{p}_{1,3}=0,\quad \tilde{p}_{1,5}=0, \quad \tilde{p}_{2,4}=0,\\ \tilde{p}_{2,6}=0,\quad \tilde{p}_{3,5}=0, \quad \tilde{p}_{4,6}=0,\\
    \tilde{p}_{2,3}=\tilde{p}_{0,4}, \quad 
    \tilde{p}_{0,4} \tilde{p}_{0,6}=1, \quad
\tilde{p}_{0,4} \tilde{p}_{1,2}=-1, \quad 
\tilde{p}_{2,5}=-1, \\
\tilde{p}_{1,2}^2 - w \tilde{p}_{1,2} + 1=0.
\end{gathered}    
\end{equation*} 
For $\Gamma_3$, we have $P_{2,5}\neq 0$, so we work in the affine chart 
$$\hat{p}_{i,j}=\frac{P_{i,j}}{P_{2,5}},$$ 
in which $\Gamma_3$ is given by 
\begin{equation*}
    \begin{gathered}
    \hat{p}_{0,2}=0, \quad \hat{p}_{0,3}=0, \quad \hat{p}_{0,4}=0, \\
    \hat{p}_{1,5}=0, \quad \hat{p}_{1,6}=0, \quad \hat{p}_{2,3}=0, \\
    \hat{p}_{2,4}=0, \quad \hat{p}_{3,4}=0, \quad \hat{p}_{5,6}=0,\\
    \hat{p}_{1,3}=\hat{p}_{0,5}, \quad 
    \hat{p}_{0,5} \hat{p}_{0,6}=-1, \quad 
    \hat{p}_{0,5} \hat{p}_{1,2}=-1, \quad 
    \hat{p}_{1,4}=-1, \\
\hat{p}_{0,5}^2- w \hat{p}_{0,5}+1=0.
    \end{gathered}
\end{equation*}
Thus each component can be reduced to the same polynomial equation, 
$$ b^2 - w b + 1=0,$$
which splits over $\C(\mathscr{U}_{\rm{II}})$ via $w=u+u^{-1}$ with roots $u,u^{-1}$, so $K_{\Gamma}\cong \C(\mathscr{U}_{\rm{II}})$ as claimed.
The nontrivial element of $\operatorname{Gal}(\C(\mathscr{U}_{\rm{II}})/\C(\mathscr{W}_{\rm{II}}))$, given by $u\to\frac{1}{u}$, induces the permutation of lines $\tilde{r}_1$ in \Cref{prop:combinatorialmonodromyPII}, which is checked directly.
\end{proof}

\subsection{Analytic monodromy}

We will conclude this section by showing that the analytic monodromy of the family $\mathcal{M}\to\mathscr{W}_{\rm{II}}$ realises $W(A_1)$.

Take $w_* \in \mathscr{W}_{\rm{II}}\setminus \mathscr{W}_{\rm{II}}^{\operatorname{sing}}$ and consider the homomorphism
\begin{equation} \label{eq:monodromyhomPII}
    \pi_1(\mathscr{W}_{\rm{II}}\setminus \mathscr{W}_{\rm{II}}^{\operatorname{sing}} ; w_*) \to \mathfrak{S}_{6},
\end{equation}
which sends a loop to the permutation of lines on $\overline{\mathcal{M}}_{w_*}$ induced by deformation over it.

\begin{definition}
    The \emph{analytic monodromy} of the family $\mathcal{M}\to\mathscr{W}_{\rm{II}}$ is the subgroup of $\mathfrak{S}_{6}$ given by the image of the homomorphism \eqref{eq:monodromyhomPII}, defined up to overall conjugation, within $\mathfrak{S}_{6}$, induced by changing enumeration of lines and choice of basepoint.
\end{definition}

We will show that the analytic monodromy exhausts the combinatorial monodromy group.

\begin{proposition} \label{prop:analyticmonodromyPII}
    The analytic monodromy of the family $\mathcal{M}\to\mathscr{W}_{\rm{II}}$ is $W(A_1)$.
\end{proposition}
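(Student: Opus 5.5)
The argument follows the proofs of \Cref{prop:analyticmonodromyPVI} and \Cref{prop:analyticmonodromyPIV}: we sandwich the analytic monodromy between the combinatorial monodromy from above and an explicit loop from below.

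\textbf{Upper bound.} Continuation of lines along a path in $\mathscr{W}_{\rm{II}}\setminus\mathscr{W}_{\rm{II}}^{\operatorname{sing}}$ preserves the incidence relations among $L_1,\dots,L_6$ and the conics $C_7^\infty,C_8^\infty,C_9^\infty$. The conics at infinity are defined independently of $w$ by \eqref{eq:conicsatinfinityPII2}, so each of them is fixed individually. Hence every monodromy permutation lies in $\operatorname{Fix}_{\operatorname{Aut}(\mathcal{G}_{\rm{II}})}(\mathcal{G}_{\rm{II}}^\infty)$. By \Cref{prop:combinatorialmonodromyPII} this group is $\langle\tilde r_1\rangle\cong W(A_1)$, with $\tilde r_1=(1\,4)(2\,5)(3\,6)$.

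\textbf{Lower bound.} It suffices to exhibit one loop realising $\tilde r_1$. Fix $w_*\neq\pm2$ and choose $u_*$ with $u_*+u_*^{-1}=w_*$; this choice numbers the lines via \eqref{eq:linesPII}. Lift further to $\theta_*$ with $u_*=e^{\pi i\theta_*}$. The preimage of $\mathscr{U}_{\rm{II}}^{\operatorname{sing}}$ in $\Theta_{\rm{II}}$ is $\Theta_{\rm{II}}^{\operatorname{sing}}=\Z$, so the complement $\C\setminus\Z$ is path-connected. The generator $r_1$ of the finite Weyl group acts on $\mathscr{U}_{\rm{II}}$ by $u\mapsto u^{-1}$, as in \Cref{prop:algebraicmonodromyPII}, which corresponds to $\theta\mapsto-\theta$ (equivalently $r_0$ in \Cref{tab:PII:symmetry:varsandparams}). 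Take a path $\gamma$ in $\C\setminus\Z$ from $\theta_*$ to $-\theta_*$.

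Its image in $\mathscr{U}_{\rm{II}}\setminus\mathscr{U}_{\rm{II}}^{\operatorname{sing}}$ runs from $u_*$ to $u_*^{-1}$. Its further image in $\mathscr{W}_{\rm{II}}$ is a loop based at $w_*$ that avoids $w=\pm2$. Concretely, one can take $u=u_*e^{i\phi}$ with $u$ kept away from $\pm1$, so that $w$ winds once around one of the points $\pm2$; this is the square-root branch point of $b^2-wb+1$.

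\textbf{Identifying the permutation.} The lines depend continuously on $u$ through \eqref{eq:linesPII}. Continuation along the loop therefore carries the line labelled $k$ at $u_*$ to the line given by the same formula evaluated at $u_*^{-1}$. Substituting $u\mapsto u^{-1}$ in \eqref{eq:linesPII} swaps $L_1\leftrightarrow L_4$, $L_2\leftrightarrow L_5$ and $L_3\leftrightarrow L_6$. This is exactly $\tilde r_1$, so the analytic monodromy contains $W(A_1)$ and, by the upper bound, equals it.

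The only delicate step is checking this identification, namely that continuation along the lifted path really induces the substitution $u\mapsto u^{-1}$ on the labelling rather than some other bookkeeping. A simple direct check is available: along the loop, the root $b=u$ of $b^2-wb+1=0$ continues to $u^{-1}$, and the component equations in the proof of \Cref{prop:algebraicmonodromyPII} then show that each component $\Gamma_i$ has its two sheets exchanged.
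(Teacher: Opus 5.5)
Your argument is correct and is essentially the paper's. You lift to $\Theta_{\rm II}\setminus\Z$, use its path-connectedness to join $\theta_*$ to a point over $u_*^{-1}$ (you take $-\theta_*$, the paper takes $r_1(\theta_*)=-2-\theta_*$; both induce $u\mapsto u^{-1}$), read off $(1\,4)(2\,5)(3\,6)$, and bound from above by the combinatorial monodromy as in the $\pain{VI}$ case.

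The only slip is the parenthetical ``concrete'' path $u=u_*e^{i\phi}$, which cannot work for real $\phi$: it reaches $u_*^{-1}$ only if $|u_*|=1$, and then the arc must pass through $u=\pm1$. Nothing depends on it, since path-connectedness already supplies the path.
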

\begin{proof}
    The proof is analogous to that of \Cref{prop:analyticmonodromyPIV}. 
    Choose $u_*\in\mathscr{U}_{\rm{II}}\setminus \mathscr{U}_{\rm{II}}^{\operatorname{sing}}$ lying above $w_*$, and $\theta_*\in\Theta_{\rm{IV}}$ lying above $u_*$ under the map $\theta\mapsto u=e^{i \pi \theta}$.
    Then $\mathscr{U}_{\rm{II}}\setminus\mathscr{U}_{\rm{II}}^{\operatorname{sing}}$ lifts under this same map to
    $$\Theta_{\rm{II}}\setminus \Theta_{\rm{II}}^{\operatorname{sing}} = \left\{ \theta \in \C: \theta \not\in  \Z
    \right\},$$
    which is path-connected.
    Then for the generator $r_1$ of $W(A_1)$, one can choose a path in $\Theta_{\rm{IV}}$ from $\theta_*$ to $r_1(\theta_*)=-2 -\theta_*$ that avoids $\Theta_{\rm{II}}^{\operatorname{sing}}$ and descends to a loop in $\mathscr{W}_{\rm{II}}\setminus \mathscr{W}_{\rm{II}}^{\operatorname{sing}}$, inducing the permutation of lines corresponding to $r_1$ in \Cref{prop:combinatorialmonodromyPII}.
\end{proof}

 \Cref{thm:symmetryunderRHPII} establishes \Cref{mainthm:conjugatedsymmetries},   \Cref{prop:oblomkovstylenormalformPII} and \Cref{cor:modulispacePII} establish \Cref{mainthm:categoryandmodulispace},
and \Cref{prop:combinatorialmonodromyPII,prop:algebraicmonodromyPII,prop:analyticmonodromyPII} establish \Cref{mainthm:mon} in the case of Painlev\'e-II.

\section{The first Painlev\'e equation}\label{sec:PI}

While the first Painlev\'e equation 
\begin{equation} \label{eq:scalarPI}
    y_{tt} = 6 y^2 + t,
\end{equation}
does not contain parameters and so does not admit B\"acklund transformations, in this section we construct its associated monodromy surface as an embedded affine variety.
This is given by a del Pezzo surface of degree 5 with a pentagon of lines at infinity.

\subsection{Initial value space}

For $\pain{I}$, we consider the initial value space as obtained from the Hamiltonian form 
\begin{equation} \label{eq:hamPI}
    \left\{
    \begin{aligned} 
    q_t &= p = \frac{\partial H}{\partial p}, \\
    p_t &= 6 q^2 + t= - \frac{\partial H}{\partial q},
    \end{aligned}
    \right. \qquad
    H = \frac{1}{2} p^2 - 2 q^3 - t q.
\end{equation}
Eliminating $p$ from \eqref{eq:hamPI} leads to $\pain{I}$ as in \eqref{eq:scalarPI} for $q(t)$.

Since there are no parameters the family of surfaces is 
\begin{equation*}
    \overline{\mathcal{X}}\to \mathscr{T}_{\rm{I}},
\end{equation*}
where $\mathscr{T}_{\rm{I}}=\C$ is the independent variable space of the system \eqref{eq:hamPI}.
The fibre $\overline{\mathcal{X}}_t$ is a Sakai surface of type $E_8^{(1)}$, with unique effective anticanonical divisor $D_t$ given by a collection of $(-2)$-curves intersecting according to the $E_8^{(1)}$ Dynkin diagram.
Removing these curves from each fibre gives the family $\mathcal{X}\to\mathscr{T}_{\rm{I}}$.

There is a five-fold discrete symmetry of $\pain{I}$, generated by the following transformation that preserves the system \eqref{eq:hamPI}: 
\begin{equation} \label{eq:discreteonqpPI}
    \begin{gathered}
        \vartheta : (q,p) \mapsto (\tilde{q},\tilde{p}),\\
        \tilde{q}(t) = \zeta_5^2 q( \zeta_5 t), \quad \tilde{p}(t) = \zeta_5^{-2} p( \zeta_5 t).
    \end{gathered}
\end{equation} 

\subsection{Associated linear problem}

We will use the following Lax pair for $\pain{I}$, which is related by a simple scaling to that given by Jimbo and Miwa in \cite{jimbomiwaII1981}:
    \begin{subequations} \label{eq:laxpairPI}
            \begin{align}
                Y_z &= A Y, &   &A = z^2 A_2+z A_1+ A_{0}, \label{eq:laxpairPI1} \\
                Y_t &= B Y, &   &B = z B_1 + B_0. \label{eq:laxpairPI2}
            \end{align}
    \end{subequations}
The matrices $A_2,A_1,A_0\in\mathfrak{sl}_2(\C)$ are 
such that 
\begin{equation*}
    A_2 = \begin{bmatrix}
        0 & 2 \\
        0 & 0
    \end{bmatrix}, 
    \qquad (A_1)_{1,1}=0,
    \qquad |A| = - 4 z^3 - 2 t z + \mathcal{O}(1),\quad (z\to \infty),
\end{equation*}
so that the matrix $A$ is parametrised by 
\begin{equation}\label{eq:matrixAPI}
    A = z^2\begin{bmatrix}
        0 & 2 \\
        0 & 0
    \end{bmatrix}
    + z \begin{bmatrix}
        0 & 2 q \\
        2 & 0
    \end{bmatrix}
    + \begin{bmatrix}
         -p & 2q^2 + t\\
        -2 q &  p
    \end{bmatrix}.
\end{equation}
The matrices $B_1,B_0\in \mathfrak{sl}_{2}(\C)$ are such that 
\begin{equation} \label{eq:matrixBPI}
    B = z\begin{bmatrix}
        0 & 1\\
        0 & 0
    \end{bmatrix}
    + \begin{bmatrix}
        0 & 2 q \\
        1 & 0
    \end{bmatrix}.
\end{equation}
Then the compatibility condition of the pair of linear equations \eqref{eq:laxpairPI}, with $A$ and $B$ given in \eqref{eq:matrixAPI} and \eqref{eq:matrixBPI} respectively, gives the following differential equations for $q(t),p(t)$:
\begin{equation*}
    q_t = p, \quad p_t = 6 q^2 + t.
\end{equation*}
Eliminating $p(t)$ from these gives the first Painlev\'e equation for $q(t)$.

\subsection{Derivation of monodromy surface}

We follow the derivation by Kapaev in \cite{kapaev2004}.
There is a unique formal solution of equation \eqref{eq:laxpairPI1} of the form
$$ Y_{\operatorname{form}}(z) = z^{\frac{1}{4} \sigma_3}P(z) e^{( \frac{4}{5} z^{5/2} + t z^{1/2})\sigma_3},$$
where $P(z)$ is a power series in $z^{-1/2}$,
$$P(z) =  \sum_{n=0}^{\infty} z^{-n/2} U_n, \qquad U_0 =\frac{1}{\sqrt{2}}\begin{bmatrix}
    1 & 1 \\
    1 & -1 
\end{bmatrix}.$$
Define the Stokes sectors
$$ \Sigma_k =\left\{ \left| \operatorname{arg}z - \frac{2(k-1)\pi}{5}\right| < \frac{\pi}{5} \right\} \subseteq\widetilde{\C^*} , \qquad (k\in\Z), $$
within which $e^{( \frac{4}{5} z^{5/2} + t z^{1/2})}$ is alternately exponentially small or large as $z\to\infty$.

For each $k\in \Z$, there is a unique solution $Y_k$ of the linear problem \eqref{eq:laxpairPI1} such that 
$$Y_k(z)\sim Y_{\operatorname{form}}(z) \qquad (z\in \Sigma_k\cup \Sigma_{k+1}, z\to \infty),$$
and the Stokes phenomenon takes the form
$$Y_{k+1}(z) = Y_k(z) S_k,$$
where the Stokes matrices are given in terms of Stokes multipliers $s_k$,  $k\in \Z$, by
$$S_k = \begin{bmatrix}
    1 & 0 \\
    s_k & 1
\end{bmatrix},
 \text{ if } k \text{ even},
 \qquad
 S_k = \begin{bmatrix}
    1 & s_k \\
    0 & 1
\end{bmatrix},
 \text{ if } k \text{ odd}.
 $$

The solutions $Y_k(z)$ are analytic and single-valued on $\C$, and satisfy the cyclic property
\begin{equation} \label{eq:cyclicpropertyYPI}
    Y_{k+5}(z)=i Y_k(z)\sigma_1.
\end{equation}
This gives
\begin{equation} \label{eq:cyclicpropertySPI}
    S_{k+5}=\sigma_1S_k \sigma_1,
\end{equation}
and
$$S_{k+5} S_{k+4} S_{k+3} S_{k+2} S_{k+1} = \begin{bmatrix}
    0 & i\\
    i & 0
\end{bmatrix},$$
for $k\in\Z$.
Using the cyclic relation \eqref{eq:cyclicpropertySPI}, a set of Stokes data is determined by the five Stokes multipliers $s_1,s_2,s_3,s_4,s_5$ subject to the following five (non-independent) equations
\begin{equation*}\label{eq:stokesmultrelsPI}
s_1=i \left(s_3 s_4+1\right), \quad 
s_2=i \left(s_4 s_5+1\right), \quad \
s_3=i \left(s_1 s_5+1\right), \quad 
s_4=i \left(s_1 s_2+1\right),\quad 
s_5=i \left(s_2 s_3+1\right).
\end{equation*}
Setting 
$$s_k =  i x_k,$$
we have the following description of the monodromy surface for $\pain{I}$.
\begin{definition}\label{def:monodromysurfacePI}
    The monodromy surface of $\pain{I}$ is the embedded affine variety 
\begin{equation*}
        \mathcal{M} = \Spec\C[x_1,x_2,x_3,x_4,x_5]/ K, \qquad 
        K = (k_1,k_2,k_3,k_4,k_5),
\end{equation*}
where 
\begin{align*}
    k_1 &= x_1x_2+x_4-1, \\
    k_2 &= x_2x_3+x_5-1, \\
    k_3 &= x_3x_4+x_1-1, \\
    k_4 &= x_4x_5+x_2-1, \\
    k_5 &= x_1x_5+x_3-1.
\end{align*}
\end{definition}

Under the embedding 
\begin{equation*}
    \begin{aligned}
        \mathbb{A}^5 &\to \p^5, \\
        (x_1,x_2,x_3,x_4,x_5) &\mapsto [1:x_1:x_2:x_3:x_4:x_5],
    \end{aligned}
\end{equation*}
the projective completion of $\mathcal{M}$ is given by 
\begin{equation*}
    \overline{\mathcal{M}} = \operatorname{Proj}\C[X_0,X_1,X_2,X_3,X_4,X_5]/\overline{K}, \quad \operatorname{deg}(X_i)=1, \quad \overline{K} = (\overline{k}_1,\overline{k}_2,\overline{k}_3,\overline{k}_4,\overline{k}_5),
\end{equation*}
where 
\begin{align*}
\overline{k}_1 &= X_1X_2+X_0X_4-X_0^2, \\
\overline{k}_2 &= X_2X_3+X_0X_5-X_0^2, \\
\overline{k}_3 &= X_3X_4+X_0X_1-X_0^2, \\
\overline{k}_4 &= X_4X_5+X_0X_2-X_0^2, \\
\overline{k}_5 &= X_1X_5+X_0X_3-X_0^2.
\end{align*}

\begin{remark}
    As an affine variety, $\mathcal{M}$ is isomorphic to the cubic surface 
    $$ \mathcal{C} = \Spec \C[x_1,x_2,x_3]/( x_1x_2x_3-x_1-x_3+1).$$
    The form of the cubic appearing in \cite{putsaito}, namely 
$$\tilde{x}_1\tilde{x}_2\tilde{x}_3+\tilde{x}_1+\tilde{x}_2+1=0,$$
    is related to ours via 
    $$\tilde{x}_1=-x_1, \quad \tilde{x}_2 = -x_3, \quad \tilde{x}_3 = x_2.$$
\end{remark}

\begin{lemma} \label{lem:propertiesofMPI}
    \begin{itemize}
        \item The surface $\overline{\mathcal{M}}$ is a smooth del Pezzo surface of degree 5.
    \item  The hyperplane section at infinity is the union of the five lines
\begin{equation} \label{eq:linesatinfinityPI}
    \begin{aligned}
        L_{1}^{\infty} &: & X_0= 0 ,& &X_1 = 0 ,& & X_2 = 0 ,& &X_4= 0,\\
        L_{2}^{\infty} &: & X_0 = 0 ,& &X_1 = 0 ,& & X_3 = 0 ,& &X_4= 0,\\
        L_{3}^{\infty} &: & X_0 = 0 ,& &X_1 = 0 ,& & X_3 = 0 ,& &X_5= 0,\\
        L_{4}^{\infty} &: & X_0 = 0 ,& &X_2 = 0 ,& & X_3 = 0 ,& &X_5= 0,\\
        L_{5}^{\infty} &: & X_0 = 0 ,& &X_2 = 0 ,& & X_4 = 0 ,& &X_5= 0,
    \end{aligned}
\end{equation}
which intersect like a pentagon, i.e. a cycle of length 5.
\end{itemize}
\end{lemma}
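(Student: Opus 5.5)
The plan is to mirror the proof of \Cref{lem:delpezzodeg6PII} by exhibiting an explicit projective equivalence between $\overline{\mathcal{M}}$ and a standard model of the smooth quintic del Pezzo surface.

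The standard model is the anticanonical image of $\p^2$ blown up at four general points, say $[1:0:0],[0:1:0],[0:0:1],[1:1:1]$. It is cut out in $\p^5$ by the five Pfaffian quadrics of a $5\times 5$ skew matrix of linear forms; equivalently, it is a linear section of $\mathbb{G}(1,4)$. The five generators $\overline{k}_1,\dots,\overline{k}_5$ are visibly cyclically symmetric under $X_i\mapsto X_{i+2}$ (indices $1,\dots,5$ mod 5), matching the $\Z/5$ symmetry $\vartheta$. So I would first try to write them directly as the $4\times 4$ Pfaffians of a skew matrix whose entries are linear forms in $X_0,\dots,X_5$. If this works, $\overline{\mathcal{M}}$ is a codimension-$3$ Pfaffian variety of degree $5$, provided its dimension is correct.

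A more concrete alternative is to write down a birational map to $\p^2$. In the affine chart, eliminating $x_4=1-x_1x_2$ and $x_5=1-x_2x_3$ reduces to the cubic $\mathcal{C}$. I would then parametrise $\overline{\mathcal{M}}$ by the linear system of cubics through four points and match coordinates via a projectivity, exactly as in \eqref{eq:projectivityStoMPII}. I expect to find this projectivity by requiring the five lines at infinity to map to the pentagon of lines $\E_1,\,\h-\E_1-\E_2,\,\E_2,\,\h-\E_2-\E_3,\dots$ in the standard model, since a pentagon of exceptional curves is an anticanonical cycle.

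Smoothness follows from the equivalence, since the target is smooth. As a check, one can also verify the Jacobian criterion directly. In the affine part, the $5\times 5$ Jacobian of $k_1,\dots,k_5$ has rank $3$ everywhere on $\mathcal{M}$; this is equivalent to smoothness of $\mathcal{C}$ away from infinity, and $\nabla(x_1x_2x_3-x_1-x_3+1)=0$ has no solutions on $\mathcal{C}$. Then, at infinity ($X_0=0$), the equations become $X_1X_2=X_2X_3=X_3X_4=X_4X_5=X_1X_5=0$. Solving these yields exactly the five coordinate lines listed in \eqref{eq:linesatinfinityPI}, each cut out by the vanishing of $X_0$ and three further coordinates. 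One checks that consecutive lines meet in a single coordinate point and non-consecutive lines are disjoint, giving the pentagon. At those five coordinate points, a local computation of the Jacobian shows the surface is smooth.

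Degree $5$ then follows because the hyperplane section $X_0=0$ is a union of five lines. To show this is the full scheme-theoretic section with multiplicity one, I would compare with the Hilbert polynomial of the Pfaffian model. Alternatively, irreducibility can be obtained as in \Cref{lem:projectivecompletionPII}, with the degree read off from the hyperplane section.

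The main obstacle is producing the explicit projectivity, or the skew matrix. I would find it by solving linear conditions imposing that the lines and their intersection points map to the standard ones, plus a few extra points of $\mathcal{M}$, as was done in the footnote to \Cref{lem:delpezzodeg6PII}.
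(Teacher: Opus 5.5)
Your main route is the paper's. The paper realises the blow-up of $\p^2$ at $[0:0:1],[0:1:0],[1:0:0],[1:1:1]$ anticanonically through a basis $a,\dots,f$ of cubics through these points, writes the projectivity explicitly as $X_0=d$, $X_1=f$, $X_2=2d-a$, $X_3=e$, $X_4=c-f$, $X_5=b-e$, and checks by direct calculation that the ideals correspond, after which the pentagon at infinity is verified directly as you describe. Your Pfaffian idea also works outright and avoids finding the projectivity: up to sign, $\overline{k}_1,\dots,\overline{k}_5$ are the $4\times 4$ Pfaffians of the skew-symmetric matrix with $A_{i,i+1}=X_0$ and $A_{i,i+2}=X_i$ for $i\in\Z/5\Z$, so $\overline{\mathcal{M}}$ is a linear section of $\mathbb{G}(1,4)$ of the expected dimension two, hence of degree $5$, and once smoothness is checked, adjunction identifies the hyperplane class with the anticanonical class.
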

\begin{proof}
    For the first part, note that the standard model of a smooth del Pezzo surface of degree 5 as the blowup of $\p^2$ at four points, no three of which are collinear, leads to the following realisation.
    Take the vector space of cubic forms in $Z_0,Z_1,Z_2$ vanishing at the four points 
    $$[0:0:1], \qquad [0:1:0], \qquad [1:0:0], \qquad [1:1:1],$$
    with basis 
    \begin{equation*}
        \begin{aligned}
        a&= Z_1Z_2^2- Z_0^2 Z_1, &\quad& 
        b=Z_1^2 Z_2-Z_0^2 Z_1, &\quad& 
        c=Z_0 Z_2^2-Z_0^2 Z_1, \\
        d&=Z_0 Z_1 Z_2-Z_0^2 Z_1, &\quad& 
        e=Z_0 Z_1^2-Z_0^2 Z_1, &\quad& 
        f=Z_0^2 Z_2-Z_0^2 Z_1.
        \end{aligned}
    \end{equation*}
    Then the anticanonical embedding of the blow up of $\p^2$ at the four points above is realised as 
    \begin{equation} \label{eq:standarddelPezzodeg5PI}
        \overline{\mathcal{S}} = \Proj \C[a,b,c,d,e,f,g] / I,
    \end{equation}
    where the homogeneous ideal $I = (I_1,I_2,I_3,I_4,I_5)$ is generated by 
\begin{align*}
        I_1 &= a d-b c+c e-d e,\\
        I_2 &= a e-b d+b f-e f,\\
        I_3 &= a f-c d+c e-e f,\\
        I_4 &= b f-c e+d e-d f,\\
        I_5 &= c d-a f+d f-d^2
    \end{align*}
    A direct calculation shows that $\overline{\mathcal{S}}$ is isomorphic to $\overline{\mathcal{M}}$ via the projectivity of $\p^5$ defined by
    \begin{equation} \label{eq:projectivityPI}
        X_0 =  d,\quad 
        X_1 = f, \quad 
        X_2 = 2 d-a, \quad 
        X_3 = e, \quad 
        X_4 = c-f, \quad 
        X_5 = b-e,
    \end{equation}
    with inverse 
    \begin{equation*}
        a= 2 X_0-X_2, \quad 
        b= X_3+X_5, \quad 
        c= X_1+X_4, \quad 
        d= X_0, \quad 
        e= X_3, \quad 
        f= X_1,
    \end{equation*}
    which finishes the proof of the first part.
    The second part is verified directly.
\end{proof}

An alternative characterisation of monodromy of the linear system \eqref{eq:laxpairPI1} is through the Nevanlinna theory of branched coverings of the Riemann sphere \cites{nevanlinna32,elfving1934}. This leads to a space of asymptotic values \cite{masoerobethe},
\begin{equation*}
    V_5=W_5/\operatorname{PSL}_2(\C),
\end{equation*}
where
\begin{equation*}
    W_5=\left\{ (p_1,p_2,p_3,p_4,p_5)\in (\p^1)^{\times 5}: p_k\neq p_{k+1} \text{ for $k\in\mathbb{Z}/5\Z$} \right\},
\end{equation*}
and $\operatorname{PSL}_2(\C)$ acts diagonally. Denoting cross-ratios by
\begin{equation*}
    (a,b;c,d)=\frac{(a-c)(b-d)}{(a-d)(b-c)},
\end{equation*}
the variety $V_5$ is isomorphic to $\mathcal{M}$ via
\begin{equation}\label{eq:iso_models}
    x_k=(p_{k+1},p_{k-2};p_{k-1},p_{k+2}),\quad k\in\mathbb{Z}/5\Z,
\end{equation}
where the inequality of consecutive asymptotic values ensures that each of these cross-ratios only takes finite values on $V_5$. We then have
\begin{equation*}
    \mathcal{M}_{0,5}\subset V_5\subset \overline{\mathcal{M}}_{0,5},
\end{equation*}
where $\mathcal{M}_{0,5}$ is the moduli space of smooth 5-pointed rational curves and its compactification $\overline{\mathcal{M}}_{0,5}$ is the moduli space of stable 5-pointed rational curves \cite{knudsen}. The latter is a smooth del Pezzo surface of degree 5 \cite[Sec. 4.3]{kapranov}. In fact, the isomorphism \eqref{eq:iso_models} between $\mathcal{M}$ and $V_5$ extends uniquely to one between $\overline{\mathcal{M}}$
and the moduli space $\overline{\mathcal{M}}_{0,5}$.

The Riemann-Hilbert map in this case is 
\begin{equation*}
    \mathcal{X}_t \xrightarrow{\operatorname{RH}_t} \mathcal{M},
\end{equation*}
defined by plugging a local solution of the Hamiltonian system \eqref{eq:hamPI} into the linear problem \eqref{eq:laxpairPI} and computing its Stokes data. This mapping is analytic and injective by general theory,
surjective \cites{kk,piwkb} and consequently its inverse is analytic by application of the analytic Fredholm alternative to a Riemann-Hilbert representation of $\Pone$ \cite{fokas}, so that $\operatorname{RH}_t$ is a biholomorphism.

\begin{proposition}
    The symmetry $\vartheta$ generating the five-fold symmetry of $\pain{I}$ conjugates under the Riemann-Hilbert map to the  automorphism of $\mathcal{M}$ defined by 
    \begin{equation*}
        x_k \mapsto \widetilde{x}_k=x_{k+2},\quad k\in\mathbb{Z}/5\mathbb{Z}.
    \end{equation*}
\end{proposition}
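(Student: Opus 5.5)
The plan is to follow the template of \Cref{thm:symmetryunderRHPII} and \Cref{lem:PIVsymmetriesunderRHdirect}. First I lift $\vartheta$ to the linear problem \eqref{eq:laxpairPI1}, then track what happens to the canonical solutions $Y_k$ and the Stokes multipliers, and finally translate the result into the coordinates $x_k=-i s_k$.

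Concretely, with $\tilde q(t)=\zeta_5^2q(\zeta_5t)$ and $\tilde p(t)=\zeta_5^{-2}p(\zeta_5 t)$, I look for a transformation of the form
\[
\widetilde{Y}(z)=D\,Y(\lambda z),\qquad \widetilde{A}(z)=\lambda D A(\lambda z)D^{-1},
\]
where $D$ is a constant diagonal matrix and $\lambda$ is a fifth root of unity, so that $\widetilde{A}$ is again of the form \eqref{eq:matrixAPI} with $(q,p,t)$ replaced by $(\tilde q,\tilde p,\tilde t)$. Matching the $z^2$, $z$ and constant terms should fix $\lambda=\zeta_5^{\pm1}$ (presumably $\lambda=\zeta_5^{-1}$, so that $\lambda^{5/2}$ and the $t$-scaling are compatible) and $D=\operatorname{diag}(\mu,\mu^{-1})$ with $\mu$ a suitable power of $\zeta_{20}$. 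A small lower-triangular gauge may be needed if the $(1,1)$-normalisation of $A_1$ fails, but I expect a diagonal $D$ to suffice.

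Next I compute how the formal solution $Y_{\operatorname{form}}$ transforms. Because of the branch $z^{1/4}$ and the half-integer powers, I expect
\[
\widetilde{Y}_{\operatorname{form}}(z)=D\,Y_{\operatorname{form}}(\lambda z)\,C
\]
for some constant matrix $C$, which is diagonal, or possibly $\sigma_1$ times diagonal. Rotation by $\arg\lambda=\mp 2\pi/5$ shifts the Stokes sectors by one step, so $\widetilde{Y}_k=D\,Y_{k\pm1}(\lambda z)\,C$ and hence $\widetilde{S}_k=C^{-1}S_{k\pm1}C$.

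This is the main obstacle: pinning down $C$ exactly, including the square-root branch choices, and handling the parity issue. A shift by an odd index swaps upper- and lower-triangular Stokes matrices, so $C$ must involve $\sigma_1$; alternatively one uses the cyclic relation \eqref{eq:cyclicpropertySPI} to shift by $k+5$. The target result $\tilde x_k=x_{k+2}$ suggests that the net index shift is by two, or equivalently by $-3$ combined with $\sigma_1$-conjugation via \eqref{eq:cyclicpropertySPI}, and that all scalar factors cancel. The cancellation is forced: the defining relations $x_kx_{k+1}+x_{k+3}=1$ are invariant only under pure cyclic index shifts, with no rescaling. So once the index shift is determined, the scalars must be trivial, and this consistency check can replace a delicate branch computation.

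Finally I verify that the shift $k\mapsto k+2$ preserves the ideal $K$, which follows from the cyclic structure of $k_1,\dots,k_5$. To confirm the direction of the shift (that is, $+2$ rather than $-2$, given the sign of $\zeta_5$), I would check against a known special point, for example the image of a tritronqu\'ee solution, or recompute the rotation direction of the sectors $\Sigma_k$ carefully.
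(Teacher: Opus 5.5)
Your strategy is the paper's: lift $\vartheta$ to $\widetilde{Y}(z)=DY(\lambda z)$, follow the canonical solutions, and read off the Stokes multipliers. However, the central step is mis-specified, and the plan as written would prove a different statement. The coefficient matching does not give $\lambda=\zeta_5^{\pm1}$. With $\widetilde{A}(z)=\lambda DA(\lambda z)D^{-1}$ and $D=\operatorname{diag}(\mu,\mu^{-1})$, the $z^2$ and $z$ terms force $\lambda^3\mu^2=1$ and $\mu^2=\lambda^2$, so $\lambda^5=1$. The remaining entries give $\tilde q=\lambda^4q$, $\tilde p=\lambda p$, $\tilde t=\lambda^3t$. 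Since $\vartheta$ sends $(q,p)$ at time $t$ to $(\zeta_5^2q,\zeta_5^{-2}p)$ at time $\zeta_5^{-1}t$, this forces $\lambda=\zeta_5^{-2}$ and $D=\pm\zeta_5^{-2\sigma_3}$. That is exactly the lift \eqref{eq:PIvarthetaonlin} used in the paper. The choices $\lambda=\zeta_5$ and $\lambda=\zeta_5^{-1}$ realise $\vartheta^2$ and $\vartheta^3$ instead; in particular your compatibility heuristic fails for $\lambda=\zeta_5^{-1}$, which gives $\tilde t=\zeta_5^{2}t$. This is why you ended up with a one-sector rotation and a parity swap. The reconciliation you sketch cannot repair it: conjugating by $\sigma_1$ and using $S_{k+5}=\sigma_1S_k\sigma_1$ only moves indices by multiples of $5$. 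A one-step shift therefore remains a shift by $\pm1$ modulo $5$, and your (correct) rigidity argument would then deliver $\tilde x_k=x_{k\pm1}$, not the claimed automorphism.

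With the correct $\lambda$, the obstacle you identify disappears. Take $\zeta_5=e^{2\pi i/5}$ and the branch $\arg(\lambda z)=\arg z-4\pi/5$, which moves the sectors $\Sigma_k$ by two steps. Then $(\lambda z)^{5/2}=z^{5/2}$ and $(\lambda z)^{1/2}=\zeta_5^{-1}z^{1/2}$, matching $\tilde t=\zeta_5^{-1}t$. Moreover, for $D=\zeta_5^{-2\sigma_3}$ one has $D(\lambda z)^{\sigma_3/4}=-z^{\sigma_3/4}$, so the formal solutions correspond via a scalar $C$. There is no triangularity swap and no $\sigma_1$, and $\widetilde{S}_k$ is literally a two-step shift of the $S_j$; that is the whole of the paper's proof. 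Your observation that the relations $x_kx_{k+1}+x_{k+3}=1$ admit no nontrivial rescaling of a cyclic shift is correct and a legitimate shortcut, and no extra triangular gauge is needed.

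Do keep your planned check of the direction. The paper records $\widetilde{Y}_k=\zeta_5^{-2\sigma_3}Y_{k+2}(\zeta_5^{-2}z)$, but the sign of the shift is tied to which primitive fifth root $\zeta_5$ denotes. On the branch above, multiplying by $e^{-4\pi i/5}$ carries $\Sigma_k$ to $\Sigma_{k-2}$, which pairs $\widetilde{Y}_k$ with $Y_{k-2}$. So this verification is not a formality.
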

\begin{proof}
    The five-fold symmetry $\vartheta$ in \cref{eq:discreteonqpPI} is realised by the following transformation of the linear problem, \eqref{eq:laxpairPI},
\begin{equation}\label{eq:PIvarthetaonlin}
    Y(z)\mapsto \widetilde{Y}(z)=\zeta_5^{-2\sigma_3}Y(\zeta_5^{-2}z), \qquad A(z)\mapsto \widetilde{A}(z)=\zeta_5^{-2}\zeta_5^{-2\sigma_3} A(\zeta_5^{-2} z) \zeta_5^{2\sigma_3}.
\end{equation}
With regards to the canonical solutions around infinity, this means that
\begin{equation*}
\widetilde{Y}_k(z) =\zeta_5^{-2\sigma_3} Y_{k+2} (\zeta_5^{-2}z).
\end{equation*}
for $k\in\mathbb{Z}/5\mathbb{Z}$ and correspondingly the Stokes matrices transform as 
\begin{equation*}
    \tilde{S}_k = S_{k+2}, \quad k\in \Z,
\end{equation*}
so 
\begin{equation*}
    \tilde{s}_1=s_3,\quad 
    \tilde{s}_2=s_4,\quad
    \tilde{s}_3=s_5,\quad 
    \tilde{s}_4= s_1,\quad 
    \tilde{s}_5= s_2,
\end{equation*}
and the proposition follows.
\end{proof}

\subsection{Moduli space}

We show that the surface $\mathcal{M}$ represents the single isomorphism class in the following category of embedded affine del Pezzo surfaces.

\begin{definition}[Category $\mathfrak{C}_{\rm{I}}$ of monodromy surfaces for $\pain{I}$]
Define the category $\mathfrak{C}_{\rm{I}}$ with 
\begin{itemize}
    \item an object being an embedded affine surface $\mathcal{V}\subset \mathbb{A}^5$, such that its projective completion $\overline{\mathcal{V}}\subset \p^5$ is a del Pezzo surface of degree five, and further that the hyperplane section at infinity $\overline{\mathcal{V}}\setminus \mathcal{V}$ is the union of five lines, which intersect like a pentagon, consisting of smooth points of $\overline{\mathcal{V}}$.
    \item a morphism between objects $\mathcal{V}_1$ and $\mathcal{V}_2$ being an affine linear $L\in\operatorname{End}(\mathbb{A}^5)$ such that $L(\mathcal{V}_1)\subseteq\mathcal{V}_2$.
\end{itemize}
\end{definition}

\begin{proposition} \label{prop:oblomkovstylenormalformPI}
    Any $\mathcal{V}\in \operatorname{ob}(\mathfrak{C}_{\rm I})$ is isomorphic in $\mathfrak{C}_{\rm I}$ to $\mathcal{M}$.  
    The automorphism group of $\mathcal{M}$ in $\mathfrak{C}_{\rm I}$ is the dihedral group $\mathfrak{D}_5$, generated by the action of the five-fold symmetry $\vartheta$ as well as the automorphism
    $$\varpi : x_1 \mapsto x_1,\quad x_2\mapsto x_5, \quad x_3 \mapsto x_4, \quad x_4\mapsto x_3, \quad x_5 \mapsto x_2.$$

\end{proposition}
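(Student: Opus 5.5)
Every object $\mathcal{V}$ is anticanonically embedded, because its hyperplane section at infinity is a pentagon of five lines and has degree five. So the argument runs in the Picard group, as in the proof of \Cref{prop:oblomkovstylenormalformPII}. The key fact is uniqueness of the smooth degree-5 del Pezzo surface. First I show that $\overline{\mathcal{V}}$ is smooth. If it were singular, its minimal resolution would be $\p^2$ blown up at four points not in general position. The boundary cycle $D=D_1+\dots+D_5\in|-\mathcal{K}|$ consists of smooth points and has degree-one components with $D_i\cdot D_{i\pm1}=1$. Using $D^2=5$ and $D_i\cdot D=1$, we get $D_i^2=-1$ for all $i$. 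This yields five $(-1)$-classes forming a pentagon orthogonal to all $(-2)$-curves, and the root lattice orthogonal to a pentagon in $\Pic$ of rank 5 is zero. So no $(-2)$-curves exist and $\overline{\mathcal{V}}$ is smooth.

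Next I fix a blowdown $\pi:\overline{\mathcal{V}}\to\p^2$ at four points in general position. I want the pentagon to have classes $\E_1,\ \h-\E_1-\E_2,\ \E_2,\ \h-\E_2-\E_3,\ \E_3$, or more symmetrically $\E_1,\h-\E_1-\E_2,\E_2,\h-\E_2-\E_4,\E_4$ with one class possibly a conic. This is possible because $W(A_4)\cong\mathfrak{S}_5$ acts transitively on the pentagons among the ten lines. There are twelve pentagons in the Petersen graph, and $\mathfrak{S}_5$ of order 120 acts on them with stabiliser $\mathfrak{D}_5$ of order 10, so the action is transitive. A projectivity of $\p^2$ then moves the four points to $[0:0:1],[0:1:0],[1:0:0],[1:1:1]$, which gives the standard model $\overline{\mathcal{S}}$ of \eqref{eq:standarddelPezzodeg5PI}. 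Since $\overline{\mathcal{V}}$ and $\overline{\mathcal{S}}$ are both anticanonically embedded, the isomorphism is induced by a projectivity of $\p^5$. This projectivity carries the pentagon of $\overline{\mathcal{V}}$ to a fixed pentagon of $\overline{\mathcal{S}}$, and the hyperplane spanned by that pentagon is the unique hyperplane containing it. Composing with \eqref{eq:projectivityPI}, the pentagon goes to the pentagon at infinity \eqref{eq:linesatinfinityPI}, the hyperplane $X_0=0$ is preserved, and the map restricts to an affine linear isomorphism $\mathcal{V}\to\mathcal{M}$.

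For the automorphism group, any automorphism of $\mathcal{M}$ in $\mathfrak{C}_{\rm I}$ extends to a projective automorphism of $\overline{\mathcal{M}}$ preserving the pentagon. Since $\Aut(\overline{\mathcal{M}})\cong\mathfrak{S}_5$ acts faithfully on the Petersen graph, the group in question is the setwise stabiliser of the pentagon, which is $\mathfrak{D}_5$. Conversely every automorphism of $\overline{\mathcal{M}}$ is linear in the anticanonical embedding, so every element of this stabiliser preserves $X_0=0$ and gives an affine linear automorphism. Finally I check directly that $\vartheta:x_k\mapsto x_{k+2}$ and $\varpi$ preserve the ideal $K$. Indeed $\varpi$ sends $k_1\leftrightarrow k_5$, $k_2\leftrightarrow k_4$ and fixes $k_3$. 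These two elements act on the pentagon \eqref{eq:linesatinfinityPI} as a rotation of order 5 and a reflection, so they generate $\mathfrak{D}_5$.

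The main obstacle is the first step, excluding singular $\overline{\mathcal{V}}$. I would handle it through the lattice argument above, verifying against the classification of singular degree-5 del Pezzo surfaces in \cite{dolgachevclassicalAG} that none admits a pentagon of lines in the smooth locus.
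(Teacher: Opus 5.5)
Your proposal is correct, and its skeleton matches the paper's. You rule out singular $\overline{\mathcal V}$, realise the smooth surface as $\p^2$ blown up at $[0:0:1],[0:1:0],[1:0:0],[1:1:1]$ so that it lands on $\overline{\mathcal S}$ of \eqref{eq:standarddelPezzodeg5PI} with boundary $\{d=0\}$, and apply \eqref{eq:projectivityPI}. You then identify the automorphism group of $\mathcal M$ in $\mathfrak{C}_{\rm I}$ with the pentagon stabiliser in $\operatorname{Aut}(\overline{\mathcal M})\cong\mathfrak{S}_5$.

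The genuine difference is the smoothness step. The paper goes through the four singular degree-five types in Dolgachev's classification that carry at least five lines, and checks that none contains a pentagon. Your lattice argument is uniform and avoids that case list, but state it precisely. The Gram matrix of the pentagon is $-I+A_{C_5}$, whose eigenvalues $-1+2\cos(2\pi k/5)$ are all nonzero (its determinant is $1$). So the five classes have full rank in the rank-$5$ Picard lattice of the minimal resolution. Any $(-2)$-curve is disjoint from, hence orthogonal to, every $D_i$, so it would have class $0$, which is impossible.

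Elsewhere your additions are justifications the paper omits rather than new routes. The transitivity count ($12$ pentagons in the Petersen graph, stabiliser $\mathfrak{D}_5$ of order $10$) supplies what lies behind the paper's phrase ``can be chosen such that''. Reading off $\vartheta$ and $\varpi$ as a rotation and a reflection of the pentagon \eqref{eq:linesatinfinityPI} replaces the paper's explicit Weyl words $\rho=r_2r_3r_1r_2r_3r_4$, $\sigma=r_1r_4$ and its unexplained direct calculation.

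Two things need repair. First, neither class list you write down is a pentagon, since $\E_3\cdot\E_1=0$ and $\E_4\cdot\E_1=0$. There are also no conic classes on a degree-five del Pezzo surface; the ten $(-1)$-classes are $\E_i$ and $\h-\E_i-\E_j$. The target you actually need is the pentagon cut out by $\{d=0\}$ on $\overline{\mathcal S}$. Since $d=Z_0Z_1(Z_2-Z_0)$, this is $\E_1,\ \h-\E_1-\E_2,\ \E_2,\ \h-\E_2-\E_4,\ \h-\E_1-\E_3$. Transitivity then lets you choose the blow-down of $\overline{\mathcal V}$ so that its boundary has exactly these classes, after which composing with \eqref{eq:projectivityPI} places it at $X_0=0$ as you claim.

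Second, the assertion that every object is anticanonically embedded, which the paper also uses tacitly, needs its one-line proof. Adjunction gives $\mathcal{K}\cdot D_i=-1$. With $H=\sum D_i$ and $\mathcal{K}^2=5$, this yields $(H+\mathcal{K})\cdot H=(H+\mathcal{K})^2=0$, and the Hodge index theorem then forces $H\sim-\mathcal{K}$.
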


\begin{proof}
The idea of the proof is similar to that of \Cref{prop:oblomkovstylenormalformPII}.
For the first part, let $\mathcal{V}\in \operatorname{ob}(\mathfrak{C}_{\rm{I}})$. 
Begin by noting that the hyperplane section $\overline{\mathcal{V}}\setminus \mathcal{V}$ consisting of a cycle of five line implies that $\overline{\mathcal{V}}$ must be smooth. 
This follows from the classification of possibly singular del Pezzo surfaces of degree five as blow-ups of $\p^2$ at four points in \cite[Sec. 8.5]{dolgachevclassicalAG}.
The only cases where the surface is singular and has at least five lines correspond to blowup of $\p^2$ at four points $b_1,\dots,b_4$ in one of the following configurations:
\begin{enumerate}
    \item $b_1,\dots,b_4$ are distinct, with three collinear. In this case the surface has a single singularity of type $A_1$ and contains $7$ lines.
    \item two points $b_1,b_2$ are infinitely near, and no three of $b_1,\dots,b_4$ are collinear. In this case the surface has a single singularity of type $A_1$ and contains $7$ lines.
    \item two points $b_1,b_2$ are infinitely near, and three of the points $b_1,\dots,b_4$ are collinear. In this case the surface has two singularities of type $A_1$ and contains $5$ lines.
    \item two pairs $b_1,b_2$ and $b_3,b_4$ are infinitely near, and no three of the points $b_1,\dots,b_4$ are collinear. In this case the surface has two singularities of type $A_1$ contains $5$ lines.
\end{enumerate}
The fact that none of these cases lead to a surface containing five lines that intersect like a pentagon is verified directly.

Now since $\overline{\mathcal{V}}$ is smooth, it is the blowup of $\p^2$ at four points $b_1,\dots,b_4$, no three of which are collinear.
These can be sent via a projectivity to 
$$b_1 = [0:0:1], \quad b_2 = [0:1:0], \quad b_3=[1:0:0], \quad b_4 = [1:1:1],$$ and there is an isomorphism from $\overline{\mathcal{V}}$ to the surface $\overline{\mathcal{S}}$ in \cref{eq:standarddelPezzodeg5PI}, which can be chosen such that the hyperplane section $\overline{\mathcal{V}}\setminus \mathcal{V}$ is sent to $\{d=0\}\cap\overline{\mathcal{S}}$.
Then the isomorphism from $\overline{\mathcal{S}}$ to $\overline{\mathcal{M}}$, defined by the projectivity \eqref{eq:projectivityPI},  sends the hyperplane section $\{d=0\}\cap \overline{\mathcal{S}}$ to $\{X_0=0\}\cap \overline{\mathcal{M}}$. 
This induces an isomorphism from  $\mathcal{V}$ to $\mathcal{M}$ in $\mathfrak{C}_{\rm{I}}$.

For the second part, we describe the automorphism group of $\mathcal{M}$ in $\mathfrak{C}_{\rm{I}}$ as follows. 
Note that the automorphism group of $\overline{\mathcal{M}}$ as a projective variety is $\mathfrak{S}_5$, which can be regarded as the Weyl group $W(A_4)$ through its action on the Picard group $\Pic(\overline{\mathcal{M}})$. 
Via the realisation of $\overline{\mathcal{M}}$ as $\p^2$ blown up at four distinct points, this is described as
$$\Pic(\overline{\mathcal{M}}) = \Z \h +\Z\E_1 + \Z \E_2+ \Z \E_3 + \Z \E_4,$$
where $\E_1,\E_2,\E_3,\E_4$ correspond to a choice of four disjoint lines on $\overline{\mathcal{M}}$.
The Cremona isometries $\operatorname{Cr}(\overline{\mathcal{M}})$ form the group $W(A_4)\cong \mathfrak{S}_5$, generated by reflections 
            $$r_{\alpha_i} (\F) = \F + (\F\cdot \alpha_i)\,\alpha_i,$$
        associated to the simple roots 
$$ \alpha_1=\E_1-\E_2,\quad \alpha_2=\E_2-\E_3, \quad \alpha_3=\E_3-\E_4, \quad \alpha_4 = \h-\E_1-\E_2-\E_3.$$
There is an isomorphism of groups 
$\Aut(\overline{\mathcal{M}}) \cong \operatorname{Cr}(\overline{\mathcal{M}})$,
sending an automorphism to the induced action on $\Pic(\overline{\mathcal{M}})$.
The enumeration of exceptional divisors can be chosen such that the cycle of five lines corresponds to the following set of elements of $\Pic(\overline{\mathcal{M}})$:
\begin{equation} \label{eq:cycleoflinesPicPI}
    \operatorname{EX}^{\infty} = \left\{\h - \E_1 - \E_2, \,\, \E_2, \,\, \h - \E_2-\E_3, \,\, \E_3, \,\, \h-\E_3- \E_4\right\}.
\end{equation}
The subgroup of $\Aut(\overline{\mathcal{M}})$ that induces automorphisms of $\mathcal{M}$ in $\mathfrak{C}_{\rm{I}}$ consists of the elements that leave the set in \eqref{eq:cycleoflinesPicPI} invariant.
This is computed directly to be $$\left\{ w \in \operatorname{Cr}(\overline{\mathcal{M}}) ~|~ w (\operatorname{EX}^{\infty})= \operatorname{EX}^{\infty} \right\} \cong \mathfrak{D}_5 = \langle \sigma, \rho ~|~ \sigma^2=\rho^5=1, \sigma \rho=\rho^{4}\sigma\rangle,$$ 
with generators given in terms of the simple reflections $r_i = r_{\alpha_i}, i=1,\dots,4$ by 
$$\rho = r_2r_3r_1r_2r_3 r_4,\quad  \sigma = r_1r_4.$$
The fact that this subgroup of $\operatorname{Cr}(\overline{\mathcal{M}})$ is induced by the subgroup of $\operatorname{Aut}(\overline{\mathcal{M}})$ generated by $\vartheta$ and $\varpi$ is checked by direct calculation.
\end{proof}

We have the following directly from \Cref{prop:oblomkovstylenormalformPI}, by analogy with \Cref{cor:modulispacePVI,cor:modulispacePIV,cor:modulispacePII}.
\begin{corollary} \label{cor:modulispacePI}
    The moduli space of $\mathfrak{C}_{\rm{I}}$ is a point, $\mathscr{O}_{\rm{I}}=\operatorname{Spec} \C$.
\end{corollary}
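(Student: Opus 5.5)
The corollary should follow from \Cref{prop:oblomkovstylenormalformPI} in exactly the way \Cref{cor:modulispacePVI} follows from \Cref{prop:oblomkov}. Here the parameter space is trivial, so the classifying map is $\Phi:\operatorname{ob}(\mathfrak{C}_{\rm I})\to \mathscr{O}_{\rm I}=\Spec\C$. It sends every object to the unique complex point. The claim is that this induces a canonical bijection
$$\operatorname{Iso}(\mathfrak{C}_{\rm I})\to\mathscr{O}_{\rm I}(\C),$$
which amounts to showing that $\mathfrak{C}_{\rm I}$ has exactly one isomorphism class of objects.

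\textbf{Nonemptiness.} $\mathfrak{C}_{\rm I}$ contains an object, namely $\mathcal{M}$ itself. This is \Cref{lem:propertiesofMPI}: $\overline{\mathcal{M}}\subset\p^5$ is a smooth del Pezzo surface of degree five. Its hyperplane section at infinity is the pentagon of lines \eqref{eq:linesatinfinityPI}, and every point of it is smooth on $\overline{\mathcal{M}}$ because $\overline{\mathcal{M}}$ is smooth.

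\textbf{Injectivity.} By \Cref{prop:oblomkovstylenormalformPI}, every object $\mathcal{V}$ is isomorphic in $\mathfrak{C}_{\rm I}$ to $\mathcal{M}$. Any two objects are therefore isomorphic to each other, so the map on isomorphism classes is injective.

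\textbf{Surjectivity.} The target $\mathscr{O}_{\rm I}(\C)$ is a single point, so surjectivity is immediate from nonemptiness.

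\textbf{Canonicity.} The map is canonical because $\Phi$ involves no choices. The nontrivial automorphism group $\mathfrak{D}_5$ of $\mathcal{M}$ affects only the isomorphism $\mathcal{V}\to\mathcal{M}$, not its class. It would matter only for a stack-theoretic refinement in the spirit of \Cref{rem:promotingtomodulistack}.

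There is no real obstacle. All the geometric content, in particular ruling out the singular degree-five del Pezzo surfaces and normalising the four blown-up points, is already contained in \Cref{prop:oblomkovstylenormalformPI}.
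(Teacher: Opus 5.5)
Your proposal is correct and follows the paper, which deduces the corollary directly from \Cref{prop:oblomkovstylenormalformPI}, by analogy with the earlier moduli corollaries. Your explicit check that $\mathcal{M}$ is itself an object, using \Cref{lem:propertiesofMPI}, is a step the paper leaves implicit.
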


\begin{remark} \label{rem:mysterysymmetryPI}
    The automorphism $\varpi$ of $\mathcal{M}$ introduced in \Cref{prop:oblomkovstylenormalformPI} conjugates under $\operatorname{RH}_{t}$ to a biholomorphic involution of $\mathcal{X}_{t}$. 
    As far as we know, a corresponding transcendental symmetry of $\pain{I}$ has not appeared in the literature.
\end{remark}

\subsection{Lines on the monodromy surface}

As a del Pezzo surface of degree 5, $\overline{\mathcal{M}}$ contains 10 lines.
Of these, there are 5 lines at infinity as in \eqref{eq:linesatinfinityPI},
as well as 5 affine lines, given by 
\begin{equation} \label{eq:linesaffPI}
    \begin{aligned}
        L_{1} &: & X_1 = X_0 ,& &X_2 = X_0 ,& & X_3+X_5 = X_0 ,& &X_4= 0,\\
        L_{2} &: & X_3 = X_0 ,& &X_4 = X_0 ,& & X_2+X_5 = X_0 ,& &X_1= 0,\\
        L_{3} &: & X_5 = X_0,& &X_1 = X_0 ,& & X_2+X_4 = X_0 ,& &X_3= 0,\\
        L_{4} &: & X_2 = X_0 ,& &X_3 = X_0 ,& & X_1+X_4 = X_0 ,& &X_5= 0,\\
        L_{5} &: & X_4 = X_0 ,& &X_5 = X_0 ,& & X_1+X_3 = X_0 ,& &X_2= 0.
    \end{aligned}
\end{equation}
The intersection graph of lines on $\overline{\mathcal{M}}$ is defined analogously to the cases in previous sections, being the coloured graph with blue vertices $1,\dots,5$ corresponding to the lines $L_1,\dots,L_5$ in \cref{eq:linesaffPI} and red vertices $6,\dots,10$ corresponding to the lines at infinity $L_{1}^{\infty},\dots,L_5^{\infty}$ in \cref{eq:linesatinfinityPI}, with edges indicating pairwise intersections. This graph $\mathcal{G}_{\rm{I}}$ (which when uncoloured is the famous Petersen graph) and the subgraph $\mathcal{G}_{\rm{I}}^{\infty}$ generated by the red vertices are shown in \Cref{fig:linesgraphpI}.

\begin{remark} \label{rem:monodromylinesexplainedPI}
    Under the Riemann-Hilbert map, each of the lines $L_k$, $1\leq k\leq 5$, corresponds to a one-parameter family of bitronqu\'ee solutions and their five points of intersection correspond to the five  tritronqu\'ee solutions of $\pain{I}$ \cites{kk,kapaev2004}. 
\end{remark}

\begin{remark}
    Recall that $\overline{\mathcal{M}}$ is isomorphic to the moduli space $\overline{\mathcal{M}}_{0,5}$ of stable rational curves with five marked points. 
    In terms of this model, the geometric meaning of the affine lines is given in \cite[Lem. 1]{masoerobethe}.
    On $\overline{\mathcal{M}}$, $x_k=0$ if and only if $p_{k-1}=p_{k+1}$ on $\overline{\mathcal{M}}_{0,5}$ under the isomorphism in $\eqref{eq:iso_models}$. 
    For the lines at infinity, these correspond to pairs of consecutive points coinciding,
    $p_k = p_{k-1}$.
\end{remark}

\begin{remark}
    While it is clear from \Cref{fig:linesgraphpI} that $\operatorname{Fix}_{\Aut(\mathcal{G}_{\rm{I}})}(\mathcal{G}_{\rm{I}}^{\infty})$ is trivial, the automorphisms that only set-wise fix $\mathcal{G}_{\rm{I}}^{\infty}$ form $\mathfrak{D}_5$. 
    The five-fold symmetry accounts for the cyclic part of this, with $\vartheta$ inducing the permutation
    $$\vartheta: (L_1\,\,L_2\,\,L_3\,\,L_4\,\,L_5)(L_1^{\infty}\,\,L_2^{\infty}\,\,L_3^{\infty}\,\,L_4^{\infty}\,\,L_5^{\infty}).$$
    The automorphism $\varpi$ of $\mathcal{M}$  introduced in \Cref{prop:oblomkovstylenormalformPI} induces the permutation of lines 
    $$\varpi: (L_2\,\,L_5)(L_3\,\,L_4)(L_2^{\infty}\,\,L_5^{\infty})(L_3^{\infty}\,\,L_4^{\infty}).$$

\end{remark}

\begin{figure}[htb]
    \centering
    \includegraphics[width=0.38\linewidth]{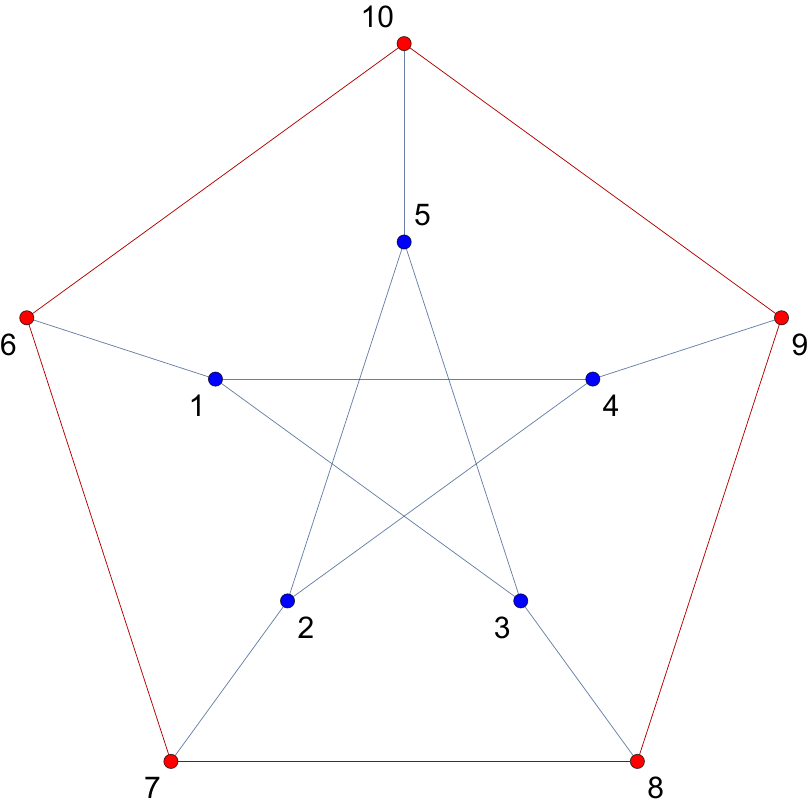}
    \caption{Graph $\mathcal{G}_{\rm{I}}$ encoding intersections of lines on the monodromy surface for $\pain{I}$,
    with in red the subgraph $\mathcal{G}_{\rm{I}}^\infty$ corresponding to lines at infinity. 
    }
    \label{fig:linesgraphpI}
\end{figure}

 \Cref{prop:oblomkovstylenormalformPI} and \Cref{cor:modulispacePI} establish  \Cref{mainthm:categoryandmodulispace} in the case of Painlev\'e-I.
 Theorems \ref{mainthm:conjugatedsymmetries} and \ref{mainthm:mon} are trivial in the case of Painlev\'e-I, since there are no nontrivial B\"acklund transformations.

\section{Conclusion} \label{sec:conclusion}
In this paper, we derived models of monodromy surfaces of $\pain{VI},\pain{IV},\pain{II},\pain{I}$ as embedded affine del Pezzo surfaces,  each characterised by a fixed degree and type of divisor at infinity.
This allowed us to resolve \Cref{main:problem} for these Painlev\'e equations by showing that the finite Weyl groups underlying their symmetry groups  appear on the right-hand side of the Riemann-Hilbert correspondence as monodromy groups of monodromy surfaces. 

It is natural ask what the categories of embedded affine del Pezzo surfaces are for the remaining Painlev\'e differential equations $\pain{V},\pain{III}^{D_6},\pain{III}^{D_7},\pain{III}^{D_8}$ as well as for discrete ones.
One interpretation of the program initiated in this paper is the formulation of a counterpart to the Sakai classification of discrete and differential Painlev\'e equations, on the monodromy surface side of the Riemann-Hilbert correspondence.

We expect that such a full classification exists, and remark that related conjectures were made in \cite{chekhovconj}.
The definitions of the categories associated with some $q$-discrete examples are obvious from \cites{joshirofqPVI,joshirofpiv,JMR}, and we conjecture that a $q$-Painlev\'e equation of surface type $A_k^{(1)}$ corresponds to a category of affine del Pezzo surfaces of degree $k+1$ with hyperplane section at infinity being an elliptic curve, for $0\leq k\leq 8$. 
Such a classification may extend beyond second-order discrete and differential Painlev\'e equations, and one may reasonably expect Fano varieties on the monodromy side.

\appendix

\section{Singular locus of the Jimbo-Fricke cubic}
\label{app:singularlocusofJimboFricke}
\begingroup\allowdisplaybreaks
In \Cref{rem:propertiesofMwPVI}, the singular locus $\mathscr{W}_{\rm{VI}}^{\operatorname{sing}}\subset \mathscr{W}_{\rm{VI}}$ of the monodromy surface $\overline{\mathcal{M}}_w$ for $\pain{VI}$, i.e. where the Jimbo-Fricke cubic is singular, was said to be given by $Q(w)=0$.
The polynomial $Q\in \mathbb{Q}[w_1,w_2,w_3,w_4]$ is
\begin{equation*}
    Q(w)= w_4^{5} + Q_4 w_4^4+Q_3 w_4^3+ Q_2 w_4^2+Q_1 w_4 + Q_0,
\end{equation*}
where
\begin{align*}
    Q_4 &= 16-\frac{1}{4}\left(w_1^2+w_2^2+w_3^2\right),
     \\
    Q_3 &=   96
    -\frac{11}{2} w_1 w_2 w_3 
    -12 (w_1^2+ w_2^2+ w_3^2)
    + \frac{1}{16}\left(w_1^2 w_2^2 +w_1^2 w_3^2 +w_2^2 w_3^2\right) ,
      \\
    Q_2  &= 256 
    -\frac{1}{64}w_1^2 w_2^2 w_3^2 
    + 22 w_1w_2w_3
     +\frac{5}{4} w_1w_2w_3\left(w_1^2+ w_2^2+w_3^2\right)
    \\
    &\quad 
   +2 \left(w_1^4+ w_2^4+ w_3^4\right)
    -88 \left(w_1^2 +w_2^2+ w_3^2\right)
    +\frac{49}{4}\left(  w_1^2w_2^2 +  w_1^2w_3^2 +w_2^2 w_3^2\right)
    ,
      \\
    Q_1  &= 
        256
     -\frac{11}{8} w_1^2 w_2^2 w_3^2  
   +184 w_1 w_2 w_3 
   -\frac{9}{32}w_1w_2w_3 (w_1^2 w_2^2 +  w_1^2 w_3^2 +w_2^2 w_3^2) 
    \\
    &\quad 
    -16w_1w_2w_3 (w_1^2 + w_2^2 +  w_3^2)
    +32 (w_1^4+ w_2^4+ w_3^4)
     -192 (w_1^2+w_2^2+ w_3^2)
     \\
    &\quad 
     -\frac{9}{4}(w_1^4 w_2^2 +w_1^4 w_3^2 
     +w_1^2 w_2^4 +w_2^4 w_3^2
     +w_1^2 w_3^4  +w_2^2 w_3^4)  
   +31 (w_1^2 w_2^2 + w_1^2 w_3^2 + w_2^2 w_3^2 )
    ,
     \\
    Q_0 &=  
     \frac{1}{16}w_1^3 w_2^3 w_3^3 
    +\frac{83}{4}w_1^2 w_2^2 w_3^2  
   + 32 w_1 w_2 w_3 
    -4 \left(w_1^6
    + w_2^6
    + w_3^6    
    \right)   
   +32 \left(
    w_1^4
    + w_2^4
    + w_3^4
    \right)    
    \\
    &\quad 
    -64 \left(w_1^2
    + w_2^2
    + w_3^2 
    \right)
     +3 w_1 w_2 w_3\left( w_1^4 
     + w_2^4 
    + w_3^4\right)
    -52 w_1 w_2 w_3 \left( w_1^2
    + w_2^2 
    + w_3^2 
    \right) 
    \\
    &\quad 
    +\frac{3}{32} w_1^2 w_2^2 w_3^2\left( 
    w_1^2
    + w_2^2 
    + w_3^2 
    \right)
    +\frac{3}{8} w_1w_2w_3\left(   w_1^2 w_2^2
    + w_1^2 w_3^2 
    +  w_2^2 w_3^2
    \right)
    +\frac{27}{64} \left(w_1^4 w_2^4 
    + w_1^4 w_3^4 
    +  w_2^4 w_3^4
    \right)    
    \\
    &\quad 
    -3 \left(  
    w_1^4 w_2^2 
    +w_1^4 w_3^2 
    +w_1^2 w_2^4 
    +w_2^4 w_3^2
    +w_1^2 w_3^4  
    +w_2^2 w_3^4
    \right)
    +60 \left( w_3^2 w_1^2
    + w_2^2 w_1^2
    + w_2^2 w_3^2
    \right).
    \end{align*}
After the quotient by $\operatorname{Aut}(D_4^{(1)})$ in \cref{eq:moduli_map}, the singular locus $\mathscr{O}^{\operatorname{sing}}_{\rm{VI}}\subset\mathscr{O}_{\rm{VI}}$ was said to be given by $P(o)=0$. The polynomial $P\in \mathbb{Q}[o_1,o_2,o_3,o_4]$ is 
\begin{equation*}
    P(o)= o_4^{10} + \sum_{n=0}^{9}P_n(o_1,o_2,o_3)o_4^{n},
\end{equation*}
where 

\begin{align*}
    P_{9} &= -\frac{o_1}{2}+ 32, \\
    P_{8} &= \frac{1}{16}o_1^2-32 o_1+\frac{1}{8}o_2+448, \\
    P_{7} &= 10 o_1^2-\frac{1}{32}o_1 o_2 -608 o_1+\frac{37 }{2}o_2-\frac{1}{32}o_3+3584, \\
    P_{6} &= 
    -o_1^3
    +316 o_1^2
    +\frac{1}{256}o_2^2
    -\frac{81}{8}o_1o_2 
    +\frac{1 }{128}o_1o_3
    \\
    &\quad -5632 o_1
    +210 o_2
    -20 o_3
    +17920, \\
    P_{5} &= 
    -72 o_1^3
    +3680 o_1^2
    +\frac{15 }{8}o_2^2
    +\frac{11}{8} o_2 o_1^2
    -\frac{493 }{2}o_1 o_2 
    +\frac{159 }{16}o_1 o_3 
    -\frac{1}{512}o_2 o_3
    \\
    &\quad 
    -29440 o_1
    +552 o_2
    +\frac{893 }{2}o_3
    +57344, \\
    P_{4} &= 
    6 o_1^4
    -1264 o_1^3
    +20576 o_1^2
    +\frac{1239 }{16}o_2^2
    +\frac{1}{4096}o_3^2
    +\frac{173}{2}  o_1^2o_2
    -\frac{5}{4}  o_1^2 o_3
    -\frac{63}{128}o_1 o_2^2  
    \\
    &\quad 
    -826 o_1 o_2 
    -\frac{3121 }{8}o_1 o_3 
    -\frac{343 }{128}o_2 o_3
    -90112 o_1
    -2208 o_2
    +3132 o_3
    +114688, \\
    P_{3} &= 
    224 o_1^4
    -7936 o_1^3
    -\frac{19}{2} o_1^3 o_2 
     +\frac{27 }{512}o_2^3
   +58880 o_1^2
     -464 o_2^2
   +\frac{133 }{256}o_3^2
     +\frac{87}{128} o_1 o_2 o_3 
   \\
    &\quad 
   +580 o_1^2 o_2 
    +\frac{223}{2} o_1^2 o_3 
    -\frac{369}{8} o_1 o_2^2 
    +3304 o_1 o_2 
    -1838 o_1 o_3 
    +\frac{679 }{16}o_2 o_3
    \\
    &\quad 
    -155648 o_1
    -13440 o_2
    -1592 o_3
    +131072, \\
    P_{2} &= 
    -16 o_1^5
    +1856 o_1^4
    -20224 o_1^3
    +\frac{891 }{128}o_2^3
    +80896 o_1^2
    +1239 o_2^2
    +\frac{3275 }{128} o_3^2
    -174 o_1^3 o_2 
    -\frac{83}{8} o_1^3 o_3 \\
    &\quad 
    +\frac{27}{4} o_1^2 o_2^2 
    -2320 o_1^2 o_2 
    +288 o_1^2 o_3 
    +\frac{891}{4}  o_1 o_2^2
    -\frac{189 }{2048}o_2^2 o_3
    -\frac{17}{128}  o_1 o_3^2
    -\frac{1017}{32} o_1 o_2 o_3 \\
    &\quad 
    +15776  o_1 o_2
    +2526 o_1 o_3 
    +\frac{1157 }{4}o_2 o_3
    -131072 o_1
    -18944 o_2
    -23424 o_3
    +65536, \\
    P_{1} &= 
    -256 o_1^5
    +3584 o_1^4
    -18432 o_1^3
    -\frac{891 }{32}o_2^3 
    +40960 o_1^2
    +480 o_2^2
    +\frac{2685 }{16}o_3^2
    +18 o_1^4 o_2 
    \\
    &\quad 
    +696 o_1^3 o_2 
    +5 o_1^3 o_3 
    -\frac{243}{128} o_1 o_2^3
    -\frac{27}{2} o_1^2 o_2^2 
    +\frac{81}{16} o_1^2 o_2 o_3 
    -\frac{1143}{8} o_1 o_2 o_3 
    \\
    &\quad 
    -5536 o_1^2 o_2 
    -738 o_1 o_2^2 
    -\frac{97}{16}  o_1 o_3^2
    +\frac{9}{256}  o_2 o_3^2
    +\frac{351}{256} o_2^2 o_3
    -1000 o_1^2 o_3 
     \\
    &\quad    
    +10368 o_1 o_2 
    +12272  o_1 o_3
    +\frac{1747 }{2}o_2 o_3
    -32768 o_1
    -2048 o_2
    -2688 o_3
    , 
    \\
    P_{0} &= 
    16 o_1^6
    -256 o_1^5
    +1536 o_1^4
     +\frac{729 }{4096}o_2^4
   -72 o_1^4 o_2 
    -3 o_1^4 o_3 
    -4096 o_1^3
    -\frac{27 }{8} o_2^3
    -\frac{1}{256} o_3^3
    -\frac{27}{8} o_1^3 o_2^2 
    \\
    &\quad 
    +\frac{3}{16}  o_1^2 o_3^2
    +108 o_1^2 o_2^2 
    +608 o_1^3 o_2 
    +122  o_1^3 o_3
    +\frac{81}{4} o_1^2 o_2 o_3 
     -\frac{81}{128} o_1 o_2^2 o_3 
    \\
    &\quad 
    -1408 o_1^2 o_2 
   -1664 o_1^2 o_3 
    +\frac{243}{32} o_1 o_2^3
    -126 o_1 o_2^2 
    -\frac{161}{8} o_1 o_3^2
    +\frac{45}{64} o_2 o_3^2
    -\frac{2133}{128} o_2^2 o_3
    \\
    &\quad
    -\frac{519}{2} o_1 o_2 o_3 
    +4096 o_1^2
    +16 o_2^2
    +\frac{4977 }{16}o_3^2
    +512 o_1 o_2 
    +1056 o_1 o_3 
    +218 o_2 o_3
    -1024 o_3
    .
\end{align*}
\endgroup

\section{Derivation of monodromy surface for $\pain{II}$ from Flaschka-Newell linear problem} \label{app:derivationofmonodromysurfaceFN}
In this Appendix, we derive the monodromy surface in 
 \Cref{def:monodromysurfacePII} from the Flaschka-Newell linear problem \eqref{eq:FNlaxA}.

There is a unique formal solution
\begin{equation*}
    Y_{\text{form}}(z)=P(z)e^{(\frac{4}{3}z^3-tz)\sigma_3},
\end{equation*}
where $P(z)$ is a power series in $z$ around $z=\infty$,
\begin{equation*}
    P(z)=I+\sum_{n=1}^\infty z^{-n}U_n(t).
\end{equation*}

We have Stokes sectors
\begin{equation*}
    \Sigma_k=\left\{\left|\arg z- \frac{(k-1)\pi}{3}\right|<\frac{\pi}{6}\right\}\subseteq \widetilde{\mathbb{C}^*},
\end{equation*}
where $k\in\mathbb{Z}$.

For any $k\in\mathbb{Z}$, there exists a unique solution $Y_k$ of the linear problem that satisfies
\begin{equation*}
    Y_k(z)\sim Y_\text{form}(z)\qquad (z\in \Sigma_k\cup \Sigma_{k+1}, z\rightarrow \infty),
\end{equation*}
Correspondingly, the Stokes phenomenon takes the form
\begin{equation*}
    Y_{k+1}(z)=Y_k(z)S_k,
\end{equation*}
where
\begin{equation*}
    S_k=\begin{bmatrix}
        1 & 0\\
        s_k & 1
    \end{bmatrix}\,\,\, \text{if $k$ even},\qquad
    S_k=\begin{bmatrix}
        1 & s_k\\
        0 & 1
    \end{bmatrix}\,\,\, \text{if $k$ odd},
\end{equation*}
for $k\in\mathbb{Z}$.

By the symmetry of the linear problem in \eqref{eq:Asymmetry},
\begin{equation*}
    Y_{k+3}(e^{\pi i}z)=\sigma_1Y_k(z)\sigma_1.
\end{equation*}
Therefore $S_{k+3}=\sigma_1S_k\sigma_1$ and hence $s_{k+3}=s_k$
for $k\in\mathbb{Z}$.

For arbitrary $\gamma\in\mathbb{C}^*$, there exists a solution
\begin{equation}\label{eq:FNY0sol}
    Y_{\underline{0}}(z)=G Q(z)z^{-(\frac{1}{2}+\theta)\sigma_3},\qquad G:=\frac{1}{\sqrt{2}}\begin{bmatrix}
        \gamma & -\gamma^{-1}\\
        \gamma & \gamma^{-1}
    \end{bmatrix},
\end{equation}
where $Q(z)$ is analytic at $z=0$ with $Q(z)=I+\mathcal{O}(z)$ as $z\rightarrow 0$. Upon fixing $\gamma$, $Q(z)$ is unique.

We introduce connection matrices,
\begin{equation*}
    Y_k(z)=Y_{\underline{0}}(z)E_k,
\end{equation*}
$k\in\mathbb{Z}$. For the sake of definitiveness, we impose that the complex powers in the defining equations of $Y_0(z)$ and $Y_{\underline{0}}(z)$ are principal on the same sheet $\mathbb{C}\setminus\mathbb{R}_{\leq 0}$. Note that each connection matrix lies in $\operatorname{SL}_2(\mathbb{C})$.

By symmetry \eqref{eq:Asymmetry},
\begin{equation*}
    Y_{\underline{0}}(e^{\pi i}z)=-i\, \sigma_1 Y_{\underline{0}}(z)u^{-\sigma_3},
\end{equation*}
where we again use the notation  $u=e^{\pi i \theta}$. This means that 
\begin{equation*}
    E_{k+3}=i\,u^{\sigma_3}E_k\, \sigma_1,
\end{equation*}
for $k\in\mathbb{Z}$.

We are thus left with three Stokes matrices $S_1,S_2,S_3$ and three connection matrices $E_1,E_2,E_3$, satisfying
\begin{equation}\label{eq:monrelations}
\begin{aligned}
        E_2=E_1S_1,\\
        E_3=E_2S_2,\\
        iu^{\sigma_3} E_1\,\sigma_1=E_3S_3.        
\end{aligned}
\end{equation}
Combining these three equations gives
\begin{equation*}
    S_1S_2S_3\sigma_1= E_1^{-1}iu^{\sigma_3}E_1,
\end{equation*}
and taking the traces of both sides shows that the Stokes multipliers satisfy the cubic equation
\begin{equation} \label{eq:cubicFN}
    s_1s_2s_3 +s_1+s_2+s_3- i (u+\tfrac{1}{u})=0,
\end{equation}
derived in Flaschka and Newell's original paper \cite[Eq. (3.24)]{flaschkanewell}.

The matrices $E_{k}$, $1\leq k\leq 3$, are defined up to the choice of $\gamma$ in equation \eqref{eq:FNY0sol}, whose freedom induces the $\mathbb{C}^*$-action
\begin{equation} \label{eq:scalingactionFNPII}
  S_k\mapsto S_k,\quad  E_k\mapsto r^{\sigma_3} E_k,\qquad (k=1,2,3),
\end{equation}
through $\gamma\mapsto \gamma/r$, since it scales the fundamental solution around zero as $Y_{\underline{0}}(z)\mapsto Y_{\underline{0}}(z)r^{-\sigma_3}$.

Writing
\begin{equation*}
    E_k=\begin{bmatrix}
        a_k & b_k\\
        c_k & d_k
    \end{bmatrix},
\end{equation*}
we have the equalities
\begin{equation}\label{eq:redundantvar}
\begin{aligned}
  a_2&=a_1, & c_2&=c_1, & b_3&=b_2,\\
 d_3&=d_2, & a_3&=i\,u\,b_1, & c_3&=i\,u^{-1}d_1,  
\end{aligned}
\end{equation}
which allow us to eliminate the variables $a_2,c_2,a_3,b_3,c_3,d_3$. Correspondingly, we interpret a set of monodromy data for the linear problem as a point on the affine variety
\begin{equation*}
    M = \operatorname{Spec} \C [s_1,s_2,s_3,a_1,b_1,c_1,d_1,b_2,d_2]/I,
\end{equation*}
where $I = (i_1,i_2,i_3,i_4,i_5,i_6,i_7)$ is the ideal generated by the seven elements
\begin{equation*}
    \begin{aligned}
        i_1=&\,a_1\,s_1+b_1-b_2, & i_4=&\,d_2\,s_2+c_1-i\, u^{-1}\, d_1,\\
        i_2=&\,c_1\,s_1+d_1-d_2, &     i_5=&\,b_1\,s_3-a_1-i\, u^{-1}\, b_2,\\
        i_3=&\,b_2\,s_2+a_1-i\, u\,b_1, & i_6=&\,d_1\,s_3-c_1-i\,u\, d_2,\\
        i_7=&\,a_1\,d_1-b_1\,c_1-1, &&
    \end{aligned}
\end{equation*}
where vanishing of the first six together with \eqref{eq:redundantvar} is equivalent to equations \eqref{eq:monrelations} and vanishing of $i_7$ corresponds to $|E_1|=1$.

Note that $\operatorname{dim}M=3$, and that the generic fibre of the projection from $M$ to the space $$\operatorname{Spec}\C[s_1,s_2,s_3] / (s_1s_2s_3 +s_1+s_2+s_3- i (u+\tfrac{1}{u}))$$ of Stokes multipliers subject to the cubic equation  \eqref{eq:cubicFN} is one-dimensional.

Induced by \eqref{eq:scalingactionFNPII} we have the $\C^*$-action on $M$ given by 
\begin{equation*}
(s_1,s_2,s_3,a_1,b_1,c_1,d_1,b_2,d_2)\mapsto (s_1,s_2,s_3,r\,a_1,r\,b_1,r^{-1}c_1,r^{-1}d_1,r\,b_2,r^{-1}d_2).
\end{equation*}
Denoting $R=\C[s_1,\dots,d_2]/I$, the ring of invariants in $R$ under the $\C^*$-action is generated by 
\begin{equation*}
    \begin{aligned}
    y_1 &= s_1, &\quad y_2 &= s_2, &\quad y_3 &= s_3, \\
    y_4 &= a_1\, c_1, &\quad y_5 &= b_2\,d_2, &\quad y_6 &= b_1\, d_1, \\
    y_7 &= a_1\, d_1, &\quad y_8 &= a_1\,d_2, &\quad y_9 &= b_1\, c_1,\\
    y_{10} &= b_1\,d_2, &\quad y_{11} &= b_2 \, c_1, &\quad y_{12} &= b_2\, d_1,
    \end{aligned}
\end{equation*}
among which there are the following six linear relations in $R$,
\begin{equation*}
\begin{aligned}
    (u-u^{-1})y_7&=u+i\,y_2, & (u-u^{-1})y_8&=-u^{-1}-i\,y_3, & (u-u^{-1})y_9&=u^{-1}+i\,y_2,\\
     (u-u^{-1})y_{10}&=-i+u^{-1}y_1, & (u-u^{-1})y_{11}&=-u-i\,y_3, & (u-u^{-1})y_{12}&=-i+u\,y_1.
\end{aligned}
\end{equation*}
These allow us to eliminate $y_k$, $7\leq k\leq 12$. Introducing 
\begin{equation*}
    \begin{aligned}
    x_1&=-i\,y_1, &\quad 
    x_2&=-i\,y_2, &\quad 
    x_3&=-i\,y_3, \\
    x_4&=1+i(u-u^{-1})y_4, &\quad 
    x_5&=1-i(u-u^{-1})y_5, &\quad 
    x_6&=1-i(u-u^{-1})y_6,
    \end{aligned}
\end{equation*}
the ring of invariants $R^{\C^*}$ is again realised by \eqref{eq:ringofinvariants}. In particular, the corresponding geometric invariant theory quotient leads to the same variety $\mathcal{M}_w$ as in \Cref{def:monodromysurfacePII}, again with $w=u+\tfrac{1}{u}$.


\bibliographystyle{amsalpha}

\bibliography{references}

\end{document}